\DeclareMathAlphabet{\mathsf}{OT1}{\sfdefault}{m}{n}
\newcommand{\nocontentsline}[3]{}
\newcommand{\tocless}[2]{\bgroup\let\addcontentsline=\nocontentsline#1{#2}\egroup}
\def\dual#1{\expandafter\dual@aux#1\@nil}
\def\dual@aux#1/#2\@nil{\begin{tabular}{@{}c@{}}#1\\#2\end{tabular}}
\DeclareMathAlphabet{\amathbb}{U}{bbold}{m}{n}
\tikzstyle{rectan} = [rectangle, rounded corners, 
\tikzstyle{ghost} = [circle, 
\newtheoremstyle{teoremas}
{8pt}
{8pt}
{\itshape}
{}
{\bfseries}
{}
{.5em}
{}
\theoremstyle{teoremas}
\newtheorem{theorem}{Theorem}[section]
\newtheorem{corollary}[theorem]{Corollary}
\newtheorem{lemma}[theorem]{Lemma}
\newtheorem{proposition}[theorem]{Proposition}
\newtheoremstyle{definition}
{8pt}
{8pt}
{}
{}
{\bfseries}
{}
{.5em}
{}
\theoremstyle{definition}
\newtheorem{definition}[theorem]{Definition}
\newtheorem{conjecture}[theorem]{Conjecture}
\newtheorem{question}[theorem]{Question}
\newtheorem{example}[theorem]{Example}
\newtheorem{remark}[theorem]{Remark}
\crefname{theorem}{theorem}{theorems}
\Crefname{theorem}{Theorem}{Theorems}
\crefname{lemma}{lemma}{lemmas}
\Crefname{lemma}{Lemma}{Lemmas}
\crefname{proposition}{proposition}{propositions}
\Crefname{proposition}{Proposition}{Propositions}
\DeclareMathOperator{\rk}{rk}
\newcommand{\M}{\mathsf{M}}
\newcommand{\Q}{\mathbb{Q}}
\newcommand{\trunc}{\operatorname{trunc}}
\newcommand{\Z}{\mathbb{Z}}
\newcommand{\Hilb}{\operatorname{Hilb}}
\renewcommand{\H}{\mathrm{H}}
\newcommand{\CH}{\mathrm{CH}}
\newcommand{\aug}{\operatorname{aug}}
\newcommand{\Int}{\operatorname{Int}}
\newcommand{\IH}{\mathrm{IH}}
\newcommand{\uH}{\underline{\mathrm{H}}}
\newcommand{\uCH}{\underline{\mathrm{CH}}}
\newcommand{\asc}{\operatorname{asc}}
\newcommand{\des}{\operatorname{des}}
\newcommand{\rev}{\operatorname{rev}}
\renewcommand{\aug}{\operatorname{aug}}
   \def\MR#1{}
\title[Chow functions for partially ordered sets]{Chow functions for partially ordered sets}
\author[L. Ferroni]{Luis Ferroni}
\address{(L. Ferroni)
   Dipartimento di Matematica, Universit\`a di Pisa, Pisa, Italy.
}
\email{luis.ferroni@unipi.it}
\thanks{Luis Ferroni was a member at the Institute for Advanced Study, funded by the Minerva Research Foundation. Jacob Matherne received support from NSF Grant DMS-2452179 and Simons Foundation Travel Support for Mathematicians Award MPS-TSM-00007970.}
\author[J.~P.~Matherne]{Jacob P. Matherne}
\address{(J. P. Matherne)
Department of Mathematics, North Carolina State University, Raleigh, NC, USA.
}
\email{jpmather@ncsu.edu}
\author[L. Vecchi]{Lorenzo Vecchi}
\address{(L. Vecchi)
  Department of Mathematics, KTH Royal Institute of Technology, Stockholm, Sweden.
}
\email{lvecchi@kth.se}
\subjclass[2020]{Primary: 06A07, 05B35, 52B05, 05A20}
\begin{document}

\begin{abstract}
     Three decades ago, Stanley and Brenti initiated the study of the Kazhdan--Lusztig--Stanley (KLS) functions, putting on common ground several polynomials appearing in algebraic combinatorics, discrete geometry, and representation theory. In the present paper we develop a theory that parallels the KLS theory. To each kernel in a given poset, we associate a polynomial function that we call the \emph{Chow function}. The Chow function often exhibits remarkable properties, and sometimes encodes the graded dimensions of a cohomology or Chow ring.  The framework of Chow functions provides natural polynomial analogs of graded module decompositions that appear in algebraic geometry, but that work for arbitrary posets, even when no graded module decomposition is known to exist.  In this general framework, we prove a number of unimodality and positivity results without relying on versions of the Hard Lefschetz theorem.  Our framework shows that there is an unexpected relation between positivity and real-rootedness conjectures about chains on face lattices of polytopes by Brenti and Welker, Hilbert--Poincar\'e series of matroid Chow rings by Ferroni and Schr\"oter, and flag enumerations on Bruhat intervals of Coxeter groups by Billera and Brenti.
\end{abstract}


\maketitle

\setcounter{tocdepth}{1}
\tableofcontents 

\section{Introduction}\label{sec:one}

\subsection{Overview}
In the foundational paper \cite{stanley-local}, Stanley developed a notable framework to study polynomials arising from partially ordered sets. This puts on common ground and unifies several---a priori unrelated---theories that are of fundamental importance in mathematics. Three prominent examples are i) the enumeration of points, lines, planes, etc. in a matroid, ii) the enumeration of faces in convex polytopes, and iii) the combinatorics and representation theory associated to Coxeter groups.

Following another influential paper by Brenti~\cite{brenti-kls}, we will refer to this as the Kazhdan--Lusztig--Stanley (KLS) theory for posets. We point to a recent survey by Proudfoot \cite{proudfoot-kls} for a self-contained introduction to KLS theory and its algebro-geometric consequences. In what follows we summarize the basic setup of the KLS theory, following closely the notation of \cite{proudfoot-kls}. In Section~\ref{sec:two} we provide more details about the construction of the main objects.

Assume that $P$ is a locally finite, weakly ranked, partially ordered set, and let $\Int(P)$ be the set of all closed intervals of $P$. We denote by $\rho\colon \Int(P)\to \mathbb{Z}$ the weak rank function of $P$. Consider the incidence algebra $\mathcal{I}(P)$ of $P$ over the univariate polynomial ring $\mathbb{Z}[x]$. The weak rank function $\rho$ gives rise to the subalgebra $\mathcal{I}_{\rho}(P)\subseteq \mathcal{I}(P)$ consisting of the elements $f\in \mathcal{I}(P)$ such that $\deg f_{st} \leq \rho_{st}$ for each closed interval $[s,t]$.   Stanley realized the importance of special elements $\kappa\in \mathcal{I}_{\rho}(P)$ which are called $(P,\rho)$-kernels or, when $\rho$ is understood from context, just $P$-kernels. To each such kernel $\kappa$ one associates two important elements $f, g \in \mathcal{I}_{\rho}(P)$.  The element $f$ (resp. $g$) is often called the right (resp. left) Kazhdan--Lusztig--Stanley (KLS) function associated to the $(P,\rho)$-kernel $\kappa$.

In each of the three examples mentioned in the first paragraph, the posets and the kernels are, respectively,  i) the lattice of flats of a matroid with the characteristic function as kernel, ii) the face lattice of a convex polytope with the kernel $[s,t]\mapsto (x-1)^{\dim t - \dim s}$, and iii) the strong Bruhat order poset of a Coxeter group with the $R$-polynomials as kernel. In these three cases the posets are graded and bounded, and the assignment $[s,t]\mapsto \rho_{st}$ is given by the length of an arbitrary saturated chain starting at $s$ and ending at $t$. Correspondingly, the KLS functions that arise in each of these cases are i) the Kazhdan--Lusztig polynomial of the matroid defined by Elias, Proudfoot, and Wakefield in \cite{elias-proudfoot-wakefield}, ii) the toric $g$-polynomial of the polytope introduced by Stanley~\cite{stanley-toric-g}, and iii) the Kazhdan--Lusztig polynomial(s) of the Coxeter group discovered by Kazhdan and Lusztig \cite{kazhdan-lusztig}.  

The central contribution of the present work is the introduction of a new class of functions, that we call \emph{Chow functions}, associated to any $(P,\rho)$-kernel $\kappa$. As opposed to the case of the KLS functions, where a convention of left versus right constitutes an essential part of the definition, in our case there is a single distinguished element $\H\in \mathcal{I}_{\rho}(P)$ called the $\kappa$-Chow function associated to $(P,\rho)$. Notably, the KLS functions are required to satisfy a very restrictive degree bound: $\deg f_{st} < \frac{1}{2} \rho_{st}$ and $\deg g_{st} < \frac{1}{2}\rho_{st}$ for each $s < t$. In our case, the Chow function $\H$ satisfies a weaker degree bound: $\deg \H_{st} < \rho_{st}$ for each $s<t$; in order to compensate for the additional degrees of freedom, one imposes that the polynomials $\H_{st}$ are \emph{palindromic}. 

Chow functions and KLS functions turn out to be tightly connected to each other. Often, properties of one have an impact on the other. The most significant example of this phenomenon in the present paper is the following result.  

\begin{theorem}\label{thm:kls-positive-chow-unimodal-intro}
    Let $\kappa$ be a $(P,\rho)$-kernel. If the right KLS function $f$ or the left KLS function $g$ is non-negative, then the Chow function $\H$ is non-negative and unimodal.
\end{theorem}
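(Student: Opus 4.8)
The strategy is to reduce both conclusions to a single combinatorial identity relating the Chow function to the KLS function, and then to read them off. First I would dispose of the symmetry between $f$ and $g$: under $P\mapsto P^{\mathrm{op}}$ the incidence algebra passes to its opposite, so the right and left KLS functions of $\kappa$ are interchanged, the weak rank $\rho$ is unaffected, and each $\H_{st}$—being palindromic—is unchanged; hence it suffices to prove the statement assuming $f$ non-negative.

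Next, starting from the construction of $\H$ (the unique palindromic solution of its $\kappa$-equation) and from $\kappa=\overline f\, f^{-1}$, I would extract a closed ``chain'' or ``flag'' formula for $\H_{st}$ in which the entries of $f$ on sub-intervals of $[s,t]$ appear with monomial weights. Because $\H_{st}$ is palindromic of degree $\rho_{st}-1$, the informative way to record such a formula is through the block expansion
\[
\H_{st}(x)=\sum_{i\ge 0} c_{st,i}\,\bigl(x^{i}+x^{i+1}+\cdots+x^{\rho_{st}-1-i}\bigr),
\]
whose coefficients $c_{st,i}$ are the successive differences of the coefficients of $\H_{st}$ up to the middle. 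The heart of the argument is to show that each $c_{st,i}$ is a \emph{non-negative integer combination of products of entries of $f$ over sub-intervals of $[s,t]$}; this is the Chow-function analogue of the fact that the toric $g$-vector encodes the increments of a toric $h$-vector.

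Granting this, the theorem follows at once. If $f$ is non-negative then every $c_{st,i}\ge 0$, so $\H_{st}$ is a non-negative combination of the polynomials $x^{i}+\cdots+x^{\rho_{st}-1-i}$; each of these is palindromic, unimodal, and centred at $\tfrac12(\rho_{st}-1)$, and a sum of palindromic unimodal polynomials sharing a centre of symmetry is again palindromic and unimodal. Non-negativity is then immediate as well. The degree accounting—that the chain formula really produces something of degree $<\rho_{st}$ fitting the block shape—is routine given the KLS degree bound $\deg f_{su}<\tfrac12\rho_{su}$.

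The principal obstacle is exactly this middle step: producing the block expansion with manifestly non-negative coefficients. Mere non-negativity of $\H$ would follow from any positive formula for $\H$ in terms of $f$; it is the unimodality—equivalently, the monotonicity up to the middle of the coefficient sequence of the palindromic polynomial $\H_{st}$—that forces one to pin down the increments $c_{st,i}$ exactly and to show that, in passing from the $\kappa$-equation to the chain formula, the a priori alternating signs all cancel.
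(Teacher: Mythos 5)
Your opening reduction via the opposite poset is fine, and your target — writing $\H_{st}$ in the block form $\sum_i c_{st,i}\,(x^i + \cdots + x^{\rho_{st}-1-i})$ with $c_{st,i}\ge 0$ — is exactly what the paper's Lemma~\ref{lemma:char-symmetric-unimodality} identifies as the criterion for symmetric unimodality. But the step you flag as the ``principal obstacle'' is precisely where the argument is missing, and it cannot be filled in the way you suggest.

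The difficulty is that the natural chain expansion (the paper's Theorem~\ref{thm:chow-from-kl}, the analogue of your ``closed chain formula'') reads
\[
\H_{st}(x) = \sum_{s = p_0 < \cdots < p_m \le t} \left(\prod_{i=1}^m \frac{g^{\rev}_{p_{i-1}p_i}(x) - x\,g_{p_{i-1}p_i}(x)}{x-1}\right) g_{p_m t}(x),
\]
and the trailing factor $g_{p_m t}(x)$ (when $p_m<t$) is \emph{not} palindromic — it is just a polynomial of degree $<\tfrac12\rho_{p_m t}$. Consequently the individual summands are not symmetric with any common center, so the block coefficients $c_{st,i}$ cannot be read off term by term from this expansion as you propose. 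The ``alternating signs'' you worry about are a red herring; the real issue is that the summands do not fit the block shape individually.

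The paper's proof works around this by using the \emph{recursive} numerical canonical decomposition (equation~\eqref{eq:ncd-g}),
\[
\H_{st}(x) = \frac{g^{\rev}_{st}(x) - g_{st}(x)}{x-1} + \sum_{s<w<t} \frac{g^{\rev}_{sw}(x) - x\,g_{sw}(x)}{x-1}\,\H_{wt}(x),
\]
in which every summand \emph{is} palindromic with center $\tfrac12(\rho_{st}-1)$: the first term is shown by a direct coefficient computation (using the degree bound $\deg g_{st}<\tfrac12\rho_{st}$ and the non-negativity of $g$) to be a non-negative block sum centered at $\tfrac12(\rho_{st}-1)$; the quotient $\tfrac{g^{\rev}_{sw}-xg_{sw}}{x-1}$ is shown similarly to be a non-negative block sum centered at $\tfrac12\rho_{sw}$; and then Stanley's lemma on products of symmetric unimodal polynomials (Lemma~\ref{lemma:prod-unimodals}) closes the induction, since the centers $\tfrac12\rho_{sw}$ and $\tfrac12(\rho_{wt}-1)$ add up to $\tfrac12(\rho_{st}-1)$. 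You would need both this specific rearrangement and the product lemma to make your outline into a proof; without them the claim that ``each $c_{st,i}$ is a non-negative integer combination of products of entries of $f$'' is an assertion, not an argument.
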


Whenever we say that an element $a\in \mathcal{I}(P)$ is non-negative (resp. unimodal, symmetric, $\gamma$-positive, etc.) we mean that each of the polynomials $a_{st}(x)$ is non-negative (resp. unimodal, symmetric, $\gamma$-positive, etc.)

We prove Theorem~\ref{thm:kls-positive-chow-unimodal-intro} motivated by a module decomposition called the \emph{canonical decomposition} of the matroid Chow ring in \cite{braden-huh-matherne-proudfoot-wang} (see Theorem~\ref{thm:ncd} below). Furthermore, our proof is entirely combinatorial, in the sense that we do not deal with any algebraic structures but only with polynomials. Notice that the above theorem yields unimodality results in the three aforementioned main examples, since the KLS functions were proven to be non-negative in the following substantial works: i) by Braden, Huh, Matherne, Proudfoot, and Wang \cite{braden-huh-matherne-proudfoot-wang} via the introduction of the matroid intersection cohomology\footnote{We note, however, that the \emph{left} KLS function is trivial in this case.}; ii) by Karu in \cite{karu04} building upon earlier work of McMullen \cite{mcmullen}, Barthel, Brasselet, Fieseler, and Kaup \cite{barthel-brasselet-fieseler-kaup}, and Bressler and Lunts \cite{bressler-lunts}; and iii) by Elias and Williamson \cite{elias-williamson} via the machinery of Soergel bimodules \cite{soergel}, and relying on techniques by De Cataldo and Migliorini \cite{decataldo-migliorini1,decataldo-migliorini2}.

The main inspiration behind the definition of Chow functions, and in fact the reason behind the choice of this name, stems from the first on-going example concerning matroids. The Chow function encodes the Hilbert series of the Chow rings of all minors of a matroid. These Chow rings were introduced by Feichtner and Yuzvinsky in \cite{feichtner-yuzvinsky} and played a primary role in the resolution of long-standing conjectures in combinatorics \cite{adiprasito-huh-katz,ardila-denham-huh,braden-huh-matherne-proudfoot-wang}; for amenable surveys we refer to \cite{okounkov-icm22,huh-icm22,ardila-icm22,eur}. The case of Chow functions arising from matroids was the main theme of a previous paper written in collaboration with Matthew Stevens \cite{ferroni-matherne-stevens-vecchi}. 

A further motivation to develop the theory in the present paper was to understand to what extent one can hope to derive other versions of some crucial module decompositions concerning matroid intersection cohomologies, by Braden, Huh, Matherne, Proudfoot, and Wang in \cite{semismall,braden-huh-matherne-proudfoot-wang}. We came to realize that a number of the module decompositions that constitute the intricate induction appearing in \cite{braden-huh-matherne-proudfoot-wang} can be shadowed step by step, but working instead with \emph{polynomials} rather than \emph{graded modules}. There are some advantages in this approach. 
\begin{itemize}
    \item Our framework does not require us to work with matroids nor posets with characteristic polynomials displaying any specific sign pattern in their coefficients, see Section~\ref{sec:four}. In particular, we can apply these constructions to the examples of face lattices of polytopes (Section~\ref{sec:five}) or Bruhat intervals (Section~\ref{sec:six}).
    \item We are able to state results that would not be possible to obtain by taking graded dimensions of any module or ring (see for example Theorem~\ref{thm:ncd}, Theorem~\ref{thm:aug-ncd}, and Theorem~\ref{thm:chow-from-kl}). A priori, our identities may involve polynomials that cannot possibly be Hilbert series or Poincar\'e polynomials, e.g., when one of the coefficients is negative.
    \item We are able to provide combinatorial proofs of statements that were known to be valid via the application of difficult results from algebraic geometry, and we achieve so for more general classes of posets (see the discussion around Theorem~\ref{thm:chi-chow-unimodal} and Theorem~\ref{thm:cohen-macaulay-chow-gamma-positive}).
    \item This framework is amenable for building upon intuition from one setting (say, polytopes) and using it in another (say, Coxeter groups). For example, the use of the $\mathbf{cd}$-index in the case of polytopes in Section~\ref{sec:five} led us to consider in Section~\ref{sec:six} the complete $\mathbf{cd}$-index of Bruhat intervals introduced by Billera--Brenti \cite{billera-brenti}.
\end{itemize}

In addition to the key object $\H\in \mathcal{I}_{\rho}(P)$ introduced in this paper, we also study two related functions: $F\in \mathcal{I}_{\rho}(P)$ the \emph{right augmented Chow function}, and $G\in \mathcal{I}_{\rho}(P)$ the \emph{left augmented Chow function}. These are obtained by convolving $\H$ with the right and left KLS functions respectively (see Section~\ref{sec:augmented} for the details), and they also exhibit remarkable properties.  The element $G$ plays a key role in the singular Hodge theory of matroids \cite{braden-huh-matherne-proudfoot-wang}, where it encodes the Hilbert--Poincar\'e series of augmented Chow rings (hence the name), but $F$ is more subtle. Nonetheless, in the general context of this paper, there is no reason to prefer $G$ over $F$, so we develop the theory in full generality. It is natural to formulate questions concerning what algebro-geometric objects they model, and we do so in Section~\ref{subsec:chowish-ring}.

\subsection{The three main examples}

A priori, we do not require the poset $P$ to be bounded. However, in some important cases we find ourselves in this situation and will correspondingly denote $\widehat{0} = \min P$ and $\widehat{1} = \max P$. We will refer to $\H_{\widehat{0}\widehat{1}}(x)$ as the \emph{Chow polynomial} of $P$. 

An important feature of Chow functions is that they are symmetric. More precisely, each of the polynomials $\H_{st}(x)$ satisfies the identity
    \[ \H_{st}(x) = x^{\rho_{st} - 1} \H_{st}(x^{-1}), \qquad \text{ for all $s < t$ in $P$}.\]
Without imposing additional restrictions on the poset or the kernel, Chow functions may fail to be unimodal, and in fact the coefficients of $\H_{st}(x)$ can even be negative, but Theorem~\ref{thm:kls-positive-chow-unimodal-intro} gives a striking criterion for unimodality.

\subsubsection{Characteristic Chow functions}

As is pointed out in \cite{proudfoot-kls}, the characteristic function $\chi\in \mathcal{I}_{\rho}(P)$ is a $P$-kernel in any weakly ranked locally finite poset $P$, and lattices of flats of matroids are just a special case. In particular, there is no formal obstruction in considering the KLS functions $f$ and $g$ and the Chow function $\H$ arising from this setup. For the sake of clarity, we will refer to this Chow function $\H$ as the \emph{characteristic Chow function} or, for brevity, the \emph{$\chi$-Chow function} of $(P,\rho)$. In the matroid setting, one has the following result.

\begin{theorem}
    Let $\M$ be a loopless matroid and let $P=\mathcal{L}(\M)$ be its lattice of flats. Then the characteristic Chow polynomial of $P$ coincides with the Hilbert--Poincar\'e series of the Chow ring of $\M$. In particular, it is unimodal.
\end{theorem}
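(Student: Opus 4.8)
The plan is to prove the stronger statement that $\H_{st}(x)$ equals the Hilbert--Poincar\'e series of $\uCH(\M_{[s,t]})$ for \emph{every} interval $[s,t]$ of $P=\mathcal{L}(\M)$, where $\M_{[s,t]}$ is any loopless matroid whose lattice of flats is the interval $[s,t]$; the theorem is then the special case $[s,t]=[\widehat{0},\widehat{1}]$. Such a matroid exists because an interval of a geometric lattice is again a geometric lattice, and the Feichtner--Yuzvinsky Chow ring $\uCH$ depends only on the lattice of flats, so $\uCH(\M_{[s,t]})$ is well defined. Recall from Section~\ref{sec:two} that the characteristic Chow function is characterized as the \emph{unique} element $\H\in\mathcal{I}_\rho(P)$ that is symmetric, has $\H_{ss}=1$ and $\deg\H_{st}<\rho_{st}$ for $s<t$, and satisfies the $\chi$-Chow equation; so it suffices to check that the assignment $[s,t]\mapsto\Hilb(\uCH(\M_{[s,t]}))$ has these four properties.

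First, the symmetry and the degree bound are exactly the assertion that $\uCH(\M_{[s,t]})$ is a Poincar\'e duality algebra whose socle sits in degree $\rank(\M_{[s,t]})-1=\rho_{st}-1$, which is part of the Hodge theory of matroids of Adiprasito--Huh--Katz \cite{adiprasito-huh-katz}. Next, for the $\chi$-Chow equation I would use the recursion expressing the Hilbert series of a matroid Chow ring in terms of the reduced characteristic polynomials of its localizations and the Hilbert series of its contractions, going back to Feichtner--Yuzvinsky \cite{feichtner-yuzvinsky} and used in \cite{ferroni-matherne-stevens-vecchi}: summed over the flats of an interval, this recursion is precisely the $\chi$-Chow equation written inside $\mathcal{I}_\rho(P)$. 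Alternatively, one may simply invoke \cite{ferroni-matherne-stevens-vecchi}, where the top polynomial $\H_{\widehat{0}\widehat{1}}$ was already identified with $\Hilb(\uCH(\M))$ via its combinatorial formula, and then observe that the identification propagates to all intervals because every interval of $P$ is itself a lattice of flats.

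Finally, for unimodality I would invoke Theorem~\ref{thm:kls-positive-chow-unimodal-intro}: it is enough that the right KLS function $f$ of the characteristic kernel be non-negative. But here $f_{st}$ is, by construction, the Kazhdan--Lusztig polynomial of $\M_{[s,t]}$ in the sense of Elias--Proudfoot--Wakefield \cite{elias-proudfoot-wakefield} (this is the interpretation of matroid Kazhdan--Lusztig polynomials recorded in \cite{proudfoot-kls}), and these polynomials are known to have non-negative coefficients by Braden--Huh--Matherne--Proudfoot--Wang \cite{braden-huh-matherne-proudfoot-wang}. Hence $\H$ is non-negative and unimodal, and in particular so is $\H_{\widehat{0}\widehat{1}}(x)=\Hilb(\uCH(\M))$. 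It is worth emphasizing that this recovers the unimodality of $\Hilb(\uCH(\M))$ --- originally a consequence of Hard Lefschetz for matroids in \cite{adiprasito-huh-katz} --- using only the non-negativity of matroid Kazhdan--Lusztig polynomials.

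The step I expect to be the real obstacle is the middle one: matching the Feichtner--Yuzvinsky recursion with the $\chi$-Chow equation uniformly over all intervals, i.e.\ reconciling the degree shifts, the reduced versus ordinary characteristic polynomial, and the bookkeeping at the minimal element. Once the top-degree identification from \cite{ferroni-matherne-stevens-vecchi} is granted, the remaining work is largely routine, since intervals of lattices of flats are again lattices of flats and the Chow ring depends only on that lattice.
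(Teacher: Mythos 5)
Your argument is correct, but it diverges from the paper's at both steps. For the identification $\H_P(x) = \Hilb(\uCH(\M))$, the paper's proof of Theorem~\ref{thm:chow-matroid} reads a monomial basis for $\uCH(\M)$ directly off the Feichtner--Yuzvinsky Gr\"obner basis computation and observes that the resulting Hilbert series is literally the chain formula of Theorem~\ref{thm:chi-chow-graded}; your proposed route via the uniqueness characterization of Proposition~\ref{prop:alt-characterization-chow} is sound in outline, but, as you anticipate, verifying the $\chi$-Chow functional equation directly for matroid Chow rings is exactly where all the bookkeeping hides, and the direct basis-matching avoids that cost entirely. The more significant divergence is in your unimodality argument: you apply Theorem~\ref{thm:kls-positive-chow-unimodal-intro} via the non-negativity of the \emph{right} KLS function $f$, i.e.\ via Braden--Huh--Matherne--Proudfoot--Wang's positivity of matroid Kazhdan--Lusztig polynomials, a deep Hodge-theoretic result. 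The paper's key observation --- made explicit in Example~\ref{example:char-is-kernel} and Theorem~\ref{thm:chi-chow-unimodal} --- is that for the characteristic kernel the \emph{left} KLS function $g$ equals $\upzeta$, which is identically $1$ and hence trivially non-negative (its non-negativity is nothing but inclusion--exclusion). Applying Theorem~\ref{thm:kls-positive-chow-unimodal-intro} with $g$ rather than $f$ bypasses not only the original Hard Lefschetz route, as you note, but also the BHMPW positivity you invoke, and it yields unimodality of the $\chi$-Chow polynomial for \emph{every} finite graded bounded poset, not just geometric lattices. Your proof is correct, but its heaviest ingredient is unnecessary, and choosing it obscures the broader scope that is one of the paper's main points.
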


The first part of the above statement is proved in \cite{ferroni-matherne-stevens-vecchi}, whereas the second follows from the validity of the Hard Lefschetz theorem, proved by Adiprasito, Huh, and Katz \cite{adiprasito-huh-katz}. 

In \cite{ferroni-matherne-stevens-vecchi} a strengthening of unimodality is also proved: the Hilbert--Poincar\'e series of a matroid Chow ring is in fact $\gamma$-positive \cite[Theorem~1.8]{ferroni-matherne-stevens-vecchi}. The main tool to prove that was a key result of Braden, Huh, Matherne, Proudfoot, and Wang \cite{semismall}, who established a semi-small decomposition for the Chow ring of a matroid. 

In the present paper we deal with much more general posets, for which the Chow ring is not even defined. By applying our numerical analog of the canonical decomposition of matroid Chow rings from \cite{braden-huh-matherne-proudfoot-wang} we have the following result.

\begin{theorem}
    Let $P$ be any graded bounded poset. The $\chi$-Chow polynomial of $P$ is unimodal.
\end{theorem}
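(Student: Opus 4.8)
The plan is to deduce the statement from Theorem~\ref{thm:kls-positive-chow-unimodal-intro}. A graded poset is in particular weakly ranked, with rank function $\rho$ satisfying $\rho_{st}\ge 1$ whenever $s<t$, and, as recalled above, the characteristic function $\chi\in\mathcal{I}_{\rho}(P)$---given on an interval $[s,t]$ by $\chi_{st}(x)=\sum_{s\le u\le t}\mu(s,u)\,x^{\rho_{ut}}$---is a $(P,\rho)$-kernel. Write $\H$ for the associated $\chi$-Chow function, so that the $\chi$-Chow polynomial of $P$ is the entry $\H_{\widehat{0}\widehat{1}}(x)$. By Theorem~\ref{thm:kls-positive-chow-unimodal-intro}, in order to conclude that $\H$---and hence in particular $\H_{\widehat{0}\widehat{1}}$---is non-negative and unimodal, it suffices to produce one of the two KLS functions of $\chi$ and observe that it has non-negative coefficients.

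The only real step is to identify the left KLS function $g$ of $\chi$, the one that is trivial when $P$ is the lattice of flats of a matroid: I claim it is simply the zeta function of $P$, that is, $g_{st}(x)=1$ for all $s\le t$. By the uniqueness clause in the definition of $g$, one need only check that the constant element $\zeta\in\mathcal{I}_{\rho}(P)$ satisfies the three defining requirements. The normalization $\zeta_{ss}=1$ is immediate; the degree bound $\deg\zeta_{st}=0<\tfrac{1}{2}\rho_{st}$ holds for every $s<t$ because $\rho_{st}\ge 1$; and the convolution identity $x^{\rho_{st}}g_{st}(x^{-1})=\sum_{s\le u\le t}g_{su}(x)\,\chi_{ut}(x)$ pinning down the left KLS function becomes, for $g=\zeta$, the equality
\[
\sum_{s\le u\le t}\chi_{ut}(x)
=\sum_{s\le u\le w\le t}\mu(u,w)\,x^{\rho_{wt}}
=\sum_{s\le w\le t}x^{\rho_{wt}}\Bigl(\,\sum_{s\le u\le w}\mu(u,w)\Bigr)
=\sum_{s\le w\le t}x^{\rho_{wt}}\,\delta_{sw}
=x^{\rho_{st}},
\]
whose right-hand side equals $x^{\rho_{st}}g_{st}(x^{-1})$ since $\zeta_{st}(x^{-1})=1$; here the first equality expands $\chi_{ut}(x)=\sum_{u\le w\le t}\mu(u,w)x^{\rho_{wt}}$, the second regroups the resulting double sum according to $w$, and the third is the M\"obius relation $\sum_{s\le u\le w}\mu(u,w)=\delta_{sw}$. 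Hence $g=\zeta$, and since each $g_{st}(x)=1$ has all coefficients in $\{0,1\}$, $g$ is non-negative; Theorem~\ref{thm:kls-positive-chow-unimodal-intro} now gives that $\H$ is non-negative and unimodal, and reading off the $(\widehat{0},\widehat{1})$-entry proves the theorem.

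I do not anticipate a real obstacle here beyond the careful bookkeeping of the left/right conventions---in particular, of the side on which the kernel $\chi$ is convolved in the identity defining $g$---which is what makes the telescoping above collapse to $x^{\rho_{st}}$ rather than to something messier; concretely, it is the convention under which, for $P=\mathcal{L}(\M)$ the lattice of flats of a matroid $\M$, the right KLS function of $\chi$ is the Kazhdan--Lusztig polynomial of $\M$ while the left one is trivial. Two remarks are worth making. First, the computation of $g$ uses only that $P$ is weakly ranked and locally finite, so at that level of generality the $\chi$-Chow function $\H$ is already non-negative and unimodal; the hypotheses that $P$ be graded and bounded serve merely to single out $\H_{\widehat{0}\widehat{1}}$ as a well-defined polynomial, namely ``the'' $\chi$-Chow polynomial. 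Second, specializing the argument to $P=\mathcal{L}(\M)$ yields a proof---free of the Hard Lefschetz theorem---that the Hilbert--Poincar\'e series of the Chow ring of a matroid is unimodal, thereby recovering the unimodality assertion of the matroid theorem stated above, now via Theorem~\ref{thm:kls-positive-chow-unimodal-intro} and the canonical decomposition on which it is modelled.
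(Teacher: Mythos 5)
Your proposal is correct and takes essentially the same route as the paper: identify the left KLS function of $\chi$ as $g=\upzeta$, observe that it is non-negative, and invoke Theorem~\ref{thm:kls-positive-chow-unimodal-intro}. The only cosmetic difference is that you verify $\upzeta^{\rev}=\upzeta\chi$ by unwinding the M\"obius convolution directly, whereas the paper reads it off from the compact identity $\chi=\upzeta^{-1}\upzeta^{\rev}$ established in Section~\ref{sec:two}.
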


Notice that this can be viewed as a corollary of Theorem~\ref{thm:kls-positive-chow-unimodal-intro}, because the left KLS function is identically $1$. The latter fact is just equivalent to the inclusion-exclusion principle. Most of the previous proofs of the above unimodality result (for geometric lattices only) relied on versions of the Hard Lefschetz theorem.

Besides unimodality, one may consider the stronger property of being $\gamma$-positive. For geometric lattices this property is known to hold via \cite[Theorem~1.8]{ferroni-matherne-stevens-vecchi}, but in general it does not (see Example~\ref{example:non-gamma-positive} below). We go far beyond geometric lattices and prove the following.  

\begin{theorem}
    Let $P$ be any Cohen--Macaulay poset. The $\chi$-Chow polynomial of $P$ is $\gamma$-positive.
\end{theorem}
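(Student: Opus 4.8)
The plan is to establish a purely combinatorial analogue of the \emph{semi-small decomposition} of matroid Chow rings of \cite{semismall}, valid for the $\chi$-Chow function of an arbitrary graded bounded poset, and then to feed in the Cohen--Macaulay hypothesis only at the very end, to guarantee that the summands of that decomposition are individually $\gamma$-positive. This parallels the strategy behind Theorem~\ref{thm:kls-positive-chow-unimodal-intro} (and hence the unimodality of the $\chi$-Chow polynomial for all graded bounded posets), where it is the \emph{canonical} decomposition analogue that does the work; here the semi-small decomposition is the natural vehicle for the sharper conclusion, exactly as in \cite{ferroni-matherne-stevens-vecchi} for geometric lattices. Throughout I would use that for the $\chi$-kernel the left KLS function is the zeta function, i.e.\ $g_{st}(x)=1$ for all $s\le t$, together with the formula expressing $\H$ through the right KLS function $f$ (Theorem~\ref{thm:chow-from-kl}).

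The argument is an induction on $\rho_{st}$ over intervals $[s,t]$, the base cases $\rho_{st}\le 2$ being immediate since then $\H_{st}\in\{1,\,1+x\}$. For the inductive step I would fix a coatom $v$ of $[s,t]$ and prove an identity in $\mathcal{I}_\rho(P)$ of the schematic form
\[
 \H_{st}(x)\;=\;\H_{sv}(x)\;+\;\sum_{u}\,\H_{su}(x)\cdot Q^{v}_{ut}(x),
\]
where $u$ runs over a distinguished family of elements of $[s,t]$ lying strictly below $v$, and the ``error polynomials'' $Q^{v}_{ut}$ are explicit expressions assembled from characteristic polynomials of subintervals (equivalently, from $h$-polynomials of the order complexes of the open intervals). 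The two properties I would extract from this identity are: (i) it is compatible with the palindromicity and the degree bound, so that each term on the right is symmetric of degree $\rho_{st}-1$; and (ii) it is multiplicative, so that since a product of $\gamma$-positive polynomials is again $\gamma$-positive, $\gamma$-positivity of $\H_{sv}$, of the $\H_{su}$, and of the $Q^{v}_{ut}$ forces $\gamma$-positivity of $\H_{st}$. Properties (i) and (ii) are pure combinatorics of $\gamma$-vectors and hold for \emph{every} graded bounded poset.

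Finally I would invoke the Cohen--Macaulay hypothesis. Every open interval $(s,t)$ of a graded Cohen--Macaulay poset is again Cohen--Macaulay -- this follows from the fact that the link of the chain $\{s,t\}$ in the order complex is the join $\Delta(P_{<s})\ast\Delta((s,t))\ast\Delta(P_{>t})$, which is Cohen--Macaulay as a link in a pure Cohen--Macaulay complex, whence each join factor is Cohen--Macaulay -- so the induction hypothesis applies verbatim to $\H_{sv}$ and to the $\H_{su}$; likewise rank-selected subposets of Cohen--Macaulay posets are Cohen--Macaulay (Baclawski--Stanley), which covers any rank-restricted pieces that intervene. It then remains to see that the error polynomials $Q^{v}_{ut}$, which encode order-complex $h$-polynomials of Cohen--Macaulay intervals, are not merely non-negative but $\gamma$-positive; I expect this to reduce to a statement about \emph{local $h$-polynomials of barycentric-type subdivisions of Cohen--Macaulay complexes}, in the circle of results of Athanasiadis and of Brenti--Welker (the derangement polynomials being the prototype). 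The main obstacle, and the technical heart of the proof, is therefore twofold: first, pinning down the correct combinatorial form of the semi-small identity for an arbitrary poset, where there is no ambient ``deletion'' operation to imitate $\M\setminus i$, so that the entire decomposition must be formulated and proved inside the incidence algebra directly from the kernel relations; and second, verifying that Cohen--Macaulayness is precisely the hypothesis that upgrades every $Q^{v}_{ut}$ from non-negative to $\gamma$-positive. Everything else is bookkeeping with $\gamma$-vectors and the already-available non-negativity and unimodality of $\H$.
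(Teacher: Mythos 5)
Your proposal rests on finding a combinatorial analogue of the semi-small decomposition for an arbitrary graded poset, and you correctly identify the obstruction yourself: there is no ambient element to delete, so there is no clear way to produce the desired identity. This is in fact precisely the obstacle the paper names and works \emph{around}, not through. The actual proof does not attempt to build a semi-small analogue at all. It stays entirely within the numerical \emph{canonical} decomposition of equation~\eqref{eq:ncd-g}, where (because $g = \upzeta$ for the characteristic kernel) the summands are explicit products of $x(x^{\rho_{sw}-1}-1)/(x-1)$ with smaller Chow polynomials. The key move is then purely at the level of $\gamma$-vectors: introduce $W_r(x)$, the $\gamma$-polynomial of $1+x+\cdots+x^r$ (which has \emph{negative} coefficients for $r \geq 2$, so individual summand $\gamma$-positivity is not even available), rewrite the recursion as
\[
\gamma_P(x) = W_{r-1}(x) + x \sum_{\widehat{0} < t < \widehat{1}} W_{\rho(t)-2}(x)\, \gamma_{[t,\widehat{1}]}(x),
\]
and then unwind this sum over chains into a closed formula $\gamma_P(x) = \sum_{S \text{ good}} \beta_P(S)\, x^{|S|}$ in terms of the flag $h$-vector, where ``good'' sets are those containing neither $1$ nor two consecutive integers. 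Cohen--Macaulayness then enters only once, via Bj\"orner--Garsia--Stanley, to make $\beta_P(S) \geq 0$.

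Two concrete gaps in your plan. First, you never define the decomposition you intend to prove: the error polynomials $Q^v_{ut}$, the index set, and the form of the identity are all left as ``to be determined,'' and there is no reason an identity of that schematic shape should hold in $\mathcal{I}_\rho(P)$ for a general poset; coatoms of matroid lattices do not play the role that ground-set elements play in the semi-small decomposition, and there is no poset-theoretic surrogate for deletion. Second, even granting the identity, your plan requires the individual summands to be $\gamma$-positive, but the paper's argument shows that this is the wrong invariance to hope for: the canonical-decomposition summands are \emph{not} individually $\gamma$-positive (the $W_r$'s have negative coefficients), and the paper gets positivity only after reassembling everything into the flag $h$-vector formula. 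Your reduction to local $h$-polynomials of barycentric subdivisions is plausible in spirit for specific subdivision-theoretic settings but would itself need a full proof that is not supplied, and it is not the mechanism the paper uses.
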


For a general Cohen--Macaulay poset there is no obvious way of defining the Chow ring and therefore no clear analogue of the semi-small decomposition of \cite{semismall}. In fact, we prove a statement more general than the above result. We show that the $\gamma$-polynomial associated to the $\chi$-Chow function is a non-negative specialization of the flag $h$-vector (see Theorem~\ref{thm:chow-from-flag-h}). This also generalizes a beautiful result by Stump \cite[Theorem~1.1]{stump}, which was a key motivation for our proof. Ferroni and Schr\"oter \cite{ferroni-schroter} and, independently, Huh and Stevens \cite{stevens-bachelor} conjectured that whenever $P=\mathcal{L}(\M)$ is the lattice of flats of a matroid $\M$, then the Hilbert--Poincar\'e series of the Chow ring of $\M$ is a real-rooted polynomial. 

\begin{conjecture}\label{conj:char-chow-real-rooted}
    Let $P$ be any geometric lattice. The $\chi$-Chow polynomial of $P$ is real-rooted.
\end{conjecture}

In a previous version of this manuscript we postulated as a conjecture that larger classes of posets, such as Cohen--Macaulay posets, could also have real-rooted $\chi$-Chow polynomials. This turns out to be false (see Remark~\ref{rem:not-RR-CM}).

\subsubsection{Eulerian Chow functions}

Whenever $P$ is an Eulerian poset, the element $\varepsilon\in \mathcal{I}_{\rho}(P)$ given by $\varepsilon_{st} = (x-1)^{\rho_{st}}$ is a $P$-kernel. The resulting Chow function will be called the \emph{Eulerian Chow function}, or \emph{$\varepsilon$-Chow function} for brevity, associated to $P$. We prove the following result.

\begin{theorem}
    The Eulerian Chow polynomial of $P$ equals the $h$-polynomial of the barycentric subdivision of $P$.
\end{theorem}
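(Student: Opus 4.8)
The plan is to realize both sides as the same element of the incidence algebra $\mathcal{I}_\rho(P)$, by verifying the conditions that characterize the $\varepsilon$-Chow function (Section~\ref{sec:two}). Throughout, for $s\le t$ in $P$ let $(s,t)$ denote the open interval and $\Delta((s,t))$ its order complex, so that $\Delta((\widehat 0,\widehat 1))$ is the barycentric subdivision of $P$ (the order complex of the proper part $\bar P$) and $\Delta((s,s))=\{\emptyset\}$. Define $\mathsf{h}\in\mathcal{I}_\rho(P)$ by $\mathsf{h}_{st}(x)=h\big(\Delta((s,t)),x\big)$. The goal is to prove $\mathsf{h}=\H$; taking the $\widehat 0\widehat 1$-entry then yields the theorem.

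Two of the defining conditions are straightforward. Since $\dim\Delta((s,t))=\rho_{st}-2$ for $s<t$, while $\mathsf{h}_{ss}=1$, we have $\mathsf{h}\in\mathcal{I}_\rho(P)$ with $\deg\mathsf{h}_{st}\le\rho_{st}-1$, the correct degree bound. Palindromicity holds because $(s,t)$ is the proper part of the Eulerian poset $[s,t]$, and the order complex of the proper part of an Eulerian poset is an Eulerian simplicial complex: by Philip Hall's theorem $\tilde\chi(\Delta((u,v)))=\mu(u,v)=(-1)^{\rho_{uv}}$ for every $u<v$ in $P$, and since $\mathrm{lk}_{\Delta((s,t))}(u_1<\cdots<u_k)=\Delta((s,u_1))*\cdots*\Delta((u_k,t))$, a short computation with reduced Euler characteristics of joins gives $\tilde\chi(\mathrm{lk}\,\sigma)=(-1)^{\dim\mathrm{lk}\,\sigma}$ for every face $\sigma$. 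The Dehn--Sommerville relations then give $\mathsf{h}_{st}(x)=x^{\rho_{st}-1}\,\mathsf{h}_{st}(1/x)$, which is exactly the palindromicity imposed on $\H$.

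The heart of the argument is to check that $\mathsf{h}$ satisfies the defining equation of the Chow function for the kernel $\varepsilon_{st}=(x-1)^{\rho_{st}}$. I would expand both sides over chains of $P$. By the standard relations between the flag $f$-vector $\alpha_\bullet$ and the flag $h$-vector $\beta_\bullet$ of $[s,t]$,
\[
\mathsf{h}_{st}(x)=\sum_{T}\beta_T([s,t])\,x^{|T|}=\sum_{S}\alpha_S([s,t])\,x^{|S|}(1-x)^{\rho_{st}-1-|S|}=\sum_{c\,\colon\,s<u_1<\cdots<u_k<t}x^{k}\,(1-x)^{\rho_{st}-1-k},
\]
the last sum running over all chains of $(s,t)$. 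Iterating the defining equation of $\H$ relative to $\varepsilon$ produces a sum over the very same chains, in which each step $u_{i-1}<u_i$ contributes a power of $x-1$; the identity $(1-x)^{m}=(-1)^{m}(x-1)^{m}$ together with the Eulerian relation $\sum_{s\le u\le t}(-1)^{\rho_{su}}=\sum_{s\le u\le t}\mu(s,u)=\delta_{st}$ (which is precisely why $\varepsilon$ is a $P$-kernel) reconciles the signs. I expect this sign-and-telescoping bookkeeping to be the main obstacle: the two chain expansions are organized differently (by rank sets on one side, by the induction implicit in the recursion on the other), and one must show either that they agree term by term or, equivalently, that $\mathsf{h}$ solves the convolution equation pinning down $\H$. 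An alternative route, parallel to the treatment of the characteristic case in Theorem~\ref{thm:chow-from-flag-h}, is to observe that $\mathsf{h}_{\widehat 0\widehat 1}(x)=\sum_{S}\beta_S(P)\,x^{|S|}$ is the $\mathbf{ab}$-index of $P$ evaluated at $(\mathbf{a},\mathbf{b})=(1,x)$, and to identify the $\varepsilon$-Chow polynomial with the same evaluation using the $\mathbf{cd}$-index computations of Section~\ref{sec:five}. Once either identification is in place, uniqueness of the $\varepsilon$-Chow function forces $\mathsf{h}=\H$, and hence $\mathsf{h}_{\widehat 0\widehat 1}=\H_{\widehat 0\widehat 1}$, as claimed.
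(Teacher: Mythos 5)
Your overall strategy---to identify both sides as the same element of $\mathcal{I}_\rho(P)$ by checking that $\mathsf{h}_{st}(x)=h(\Delta((s,t)),x)$ satisfies the properties characterizing the $\varepsilon$-Chow function---is valid, and it is a genuinely different route from the one taken in the paper. The paper argues by induction on rank: it invokes the formula $h(\Delta(P),x)=(1-x)^{\rho(P)+1}\sum_{n\geq 0}\upzeta^n_P x^{n+1}$ from \cite[Exercise~3.157(ii)]{stanley-ec1}, unravels the recursion~\eqref{eq:recurrence-chow} with $\kappa=\varepsilon$, and closes the induction with a telescoping identity that uses the Eulerian relation $\mu\cdot\upzeta^n=\upzeta^{n-1}$. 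Your proposal avoids the multichain formula entirely.

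However, as it stands the proposal has a gap: you correctly identify that the crux is verifying the convolution equation, but you leave that step as a prediction of ``sign-and-telescoping bookkeeping'' rather than carrying it out. The good news is that the bookkeeping you fear does not actually arise---it is an artifact of having rewritten $\mathsf{h}_{st}(x)$ in the form $\sum_{c'\subseteq(s,t)}x^{|c'|}(1-x)^{\rho_{st}-1-|c'|}$, which agrees with $h(\Delta((s,t)),x)$ only \emph{after} invoking Dehn--Sommerville. Use the paper's own definition $h(\Delta,x)=f(\Delta,x-1)$ directly instead. Expanding $\H=-(\overline\varepsilon)^{-1}$ as a geometric series in the incidence algebra (equivalently iterating eq.~\eqref{eq:recurrence-chow2}) gives, for $s<t$,
\[
\H_{st}(x)=\sum_{\substack{s=p_0<p_1<\cdots<p_k=t\\ k\geq 1}}\;\prod_{i=1}^{k}(x-1)^{\rho_{p_{i-1}p_i}-1}=\sum_{k\geq 1}\bigl(\#\ \text{chains of length }k\bigr)\,(x-1)^{\rho_{st}-k},
\]
and a face of $\Delta((s,t))$ with $j$ elements corresponds precisely to a chain from $s$ to $t$ of length $j+1$. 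So the right-hand side is $\sum_{j}f_j\bigl(\Delta((s,t))\bigr)(x-1)^{(\rho_{st}-1)-j}=f(\Delta((s,t)),x-1)=h(\Delta((s,t)),x)$, which is the claim, with no signs to reconcile. With this rewriting, the Dehn--Sommerville verification you include becomes unnecessary: once $\mathsf{h}=\H$ is established, palindromicity of $\mathsf{h}$ is inherited from $\H$ via Proposition~\ref{prop:degree-and-symmetry}, rather than being needed as an independent input. (The Eulerian hypothesis is used only to guarantee that $\varepsilon$ is a $(P,\rho)$-kernel at all, so that ``$\varepsilon$-Chow function'' is even defined.)
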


By barycentric subdivision of a poset $P$, we mean the simplicial complex whose faces are the flags of elements of $P$. We do not know whether Eulerian Chow polynomials are always non-negative. Moreover, we explain why we expect this question to be very subtle. By the positivity of the KLS functions proved in certain special cases (e.g., for face posets of simplicial polytopes \cite{stanley-g-theorem}, of general polytopes \cite{karu04}, or of simplicial spheres \cite{adiprasito,papadakis-petrotou}), Theorem~\ref{thm:kls-positive-chow-unimodal-intro} guarantees that the $\varepsilon$-Chow function is non-negative and unimodal. However, another deep result by Karu \cite{karu} about the $\mathbf{cd}$-index of Gorenstein* posets (that is, posets that are both Eulerian and Cohen--Macaulay) can be used to obtain the following stronger property.

\begin{theorem}
    Let $P$ be a Gorenstein* poset. The $\varepsilon$-Chow function of $P$ is $\gamma$-positive.
\end{theorem}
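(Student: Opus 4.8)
The plan is to deduce $\gamma$-positivity from Karu's non-negativity theorem for the $\mathbf{cd}$-index, after first rewriting the Eulerian Chow polynomial as an explicit specialization of that index.

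First I would reduce to the top interval. Every closed interval $[s,t]$ of a Gorenstein* poset is again Gorenstein* (a standard fact: closed intervals of Eulerian posets are Eulerian, and closed intervals of Cohen--Macaulay posets are Cohen--Macaulay, e.g.\ via Reisner's criterion, since the order complex of an open interval $(s,t)$ occurs as a join factor of a link inside the order complex of $P$). Since ``the $\varepsilon$-Chow function is $\gamma$-positive'' means that each $\H_{st}(x)$ is $\gamma$-positive, and $\H_{st}$ is exactly the $\varepsilon$-Chow polynomial of the Gorenstein* poset $[s,t]$, it suffices to prove that for a bounded Gorenstein* poset $P$ of rank $n=\rho_{\widehat{0}\widehat{1}}$ the Chow polynomial $\H_{\widehat{0}\widehat{1}}(x)$ is $\gamma$-positive.

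Next I would invoke the theorem stated above: $\H_{\widehat{0}\widehat{1}}(x)$ equals the $h$-polynomial of the barycentric subdivision of $P$, i.e.\ the flag $h$-polynomial $\sum_{S}\beta_P(S)\,x^{|S|}$ of the proper part of $P$ (the sum over $S\subseteq\{1,\dots,n-1\}$). This is precisely the image of the $\mathbf{ab}$-index $\Psi_P$ under $\mathbf{a}\mapsto 1$, $\mathbf{b}\mapsto x$, and hence --- since $\mathbf{c}=\mathbf{a}+\mathbf{b}$, $\mathbf{d}=\mathbf{ab}+\mathbf{ba}$, and $P$ is Eulerian so the $\mathbf{cd}$-index $\Phi_P$ exists --- the image of $\Phi_P$ under the substitution $\mathbf{c}\mapsto 1+x$, $\mathbf{d}\mapsto 2x$. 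The key observation is that a $\mathbf{cd}$-monomial of degree $n-1$ with $j$ copies of $\mathbf{d}$ (and $n-1-2j$ copies of $\mathbf{c}$) is sent to $(1+x)^{n-1-2j}(2x)^{j}=2^{j}\,x^{j}(1+x)^{n-1-2j}$, a positive multiple of a $\gamma$-basis element. Collecting the monomials of $\Phi_P=\sum_w[\Phi_P]_w\,w$ by their number of $\mathbf{d}$'s yields
\[ \H_{\widehat{0}\widehat{1}}(x)\;=\;\sum_{j\ge 0}\Bigl(\,2^{j}\!\!\sum_{w:\,|w|_{\mathbf{d}}=j}\!\![\Phi_P]_w\Bigr)\,x^{j}(1+x)^{\,n-1-2j}, \]
and since the polynomials $x^{j}(1+x)^{n-1-2j}$ are linearly independent (each has $x$-adic valuation exactly $j$) this is the $\gamma$-expansion, so $\gamma_j=2^{j}\sum_{|w|_{\mathbf{d}}=j}[\Phi_P]_w$. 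By Karu's theorem \cite{karu}, $[\Phi_P]_w\ge 0$ for every $\mathbf{cd}$-word $w$; hence $\gamma_j\ge 0$ for all $j$, which is what we wanted.

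The only genuinely deep input is Karu's non-negativity of the $\mathbf{cd}$-index of Gorenstein* posets; everything else is bookkeeping, so I do not anticipate a real obstacle. The two points needing care are (a) pinning down normalizations so that the substitution recovering the barycentric $h$-polynomial from $\Phi_P$ is exactly $\mathbf{c}\mapsto 1+x$, $\mathbf{d}\mapsto 2x$ --- easily confirmed on Boolean lattices, where it reproduces the Eulerian polynomials --- and (b) the heredity of the Gorenstein* property under passage to closed intervals; both are standard. One could instead bypass the barycentric-subdivision theorem and establish the identity ``$\H_{st}=$ image of $\Phi_{[s,t]}$ under $\mathbf{c}\mapsto 1+x$, $\mathbf{d}\mapsto 2x$'' directly from the defining recursion for the Chow function, but that is merely a repackaging of the same computation.
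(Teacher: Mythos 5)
Your proposal is correct and takes essentially the same route as the paper: the paper combines Theorem~\ref{thm:chow-eulerian-h-vector-order-complex} (Chow polynomial $=$ $h$-polynomial of $\Delta(P)$), Karu's Theorem~\ref{thm:karu}, and Gal's observation that the $\gamma$-polynomial of $h(\Delta(P),x)$ is $\Phi_P(1,2t)$ — which is exactly the specialization $\mathbf{c}\mapsto 1+x$, $\mathbf{d}\mapsto 2x$ that you work out by hand. The only difference is that you spell out Gal's observation and the heredity of the Gorenstein* property explicitly, whereas the paper cites the former and handles the latter by the general ``hereditary class'' discussion at the start of Section~\ref{sec:four}.
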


It is natural to ask whether the above property can be upgraded to real-rootedness. That is equivalent to a long-standing folklore conjecture, posed as an open question by Brenti and Welker \cite{brenti-welker}, when $P$ is the face poset of a polytope.

\begin{conjecture}[{see \cite[Question~1]{brenti-welker}}]\label{conj:gorenstein-real-rooted}
    Let $P$ be the face poset of a polytope (or even just a Gorenstein* poset). Then the $\varepsilon$-Chow polynomial of $P$ is real-rooted.
\end{conjecture}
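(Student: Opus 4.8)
The plan is to translate the statement into the language of $h$-vectors of barycentric subdivisions, where it becomes a well-studied open problem, and then to attack that via subdivision operators acting on flag $h$-vectors. By the theorem on Eulerian Chow polynomials above, the $\varepsilon$-Chow polynomial $\H_{\widehat{0}\widehat{1}}(x)$ equals $h\bigl(\Delta(\bar P),x\bigr)$, the $h$-polynomial of the order complex of the proper part $\bar P = P \setminus \{\widehat{0},\widehat{1}\}$; when $P$ is the face poset of a polytope $\mathcal P$ this is precisely the $h$-polynomial of the barycentric subdivision of the boundary sphere $\partial\mathcal P$, and when $P$ is Gorenstein* the complex $\Delta(\bar P)$ is Cohen--Macaulay (indeed a homology sphere), so its $h$-vector is nonnegative and symmetric. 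Thus the conjecture is exactly Brenti--Welker's Question~1, and the whole point is to upgrade the $\gamma$-positivity coming from Karu's theorem on the $\mathbf{cd}$-index (the preceding result) to the strictly stronger property of real-rootedness. Step~1 is just to record this dictionary, separating the simplicial and non-simplicial cases of $\partial\mathcal P$.

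Step~2 disposes of the simplicial case. If $\mathcal P$ is simplicial then $\partial\mathcal P$ is a simplicial complex $\Delta$ and $\Delta(\bar P) = \mathrm{sd}(\Delta)$. Brenti and Welker constructed the linear \emph{subdivision operator} $\mathcal A$, characterized by $h(\mathrm{sd}\,\Delta, x) = \mathcal A\bigl(h(\Delta,x)\bigr)$, and proved that the polynomials $\mathcal A(x^0), \mathcal A(x^1), \dots$ form an interlacing sequence; consequently $\mathcal A$ maps any nonzero polynomial with nonnegative coefficients to a real-rooted one. Since the $h$-vector of a simplicial sphere is nonnegative (Cohen--Macaulayness already suffices; the $g$-theorem is not needed), this settles the conjecture for simplicial polytopes, and more generally whenever $\bar P$ is the face poset of a Cohen--Macaulay simplicial complex. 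Equivalently one may invoke Stanley's local $h$-vector formula for subdivisions together with the known real-rootedness of the local $h$-polynomials of barycentric subdivisions of simplices, which exhibits the required interlacing directly.

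Step~3 is the general case. When $\mathcal P$ is not simplicial --- or, more generally, for an arbitrary Gorenstein* poset $P$ --- the order complex $\Delta(\bar P)$ is no longer the barycentric subdivision of a simplicial complex, so one needs the correct replacement for $\mathcal A$. Because $h(\Delta(\bar P),x)$ is a linear function of the flag $f$-vector of $\bar P$, there is a universal linear operator $\widetilde{\mathcal A}$ on $\mathbf{cd}$-indices with $h(\Delta(\bar P),x) = \widetilde{\mathcal A}(\Phi_{\bar P})$, where $\Phi_{\bar P}$ is the $\mathbf{cd}$-index of $\bar P$ (this is the same identity that, via Karu's nonnegativity of $\Phi_{\bar P}$, yields the $\gamma$-positivity). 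Writing $\widetilde{\mathcal A}(w)$ for the polynomial attached to a $\mathbf{cd}$-monomial $w$, the strategy is to prove that the finite family $\{\widetilde{\mathcal A}(w)\}_w$ of the relevant degree admits a \emph{common interleaver} $g$, i.e.\ $g \preceq \widetilde{\mathcal A}(w)$ for every $w$. Granting this, the standard fact that a nonnegative linear combination of real-rooted polynomials sharing a common interleaver is again real-rooted, applied to $\widetilde{\mathcal A}(\Phi_{\bar P})$ with the coefficients of $\Phi_{\bar P}$ nonnegative by Karu, completes the proof. In the polytopal case one has an additional tool: a line shelling of $\partial\mathcal P$ (Bruggesser--Mani) induces a shelling of $\Delta(\bar P)$, and one can try instead to build $h(\Delta(\bar P),x)$ as a telescoping sum over shelling steps whose partial sums form an interlacing sequence.

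The main obstacle is the interlacing claim of Step~3, and I expect it to be genuinely hard --- this is essentially why Brenti--Welker's question has remained open. Each building block $\widetilde{\mathcal A}(w)$ should itself be real-rooted (being, or closely resembling, an $h$-polynomial of a barycentric subdivision of a sphere), but real-rootedness is not preserved under arbitrary nonnegative combinations, and there is no evident reason that the roots of the various $\widetilde{\mathcal A}(w)$ are positioned so as to admit a common interleaver; similarly, tracking relative root positions across shelling steps of a subdivision of a non-simplicial complex is delicate. A realistic route to partial progress would be to first handle families with tame $\mathbf{cd}$-structure --- cubical polytopes, products of simplices, zonotopes --- where the building blocks and their interlacings can be made explicit, and only afterwards attempt the general Gorenstein* case, where one must argue purely from the Eulerian axioms and Karu's inequalities with no geometry to lean on.
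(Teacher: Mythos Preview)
The statement you are attempting to prove is a \emph{conjecture}, not a theorem: the paper does not offer a proof, and indeed explicitly frames it as an open problem equivalent to Brenti and Welker's long-standing Question~1. So there is no ``paper's own proof'' to compare against.

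Your write-up is not a proof either, and to your credit you say so: you correctly reduce the $\varepsilon$-Chow polynomial to the $h$-polynomial of the order complex (this is Theorem~\ref{thm:chow-eulerian-h-vector-order-complex}), you correctly dispose of the simplicial case via the Brenti--Welker subdivision operator, and then in Step~3 you outline a common-interleaver strategy for the $\mathbf{cd}$-monomial contributions and immediately concede that establishing such a common interleaver is ``genuinely hard'' and is ``essentially why Brenti--Welker's question has remained open.'' That is an accurate assessment. The gap is not a subtle technical slip; it is the entire content of the conjecture. Real-rootedness is not closed under nonnegative linear combinations, and there is at present no mechanism---geometric, algebraic, or combinatorial---that forces the polynomials $\widetilde{\mathcal A}(w)$ attached to the various $\mathbf{cd}$-monomials to admit a common interleaver. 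Karu's theorem gives you nonnegativity of the coefficients in the $\mathbf{cd}$-expansion, which is exactly what buys $\gamma$-positivity (as the paper records), but it gives no control over root locations.

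In short: your Steps~1 and~2 are correct and known; Step~3 is a reasonable plan of attack but remains a plan, not a proof. What you have written is a fair summary of the state of the art and of where the difficulty lies, not a resolution of the conjecture.
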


The question for Gorenstein* posets is strongly related to questions formulated by Athanasiadis and Tzanaki \cite[Question~7.4]{athanasiadis-tzanaki} and by Athanasiadis and Kalampogia-Evangelinou \cite[Question~5.2]{athanasiadis-kalampogia}.

\subsubsection{Coxeter Chow functions}

The chief example of KLS functions are precisely the Kazhdan--Lusztig polynomials of Bruhat intervals, defined by Kazhdan and Lusztig in \cite{kazhdan-lusztig}. The kernels in this case are the so-called $R$-polynomials. A powerful result by Dyer \cite{dyer} allows for the computation of the $R$-polynomials via a computation on Bruhat graphs. We use this to prove the following interpretation for the Chow function.

\begin{theorem}\label{thm:comb-coxeter-H-intro}
    Let $W$ be a Coxeter group with a reflection order $<$ and two elements $u, v \in W$. Then 
    \[
    \H_{uv}(x) = \sum_{\Delta \in B(u,v)}x^{\frac{\rho_{uv}-\ell(\Delta)}{2} + \asc(\Delta)} = \sum_{\Delta \in B(u,v)}x^{\frac{\rho_{uv}-\ell(\Delta)}{2} + \des(\Delta)}.
    \]
\end{theorem}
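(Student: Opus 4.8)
The plan is to...The plan is to unwind the definitions on both sides and reduce the claimed identity to a known combinatorial formula for the $R$-polynomials due to Dyer. First I would recall that the Chow function $\H$ is, by definition, the unique element of $\mathcal{I}_\rho(P)$ satisfying the defining kernel equation together with the palindromicity condition $\H_{uv}(x) = x^{\rho_{uv}-1}\H_{uv}(x^{-1})$ and the degree bound $\deg \H_{uv} < \rho_{uv}$; so it suffices to show that the right-hand side --- call it $\widetilde{\H}_{uv}(x) := \sum_{\Delta \in B(u,v)} x^{(\rho_{uv}-\ell(\Delta))/2 + \asc(\Delta)}$, where $B(u,v)$ ranges over paths in the Bruhat graph from $u$ to $v$ and $\ell(\Delta)$, $\asc(\Delta)$ are the length and number of ascents with respect to the reflection order --- satisfies these three characterizing properties. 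The equality of the two sums in the statement (ascents versus descents) should follow from the standard involution on paths given by reversal $\Delta \mapsto \rev(\Delta)$, which swaps ascents and descents and preserves $\ell$; since reversal of a path from $u$ to $v$ along a reflection order $<$ corresponds (via Dyer's theory) to a path for the opposite order, and the two generating functions must coincide by uniqueness once one of them is shown to be the Chow function, I would either invoke this symmetry directly or prove it as a lemma using the fact that $\asc(\Delta) + \des(\Delta) = \ell(\Delta)$ together with palindromicity.

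The technical heart is verifying the kernel equation. Dyer's theorem expresses the $R$-polynomial $R_{uv}(q)$ --- equivalently the value $\kappa_{uv}$ of the $R$-kernel --- as a sum over paths in the Bruhat graph weighted by $q^{\des(\Delta)}$ (up to the standard normalization), so I expect the convolution identity defining a $P$-kernel to translate, after the substitution relating $q$ to $x$, into a statement about concatenation of paths: a path from $u$ to $w$ followed by a path from $w$ to $v$ gives a path from $u$ to $v$, and the ascent/descent statistics behave additively except for a correction at the concatenation point governed by whether the reflection order has an ascent or descent there. I would set up the bijection $B(u,v) \cong \bigsqcup_{u \le w \le v} B(u,w) \times B(w,v)$ (with the appropriate bookkeeping of the middle comparison), track how $\ell$, $\asc$, and the exponent $(\rho_{uv}-\ell)/2 + \asc$ split across it, and check that the resulting identity is exactly the Chow-function analog of the kernel equation, i.e., $\sum_w (-1)^{\rho_{uw}} \overline{\H}_{uw} \cdot \text{(something)}_{wv} = \delta_{uv}$ in the form required in Section~\ref{sec:two}. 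Palindromicity of $\widetilde{\H}_{uv}$ should then fall out of path reversal combined with the identity $\rho_{uv} = \ell(v) - \ell(u)$ and $\asc(\Delta) + \des(\rev(\Delta)) = \ell(\Delta)$, which forces the exponent multiset to be symmetric about $(\rho_{uv}-1)/2$; the degree bound follows since $\asc(\Delta) \le \ell(\Delta)$ forces the top exponent to be $< \rho_{uv}$ (one uses that $\Delta$ nonempty when $u < v$).

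The main obstacle I anticipate is the precise matching of normalizations between Dyer's path formula for $R$-polynomials and the incidence-algebra conventions for kernels used here: Dyer works with $\widetilde{R}$-polynomials in a variable typically called $q$, while the Chow function lives over $\mathbb{Z}[x]$, and the parity condition $2 \mid (\rho_{uv} - \ell(\Delta))$ that makes the exponent an integer is itself a nontrivial consequence of the structure of Bruhat intervals (every path between $u$ and $v$ has length congruent to $\rho_{uv}$ modulo $2$) that must be cited or justified. Once the dictionary between Dyer's weighting and the kernel equation is pinned down, the rest is a bijective bookkeeping argument; so the real work is the first translation step, after which uniqueness of $\H$ in $\mathcal{I}_\rho(P)$ closes the proof.
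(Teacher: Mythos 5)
Your overall strategy---define the right-hand side as $\widetilde{\H}_{uv}(x)$, show it satisfies the characterizing properties of Proposition~\ref{prop:alt-characterization-chow}, and conclude by uniqueness---is a legitimate route, and it is genuinely different from the paper's. The paper instead unwinds the recursion $\H = -\overline{\kappa}^{-1}$ directly to express $\H_{uv}$ as a sum over \emph{chains} $u = u_0 < \cdots < u_{r+1} = v$ of products of reduced $R$-kernels, applies Dyer's descent-free-path formula to each segment independently, and then interchanges the chain summation and path summation; for a fixed path $\Delta$ the inner sum over chains $\mathcal{U}$ with $\des(\Delta) \subseteq \mathcal{U} \subseteq \Delta\setminus\{u,v\}$ of $(x-1)^{\ell(\Delta)-|\mathcal{U}|-1}$ collapses to $x^{\asc(\Delta)}$ by the binomial theorem. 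That binomial identity is the technical heart of the argument.

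This is exactly where your proposal has a gap. You assert a ``bijection $B(u,v) \cong \bigsqcup_{u \le w \le v} B(u,w) \times B(w,v)$,'' but this is not a bijection: a single path $\Delta \in B(u,v)$ can be cut at \emph{any} of its intermediate vertices, and in the concrete identity you need to verify (namely $\sum_w R_{uw}(x)\,\widetilde{\H}_{wv}(x) = x\,\widetilde{\H}_{uv}(x)$, after inserting Dyer's formula for $R_{uw}$), a given $\Delta$ is counted once for each way of choosing a prefix that is descent-free. Reconciling this multiplicity with the $(x-1)^{\ell}$ factors from Dyer's formula is precisely the bookkeeping you defer, and it is not a triviality---it is the analogue of the paper's binomial-identity step. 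Your parenthetical ``with the appropriate bookkeeping of the middle comparison'' is the whole proof.

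Two smaller issues with the palindromicity/ascent-vs-descent discussion. First, literal path reversal does \emph{not} give a path in $B(u,v)$: edges of the Bruhat graph $B(u,v)$ go upward in length, so the reversed walk lives in $B(v,u)$, not $B(u,v)$. What actually works is to apply the already-established ascent formula to the \emph{opposite} reflection order $<'$ (Dyer's theorem is valid for any reflection order and $\H$ does not depend on the choice), which swaps $\asc_{<}$ and $\des_{<}$ on the same path; you gesture at this but conflate it with reversal. Second, the identity you write, $\asc(\Delta) + \des(\rev(\Delta)) = \ell(\Delta)$, has an off-by-one error: the correct statement is $\asc(\Delta) + \des(\Delta) = \ell(\Delta) - 1$, equivalently $\des_{<'}(\Delta) = \asc_{<}(\Delta) = \ell(\Delta) - 1 - \des_{<}(\Delta)$, which is the form used to deduce palindromicity from the two-order comparison.
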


In the above statement $B(u,v)$ stands for all the paths in the Bruhat graph of $W$ that go from $u$ to $v$, $\ell(\Delta)$ stands for the length of the path $\Delta$, $\des$ stands for the number of descents of the path, whereas asc stands for the number of ascents. In particular, the Chow function  enumerates these paths according to a descent-like statistic. We show that the combinatorial invariance conjecture for Chow functions is equivalent to the combinatorial invariance conjecture for Kazhdan--Lusztig or $R$-polynomials, see Theorem~\ref{thm:comb-invariance}.

Thanks to the breakthrough of Elias and Williamson~\cite{elias-williamson}, and as a consequence of Theorem~\ref{thm:kls-positive-chow-unimodal-intro}, we obtain that the above enumeration of paths yields a unimodal polynomial. By shadowing the discussion of the two previous examples, we are led to consider $\gamma$-positivity and real-rootedness. In the case of polytopes (or Gorenstein* posets), the key tool to prove $\gamma$-positivity is the result on the $\mathbf{cd}$-index proved by Karu \cite{karu}. In this case, we need to rely on a more complicated non-commutative polynomial called the \emph{complete $\mathbf{cd}$-index}, introduced by Billera and Brenti \cite{billera-brenti}. We prove the following. 

\begin{theorem}
    Let $W$ be a Coxeter group and let $u < v$ in $W$. The $\gamma$-polynomial associated to the Coxeter Chow polynomial $\H_{uv}$ is a positive specialization of the complete $\mathbf{cd}$-index of the interval $[u,v]$.
\end{theorem}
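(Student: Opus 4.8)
The plan is to run the argument from the Gorenstein* case, replacing the ordinary $\mathbf{cd}$-index by the complete $\mathbf{cd}$-index $\widetilde{\Psi}_{uv}(\mathbf{c},\mathbf{d})$ of the Bruhat interval $[u,v]$. The input is Theorem~\ref{thm:comb-coxeter-H-intro}, which already writes $\H_{uv}$ as a generating function over the Bruhat paths $\Delta\in B(u,v)$ weighted by a descent statistic; the key point is that the \emph{same} family of paths, recorded by descent \emph{set} rather than by descent \emph{number}, is precisely what Billera and Brenti package into the complete $\mathbf{ab}$-index, which by their theorem is the image of $\widetilde{\Psi}_{uv}(\mathbf{c},\mathbf{d})$ under $\mathbf{c}\mapsto\mathbf{a}+\mathbf{b}$ and $\mathbf{d}\mapsto\mathbf{ab}+\mathbf{ba}$.

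First I would fix a reflection order and set up the bookkeeping. A path $\Delta\in B(u,v)$ of length $\ell(\Delta)$ has descent set inside $\{1,\dots,\ell(\Delta)-1\}$, encoded by an $\mathbf{ab}$-word of length $\ell(\Delta)-1$ with $\mathbf{b}$'s in the descent positions; since reflections change length by an odd amount, every $\Delta\in B(u,v)$ satisfies $\ell(\Delta)\equiv\rho_{uv}\pmod 2$, so all these words have length $\equiv\rho_{uv}-1\pmod 2$. Reading off the exponents in Theorem~\ref{thm:comb-coxeter-H-intro}, the contribution of $\Delta$ to $\H_{uv}(x)$ equals $x^{(\rho_{uv}-1)/2}$ times the image of its $\mathbf{ab}$-word under the ring homomorphism $\mathbf{a}\mapsto x^{-1/2}$, $\mathbf{b}\mapsto x^{1/2}$. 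Summing over $B(u,v)$ and invoking the Billera--Brenti identity yields the closed form $\H_{uv}(x)=x^{(\rho_{uv}-1)/2}\,\widetilde{\Psi}_{uv}\!\left(x^{1/2}+x^{-1/2},\,2\right)$, since that homomorphism sends $\mathbf{c}=\mathbf{a}+\mathbf{b}$ to $x^{1/2}+x^{-1/2}$ and $\mathbf{d}=\mathbf{ab}+\mathbf{ba}$ to $2$. I would sanity-check this on intervals of rank at most $3$, where the non-homogeneous part of $\widetilde{\Psi}_{uv}$ (a constant term appearing exactly when $v=ut$ for a single reflection $t$) produces precisely the extra contribution to $\H_{uv}$.

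The rest is a computation monomial by monomial. A $\mathbf{cd}$-monomial $w$ of degree $d$ (with $|\mathbf{c}|=1$, $|\mathbf{d}|=2$) carrying $j$ copies of $\mathbf{d}$ is sent by this substitution to $2^{j}\,x^{(\rho_{uv}-1-d)/2+j}\,(1+x)^{d-2j}$; here $(\rho_{uv}-1-d)/2$ is a nonnegative integer, and a polynomial of the form $x^{a}(1+x)^{\rho_{uv}-1-2a}$ is itself a single element of the $\gamma$-basis, so the $\gamma$-polynomial of this image is the monomial $2^{j}\,x^{(\rho_{uv}-1-d)/2+j}$. Consequently the $\gamma$-polynomial of $\H_{uv}$ equals $\sum_{w}[w]\,2^{\#\mathbf{d}(w)}\,x^{(\rho_{uv}-1-\deg w)/2+\#\mathbf{d}(w)}$, the sum over $\mathbf{cd}$-monomials $w$ with $[w]$ the coefficient of $w$ in the complete $\mathbf{cd}$-index; this is manifestly a positive specialization of the complete $\mathbf{cd}$-index of $[u,v]$. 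Feeding in the nonnegativity of the complete $\mathbf{cd}$-index due to Billera--Brenti then recovers $\gamma$-positivity of the Coxeter Chow polynomial as a corollary.

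The step I expect to be the real obstacle is the identification underlying the first two paragraphs: one must check that the descent statistic on Bruhat paths built into Theorem~\ref{thm:comb-coxeter-H-intro} is \emph{literally} the one whose descent-set refinement Billera and Brenti prove admits a $\mathbf{cd}$-expansion, being careful about the boundary convention for descents of short non-saturated paths and about the role of the sub-intervals $[u,w]$ with $w\le v$ in the non-homogeneous part of $\widetilde{\Psi}_{uv}$. Once the conventions are matched, the fact that the specialization factors through the $\mathbf{cd}$-structure is automatic, since $\mathbf{a}\mapsto x^{-1/2}$, $\mathbf{b}\mapsto x^{1/2}$ is a ring homomorphism of the free associative algebra on $\mathbf{a},\mathbf{b}$ and $\widetilde{\Psi}_{uv}(\mathbf{c},\mathbf{d})$ coincides there with the complete $\mathbf{ab}$-index.
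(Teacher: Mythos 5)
Your proposal is correct and follows essentially the same route as the paper: both arguments apply Billera and Brenti's identity for the complete $\mathbf{ab}$-index, substitute $\mathbf{a}\mapsto x^{-1/2}$, $\mathbf{b}\mapsto x^{1/2}$ to obtain $\H_{uv}(x) = x^{(\rho_{uv}-1)/2}\,\widetilde{\psi}_{uv}(x^{1/2}+x^{-1/2},2)$, and then read off the $\gamma$-polynomial. The only cosmetic difference is in the final extraction step (you compute monomial by monomial in the $\mathbf{cd}$-index, correctly noting that each $\mathbf{cd}$-monomial lands on a single $\gamma$-basis element, whereas the paper performs the change of variable $y = (x^{1/2}+x^{-1/2})^{-1}$ globally); these are equivalent computations.
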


The precise positive specialization is proved in Corollary~\ref{coro:complete-cd-chow-gamma}. Billera and Brenti conjecture the non-negativity of all the coefficients of the complete $\mathbf{cd}$-index for any interval in a Coxeter group, see \cite[Conjecture~6.1]{billera-brenti}. Some special cases of that conjecture are known to be true (see, e.g., \cite{karu-complete-cd}, \cite{fan-cd}), but it remains open in general.   The preceding theorem implies that if Billera and Brenti's conjecture is true, then the Coxeter Chow functions of a Coxeter group are $\gamma$-positive. That is, we have the following conjecture. 

\begin{conjecture}\label{conj:coxeter-gamma-positive}
    Coxeter Chow polynomials of Bruhat intervals of Coxeter groups are $\gamma$-positive.
\end{conjecture}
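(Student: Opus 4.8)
\textit{Proof proposal.} The plan is to deduce the conjecture from the positivity of the complete $\mathbf{cd}$-index. By the theorem preceding the conjecture, made precise in Corollary~\ref{coro:complete-cd-chow-gamma}, the $\gamma$-polynomial associated to the Coxeter Chow polynomial $\H_{uv}$ is obtained from the complete $\mathbf{cd}$-index of the Bruhat interval $[u,v]$ by a fixed substitution of the non-commuting variables $\mathbf{c}$ and $\mathbf{d}$ by explicit polynomials in $x$ having non-negative coefficients. Consequently, if every coefficient of the complete $\mathbf{cd}$-index of $[u,v]$ is non-negative --- which is exactly \cite[Conjecture~6.1]{billera-brenti} --- then $\gamma(\H_{uv})$ is a non-negative linear combination of products of polynomials with non-negative coefficients, hence itself has non-negative coefficients; this is the definition of $\gamma$-positivity of $\H_{uv}$. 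Thus Conjecture~\ref{conj:coxeter-gamma-positive} is a formal consequence of the Billera--Brenti conjecture, and essentially all of the difficulty is concentrated there.

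Since the Billera--Brenti conjecture is itself open, I would next try to reduce the statement we actually need to something weaker. The specialization sending the complete $\mathbf{cd}$-index to $\gamma(\H_{uv})$ discards a great deal of information, so it is conceivable that the particular linear functional on $\mathbf{cd}$-words arising here is already known to be non-negative even though the full conjecture is not. Concretely I would: (i) write out the substitution of Corollary~\ref{coro:complete-cd-chow-gamma} explicitly and identify the support of the relevant functional on $\mathbf{cd}$-words; (ii) compare this against the partial results on the complete $\mathbf{cd}$-index due to Karu \cite{karu-complete-cd} and Fan et al.\ \cite{fan-cd}, and check whether they already cover the coefficients that matter; and (iii) look for a direct combinatorial model for $\gamma(\H_{uv})$ via the path description of Theorem~\ref{thm:comb-coxeter-H-intro}, which writes $\H_{uv}$ as a descent-type generating function over paths in the Bruhat graph. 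A $\gamma$-positive expansion of Foata--Strehl / valley-hopping type for that path statistic would prove the conjecture outright, bypassing the $\mathbf{cd}$-index.

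A more structural alternative would mirror the strategy used for the $\chi$-Chow and $\varepsilon$-Chow polynomials, where $\gamma$-positivity ultimately came from a semi-small or canonical module decomposition \cite{semismall,braden-huh-matherne-proudfoot-wang} and the present framework is built precisely to shadow such decompositions at the level of polynomials. Here one would search for an analogue of the canonical decomposition internal to the incidence algebra $\mathcal{I}_\rho(W)$ whose summand-wise ``graded dimensions'' are manifestly palindromic with non-negative center; combined with the inductive structure of Soergel bimodules and the Hodge-theoretic input of Elias--Williamson \cite{elias-williamson} (which already yields non-negativity of the Kazhdan--Lusztig polynomials, hence via Theorem~\ref{thm:kls-positive-chow-unimodal-intro} unimodality of $\H_{uv}$), this might upgrade unimodality to $\gamma$-positivity.

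The main obstacle is the same in every approach: unlike the matroid and polytope settings, there is no Hard Lefschetz or Hodge--Riemann statement --- indeed no natural graded ring with Hilbert series $\H_{uv}$ --- available on the Coxeter side, and the $\mathbf{cd}$-index route demands a positivity input, the Billera--Brenti conjecture, that appears strictly harder than anything proved so far in this direction. Realistically, then, I expect the contribution here to be the clean reduction, leaving Conjecture~\ref{conj:coxeter-gamma-positive} conditional on \cite[Conjecture~6.1]{billera-brenti} (and equivalent, under that conjecture, to the $\gamma$-positivity of the complete $\mathbf{cd}$-index specialization identified in Corollary~\ref{coro:complete-cd-chow-gamma}).
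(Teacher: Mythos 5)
Your proposal is correct and follows exactly the paper's reasoning: the statement in question is stated as a conjecture, and the paper's entire justification is precisely the reduction you give in your first paragraph, namely that Corollary~\ref{coro:complete-cd-chow-gamma} writes $\gamma(\H_{uv},x^2)=x^{\rho_{uv}}\widetilde\psi_{uv}(x^{-1},2)$, so non-negativity of the complete $\mathbf{cd}$-index (Billera--Brenti's \cite[Conjecture~6.1]{billera-brenti}) formally implies $\gamma$-positivity of $\H_{uv}$. One small imprecision: the substitution is $\mathbf{c}\mapsto x^{-1}$, $\mathbf{d}\mapsto 2$ with a prefactor $x^{\rho_{uv}}$ --- so $\mathbf{c}$ is not replaced by a polynomial with non-negative coefficients --- but since every $\mathbf{cd}$-monomial appearing in $\widetilde\psi_{uv}$ has degree at most $\rho_{uv}-1$, each such monomial is still sent to a monomial $2^j x^e$ with $e\geq 1$, so your conclusion stands. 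The further paragraphs (reducing to known partial positivity of the complete $\mathbf{cd}$-index, a valley-hopping model for the Bruhat-graph path statistic, or an incidence-algebra analogue of the canonical decomposition) are sensible speculation not contained in the paper; the paper leaves the conjecture conditional exactly as you do.
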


Emboldened by Conjecture~\ref{conj:char-chow-real-rooted} and Conjecture~\ref{conj:gorenstein-real-rooted},  and numerous experiments on Bruhat intervals of rank $\leq 7$, we also pose the following conjecture.

\begin{conjecture}\label{conj:coxeter-real-rooted}
    Coxeter Chow polynomials of Bruhat intervals of Coxeter groups are always real-rooted.
\end{conjecture}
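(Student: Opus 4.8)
\renewcommand{\qedsymbol}{}
Since this is a conjecture, what follows is the line of attack I would pursue rather than a complete argument. The natural plan is an induction on the rank $\rho_{uv}$ proving something formally stronger than real-rootedness of the individual $\H_{uv}$, namely an \emph{interlacing} relation among the Chow polynomials of a compatible family of Bruhat intervals. Fixing a top element $v$ (or, dually, a bottom element $u$), one would try to show that the polynomials $\{\H_{uv} : u \le v\}$, after a suitable normalization, form an interlacing sequence, so that real-rootedness of each $\H_{uv}$ falls out as a byproduct. The engine would be the defining recursion for $\H$ coming from the $R$-polynomial kernel relations together with the palindromy $\H_{uv}(x) = x^{\rho_{uv}-1}\H_{uv}(1/x)$, which expresses $\H_{uv}$ in terms of the $\H_{wv}$ with $u < w \le v$; one would then verify that each step of the recursion is a nonnegative combination of interlacing polynomials.

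The principal obstacle is exactly the word \emph{nonnegative}. The $R$-polynomials $\kappa_{uw}(x)$ have coefficients of alternating sign, so the raw recursion for $\H$ is not a positive operation, and interlacing is destroyed by subtraction. The remedy I would try is the cancellation mechanism that underlies Theorem~\ref{thm:kls-positive-chow-unimodal-intro}: regroup the terms using the nonnegativity of the Kazhdan--Lusztig polynomials (Elias--Williamson) so as to rewrite $\H_{uv}$ --- or, more comfortably, one of the augmented Chow functions $F_{uv}$ or $G_{uv}$, which is a genuine convolution of $\H$ with a KLS function --- as an honestly nonnegative combination of products of strictly smaller Chow and Kazhdan--Lusztig polynomials. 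At that point one still needs the currently unavailable input that Kazhdan--Lusztig polynomials of Bruhat intervals interact well with interlacing, for instance that the factors occurring in such a product can be arranged into an interlacing family. I expect the argument to stall here: nothing in the present technology controls the roots of KL polynomials, and this is very plausibly why the statement is only a conjecture.

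A second route, which I would run in parallel and which has a better chance of settling tractable subcases, goes through $\gamma$-positivity. Since $\H_{uv}$ is symmetric of degree $\rho_{uv}-1$ it admits a $\gamma$-expansion, and one has $\H_{uv}(x) = (1+x)^{\rho_{uv}-1}\,\gamma_{\H_{uv}}\bigl(x/(1+x)^2\bigr)$; because the substitution $t = x/(1+x)^2$ carries the real line onto $(-\infty, 1/4]$, which contains the nonpositive reals where the roots of a $\gamma$-positive polynomial must lie, a $\gamma$-positive polynomial is real-rooted if and only if its $\gamma$-polynomial is. Granting Conjecture~\ref{conj:coxeter-gamma-positive}, the statement above is thus \emph{equivalent} to the real-rootedness of $\gamma_{\H_{uv}}$, which by Corollary~\ref{coro:complete-cd-chow-gamma} is a prescribed positive specialization of the complete $\mathbf{cd}$-index of $[u,v]$; the task then becomes to strengthen the conjectural Billera--Brenti nonnegativity of the complete $\mathbf{cd}$-index to the assertion that this particular specialization has only real roots. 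As a first testing ground I would take the lower intervals $[e,v]$ in type $A$, where the path model of Theorem~\ref{thm:comb-coxeter-H-intro} presents $\H_{ev}$ as a descent-type enumerator over paths in the Bruhat graph relative to a reflection order, and I would try to evaluate this enumerator by a transfer-matrix computation with totally nonnegative matrices, or to realize it as a Lindstr\"om--Gessel--Viennot determinant with real-rooted entries --- either of which would force real-rootedness for those intervals. Passing to general intervals $[u,v]$ and to all Coxeter types would then require either a direct extension of the path-model argument or progress on combinatorial invariance; obtaining such a total-positivity structure --- equivalently, the refined real-rootedness of the complete $\mathbf{cd}$-index specialization --- is what I regard as the crux of the problem.
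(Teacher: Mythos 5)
This is posed in the paper only as a conjecture; the text offers no proof, just computational verification over all Bruhat intervals in $\mathfrak{S}_n$ for $n \le 6$ and an analogy with Conjectures~\ref{conj:cohen-real-rooted} and~\ref{conj:gorenstein-real-rooted}. You correctly treat it as open, so there is no argument in the paper to compare against; what you have written is a strategy map rather than a proof, and both routes are consistent with the paper's own pointers. Your first route retraces the positivity mechanism behind Theorem~\ref{thm:kls-positive-chow-unimodal-intro} (the numerical canonical decomposition reorganizes the alternating-sign $R$-polynomial recursion into a nonnegative one using Elias--Williamson) and, as you say, stalls because nothing currently constrains the roots of Kazhdan--Lusztig polynomials, so no interlacing can be propagated. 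Your second route is exactly where the paper points via Corollary~\ref{coro:complete-cd-chow-gamma}: conditionally on Conjecture~\ref{conj:coxeter-gamma-positive}, real-rootedness of $\H_{uv}$ reduces to real-rootedness of the specialization $x^{\rho_{uv}}\,\widetilde{\psi}_{uv}(x^{-1},2)$ of the complete $\mathbf{cd}$-index; your underlying reduction---that a palindromic polynomial with nonnegative coefficients is real-rooted if and only if its $\gamma$-polynomial is---is correct, since under $t=x/(1+x)^2$ each nonpositive real root of the $\gamma$-polynomial pulls back to a reciprocal pair of negative real roots of $\H_{uv}$ and conversely. The endpoints you identify as the crux (an interlacing family among the $\H_{wv}$, or a total-positivity/LGV realization of the path-descent enumerator of Theorem~\ref{thm:comb-coxeter-H-intro}) are precisely what is missing; this is why the authors stop at a conjecture, and your write-up is an accurate account of the gap rather than a proof.
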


\subsection{Paper outline}

In Section~\ref{sec:two} we briefly recapitulate the key notions about polynomial inequalities and poset properties that we will need. 

The chief contribution of this paper is the combinatorial framework of Chow functions developed in Section~\ref{sec:three}; we view this construction as a counterpart to Stanley's development of the theory of KLS functions in \cite{stanley-local}. This section introduces Chow functions and augmented Chow functions, and here we study their general connection with KLS functions and $Z$-functions.  In this section we prove various numerical analogues of graded module decompositions appearing in \cite{braden-huh-matherne-proudfoot-wang}. Sections~\ref{sec:four},~\ref{sec:five}, and~\ref{sec:six} then apply the general machinery developed in Section~\ref{sec:three} to our three central running examples, as we now describe.

In Section~\ref{sec:four}, we introduce the $\chi$-Chow function and study a number of combinatorial properties that it and its augmented counterparts satisfy; furthermore, we use the case of matroids to explain the algebro-geometric motivation for the main results in Section~\ref{sec:three}.

Section~\ref{sec:five} describes the Chow function arising from an Eulerian poset (or, for concreteness, the face poset of a polytope). We discuss how it relates to barycentric subdivisions, and we prove that the Chow function in this example also satisfies strong inequalities, by relying on a deep theorem by Karu \cite{karu}. 

Section~\ref{sec:six} addresses the case of Coxeter groups: we give a combinatorial description of the Chow function, and we relate it to the complete $\mathbf{cd}$-index of Billera and Brenti \cite{billera-brenti}.

\section{Preliminaries}\label{sec:two}

\subsection{Inequalities for polynomials}

Let $p(x) = a_0 + a_1x+\cdots +a_m x^m\in \mathbb{Z}[x]$ denote a polynomial having non-negative coefficients. The polynomial $p(x)$ is said to be \emph{unimodal} if there exists an index $j$ such that 
    \[ a_0 \leq \cdots \leq a_{j-1} \leq a_j \geq a_{j+1} \geq \cdots \geq a_m.\]

We say that $p(x)$ is \emph{symmetric} if there exists some index $d$ such that $a_i = a_{d-i}$ for each $i$ (where $a_i:=0$ if $i<0$). In this case, we say that $p(x)$ has center of symmetry $d/2$. Notice that the symmetry of the coefficients can be encoded via the equation $p(x) = x^d p(1/x)$. For thorough references about unimodality, we refer to \cite{stanley-unimodality,brenti-unimodality,branden}.

The following statement provides a useful characterization of polynomials that are symmetric and unimodal.

\begin{lemma}\label{lemma:char-symmetric-unimodality}
    Let $p(x)$ be a polynomial with non-negative coefficients. The following are equivalent.
    \begin{enumerate}[\normalfont(i)]
        \item $p(x)$ is unimodal and symmetric with center of symmetry $d/2$.
        \item There exist non-negative numbers $c_0, \ldots, c_{\lfloor d/2\rfloor}$ such that
            \[ p(x) = \sum_{i=0}^{\lfloor d/2\rfloor} c_i\, x^i (1+x+\cdots+x^{d-2i}).\]
    \end{enumerate}
\end{lemma}

The proof is straightforward, so we omit it. A further property that will be of relevance in the present paper is that of $\gamma$-positivity. We say that the polynomial $p(x)$ is \emph{$\gamma$-positive} if it is symmetric with center of symmetry $d/2$ and there exist non-negative integers $\gamma_0,\ldots,\gamma_{\lfloor d/2\rfloor}$ such that
    \[ p(x) = \sum_{i=0}^{\lfloor d/2\rfloor} \gamma_i\, x^i (1+x)^{d-2i}.\]
It is not hard to see that a $\gamma$-positive polynomial is unimodal. We refer to \cite{athanasiadis-gamma-positivity} for a thorough survey on $\gamma$-positivity.  The \emph{$\gamma$-polynomial} associated to $p$ is defined by $\gamma(p,x) := \sum_{i=0}^{\lfloor d/2\rfloor} \gamma_i x^i$.
It satisfies the following property:
\[ p(x) = (1+x)^{d} \,\gamma\left(p,\frac{x}{(1+x)^2}\right).\]

If the polynomial $p(x)$ is symmetric and has only negative real roots, then it is $\gamma$-positive; see \cite[Remark~3.1]{branden}. In other words, for symmetric polynomials with non-negative coefficients we have the following (strict) hierarchy of properties:

\[ \text{real-rootedness} \Longrightarrow \text{$\gamma$-positivity} \Longrightarrow \text{unimodality}.\]

\subsection{Essential notions about posets}\label{sec:posetnotions}

Throughout this paper we will use the letter $P$ to denote a partially ordered set, and  $\Int(P)$ to denote the set of all closed intervals of $P$. We say that $P$ is \emph{locally finite} if for every pair of elements $s\leq t$ in $P$, the closed interval $[s,t]= \{w\in P : s\leq w \leq t\}$ has finitely many elements. The \emph{incidence algebra} of $P$, denoted by $\mathcal{I}(P)$, is the free $\mathbb{Z}[x]$-module spanned by $\Int(P)$. In other words, an element $a\in \mathcal{I}(P)$ associates to each closed interval $[s,t]\in \Int(P)$ a polynomial $a_{st}(x)\in \mathbb{Z}[x]$. Depending on the context we shall write $a_{st}$ or $a_{st}(x)$ interchangeably. The product (also known as convolution) of two elements $a,b\in \mathcal{I}(P)$ is defined via

    \[ (ab)_{st}(x) = \sum_{s\leq w\leq t} a_{sw}(x)\, b_{wt}(x) \qquad \text{ for every $s\leq t$ in $P$}.\]

The algebra $\mathcal{I}(P)$ satisfies the following basic properties:

\begin{enumerate}[(i)]
    \item The product in $\mathcal{I}(P)$ is associative but not commutative.
    \item There is a multiplicative identity $\delta\in \mathcal{I}(P)$ defined by
        \[ \delta_{st} = \begin{cases} 1 & \text{ if $s = t$},\\ 0 & \text{ if $s < t$}.\end{cases}\]
\end{enumerate}

The following is a simple criterion to decide whether an element in the incidence algebra of $P$ admits an inverse.

\begin{proposition}\label{prop:inverses-incidence-algebra}
    The element $a\in \mathcal{I}(P)$ admits a two-sided inverse, denoted $a^{-1}\in \mathcal{I}(P)$, if and only if $a_{ss}(x) = \pm 1$ for every $s\in P$. 
\end{proposition}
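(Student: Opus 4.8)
The plan is to prove both directions by a constructive argument on closed intervals, exploiting that $\mathcal{I}(P)$ is "upper triangular" with respect to the length of intervals. First I would observe that the condition $a_{ss}(x)=\pm 1$ for all $s$ is clearly necessary: if $b$ is a two-sided inverse of $a$, then evaluating $(ab)_{ss}$ and $(ba)_{ss}$ using the convolution formula gives $a_{ss}(x)\,b_{ss}(x)=1=b_{ss}(x)\,a_{ss}(x)$ in $\mathbb{Z}[x]$, and the only units of $\mathbb{Z}[x]$ are $\pm 1$, so $a_{ss}(x)\in\{\pm 1\}$ and $b_{ss}(x)=a_{ss}(x)$.

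For the converse, I would construct the inverse $b$ by recursion on the cardinality of the interval $[s,t]$ (which is finite since $P$ is locally finite). Set $b_{ss}(x):=a_{ss}(x)\in\{\pm 1\}$, which is its own inverse in $\mathbb{Z}[x]$, so $(ab)_{ss}=(ba)_{ss}=1$. Now suppose $s<t$ and that $b_{sw}$ has been defined for all $w$ with $s\le w<t$ (equivalently, for all proper subintervals $[s,w]$). We want $(ab)_{st}=0$, i.e.
\[
a_{tt}(x)\,b_{st}(x)=-\sum_{s\le w<t}a_{sw}(x)\,b_{wt}(x) \cdot \text{(reindexed)},
\]
so I would instead set up the recursion from the left: the requirement $(ba)_{st}=0$ reads
\[
\sum_{s\le w\le t} b_{sw}(x)\,a_{wt}(x)=0,
\]
which can be solved for the top term as
\[
b_{st}(x) := -\,a_{tt}(x)\,\sum_{s\le w< t} b_{sw}(x)\,a_{wt}(x),
\]
using that $a_{tt}(x)\in\{\pm 1\}$ is its own inverse in $\mathbb{Z}[x]$. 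This defines $b\in\mathcal{I}(P)$ with $ba=\delta$. One then checks, by the same recursion or by a standard argument, that a left inverse in this triangular setting is automatically a two-sided inverse: indeed, if $ba=\delta$ and $a$ also has a right inverse $c$ (constructed symmetrically using $(ac)_{st}=0$), then $b=b(ac)=(ba)c=c$, so $b=c$ is the two-sided inverse $a^{-1}$; alternatively, verify $(ab)_{st}=0$ directly by induction on $\#[s,t]$.

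The only mild subtlety — and the step I would be most careful about — is making the recursion well-founded: for a general locally finite poset $[s,w]\subsetneq[s,t]$ as $w$ ranges over $s\le w<t$ are proper subintervals, and each has strictly smaller cardinality, so induction on $\#[s,t]$ (a nonnegative integer) terminates; the sum defining $b_{st}$ is finite by local finiteness, so $b_{st}(x)\in\mathbb{Z}[x]$ is well-defined. There is no genuine obstacle here beyond bookkeeping; the essential point is simply that $\pm 1$ are units in $\mathbb{Z}[x]$ and that convolution respects the triangular structure.
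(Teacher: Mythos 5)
Your proof is correct, and it is the standard argument for this fact. The paper states this proposition without proof (it is a well-known criterion for invertibility in incidence algebras over commutative rings), so there is nothing in the paper to compare against; your argument — necessity via the units of $\mathbb{Z}[x]$, sufficiency via recursion on interval cardinality, and then the observation that a left inverse together with a symmetrically constructed right inverse gives a two-sided inverse by associativity — is exactly what is intended.
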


If we consider the element $\upzeta\in \mathcal{I}(P)$ defined by
    \[ \upzeta_{st} = 1 \qquad \text{ for all $s\leq t$},\]
the preceding proposition guarantees that it is invertible, and we will denote its inverse by $\mu = \upzeta^{-1}$. The element $\mu\in \mathcal{I}(P)$ is known as the \emph{M\"obius function} of $P$. It can alternatively be defined via the following recursion:
    \[ \mu_{st} = \begin{cases}
        1 & \text{ if $s=t$},\\
        - \sum_{s\leq w < t} \mu_{sw} & \text{ if $s < t$}.
    \end{cases}\]

An element $a\in \mathcal{I}(P)$ can satisfy an additional property called combinatorial invariance. Precisely, if $a_{st}(x) = a_{s't'}(x)$ whenever $[s,t]$ and $[s',t']$ are isomorphic as posets, then we say that $a$ is \emph{combinatorially invariant}. Note that $\delta$, $\mu$, and $\upzeta$ are combinatorially invariant.

\subsection{Weak rank functions and characteristic functions}

A \emph{weak rank function} on $P$ is a map $\rho\colon \Int(P) \to \Z_{\geq 0}$ satisfying the following properties:
    \begin{enumerate}[(i)]
        \item If $s < t$, then $\rho_{st} > 0$.
        \item If $s\leq w\leq t$, then $\rho_{st} = \rho_{sw} + \rho_{wt}$.
    \end{enumerate}
Observe that the second condition guarantees that $\rho_{ss} = 0$ for every $s\in P$. By definition, a \emph{weakly ranked poset} is a pair $(P,\rho)$ consisting of a partially ordered set $P$ and a weak rank function $\rho$ on $P$. We note explicitly that it is \emph{not} required that $\rho$ be combinatorially invariant. If $P$ has a minimum element $\widehat{0}$, we will often write $\rho(w) := \rho_{\widehat{0},w}$ for any $w \in P$.

A weak rank function $\rho$ on a locally finite poset $P$ induces a special subalgebra $\mathcal{I}_{\rho}(P)\subseteq \mathcal{I}(P)$ defined by
    \begin{equation} \label{eq:def-of-I-rho-P}
    \mathcal{I}_{\rho}(P) = \left\{a\in \mathcal{I}(P): \deg a_{st}(x) \leq \rho_{st} \text{ for all $s\leq t$ in $P$}\right\}.
    \end{equation}
This subalgebra admits an involution $a \mapsto a^{\rev}$ defined via the identity
    \begin{equation}
        \left(a^{\rev}\right)_{st}(x) = x^{\rho_{st}}\, a_{st}(x^{-1}).
    \end{equation}
The name ``rev'' stems from the fact that this involution reverses (with respect to the weak rank function) the coefficients of the polynomials associated to each interval.\footnote{We warn the reader that in other sources this involution is denoted by $a\mapsto \overline{a}$; however, in the present work we will reserve that notation for a different operation.} It is immediate from the definition that this involution respects products, that is, $(ab)^{\rev} = a^{\rev}\cdot b^{\rev}$. Similarly, whenever $a\in \mathcal{I}_{\rho}(P)$ is invertible, our involution commutes with taking inverses $(a^{-1})^{\rev} = \left(a^{\rev}\right)^{-1}$.

A key object in the subalgebra $\mathcal{I}_{\rho}(P)$ is the \emph{characteristic function}, denoted by $\chi$. It is defined by
    \begin{equation} \label{eq:char-function}
    \chi = \mu\cdot \upzeta^{\rev} = \upzeta^{-1}\cdot \upzeta^{\rev}.
    \end{equation}
More explicitly, to each interval $[s,t]$ of $P$ we associate the polynomial
    \[ \chi_{st}(x) = \sum_{s\leq w\leq t} \mu_{sw}\,x^{\rho_{wt}}.\]
Whenever $P$ is bounded, the polynomial $\chi_P(x) := \chi_{\widehat{0}\,\widehat{1}}(x)$ will often be called the \emph{characteristic polynomial of $P$}.

\subsection{The basics of KLS theory}

From the basic properties of the involution $\rev$ described in the previous subsection, we have that the characteristic function enjoys an important property:

\begin{equation}\label{eq:char-function-is-p-kernel} \chi^{\rev} = \left(\upzeta^{-1}\cdot \upzeta^{\rev}\right)^{\rev} = \left(\upzeta^{\rev}\right)^{-1} \cdot \upzeta = \left(\upzeta^{\rev}\right)^{-1} \cdot \left(\upzeta^{-1}\right)^{-1} = \left(\upzeta^{-1}\cdot \upzeta^{\rev}\right)^{-1} = \chi^{-1}.
\end{equation}

In other words, inverting $\chi$ just reverses its coefficients. This motivates a key definition.

\begin{definition}\label{def:p-kernel}
    Let $(P,\rho)$ be a weakly ranked poset. An element $\kappa\in \mathcal{I}_{\rho}(P)$ is said to be a \emph{$(P,\rho)$-kernel} if $\kappa_{ss}(x) = 1$ for all $s\in P$ and
        \[ \kappa^{-1} = \kappa^{\rev}.\]
    We say that $\kappa$ is \emph{non-degenerate} if $\deg \kappa_{st} = \rho_{st}$ for every $s\leq t$ in $P$.
\end{definition}

The notion of non-degeneracy appears to be new, but will be useful in many of our statements below.

It follows from the preceding discussion that the characteristic function $\chi\in \mathcal{I}_{\rho}(P)$ is a non-degenerate $(P,\rho)$-kernel. Furthermore, notice that reasoning as in equation~\eqref{eq:char-function-is-p-kernel}, it follows that if $a\in \mathcal{I}_{\rho}(P)$ is an invertible element and $\kappa := a^{-1}\cdot a^{\rev}$, then $\kappa$ is a $(P,\rho)$-kernel. Stanley proved in \cite[Theorem~6.5]{stanley-local} that all $(P,\rho)$-kernels arise in this way.

\begin{theorem}\label{thm:kls-functions}
    Let $\kappa\in \mathcal{I}_{\rho}(P)$ be a $(P,\rho)$-kernel. There exists a unique element $f\in \mathcal{I}_{\rho}(P)$ satisfying the following properties:
    \begin{enumerate}[\normalfont(i)]
        \item \label{it:f-i}$f_{ss}(x) = 1$ for all $s\in P$.
        \item \label{it:f-ii} $\deg f_{st}(x) < \frac{1}{2} \rho_{st}$ for all $s < t$.
        \item \label{it:f-iii} $f^{\rev} = \kappa\cdot f$.
    \end{enumerate}
    Similarly, there exists a unique element $g\in \mathcal{I}_{\rho}(P)$ satisfying the following properties:
    \begin{enumerate}[\normalfont(i')]
        \item \label{it:g-i} $g_{ss}(x) = 1$ for all $s\in P$.
        \item \label{it:g-ii} $\deg g_{st}(x) < \frac{1}{2} \rho_{st}$ for all $s < t$.
        \item \label{it:g-iii} $g^{\rev} = g \cdot \kappa$.
    \end{enumerate}
\end{theorem}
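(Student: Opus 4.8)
The plan is to prove the statement for $f$ by induction on the weak rank $\rho_{st}$, and then obtain the statement for $g$ by the mirror argument (equivalently, by applying the $f$-statement to the opposite poset equipped with the transposed kernel, noting that $g^{\rev}=g\cdot\kappa$ in $P$ becomes $f^{\rev}=\kappa\cdot f$ in $P^{\mathrm{op}}$). The induction is well-founded: $P$ is locally finite, and if $s<w$ then $\rho_{sw}\ge 1$, so $\rho_{wt}=\rho_{st}-\rho_{sw}<\rho_{st}$. Two ingredients make it work: a linear-algebra normal form for ``antisymmetric'' polynomials, and — the real point — the observation that the defining property $\kappa^{-1}=\kappa^{\rev}$ forces a certain correction term to be of that antisymmetric shape.

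First, the linear algebra. Call a polynomial $C(x)$ of degree at most $d$ \emph{$d$-antipalindromic} if $x^d C(1/x)=-C(x)$; writing $C(x)=\sum_{i=0}^d c_i x^i$ this means $c_i=-c_{d-i}$ for all $i$, and in particular $c_{d/2}=0$ when $d$ is even. I claim that for such a $C$ there is a \emph{unique} polynomial $h$ with $\deg h<\tfrac{d}{2}$ satisfying $x^d h(1/x)-h(x)=C(x)$, namely $h(x)=-\sum_{i<d/2}c_i x^i$. Existence is the straightforward check obtained by splitting the sum at the midpoint and using $c_{d-j}=-c_j$; uniqueness holds because the difference $h$ of two solutions satisfies $x^d h(1/x)=h(x)$ with $\deg h<\tfrac d2$, and a nonzero such $h$ would have both its lowest and its highest nonzero coefficient in degrees $<\tfrac d2$ and $>\tfrac d2$, a contradiction, so $h=0$.

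Now fix $[s,t]$ with $s<t$ and assume $f_{wt}$ has been constructed for all $w\in(s,t]$, with $f_{tt}=1$, $\deg f_{wt}<\tfrac12\rho_{wt}$, and $f_{wt}^{\rev}=(\kappa f)_{wt}$. Using $\kappa_{ss}=1$, the equation $f^{\rev}=\kappa f$ at $[s,t]$ reads
\[ x^{\rho_{st}}f_{st}(1/x)-f_{st}(x)=C_{st}(x),\qquad C_{st}(x):=\sum_{s<w\le t}\kappa_{sw}(x)\,f_{wt}(x), \]
and $C_{st}$ is already determined by the inductive data, with $\deg C_{st}\le\rho_{sw}+\rho_{wt}=\rho_{st}$ since $\kappa\in\mathcal I_\rho(P)$. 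The crux is that $C_{st}$ is $\rho_{st}$-antipalindromic. Indeed, since $\rev$ multiplies degrees additively and respects products, and using the inductive identity $f_{wt}^{\rev}=(\kappa f)_{wt}$ together with $(\kappa f)_{wt}=\sum_{w\le v\le t}\kappa_{wv}f_{vt}$,
\[ x^{\rho_{st}}C_{st}(1/x)=\sum_{s<w\le t}\kappa_{sw}^{\rev}(x)\,f_{wt}^{\rev}(x)=\sum_{s<v\le t}\Bigl(\sum_{s<w\le v}\kappa_{sw}^{\rev}(x)\,\kappa_{wv}(x)\Bigr)f_{vt}(x). \]
Because $\kappa^{\rev}=\kappa^{-1}$ we have $(\kappa^{\rev}\kappa)_{sv}=\delta_{sv}=0$ for $s<v$, and since $\kappa^{\rev}_{ss}=1$ the inner sum equals $-\kappa_{sv}(x)$; hence $x^{\rho_{st}}C_{st}(1/x)=-\sum_{s<v\le t}\kappa_{sv}(x)f_{vt}(x)=-C_{st}(x)$, as claimed. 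Applying the normal-form lemma with $d=\rho_{st}$ produces the unique $f_{st}$ with $\deg f_{st}<\tfrac12\rho_{st}$ solving the displayed recursion; in particular $\deg f_{st}\le\rho_{st}$, so $f\in\mathcal I_\rho(P)$. This simultaneously proves existence and uniqueness of $f$. For $g$, expanding $g^{\rev}=g\cdot\kappa$ at $[s,t]$ gives $x^{\rho_{st}}g_{st}(1/x)-g_{st}(x)=C'_{st}(x)$ with $C'_{st}:=\sum_{s\le v<t}g_{sv}\kappa_{vt}$, and the identical manipulation — now invoking $(\kappa\kappa^{\rev})_{ut}=\delta_{ut}$ in place of $(\kappa^{\rev}\kappa)_{sv}=\delta_{sv}$ — shows $C'_{st}$ is $\rho_{st}$-antipalindromic, after which the same argument applies verbatim.

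The only genuinely substantive step is the antipalindromicity of $C_{st}$ (and $C'_{st}$): this is exactly where the hypothesis $\kappa^{-1}=\kappa^{\rev}$ is consumed, and the mild care needed is in tracking the interchange of the double summation and the interaction of $\rev$ with the convolution. Everything else — the normal form for antipalindromic polynomials and the bookkeeping of degrees and of the well-founded induction on $\rho_{st}$ — is routine.
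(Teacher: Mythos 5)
Your proof is correct, and it is essentially the standard argument (the one used by Stanley and, in the form you state it, by Proudfoot in the survey the paper cites for this theorem): inductively solve the convolution identity $f^{\rev}=\kappa f$ on $[s,t]$ by isolating $f_{st}$, observing that the remainder is antipalindromic of degree $\le\rho_{st}$ precisely because $\kappa^{\rev}=\kappa^{-1}$, and then invoking the elementary fact that an antipalindromic polynomial has a unique ``half-degree'' preimage under $h\mapsto h^{\rev}-h$. The paper itself does not give a proof but defers to that reference, so there is nothing to contrast beyond noting that your write-up is complete and correct; the observation that the $g$-case follows formally from the $f$-case by passing to $P^{\mathrm{op}}$ with the transposed kernel is a clean way to avoid repeating the mirror computation.
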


Following \cite[Section~2]{proudfoot-kls}, we will call $f$ (resp. $g$) \emph{the right (resp. left) Kazhdan--Lusztig--Stanley (KLS) function} associated to $\kappa$. If $P$ is bounded, we call $f_P(x):=f_{\widehat{0}\,\widehat{1}}(x)$ (resp. $g_P(x):=g_{\widehat{0}\,\widehat{1}}(x)$) the \emph{right (resp. left) Kazhdan--Lusztig--Stanley polynomial} of $P$.

For a detailed proof of the above theorem we refer to \cite[Theorem~2.2]{proudfoot-kls}. Furthermore, there is a converse to it proved in \cite[Theorem~2.5]{proudfoot-kls}, which guarantees that if $g$ is any element in $\mathcal{I}_{\rho}(P)$ satisfying the conditions \ref{it:g-i} and \ref{it:g-ii} then $\kappa:=g^{-1}g^{\rev}\in \mathcal{I}_{\rho}(P)$ is a $(P,\rho)$-kernel which has $g$ as its left KLS function. A completely analogous statement holds for right KLS functions.

A further important object in the KLS theory is the $Z$-function, studied in great detail by Proudfoot in \cite{proudfoot-kls}.

\begin{definition}\label{def:zeta-functions}
    Let $\kappa\in \mathcal{I}_{\rho}(P)$ be a $(P,\rho)$-kernel and let $f$ (resp. $g$) denote the right (resp. left) KLS function. The \emph{$Z$-function} associated to $\kappa$ is defined as the element $Z\in \mathcal{I}_{\rho}(P)$ given by
        \[ Z := g^{\rev} f = gf^{\rev}.\]
\end{definition}

The equality between the two expressions that define $Z$ can be seen directly from the equation $Z = g\kappa f$. The $Z$-function is symmetric, i.e., $Z^{\rev} = Z$ or, in other words, $Z_{st}(x) = x^{\rho_{st}}Z_{st}(x^{-1})$ for every $s\leq t$. Furthermore, it follows from \cite{proudfoot-kls} that if $\kappa$ is non-degenerate, then $\deg Z_{st}(x) = \deg \kappa_{st}(x)$ for all $s\leq t$.

Since the characteristic function is a $(P,\rho)$-kernel in any locally finite weakly ranked poset, we are led to consider the corresponding KLS functions.

\begin{example}\label{example:char-is-kernel}
    Let $\kappa = \chi$. By the preceding discussion, since $\chi = \upzeta^{-1}\cdot \upzeta^{\rev}$, we obtain that $g=\upzeta$ is the left KLS function. Trivially we have that $g$ is non-negative. In strong contrast, the right KLS function is a much more subtle and difficult object. Nevertheless, when $P$ is a geometric lattice and $\rho$ is the rank function, then the right KLS function is non-negative due to a deep result by Braden, Huh, Matherne, Proudfoot, and Wang \cite[Theorem~1.2]{braden-huh-matherne-proudfoot-wang}.  We note that when $P$ is not a geometric lattice, the right KLS function is not guaranteed to be non-negative (see Remark~\ref{rem:F-horrible}).
\end{example}

\begin{example}\label{example:ad-hoc-kernel}
    Let $P$ be the Boolean lattice having three atoms, and regard it as a graded poset so that $\rho_{st}$ is the length of any saturated chain starting at $s$ and ending at $t$. Fix any number $m\in \mathbb{Z}$ and define the following element $\kappa$ on $\mathcal{I}_{\rho}(P)$:
    \[
    \kappa_{st}(x) = \begin{cases}
        1 & \text{ if }\rho_{st} = 0,\\
        x-1 & \text{ if } \rho_{st} = 1, \\
        x^2-2x+1 & \text{ if }\rho_{st} = 2, \\
        x^3 + mx^2 -mx -1 & \text{ if }\rho_{st} = 3.
    \end{cases}
    \]
    
    A direct computation shows that $\kappa$ is a $(P,\rho)$-kernel.
    Observe that the right KLS function is constant equal to $1$ on all proper intervals and the right KLS polynomial is $f_P(x) = 1 + (m+3)x$. Notice that if $m\leq -4$, the KLS function fails to be non-negative. Furthermore, the left KLS function $g$ equals $f$ in this specific case, i.e. $g_{st}=f_{st}$ for every $s \leq t$. (This is a coincidence, as the two functions may in general have very different behaviors.) The $Z$-function is equal to $(x+1)^{\rho_{st}}$ on all proper intervals and $Z_P(x) = x^3 + (m+6)x^2 + (m+6)x + 1$.
\end{example}

\begin{lemma}\label{lem:g-and-non-degeneracy-of-k}
    Let $\kappa$ be a $(P,\rho)$-kernel and let $f,g$ be the KLS functions. We have the following equalities of coefficients:
    \[
    [x^0]g_{st}(x) = [x^{\rho_{st}}]g_{st}^{\rev}(x) = [x^{\rho_{st}}]\kappa_{st}(x),
    \]
    \[
    [x^0]f_{st}(x) = [x^{\rho_{st}}]f_{st}^{\rev}(x) = [x^{\rho_{st}}]\kappa_{st}(x).
    \]
    In particular, the constant term of $f$ and $g$ is non-zero if and only if $\kappa$ is non-degenerate.
\end{lemma}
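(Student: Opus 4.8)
The plan is to read off both coefficient identities directly from the defining recursions of the KLS functions in Theorem~\ref{thm:kls-functions}, namely $g^{\rev} = g\cdot\kappa$ and $f^{\rev} = \kappa\cdot f$, together with the degree bounds that are part of the definition of $f$ and $g$. The case $s=t$ is trivial (every quantity in sight equals $1$), so I would assume $s<t$ throughout, and hence $\rho_{st}>0$.

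First I would dispatch the first equality in each line, which requires no work: by the very definition of the involution, $(g^{\rev})_{st}(x) = x^{\rho_{st}}\,g_{st}(x^{-1})$, so the coefficient of $x^{\rho_{st}}$ in $g_{st}^{\rev}$ equals the coefficient of $x^{0}$ in $g_{st}$, and likewise for $f$. For the substantive equality $[x^{\rho_{st}}]g_{st}^{\rev}(x) = [x^{\rho_{st}}]\kappa_{st}(x)$, I would expand $(g^{\rev})_{st} = (g\kappa)_{st} = \sum_{s\leq w\leq t} g_{sw}\,\kappa_{wt}$ and show that only the summand $w=s$ contributes in degree $\rho_{st}$. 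Indeed, that summand is $g_{ss}\kappa_{st} = \kappa_{st}$, of degree at most $\rho_{st}$; while for any $w>s$ the bounds $\deg g_{sw} < \tfrac12\rho_{sw}$ (property \ref{it:g-ii}) and $\deg\kappa_{wt}\leq\rho_{wt}$ give, via additivity of $\rho$, $\deg(g_{sw}\kappa_{wt}) < \tfrac12\rho_{sw} + \rho_{wt} = \rho_{st} - \tfrac12\rho_{sw} < \rho_{st}$, the last inequality being strict because $\rho_{sw}>0$ for $w>s$. Hence $[x^{\rho_{st}}](g\kappa)_{st} = [x^{\rho_{st}}]\kappa_{st}$, as claimed. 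The argument for $f$ is the mirror image: expanding $(f^{\rev})_{st} = (\kappa f)_{st} = \sum_{s\leq w\leq t}\kappa_{sw}\,f_{wt}$, the term $w=t$ equals $\kappa_{st}$, and for $w<t$ one gets $\deg(\kappa_{sw}f_{wt}) < \rho_{sw} + \tfrac12\rho_{wt} = \rho_{st} - \tfrac12\rho_{wt} < \rho_{st}$.

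For the final ``in particular'' clause, I would note that since $\kappa\in\mathcal{I}_{\rho}(P)$ one always has $\deg\kappa_{st}\leq\rho_{st}$, so $[x^{\rho_{st}}]\kappa_{st}(x)\neq 0$ holds exactly when $\deg\kappa_{st} = \rho_{st}$; and $\kappa$ being non-degenerate means precisely that this holds for every interval (it holds automatically when $s=t$). Combining this with the coefficient identities just proved yields that $[x^{0}]g_{st}(x)$, equivalently $[x^{0}]f_{st}(x)$, is nonzero for all $s\leq t$ if and only if $\kappa$ is non-degenerate.

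I do not expect a genuine obstacle here. The only step needing a little care is verifying that the inequality $\deg(g_{sw}\kappa_{wt}) < \rho_{st}$ (and its $f$-analogue) is \emph{strict} for every intermediate $w$, which is exactly where the factor $\tfrac12$ in the KLS degree bound and the positivity of $\rho$ on proper intervals both get used; separating off the degenerate case $s=t$ at the outset keeps this clean.
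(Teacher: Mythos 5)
Your proof is correct and follows essentially the same route as the paper's: both expand the defining relation $g^{\rev} = g\kappa$ (resp. $f^{\rev} = \kappa f$) as a convolution, isolate the summand giving $\kappa_{st}$, and use the half-rank degree bound on the KLS function to kill the remaining terms in degree $\rho_{st}$. The only difference is cosmetic: the paper moves the $w=t$ term $g_{st}$ to the left-hand side before comparing top coefficients, whereas you simply bound its degree in place.
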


\begin{proof}
    The property of Theorem~\ref{thm:kls-functions}\ref{it:g-iii} translates into the following equation for any $s\leq t$:
    \[
    g_{st}^{\rev}(x) - g_{st}(x) = \kappa_{st}(x) + \sum_{s < w < t}g_{sw}(x)\kappa_{wt}(x).
    \]
    The proof of the statement follows from observing that every term in the sum on the right-hand side has degree smaller than $\rho_{st}$. The proof for $f$ is identical.
\end{proof}

\subsection{Graded posets, flag \texorpdfstring{$f$}{f}-vectors, and Cohen--Macaulayness}\label{subsec:cohen-macaulay}

Whenever $P$ is a finite graded bounded poset, the rank function $\rho\colon \Int(P)\to \mathbb{Z}_{\geq 0}$ can be computed as $\rho_{st} = \rho(t) - \rho(s)$ where $\rho(w)$ denotes the length of any saturated chain from $\widehat{0}$ to $w\in P$. Let us denote $r = \rho(\widehat{1})$ the rank of the poset $P$, and for each subset $S\subseteq \{0,1,\ldots,r\}$, say $S = \{s_1,\ldots,s_m\}$, define
    \[ \alpha_P(S) = \|\{\text{chains $w_1 < \cdots < w_m$ in $P$} : \rho(w_i) = s_i \text{ for $1\leq i\leq m$}\}\|.\]
The map $\alpha_P\colon 2^{\{0,\ldots,r\}}\to \mathbb{Z}_{\geq 0}$ is commonly known as the \emph{flag $f$-vector} of $P$. It can be encoded in an alternative way by considering instead the \emph{flag $h$-vector}, which is the map $\beta_P\colon 2^{\{0,\ldots,r\}}\to \mathbb{Z}$ defined by the condition
    \[ \alpha_P(S) = \sum_{T\subseteq S} \beta_P(T).\]
Let $\Delta$ be a simplicial complex, and denote by $f_i$ the number of faces of $\Delta$ having cardinality $i$ (or, equivalently, dimension $i-1$)\footnote{We are using the conventions of Bj\"orner in \cite{bjorner-matroids}.}. The \emph{$f$-vector} of $\Delta$ is defined by
    \[ f(\Delta) = (f_0, \ldots, f_{d})\]
where $d = \dim(\Delta) + 1$. The \emph{$f$-polynomial of $\Delta$} is the polynomial $f(\Delta,x) = f_0x^d + f_{1}x^{d-1} + \cdots + f_d$. The \emph{$h$-vector} and the \emph{$h$-polynomial} of $\Delta$ are defined via
\[h(\Delta,x) = h_0x^d + h_1 x^{d-1} + \cdots + h_d = f(\Delta,x-1).\]

Recall that to every poset $P$ we may associate a simplicial complex $\Delta(P)$, called the \emph{order complex of $P$}. The faces of $\Delta(P)$ correspond to chains of elements in $P$.  The \emph{$f$-vector} of the simplicial complex $\Delta(P)$ is encoded in the flag $f$-vector of $P$. Put precisely, 
    \[ f_{i}(\Delta(P)) = \sum_{\substack{S\subseteq [r-1]\\|S| = i}} \alpha_P(S).\]
It is not difficult to show that the $h$-vector of $\Delta(P)$ is given by
    \[ h_{i}(\Delta(P)) = \sum_{\substack{S\subseteq [r-1]\\|S| = i-1}} \beta_P(S).\]

Note that the flag $f$-vector of $P$ has non-negative values, as it count chains. However, the flag $h$-vector often fails to be non-negative. Similarly, the $h$-vector of $\Delta(P)$ can a priori have negative coefficients. In what follows we recapitulate an important case in which the flag $h$-vector of $P$ is indeed non-negative (and therefore the $h$-vector of $\Delta(P)$).

As any other simplicial complex, $\Delta(P)$ admits a geometric realization that we will denote $|\Delta(P)|$.
Note that every (open) interval $(s,t)=\{w\in P : s < w < t\}$ is itself a graded poset. By definition, we say that $P$ is \emph{Cohen--Macaulay} (over $\mathbb{Q}$) if the rational reduced homology groups of the order complex of every open interval $(s,t)$ satisfy
    \[ \widetilde{H}_i(\Delta(s,t)) = 0 \qquad \text{ for all $i \neq \rho_{st} - 2$}.\]
In other words, the (reduced) homology of every open interval $(s,t)$ must be concentrated in dimension $\dim \Delta(s,t) = \rho_{st}-2$. The class of Cohen--Macaulay posets comprises a number of well-studied families, such as geometric lattices, distributive lattices, posets that are EL-shellable, etc. It is worth noting that Cohen--Macaulayness is a topological property, that is, if $P_1$ and $P_2$ are posets and there is a homeomorphism $|\Delta(P_1)|\approx |\Delta(P_2)|$ then $P_1$ is Cohen--Macaulay if and only if $P_2$ is Cohen--Macaulay.

\begin{theorem}[{\cite[Theorem~3.3]{bjorner-garsia-stanley}}]
    Let $P$ be a finite graded bounded poset. If $P$ is Cohen--Macaulay, then the flag $h$-vector of $P$ has non-negative entries.
\end{theorem}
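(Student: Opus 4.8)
The plan is to pass to rank-selected subposets and turn the statement into the non-negativity of a reduced Euler characteristic. Write $r = \rho(\widehat{1})$. For a subset $S = \{s_1 < \cdots < s_k\} \subseteq \{1,\ldots,r-1\}$, let $P_S = \{w \in P : \rho(w) \in S\}$ be the corresponding rank-selected subposet and $\Delta(P_S)$ its order complex. Since $P$ is graded, between any two comparable elements there is an element of every intermediate rank, so a chain of $P_S$ is maximal exactly when it meets every rank in $S$; hence $\Delta(P_S)$ is pure of dimension $k-1$, and the number of its $j$-element chains is $\sum_{T \subseteq S,\ |T| = j}\alpha_P(T)$. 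Feeding this into the Möbius relation $\alpha_P(S) = \sum_{T\subseteq S}\beta_P(T)$ and into the standard expansion of the reduced Euler characteristic gives, for every $S$, the identity
\[
\beta_P(S) = (-1)^{|S|-1}\,\widetilde{\chi}(\Delta(P_S)),
\]
which is the same as $\beta_P(S) = h_{|S|}(\Delta(P_S))$ together with $h_{\mathrm{top}} = (-1)^{\dim}\widetilde{\chi}$ for a pure complex. Moreover $\beta_P(S) = 0$ whenever $0 \in S$ or $r \in S$ (the summands of $\beta_P(S)$ cancel in pairs there), so restricting to $S \subseteq \{1,\ldots,r-1\}$ costs nothing.

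Granting that $\Delta(P_S)$ is Cohen--Macaulay over $\mathbb{Q}$, the proof finishes at once: its reduced rational homology is concentrated in the top degree $|S|-1$, so $\widetilde{\chi}(\Delta(P_S)) = (-1)^{|S|-1}\dim_{\mathbb{Q}}\widetilde{H}_{|S|-1}(\Delta(P_S);\mathbb{Q})$, and therefore $\beta_P(S) = \dim_{\mathbb{Q}}\widetilde{H}_{|S|-1}(\Delta(P_S);\mathbb{Q}) \geq 0$. To see that $\Delta(P_S)$ is Cohen--Macaulay I would induct on $r$ and use Reisner's criterion: a complex is Cohen--Macaulay over $\mathbb{Q}$ iff the reduced homology of the link of every face, the empty face included, is concentrated in that link's top degree. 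For a nonempty chain $w_1 < \cdots < w_j$ of $P_S$, the link in $\Delta(P_S)$ is the join of the order complexes of rank-selected subposets of the open intervals $(\widehat{0},w_1),(w_1,w_2),\ldots,(w_j,\widehat{1})$; each of $[\widehat{0},w_1],[w_1,w_2],\ldots,[w_j,\widehat{1}]$ is again a graded bounded Cohen--Macaulay poset of strictly smaller rank (an interval of a Cohen--Macaulay poset is Cohen--Macaulay), so by induction each factor is Cohen--Macaulay, and a join of Cohen--Macaulay complexes over a field is Cohen--Macaulay by the Künneth isomorphism $\widetilde{H}_n(\Delta_1 \ast \Delta_2) \cong \bigoplus_{i+j=n-1}\widetilde{H}_i(\Delta_1)\otimes\widetilde{H}_j(\Delta_2)$.

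The hard part is the remaining, empty-face case of Reisner's condition: that the full complex $\Delta(P_S)$ itself (not merely its proper links) has homology concentrated in degree $|S|-1$. This is exactly the rank-selection stability of Cohen--Macaulayness, and it is where the genuine content lies; the induction above does not supply it. I would obtain it via the balanced commutative-algebra picture: $P$ being Cohen--Macaulay makes $\Delta(P_{\{1,\ldots,r-1\}})$ a balanced Cohen--Macaulay complex, colored by rank, so its Stanley--Reisner ring over $\mathbb{Q}$ is Cohen--Macaulay and admits a rank-homogeneous linear system of parameters; killing the parameters for the ranks not in $S$ keeps the ring Cohen--Macaulay, and the Hilbert function of what remains is precisely the flag $h$-vector restricted to subsets of $S$, hence non-negative. (One can instead invoke Baclawski's rank-selection theorem, or Munkres' result that Cohen--Macaulayness is a topological property, or simply cite the result as stated.) Everything else — the chain bookkeeping, the $\alpha_P$-to-$\beta_P$ inversion, the purity of $\Delta(P_S)$, and the elementary relation between top $h$-numbers and reduced Euler characteristics — is routine, so the balanced/rank-selection input is the single substantive ingredient.
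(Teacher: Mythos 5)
Your proposal develops precisely the argument the paper alludes to in the remark following the theorem statement — interpreting $\beta_P(S)$ as the top Betti number of the rank-selected subposet $P_S$ and invoking rank-selection stability of Cohen--Macaulayness — so it matches the intended proof from Bj\"orner--Garsia--Stanley that the paper cites. The chain bookkeeping and the Euler-characteristic identity are correct, and you rightly isolate the rank-selection theorem (equivalently, the balanced Stanley--Reisner argument) as the single substantive ingredient; everything else is routine.
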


The non-negativity property in the above result follows from interpreting the entries of the flag $h$-vector as Betti numbers of rank-selected subposets of $P$.

\section{Chow functions}\label{sec:three}

In this section we will introduce the main objects of study in the present article and prove theorems in a general and abstract setting. In the later sections we will investigate the interactions of this framework with existing prior work. 

\subsection{Reduced kernels}

A well-known object in the theory of hyperplane arrangements is the ``reduced'' characteristic polynomial. If $\mathcal{A}$ denotes a non-empty central hyperplane arrangement, the characteristic polynomial $\chi_{\mathcal{A}}(x)$ vanishes when evaluated at $x = 1$. (This is an immediate fact that follows from the definition of the M\"obius function.) In this setting, and in the more general context of matroids, it is customary to define the \emph{reduced characteristic polynomial} by $\overline{\chi}_{\mathcal{A}}(x) = \frac{1}{x-1}\,\chi_{\mathcal{A}}(x)$. We point out that this agrees with the usual notation and conventions used, for instance, in \cite[Definition~9.1]{adiprasito-huh-katz}. 

In the same way that one is able to reduce the characteristic function by discarding the trivial zero $x = 1$, one can in fact reduce any $(P,\rho)$-kernel in the same way.

\begin{lemma}\label{lemma:divisibility-x-1}
    Let $\kappa$ be a $(P,\rho)$-kernel. Then, for every $s < t$ in $P$, the polynomial $\kappa_{st}(x)$ is divisible by $x - 1$. 
\end{lemma}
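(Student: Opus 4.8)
The plan is to prove the a priori stronger statement that $\kappa_{st}(1) = \delta_{st}$ for every closed interval $[s,t]$ of $P$; divisibility of $\kappa_{st}(x)$ by $x-1$ for $s < t$ is then just the factor theorem applied in $\mathbb{Z}[x]$.

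The input is the kernel identity $\kappa^{-1} = \kappa^{\rev}$, equivalently $\kappa\cdot\kappa^{\rev} = \delta$. First I would record the elementary fact that the ``evaluation at $1$'' map $\mathcal{I}_{\rho}(P) \to \mathcal{I}(P)$, $a \mapsto a(1)$ (where $a(1)$ denotes the element of the $\mathbb{Z}$-coefficient incidence algebra with entries $a_{st}(1)$), is a ring homomorphism, and that it collapses the distinction between $a$ and $a^{\rev}$: indeed $(a^{\rev})_{st}(1) = 1^{\rho_{st}}a_{st}(1^{-1}) = a_{st}(1)$, so $(a^{\rev})(1) = a(1)$. Feeding $\kappa\cdot\kappa^{\rev} = \delta$ through this homomorphism gives $\kappa(1)\cdot\kappa(1) = \delta$ in $\mathcal{I}(P)$; writing $u := \kappa(1)$, this says $u$ is an involution of the incidence algebra with $u_{ss} = \kappa_{ss}(1) = 1$ for all $s \in P$.

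It then remains to check that any such $u$ equals $\delta$. I would argue by induction on $\rho_{st}$, using local finiteness of $P$ so that all intervals are finite. When $\rho_{st} = 0$ we have $s = t$ and $u_{ss} = 1 = \delta_{ss}$. For $s < t$, expand $0 = (u^2)_{st} = \sum_{s \le w \le t} u_{sw}u_{wt}$ and separate the terms $w = s$ and $w = t$, each of which contributes $u_{st}$; since $\rho_{sw} < \rho_{st}$ whenever $s < w < t$, the inductive hypothesis gives $u_{sw} = 0$ for those $w$, so $2u_{st} = 0$ and hence $u_{st} = 0$ because $\mathbb{Z}[x]$ is torsion-free. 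Thus $\kappa_{st}(1) = 0$ for all $s < t$, which is what we wanted.

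A shortcut that avoids the induction is to invoke Stanley's theorem (quoted earlier) that $\kappa = a^{-1}\cdot a^{\rev}$ for some invertible $a \in \mathcal{I}_{\rho}(P)$, and then simply apply the homomorphism $a \mapsto a(1)$: since a ring homomorphism carries inverses to inverses, $\kappa(1) = a(1)^{-1}\cdot(a^{\rev})(1) = a(1)^{-1}\cdot a(1) = \delta$. I do not expect any genuine obstacle in this lemma; the only points needing a little care are that evaluation at $1$ is multiplicative on the incidence algebra (a one-line check on the convolution formula) and the bookkeeping in the inductive step, in particular the strict inequality $\rho_{sw} < \rho_{st}$ for $s < w < t$, which follows from $\rho_{st} = \rho_{sw} + \rho_{wt}$ together with $\rho_{wt} > 0$.
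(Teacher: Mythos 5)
Your primary argument is correct and is essentially the paper's proof in different clothing: both reduce to evaluating $\kappa\kappa^{\rev}=\delta$ at $x=1$ and inducting on the interval to obtain $2\kappa_{st}(1)=0$. You package this as the abstract observation that an involution $u$ of the integer incidence algebra with $u_{ss}=1$ for all $s$ must equal $\delta$; the paper performs the identical computation directly on $\kappa$, inducting on the size of $[s,t]$ rather than on $\rho_{st}$, which is immaterial. Your shortcut, however, is a genuinely different route: since Stanley's theorem expresses $\kappa = a^{-1}a^{\rev}$ for some invertible $a\in\mathcal{I}_{\rho}(P)$, and evaluation at $x=1$ is a unital ring homomorphism under which $a^{\rev}$ and $a$ have the same image, one gets $\kappa(1)=a(1)^{-1}a(1)=\delta$ in one line with no induction at all. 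What the shortcut buys is brevity and a conceptual explanation (the ``reversal'' operation becomes invisible at $x=1$); what it costs is the reliance on a nontrivial structural theorem, whereas the paper's inductive proof is entirely self-contained and only uses the defining identity $\kappa\kappa^{\rev}=\delta$. One harmless slip: the entries $u_{st}=\kappa_{st}(1)$ live in $\mathbb{Z}$, not $\mathbb{Z}[x]$, but the needed implication $2u_{st}=0\Rightarrow u_{st}=0$ is of course the same.
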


\begin{proof}
    We proceed by induction on the size of the interval $[s,t]$. Consider first the case in which the element $s$ is covered by $t$. The condition $\kappa \kappa^{\rev} = \delta$ is equivalent to
    \[
    0 = \kappa_{ss}(x)\kappa_{st}^{\rev}(x) + \kappa_{st}(x)\kappa_{tt}^{\rev}(x) =  \kappa_{st}^{\rev}(x) + \kappa_{st}(x).
    \]
    This implies that $\kappa_{st}(x) = -x^{\rho_{st}}\kappa_{st}(x^{-1})$. By evaluating both sides at $x=1$ we see that $\kappa_{st}(1) = -\kappa_{st}(1)$, which implies the desired property. Now, for the induction step, notice that 
    \[
    \kappa_{st}(x) + \kappa_{st}^{\rev}(x) = - \sum_{s < w < t} \kappa_{sw}(x)\kappa_{wt}^{\rev}(x).
    \]
    As all the polynomials on the right-hand side are associated to smaller intervals that are not singletons, evaluating at $x=1$ gives $\kappa_{st}(1) + \kappa_{st}^{\rev}(1) = 0$, which finishes the proof.
\end{proof}

\begin{definition}\label{def:reduced-kernel}
    Let $\kappa$ be a $(P,\rho)$-kernel. We define the corresponding \emph{reduced $(P,\rho)$-kernel} as the element $\overline{\kappa}\in \mathcal{I}_{\rho}(P)$ given by
    \[ \overline{\kappa}_{st}(x) 
    = \begin{cases}
        \frac{1}{x-1}\, \kappa_{st}(x) & \text{ if $s < t$}\\
        -1 & \text{ if $s = t$}.
    \end{cases}\]
\end{definition}

The choice we impose on defining $\kappa_{ss}(x)$ as $-1$ (as opposed to $1$) is intentional. An alternative approach would be to define $\overline{\kappa}_{st}(x)$ as $\frac{1}{1-x} \kappa_{st}(x)$ for $s < t$ and as $1$ for $s=t$, but this might cause confusion with the standard notation for the reduced characteristic polynomial. Since the latter is a key motivating example, we prefer to follow the convention in Definition~\ref{def:reduced-kernel}. On another note, we warn the reader that in other sources the notation $\overline{\kappa}$ stands for what we denote here as $\kappa^{\rev}$. We prefer to use the overline to denote ``reduced'' instead of ``reversed''. 

\subsection{Definition of Chow functions}

Since Proposition~\ref{prop:inverses-incidence-algebra} guarantees that a reduced $(P,\rho)$-kernel is invertible, we are motivated to consider the following notion, which constitutes the primary object of study in this article.

\begin{definition}\label{def:chow-function}
    Let $\kappa$ be a $(P,\rho)$-kernel. We define the \emph{Chow function} associated to $\kappa$, or the \emph{$\kappa$-Chow function}, as the element $\H\in \mathcal{I}_{\rho}(P)$ defined by
    \[ \H = - \left(\overline{\kappa}\right)^{-1}.\]
    If the poset $P$ is bounded, the polynomial $\H_P(x)=\H_{\widehat{0}\,\widehat{1}}(x)$ will be customarily called the \emph{$\kappa$-Chow polynomial} of the poset.
\end{definition}

As a consequence of having defined $\overline{\kappa}_{ss}(x)$ as $-1$ for every $s\in P$, the minus sign appearing in the above definition guarantees that $\H_{ss}(x) = 1$ for every $s\in P$. 
Furthermore, we note that Definition~\ref{def:chow-function} is equivalent to either of the following properties.  For all $s<t$ in $P$,
    \begin{align}
        \H_{st}(x) &= \sum_{s < w \leq t} \overline{\kappa}_{sw}(x)\, \H_{wt}(x)\qquad \text{ or, dually,} \label{eq:recurrence-chow}\\
        \H_{st}(x) &= \sum_{s\leq w < t} \H_{sw}(x)\, \overline{\kappa}_{wt}(x).\label{eq:recurrence-chow2}
    \end{align}

In the subsequent sections of this article we will focus our attention on a number of interesting examples of Chow functions.  

The following is the basic toolkit of properties that general Chow functions satisfy regarding degree and symmetry.

\begin{proposition}\label{prop:degree-and-symmetry}
    Let $\kappa$ be a $(P,\rho)$-kernel, and let $\H\in \mathcal{I}_{\rho}(P)$ be the corresponding Chow function. The  following properties hold.
    \begin{enumerate}[\normalfont(i)]
        \item\label{it:degree-of-H} For every $s < t$, we have that
            \[ [x^{\rho_{st}-1}] \H_{st}(x) = [x^{\rho_{st}}] \kappa_{st}(x).\]
            In particular, if $\kappa$ is non-degenerate, then $\deg \H_{st}(x) = \rho_{st} - 1$ for every $s < t$.
        \item \label{it:chow-symmetry}The Chow function is symmetric, i.e.,
            \[ \H_{st}(x) = x^{\rho_{st} - 1}\, \H_{st}(x^{-1}) \qquad \text{ for every $s < t$}.\]
    \end{enumerate}
\end{proposition}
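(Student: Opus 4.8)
The plan is to deduce both parts from one structural rewriting of the defining relation. By Lemma~\ref{lemma:divisibility-x-1}, for $s<t$ the polynomial $\kappa_{st}(x)$ is divisible by $x-1$, so there is a unique $\nu\in\mathcal{I}_{\rho}(P)$ with $\nu_{ss}=0$ and $\kappa=\delta+(x-1)\nu$ (concretely $\nu_{st}=\overline{\kappa}_{st}$ for $s<t$, and $\deg\nu_{st}\leq\rho_{st}-1$). Then $\overline{\kappa}=\nu-\delta$, so
\[ \H=-(\overline{\kappa})^{-1}=(\delta-\nu)^{-1}=\sum_{k\geq 0}\nu^{k}, \]
the series terminating on each interval because $\nu_{ss}=0$ forces $\nu^{k}_{st}=0$ once $k>\rho_{st}$. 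I will read off all of the degree and symmetry bookkeeping from this geometric-series expansion.

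For Part~\ref{it:degree-of-H}: since $\nu_{uu}=0$ we have $\nu^{k}_{st}=\sum_{s=w_{0}<\cdots<w_{k}=t}\nu_{w_{0}w_{1}}\cdots\nu_{w_{k-1}w_{k}}$, and each product has degree at most $\sum_{i}(\rho_{w_{i-1}w_{i}}-1)=\rho_{st}-k$; as $\H_{st}=\sum_{k\geq 1}\nu^{k}_{st}$ for $s<t$, the coefficient of $x^{\rho_{st}-1}$ can come only from the term $k=1$, so $[x^{\rho_{st}-1}]\H_{st}=[x^{\rho_{st}-1}]\nu_{st}=[x^{\rho_{st}}]\kappa_{st}$ (the last equality because $\kappa_{st}=(x-1)\nu_{st}$ with $\deg\nu_{st}\leq\rho_{st}-1$). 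If $\kappa$ is non-degenerate this leading coefficient is nonzero, hence $\deg\H_{st}=\rho_{st}-1$ exactly.

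For Part~\ref{it:chow-symmetry}: applying $\rev$ to the recursion~\eqref{eq:recurrence-chow} does not close up, so instead I bring in the kernel $\kappa^{\rev}=\kappa^{-1}$ (Definition~\ref{def:p-kernel}), which is again a $(P,\rho)$-kernel; write $\kappa^{\rev}=\delta+(x-1)\nu'$ with $\nu'_{ss}=0$ exactly as above. A short direct computation from $(a^{\rev})_{st}(x)=x^{\rho_{st}}a_{st}(x^{-1})$ gives, for $s<t$,
\[ \nu^{\rev}_{st}(x)=x^{\rho_{st}}\nu_{st}(x^{-1})=\frac{x^{\rho_{st}+1}\kappa_{st}(x^{-1})}{1-x}=-x\cdot\frac{\kappa^{\rev}_{st}(x)}{x-1}=-x\,\nu'_{st}(x), \]
and both sides vanish for $s=t$, so $\nu^{\rev}=-x\,\nu'$. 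Since $\rev$ fixes $\delta$ and commutes with inverses, $\H^{\rev}=\big((\delta-\nu)^{-1}\big)^{\rev}=(\delta+x\nu')^{-1}$. It then suffices to check that $(1-x)\delta+x\H$ is a one-sided (hence two-sided) inverse of $\delta+x\nu'$: expanding the product and using $\H=\delta+\nu\H$ and $\H^{-1}=\delta-\nu$, the verification collapses to the identity $\nu+\nu'+(x-1)\nu'\nu=0$, which is precisely $\kappa^{\rev}\kappa=\delta$ after dividing through by $x-1$ (legitimate since $\mathbb{Z}[x]$ is an integral domain). Therefore $\H^{\rev}=(1-x)\delta+x\H$, i.e.\ $\H^{\rev}_{st}(x)=x\,\H_{st}(x)$ for every $s<t$; unwinding $\H^{\rev}_{st}(x)=x^{\rho_{st}}\H_{st}(x^{-1})$ then yields $\H_{st}(x)=x^{\rho_{st}-1}\H_{st}(x^{-1})$.

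The only genuinely delicate point is in Part~\ref{it:chow-symmetry}: recognizing that the right comparison object is the reduced kernel of $\kappa^{\rev}$ rather than the recursion itself, and being careful that every cancellation of the factor $x-1$ takes place inside $\mathbb{Z}[x]$ (equivalently in its fraction field), so that the formal manipulations with $\nu$ and $\nu'$ are valid even though $x-1$ is not invertible in the incidence algebra.
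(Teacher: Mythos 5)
Your proof is correct, and it takes a genuinely different route from the paper's. The paper proves both parts by induction on the size of $[s,t]$, using the recursion $\H_{st}=\overline{\kappa}_{st}+\sum_{s<w<t}\overline{\kappa}_{sw}\H_{wt}$ and, for the symmetry, a page-long chain of polynomial manipulations that cycles the kernel relation $\kappa\kappa^{\rev}=\delta$ through the sum. You instead decompose $\kappa=\delta+(x-1)\nu$ with $\nu$ strictly upper triangular (so $\overline{\kappa}=\nu-\delta$), observe that $\H=(\delta-\nu)^{-1}=\sum_{k\ge 0}\nu^{k}$ is a terminating geometric series, and read off everything from that. Part~(i) falls out immediately because the degree of $\nu^{k}_{st}$ drops by one for each step of the chain, so only the $k=1$ term can reach degree $\rho_{st}-1$—no induction needed. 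For Part~(ii) you bring in the analogous $\nu'$ for $\kappa^{\rev}$, prove the clean identity $\nu^{\rev}=-x\nu'$, and then verify the single element-level identity $\H^{\rev}=(1-x)\delta+x\H$ by checking that $(1-x)\delta+x\H$ inverts $\delta+x\nu'$; this reduces, after multiplying by $\H^{-1}=\delta-\nu$ and clearing $x$ and $x-1$ in $\mathbb{Z}[x]$, exactly to $\nu+\nu'+(x-1)\nu'\nu=0$, which is $\kappa^{\rev}\kappa=\delta$ in disguise. What your version buys is a shorter, induction-free, and more conceptual argument: symmetry of $\H$ is exposed as a direct consequence of the single kernel identity $\kappa^{\rev}=\kappa^{-1}$, rather than being reassembled step by step from smaller intervals. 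The one point worth stating explicitly (you gesture at it) is that $(\delta+x\nu')$ has invertible diagonal entries, so any one-sided inverse in $\mathcal{I}_\rho(P)$ is automatically two-sided, justifying the conclusion $\H^{\rev}=(1-x)\delta+x\H$ from a single-sided check.
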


\begin{proof}
    From equation~\eqref{eq:recurrence-chow} we can write
    \begin{equation}\label{eq:some-recursion}
    \H_{st}(x) = \overline{\kappa}_{st}(x) + \sum_{s < w < t} \overline{\kappa}_{sw}(x)\H_{wt}(x).
    \end{equation}
    We first prove both claims in the case in which $s$ is covered by $t$. Notice that the above sum simplifies to
    \[ \H_{st}(x) = \overline{\kappa}_{st}(x).\]
    In particular $[x^{\rho_{st}-1}] \H_{st}(x) = [x^{\rho_{st}-1}]\overline{\kappa}_{st}(x) = [x^{\rho_{st}}]\kappa_{st}(x)$. On the other hand since $s$ is covered by $t$, proceeding as in the proof of Lemma~\ref{lemma:divisibility-x-1} the condition $\kappa\kappa^{\rev} = \delta$ tells us that
        \[ \kappa_{st}^{\rev}(x) + \kappa_{st}(x) = 0,\]
    which in turn is equivalent to
        \[ 0 = x^{\rho_{st}}\kappa_{st}(x^{-1}) + \kappa_{st}(x) = x^{\rho_{st}} (x^{-1}-1) \overline{\kappa}_{st}(x^{-1}) + (x-1)\overline{\kappa}_{st}(x). \]
    Hence, since $\H_{st}(x) = \overline{\kappa}_{st}(x)$, the last equation reduces to
        \[ x^{\rho_{st}-1} (1-x) \H_{st}(x^{-1}) + (x-1) \H_{st}(x) = 0,\]
    which can be simplified to $x^{\rho_{st}-1} \H_{st}(x^{-1}) = \H_{st}(x)$.  Hence we have proved our claims under the assumption that $s$ is covered by $t$.
    
    Now, proceeding by induction on the size of the interval $[s,t]$, every term in the sum on the right-hand side of equation~\eqref{eq:some-recursion} has degree at most $\rho_{sw}-1 + \rho_{wt}-1 = \rho_{st}-2$. Therefore $[x^{\rho_{st}-1}]\H_{st}(x) = [x^{\rho_{st}-1}]\overline{\kappa}_{st}(x) = [x^{\rho_{st}}] \kappa_{st}(x)$. Now, to prove the second property, we multiply the formula of equation~\eqref{eq:some-recursion} by $x-1$ and subtract the polynomial $\kappa_{st}(x)$ from both sides:
    \begin{align}
        (x-1)\H_{st}(x) - \kappa_{st}(x) &= \sum_{s<w<t}\kappa_{sw}(x)\H_{wt}(x) \label{step-0}\\
        &= \sum_{s<w<t}\kappa_{sw}(x)x^{\rho_{wt}-1}\H_{wt}(x^{-1}) \label{step-1}\\
        &= \sum_{s<w<t} \left(-\sum_{s\leq u <w}\kappa_{su}(x)x^{\rho_{uw}}\kappa_{uw}(x^{-1}) \right) x^{\rho_{wt}-1}\H_{wt}(x^{-1}) \label{step-2}\\
        &= -\sum_{s\leq u<t}\kappa_{su}(x) x^{\rho_{ut}-1} \left(\sum_{u < w < t}\kappa_{uw}(x^{-1})\H_{wt}(x^{-1}) \right) \nonumber\\
        &= -\sum_{s\leq u<t}\kappa_{su}(x) x^{\rho_{ut}-1} \left((x^{-1}-1)\H_{ut}(x^{-1}) - \kappa_{ut}(x^{-1}) \right) \label{step-3}\\
        &=x^{-1} \sum_{s\leq u < t}\kappa_{su}(x)x^{\rho_{ut}}\kappa_{ut}(x^{-1}) \\
        &\qquad\qquad - \frac{1-x}{x}\sum_{s\leq u < t}\kappa_{su}(x)x^{\rho_{ut}-1}\H_{ut}(x^{-1})\nonumber \\
        &= -\frac{1}{x}\kappa_{st}(x) + \frac{x-1}{x}\sum_{s\leq u < t}\kappa_{su}(x)x^{\rho_{ut}-1}\H_{ut}(x^{-1}),\label{step-4}
    \end{align}
    where in \eqref{step-1} we used the induction hypothesis since $[w,t]$ is a strictly smaller interval, in \eqref{step-2} we used that $\kappa$ is a $(P,\rho)$-kernel, in \eqref{step-3} we used equation~\eqref{step-0} again but changing the variable $x$ by $x^{-1}$ and $w$ by $u$, while in \eqref{step-4} we used again that $\kappa$ is a $(P,\rho)$-kernel. Note that it is possible to simplify the equality between the left-hand side of~\eqref{step-0} and the right-hand side of~\eqref{step-4}:
    \[
    x\H_{st}(x) = \kappa_{st}(x) + \sum_{s\leq u < t}\kappa_{su}(x) x^{\rho_{ut}-1}\H_{ut}(x^{-1}).
    \]
    Using the induction hypothesis once more, but now on the intervals $[u,t]$ for $s < u$, we write $x^{\rho_{ut}-1}\H_{ut}(x^{-1}) = \H_{ut}(x)$ and get
    \[
    x\H_{st}(x)= \kappa_{st}(x) + x^{\rho_{st}-1}\H_{st}(x^{-1}) + \sum_{s<u< t}\kappa_{su}(x)\H_{ut}(x).
    \]
    To conclude, observe that adding $\H_{st}(x)$ to both sides in equation~\eqref{step-0} and combining the result with the last formula we obtained, it is possible to see that $\H_{st}(x) = x^{\rho_{st}-1}\H_{st}(x^{-1})$, and the proof is complete.
\end{proof}

\begin{example}\label{example:negative-chow-functions}
    Continuing with the poset in Example~\ref{example:ad-hoc-kernel}, the Chow polynomial of $P$ can be computed by hand, yielding $\H_P(x) = x^2 +(m+7) x + 1$. Notice that by choosing any integer $m \leq -7$ the Chow function fails to be unimodal, and by choosing $m\leq - 8$ we can produce a Chow polynomial attaining a negative coefficient.
\end{example}

The following proposition provides an alternative characterization of Chow functions, that shows that it fulfills \emph{simultaneously} the key properties  \ref{it:f-iii} and \ref{it:g-iii} of both the right and left KLS functions in Theorem~\ref{thm:kls-functions}, but dropping instead the assumption on having degree at most half the rank.

\begin{proposition}\label{prop:alt-characterization-chow}
Let $\kappa$ be a $(P,\rho)$-kernel. The Chow function $\H$ is the unique element in $\mathcal{I}_{\rho}(P)$ such that 
    \begin{enumerate}[\normalfont(i)]
        \item $\H_{ss}(x) = 1$ for all $s\in P$,
        \item For every $s<t$ the polynomial $\H_{st}(x)$ is symmetric, with center of symmetry $\frac{1}{2}(\rho_{st}-1)$.
        \item $\kappa \H = \H^{\rev}$ or $\H\kappa = \H^{\rev}$.
    \end{enumerate}
\end{proposition}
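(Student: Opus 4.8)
The plan is to prove that $\H$ satisfies the three properties (existence), and then that these three properties determine $\H$ uniquely. For existence, property (i) is immediate from the sign conventions, as already noted after Definition~\ref{def:chow-function}. Property (ii) is exactly Proposition~\ref{prop:degree-and-symmetry}\ref{it:chow-symmetry}. For property (iii), I would rewrite the defining equation $\overline{\kappa}\,\H = -\delta$, or rather unwind the recurrence~\eqref{eq:recurrence-chow}, keeping track of the factor $x-1$. Concretely, starting from $\H_{st}(x) = \sum_{s<w\le t}\overline{\kappa}_{sw}(x)\H_{wt}(x)$ and multiplying through by $x-1$, one gets $(x-1)\H_{st}(x) = \kappa_{st}(x) + \sum_{s<w<t}\kappa_{sw}(x)\H_{wt}(x)$. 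Using the symmetry $\H_{wt}(x) = x^{\rho_{wt}-1}\H_{wt}(x^{-1})$ (Proposition~\ref{prop:degree-and-symmetry}) together with $\H_{st}(x) = x^{\rho_{st}-1}\H_{st}(x^{-1})$, this relation should transform into precisely the convolution identity $\kappa\H = \H^{\rev}$ after a short manipulation; indeed, the chain of equalities \eqref{step-0}--\eqref{step-4} in the proof of Proposition~\ref{prop:degree-and-symmetry} already essentially establishes a relation of this type, so I would extract the statement $\kappa\H = \H^{\rev}$ from that computation rather than redoing it. The identity $\H\kappa = \H^{\rev}$ follows symmetrically from~\eqref{eq:recurrence-chow2}, or by applying the involution $\rev$ (which is multiplicative) to $\kappa\H = \H^{\rev}$, using $\kappa^{\rev} = \kappa^{-1}$ and $(\H^{\rev})^{\rev} = \H$.

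For uniqueness, suppose $\H'\in\mathcal{I}_\rho(P)$ satisfies (i), (ii), and the identity $\kappa\H' = (\H')^{\rev}$ (the case $\H'\kappa = (\H')^{\rev}$ is handled identically, or by reversal). I would argue by induction on the cardinality of the interval $[s,t]$ that $\H'_{st} = \H_{st}$. The base case $s=t$ is property (i). For the inductive step, fix $s<t$ and assume $\H'_{uw} = \H_{uw}$ for all proper subintervals $[u,w]\subsetneq[s,t]$. Both $\H$ and $\H'$ satisfy $\kappa\H = \H^{\rev}$, so subtracting gives $\kappa(\H-\H') = (\H-\H')^{\rev}$; isolating the $[s,t]$-entry and using $\kappa_{ss}=\kappa_{tt}=1$ together with the inductive hypothesis (which kills all the intermediate terms), we obtain
\[
(\H_{st} - \H'_{st}) + \sum_{s<w<t}\kappa_{sw}(\H_{wt}-\H'_{wt}) + \kappa_{st}\cdot 1 \;=\; (\H_{st}-\H'_{st})^{\rev} + \kappa_{st},
\]
wait — more carefully, the $[s,t]$-component of $\kappa(\H-\H')$ is $\kappa_{ss}(\H-\H')_{st} + (\H-\H')_{ss}\kappa_{st} + \sum_{s<w<t}\kappa_{sw}(\H-\H')_{wt}$, and by induction the middle term vanishes (since $(\H-\H')_{ss}=0$) and the sum vanishes, leaving $(\H-\H')_{st} = ((\H-\H')_{st})^{\rev}$. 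So $D := \H_{st}-\H'_{st}$ satisfies $D(x) = x^{\rho_{st}}D(x^{-1})$, i.e. $D$ is palindromic with center of symmetry $\rho_{st}/2$. On the other hand, both $\H_{st}$ and $\H'_{st}$ are palindromic with center of symmetry $(\rho_{st}-1)/2$ by property (ii), hence so is $D$: $D(x) = x^{\rho_{st}-1}D(x^{-1})$. A polynomial that is simultaneously symmetric about $\rho_{st}/2$ and about $(\rho_{st}-1)/2$ must be zero — comparing coefficients, $[x^i]D = [x^{\rho_{st}-i}]D = [x^{\rho_{st}-1-i}]D$ for all $i$, which forces all coefficients equal and, since $\deg D \le \rho_{st}$ is finite, forces $D = 0$. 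This completes the induction.

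The main obstacle I anticipate is purely bookkeeping: making sure that the manipulation turning the recurrence for $\H$ into the clean convolution identity $\kappa\H = \H^{\rev}$ is carried out with the correct factors of $x-1$, $x^{\rho_{st}}$, and signs — essentially repackaging the computation \eqref{step-0}--\eqref{step-4}. The conceptual content (the "two centers of symmetry force vanishing" trick for uniqueness) is short; the risk is an arithmetic slip in the existence half. One clean way to sidestep the recomputation entirely for existence of (iii) is to observe that $\H = -(\overline{\kappa})^{-1}$ gives $\overline{\kappa}\,\H = -\delta = \H\,\overline{\kappa}$, and then relate $\overline{\kappa}$ to $\kappa$ via the element $\eta\in\mathcal{I}(P)$ with $\eta_{st} = x-1$ for $s<t$ and $\eta_{ss} = -1$, so that $\kappa = -\eta\cdot\overline{\kappa}$ on off-diagonal entries — though one must be careful that this factorization is not literally a product in $\mathcal{I}(P)$, so in the write-up I would likely just do the direct computation and cross-reference the steps already appearing in Proposition~\ref{prop:degree-and-symmetry}.
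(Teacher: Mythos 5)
Your existence argument is essentially the same as the paper's (multiply the recurrence by $x-1$, absorb the $w=s$ term to get $x\H_{st}=(\kappa\H)_{st}$, then use Proposition~\ref{prop:degree-and-symmetry}\ref{it:chow-symmetry}), but your uniqueness argument is genuinely different. The paper works backwards: from the hypotheses it recovers $\H_{st}=\sum_{s<w\le t}\overline{\kappa}_{sw}\H_{wt}$, i.e.\ $\overline{\kappa}\H=-\delta$, and then invokes uniqueness of inverses in $\mathcal{I}(P)$. You instead run an induction on interval size and argue at the level of a single polynomial: the difference $D=\H_{st}-\H'_{st}$ is simultaneously palindromic about $\rho_{st}/2$ (from the convolution identity, after the inductive hypothesis kills the interior terms) and about $(\rho_{st}-1)/2$ (from property (ii)), and a nonzero polynomial of finite degree cannot have two distinct centers of symmetry. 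Both arguments are correct; the paper's is more compact because it converts directly back to the defining inverse characterization, while yours isolates a clean polynomial-level observation that may be reusable elsewhere. Either way the inductive setup is short.

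One quibble worth flagging: your proposed shortcut for getting the second identity in (iii) does not work as stated. Applying $\rev$ to $\kappa\H=\H^{\rev}$ (using that $\rev$ is a ring homomorphism and $\kappa^{\rev}=\kappa^{-1}$, $(\H^{\rev})^{\rev}=\H$) gives $\kappa^{-1}\H^{\rev}=\H$, which upon left-multiplication by $\kappa$ returns exactly $\kappa\H=\H^{\rev}$ — it is circular and never produces $\H\kappa=\H^{\rev}$. The two identities are independent; they both hold because $-\overline{\kappa}\H=\delta=-\H\overline{\kappa}$, i.e.\ $\H$ is a two-sided inverse. The derivation you give from equation~\eqref{eq:recurrence-chow2} is the correct one; it is not obtainable from the first identity by symmetry of the involution.

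Also a minor bookkeeping slip in your uniqueness paragraph: the $w=t$ term of the convolution $(\kappa(\H-\H'))_{st}$ is $\kappa_{st}(\H-\H')_{tt}$, not $(\H-\H')_{ss}\kappa_{st}$, though both vanish by (i) so the conclusion is unaffected.
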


\begin{proof}
    If  $s<t$, by multiplying equation~\eqref{eq:recurrence-chow} by $x-1$, we have:
    \[ (x-1)\H_{st}(x) = \sum_{s < w \leq t} \kappa_{sw}(x) \H_{wt},\]
    which after adding $\H_{st}$ to both sides translates into
    \[ x\H_{st}(x) = \sum_{s\leq w \leq t} \kappa_{sw}(x) \H_{wt}(x) = (\kappa\,\H)_{st}(x).\]
    Applying Proposition~\ref{prop:degree-and-symmetry}\ref{it:chow-symmetry} we have that
    \[\H^{\rev}_{st}(x) = x^{\rho_{st}}\H_{st}(x^{-1}) = x\H_{st}(x) = (\kappa\,\H)_{st}(x),\]
    which says that $\H^{\rev} = \kappa\,\H$. On the other hand, by multiplying equation~\eqref{eq:recurrence-chow2} by $x-1$ and adding $\H_{st}(x)$
    \[ x\H_{st}(x) = \sum_{s\leq w \leq t} \H_{sw}(x) \kappa_{wt}(x) = \H\kappa.\]
    Repeating the reasoning of above, this proves that $\H\kappa = \H^{\rev}$. Now, from any element $\H$ satisfying the assumptions of the statement, we obtain that for every $s<t$ 
    \[
    x\H_{st}(x) = \sum_{s \leq w \leq t}\kappa_{sw}(x)\H_{wt}(x).
    \]
    By moving the term corresponding to $w=s$ to the left-hand side and dividing by $x-1$ we obtain that
    \[
    \H_{st}(x) = \sum_{s < w \leq t}\overline{\kappa}_{sw}(x)\H_{wt}(x),
    \]
    or, equivalently, that $\overline{\kappa}\H = \delta$. The uniqueness of the inverse in $\mathcal{I}(P)$ lets us conclude.
\end{proof}

Lastly, the next proposition can be seen as the Chow counterpart of \cite[Proposition~2.5]{proudfoot-kls} and \cite[Theorem~6.5]{stanley-local}.

\begin{proposition}
    Let $\H$ be an element of $\mathcal{I}_\rho(P)$ such that $\H_{ss}(x) = 1$ for all $s \in P$, and such that for every $s<t$ the polynomial $\H_{st}(x)$ is symmetric with center of symmetry $\frac{1}{2}(\rho_{st}-1)$. There exists a unique $(P, \rho)$-kernel $\kappa$ such that $\H$ is the associated Chow function. 
\end{proposition}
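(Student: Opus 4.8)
The plan is to construct $\kappa$ directly from $\H$ and then verify both that it is a $(P,\rho)$-kernel and that $\H$ is its Chow function; uniqueness will follow from the recursive nature of the construction. Given $\H$ as in the hypothesis, the defining recurrence~\eqref{eq:recurrence-chow} forces the candidate: we must have $\overline{\kappa}\,\H = \delta$, so $\overline{\kappa} := \H^{-1}$ (which exists by Proposition~\ref{prop:inverses-incidence-algebra}, since $\H_{ss}(x)=1$), and then we set $\kappa_{st}(x) := (x-1)\,\overline{\kappa}_{st}(x)$ for $s<t$ and $\kappa_{ss}(x):=1$. First I would check that this $\kappa$ actually lies in $\mathcal{I}_\rho(P)$, i.e.\ that $\deg \kappa_{st}\le \rho_{st}$: unwinding $\H^{-1}$ via the Möbius-style recursion and using $\deg \H_{st}\le \rho_{st}-1$ (a consequence of the symmetry hypothesis, which pins the center of symmetry at $\tfrac12(\rho_{st}-1)$) gives $\deg \overline{\kappa}_{st}\le \rho_{st}-1$ by an easy induction on the size of $[s,t]$, hence $\deg\kappa_{st}\le\rho_{st}$.

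The heart of the argument is showing $\kappa^{-1}=\kappa^{\rev}$. Here I would mimic the computation in the proof of Proposition~\ref{prop:alt-characterization-chow}. Running that argument backwards: from $\overline{\kappa}\,\H=\delta$ we get, after multiplying by $x-1$ and adding $\H_{st}$, the identity $x\,\H_{st}(x) = (\kappa\,\H)_{st}(x)$ for $s<t$ (and trivially for $s=t$). Now apply the symmetry hypothesis, $\H^{\rev}_{st}(x)=x^{\rho_{st}}\H_{st}(x^{-1}) = x\,\H_{st}(x)$, to conclude $\H^{\rev}=\kappa\,\H$. Since $\rev$ is an involution commuting with products and inverses, applying $\rev$ to $\H^{\rev}=\kappa\,\H$ yields $\H = \kappa^{\rev}\,\H^{\rev} = \kappa^{\rev}\kappa\,\H$, and cancelling the invertible $\H$ on the right gives $\kappa^{\rev}\kappa=\delta$, i.e.\ $\kappa^{-1}=\kappa^{\rev}$. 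Together with $\kappa_{ss}(x)=1$ this is exactly Definition~\ref{def:p-kernel}, so $\kappa$ is a $(P,\rho)$-kernel; and by construction $\overline{\kappa}=\H^{-1}$, so $\H = -(-\H^{-1})^{-1}$... more precisely $\overline\kappa$ (with $\overline\kappa_{ss}=-1$) satisfies $-(\overline\kappa)^{-1}=\H$, which is Definition~\ref{def:chow-function}. One subtlety I would be careful about: the reduced kernel in Definition~\ref{def:reduced-kernel} has $\overline{\kappa}_{ss}(x)=-1$, not $+1$, so the bookkeeping of signs must be done consistently — defining $\overline{\kappa}:= -\H^{-1}$ rather than $\H^{-1}$ — but this is purely cosmetic.

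For uniqueness: if $\kappa$ and $\kappa'$ are both $(P,\rho)$-kernels with Chow function $\H$, then by definition $-(\overline{\kappa})^{-1}=\H=-(\overline{\kappa'})^{-1}$, hence $\overline{\kappa}=\overline{\kappa'}$ as the inverse is unique, and therefore $\kappa_{st}=(x-1)\overline{\kappa}_{st}=(x-1)\overline{\kappa'}_{st}=\kappa'_{st}$ for $s<t$, while $\kappa_{ss}=1=\kappa'_{ss}$; so $\kappa=\kappa'$. The only step requiring genuine care — and the one I would expect to be the main obstacle — is verifying $\kappa\in\mathcal{I}_\rho(P)$, because it is the one place where the precise normalization of the center of symmetry (as opposed to mere palindromicity) is used; everything else is formal manipulation in the incidence algebra essentially identical to arguments already carried out for Proposition~\ref{prop:alt-characterization-chow}.
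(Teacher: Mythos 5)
Your proof is correct and follows essentially the same route as the paper: the paper directly sets $\kappa = \H^{\rev}\H^{-1}$, checks $\kappa\H = \H^{\rev}$, and invokes Proposition~\ref{prop:alt-characterization-chow}, while you arrive at the same $\kappa$ via the reduced kernel $\overline{\kappa}=-\H^{-1}$ and verify $\kappa^{\rev}\kappa=\delta$ by the same $\rev$-algebra; your uniqueness argument (uniqueness of the inverse) is slightly slicker than the paper's induction on intervals. One can avoid the inductive degree count entirely by observing that $\kappa=\H^{\rev}\H^{-1}$ lies in $\mathcal{I}_\rho(P)$ because that subalgebra is closed under $\rev$ and under inverses; aside from the sign wobble you flag and correct, everything checks out.
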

\begin{proof}
    We define $\kappa = \H^{\rev}\H^{-1}$. This is clearly a $(P,\rho)$-kernel (cf. the discussion below Definition~\ref{def:p-kernel}) and $\kappa \H = \H^{\rev}$, hence $\H$ is the $\kappa$-Chow function by Proposition~\ref{prop:alt-characterization-chow}. Now consider a different function $\widetilde{\kappa}$ also having $\H$ as its associated Chow function. We trivially have that $\widetilde{\kappa}_{ss}(x) = \kappa_{ss}(x)$ for each $s\in P$. For a non-trivial interval $[s,t]$, by expanding the convolutions $\widetilde{\kappa}\H = \H^{\rev}$ and $\kappa\H = \H^{\rev}$ and by proceeding by induction on the size of the intervals using the assumption that $\widetilde{\kappa}_{s't'}(x) = \kappa_{s't'}(x)$ on all intervals $[s',t']$ that are smaller in size than $[s,t]$, it is straightforward to conclude that $\widetilde{\kappa}_{st}(x) = \kappa_{st}(x)$, and thus $\widetilde{\kappa} = \kappa$ in $\mathcal{I}_{\rho}(P)$.
\end{proof}

\subsection{The relation between KLS and Chow functions}

Our goal now is to establish a general set of formulas that link the KLS functions with the Chow functions. Although these formulas are valid at a great level of generality, even specific examples of them are highly non-trivial when specializing to some of the concrete examples mentioned in the subsequent sections. We start with a preparatory lemma. Throughout the rest of this section, we assume that $\kappa$ denotes a $(P,\rho)$-kernel, $\H$ denotes the Chow function, and $f$ (resp. $g$) denotes the right (resp. left) KLS function.

\begin{lemma}\label{lem:kbarf-and-gkbar}
	The products in $\mathcal{I}_{\rho}(P)$ between the KLS functions and the reduced $(P,\rho)$-kernel are given by 
	\[(\overline{\kappa} f)_{st}(x) =  \begin{cases}
		\dfrac{f^{\rev}_{st}(x) - xf_{st}(x)}{x-1} & \text{ if $ s < t$, }\\
		-1 & \text{ if $s=t$},
	\end{cases} \qquad (g\overline{\kappa})_{st}(x) = \begin{cases}
		\dfrac{g^{\rev}_{st}(x) - xg_{st}(x)}{x-1} & \text{ if $ s < t$,}\\
		-1 & \text{ if $s=t$}.
	\end{cases}\]
\end{lemma}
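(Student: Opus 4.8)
The plan is to compute the convolution $\overline{\kappa} f$ directly from the definitions, using the defining relation $f^{\rev} = \kappa f$ of the right KLS function (Theorem~\ref{thm:kls-functions}\ref{it:f-iii}) together with the relation $\kappa = (x-1)\overline{\kappa}$ on non-trivial intervals and $\overline{\kappa}_{ss} = -1$. First I would dispose of the diagonal case: since $f_{ss} = 1$ and $\overline{\kappa}_{ss} = -1$, the only term in $(\overline{\kappa} f)_{ss}$ is $\overline{\kappa}_{ss}(x) f_{ss}(x) = -1$, which matches the claim. The symmetric computation handles $(g\overline{\kappa})_{ss} = -1$.

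For a non-trivial interval $[s,t]$ with $s<t$, I would start from $(\overline{\kappa} f)_{st}(x) = \sum_{s\le w\le t}\overline{\kappa}_{sw}(x) f_{wt}(x)$ and split off the $w=s$ term, which contributes $\overline{\kappa}_{ss}(x)f_{st}(x) = -f_{st}(x)$. For the remaining terms $s<w\le t$ we have $\overline{\kappa}_{sw}(x) = \frac{1}{x-1}\kappa_{sw}(x)$, so
\[
(\overline{\kappa} f)_{st}(x) = -f_{st}(x) + \frac{1}{x-1}\sum_{s<w\le t}\kappa_{sw}(x) f_{wt}(x).
\]
Now the sum $\sum_{s<w\le t}\kappa_{sw}(x)f_{wt}(x)$ is exactly $(\kappa f)_{st}(x) - \kappa_{ss}(x)f_{st}(x) = (\kappa f)_{st}(x) - f_{st}(x)$, and by the KLS relation $(\kappa f)_{st} = f^{\rev}_{st}$. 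Substituting gives
\[
(\overline{\kappa} f)_{st}(x) = -f_{st}(x) + \frac{f^{\rev}_{st}(x) - f_{st}(x)}{x-1} = \frac{-f_{st}(x)(x-1) + f^{\rev}_{st}(x) - f_{st}(x)}{x-1} = \frac{f^{\rev}_{st}(x) - x f_{st}(x)}{x-1},
\]
which is the desired formula. The computation for $g\overline{\kappa}$ is word-for-word analogous, splitting off the $w=t$ term of $(g\overline{\kappa})_{st} = \sum_{s\le w\le t} g_{sw}(x)\overline{\kappa}_{wt}(x)$, using $\overline{\kappa}_{tt} = -1$, rewriting the remaining terms via $\overline{\kappa}_{wt} = \frac{1}{x-1}\kappa_{wt}$, and invoking $g^{\rev} = g\kappa$ from Theorem~\ref{thm:kls-functions}\ref{it:g-iii}.

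There is essentially no obstacle here: the only point requiring a word of care is that the quotient $\frac{f^{\rev}_{st}(x) - x f_{st}(x)}{x-1}$ is genuinely a polynomial (so that the claimed formula makes sense inside $\mathcal{I}_\rho(P)$), but this is automatic since $\overline{\kappa} f$ lives in the incidence algebra over $\mathbb{Z}[x]$ by construction, so the left-hand side is a polynomial and hence so is the right-hand side; alternatively one sees it by plugging $x=1$ into $f^{\rev}_{st}(x) - x f_{st}(x)$ and using $f^{\rev}_{st}(1) = f_{st}(1)$. One should also check the degree bound $\deg(\overline{\kappa} f)_{st} \le \rho_{st}$ to confirm membership in $\mathcal{I}_\rho(P)$, but this follows since $\deg f^{\rev}_{st} \le \rho_{st}$ and $\deg(x f_{st}) \le 1 + \rho_{st}/2 \le \rho_{st}$ (for $\rho_{st}\ge 2$; the covering case is immediate), so dividing by $x-1$ drops the degree by one. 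I expect the whole argument to be short and purely formal, the ``main obstacle'' being merely bookkeeping of which endpoint term to peel off in each of the two cases.
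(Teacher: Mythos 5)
Your argument is correct and is essentially the same as the paper's: peel off the endpoint term of the convolution, express the remaining terms via $\kappa = (x-1)\overline{\kappa}$, and invoke the defining relation $\kappa f = f^{\rev}$ (resp. $g\kappa = g^{\rev}$). The only cosmetic difference is that the paper writes out the $g$ case and sketches $f$, whereas you do the reverse; your added checks on polynomiality and degree are correct but not needed since the left-hand side visibly lives in $\mathcal{I}_\rho(P)$.
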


\begin{proof}
	We will only do the proof for $g$ since the one for $f$ is very similar. By Theorem~\ref{thm:kls-functions}\ref{it:g-iii} we have that $g\kappa = g^{\rev}$. In particular, for every $s\leq t$ we have
		\[g^{\rev}_{st}(x) = \sum_{s\leq w\leq t} g_{sw}(x)\, \kappa_{wt}(x).\]
	Subtracting on both sides the term on the right corresponding to $w = t$, we obtain
		\[
		g^{\rev}_{st}(x)  - g_{st}(x) = \sum_{s\leq w < t} g_{sw}(x)\, \kappa_{wt}(x),\\
		= (x - 1) \sum_{s\leq w < t} g_{sw}(x)\, \overline{\kappa}_{wt}(x).
		\]
	Hence, it follows that
		\[
			(g\overline{\kappa})_{st}(x) = \sum_{s\leq w \leq t} g_{sw}(x)\, \overline{\kappa}_{wt}(x)\\
			= \tfrac{1}{x-1} \left(g^{\rev}_{st}(x)  - g_{st}(x)\right) - g_{st}(x) = \frac{g^{\rev}_{st}(x)  - xg_{st}(x)}{x-1},
		\]
		as desired.
\end{proof}

We are now ready to state and prove one of the most important tools we will employ throughout the rest of the paper. We will refer to the following formulas as ``numerical canonical decompositions'' because, as we will argue in Section~\ref{subsec:hodge-theory}, they are numerical shadows of deep algebro-geometric properties of certain cohomologies.

\begin{theorem}[Numerical canonical decomposition]\label{thm:ncd}
    The Chow function can be computed from the KLS functions as follows:
    \begin{align}
        \H_{st}(x) &= \frac{f^{\rev}_{st}(x) - f_{st}(x)}{x-1} + \sum_{s < w < t}\H_{sw}(x) \,\frac{f^{\rev}_{wt}(x) - x f_{wt}(x)}{x-1} ,\label{eq:ncd-f}\\
        \H_{st}(x) &= \frac{g^{\rev}_{st}(x) - g_{st}(x)}{x-1} + \sum_{s < w < t}\frac{g^{\rev}_{sw}(x) - x g_{sw}(x)}{x-1}\, \H_{wt}(x).\label{eq:ncd-g}
    \end{align}
\end{theorem}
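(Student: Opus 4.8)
The plan is to exploit the fact that, by Definition~\ref{def:chow-function}, $\H = -(\overline{\kappa})^{-1}$, so that $-\H$ and $\overline{\kappa}$ are mutually inverse in $\mathcal{I}_{\rho}(P)$; since inverses in an incidence algebra are two-sided (Proposition~\ref{prop:inverses-incidence-algebra}), this gives
\[ \overline{\kappa}\,\H = \H\,\overline{\kappa} = -\delta. \]
I would then compute the two triple products $\H\,\overline{\kappa}\,f$ and $g\,\overline{\kappa}\,\H$ in two different ways each: on one side, using associativity of the convolution to bracket off the $-\delta$; on the other side, expanding through the explicit formulas for $\overline{\kappa}f$ and $g\overline{\kappa}$ provided by Lemma~\ref{lem:kbarf-and-gkbar}. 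Reading off the coefficient on a fixed interval $[s,t]$ will produce \eqref{eq:ncd-f} and \eqref{eq:ncd-g} respectively.

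For \eqref{eq:ncd-f}: bracketing as $(\H\,\overline{\kappa})\,f = (-\delta)\,f = -f$, while bracketing as $\H\,(\overline{\kappa}\,f)$ and expanding the convolution on $[s,t]$ using the entries of $\overline{\kappa}f$ from Lemma~\ref{lem:kbarf-and-gkbar} (in particular $(\overline{\kappa}f)_{tt} = -1$), one arrives at
\[ -f_{st}(x) = -\H_{st}(x) + \sum_{s\le w<t}\H_{sw}(x)\,\frac{f^{\rev}_{wt}(x)-xf_{wt}(x)}{x-1}. \]
I would solve this for $\H_{st}(x)$, separate off the $w=s$ summand (which is $\dfrac{f^{\rev}_{st}(x)-xf_{st}(x)}{x-1}$, since $\H_{ss}=1$), and absorb the leftover $f_{st}(x)$ via the elementary identity $f_{st}(x)+\dfrac{f^{\rev}_{st}(x)-xf_{st}(x)}{x-1} = \dfrac{f^{\rev}_{st}(x)-f_{st}(x)}{x-1}$; this is precisely \eqref{eq:ncd-f}. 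The identity \eqref{eq:ncd-g} follows from the mirror argument applied to $g\,\overline{\kappa}\,\H$: one bracketing gives $g\,(\overline{\kappa}\,\H) = g\,(-\delta) = -g$, the other gives $(g\,\overline{\kappa})\,\H$, which I would expand with the formula for $g\overline{\kappa}$ from Lemma~\ref{lem:kbarf-and-gkbar}; isolating $\H_{st}(x)$ and peeling off the $w=t$ term (rather than the $w=s$ term) yields \eqref{eq:ncd-g}.

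The step most prone to error is the algebraic bookkeeping: tracking the signs coming from $\overline{\kappa}_{ss}=-1$ and the diagonal entries $-1$ in Lemma~\ref{lem:kbarf-and-gkbar}, and confirming that every quotient by $x-1$ above is a genuine polynomial in $\mathbb{Z}[x]$ — $\dfrac{f^{\rev}_{wt}-xf_{wt}}{x-1}$ because it equals $(\overline{\kappa}f)_{wt}$, and $\dfrac{f^{\rev}_{st}-f_{st}}{x-1}$ because it equals $f_{st}+(\overline{\kappa}f)_{st}$, and symmetrically for $g$. Beyond this there is no genuine obstacle: the conceptual content has already been isolated in Lemma~\ref{lem:kbarf-and-gkbar}, and the theorem is essentially the two-sided inverse relation $\overline{\kappa}\,\H = \H\,\overline{\kappa} = -\delta$ rewritten through that lemma.
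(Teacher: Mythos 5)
Your proposal is correct and tracks the paper's own argument exactly: the paper likewise writes $g\,\delta = g(-\overline{\kappa})\H$, uses Lemma~\ref{lem:kbarf-and-gkbar} to expand $(g\overline{\kappa})\H$, extracts $\H_{st}$, peels off the $w=t$ term, and absorbs the leftover via $p + \frac{p^{\rev}-xp}{x-1} = \frac{p^{\rev}-p}{x-1}$, with the $f$-version declared analogous. Your bookkeeping of the $-1$ diagonal entries and the polynomiality of the quotients is also right, so there is nothing to correct.
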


\begin{proof}
    We only show the second one, as the proof of the first is entirely analogous. We write 
    \[ g = g\delta = g(-\overline\kappa)\H = -(g\overline{\kappa})\H.
    \]
    Now, from Lemma~\ref{lem:kbarf-and-gkbar} we can compute $g\overline{\kappa}$, and convolving this with $\H$ gives us
    \begin{align*}
        g_{st}(x) &= -\left(-\H_{st}(x) + \sum_{s < w \leq t} \frac{g_{sw}^{\rev}(x) - x g_{sw}(x)}{x-1} \H_{wt}(x)\right).
    \end{align*}
    Now we rearrange the terms and separate one summand from the sum on the right to obtain~\eqref{eq:ncd-g}, as desired:
    \begin{align}
    \H_{st}(x) &= g_{st}(x) + \sum_{s < w \leq t}\frac{g^{\rev}_{sw}(x) - x g_{sw}(x)}{x-1}\, \H_{wt}(x)\label{eq:old-ncd}\\
    &= g_{st}(x) + \frac{g^{\rev}_{st}(x) - x g_{st}(x)}{x-1} + \sum_{s < w < t}\frac{g^{\rev}_{sw}(x) - x g_{sw}(x)}{x-1}\, \H_{wt}(x)\nonumber\\
    &= \frac{g^{\rev}_{st}(x) - g_{st}(x)}{x-1} + \sum_{s < w < t}\frac{g^{\rev}_{sw}(x) - x g_{sw}(x)}{x-1}\, \H_{wt}(x).\qedhere
    \end{align}
\end{proof}

Although the numerical canonical decompositions are fundamental throughout this paper, in some cases we will need to write the Chow function in a non-recursive way. The following provides us with a non-recursive formula, which is given by a sum over chains.

\begin{theorem}\label{thm:chow-from-kl}
    Let $\kappa$ be a $P$-kernel, let $f$ and $g$ be the right and left KLS functions, and let $\H$ be the Chow function. Then,
    \begin{align}
        \H_{st}(x) &= \sum_{s\leq p_0 < p_1 < \cdots < p_m = t} f_{sp_0}(x) \prod_{i=1}^m \frac{f^{\rev}_{p_{i-1}p_i}(x) - xf_{p_{i-1}p_i}(x)}{x-1}\\
        &=\sum_{s = p_0 < p_1 < \cdots < p_m\leq t} \left(\prod_{i=1}^m\frac{g^{\rev}_{p_{i-1}p_i}(x) - xg_{p_{i-1}p_i}(x)}{x-1}\right)g_{p_m\,t}(x).\label{eq:chow-from-kl-on-chains}
    \end{align}
\end{theorem}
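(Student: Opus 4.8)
The plan is to unwind the recursion in Theorem~\ref{thm:ncd} (specifically, equation~\eqref{eq:ncd-g} for the $g$-version, and equation~\eqref{eq:ncd-f} for the $f$-version) and iterate it. I will only treat the $g$-version, equation~\eqref{eq:chow-from-kl-on-chains}, since the $f$-version is entirely symmetric. It is more convenient to start from the unsimplified form~\eqref{eq:old-ncd} of the numerical canonical decomposition, namely
\[
\H_{st}(x) = g_{st}(x) + \sum_{s < w \leq t}\frac{g^{\rev}_{sw}(x) - x g_{sw}(x)}{x-1}\, \H_{wt}(x),
\]
which expresses $\H_{st}$ as $g_{st}$ plus a sum of terms each carrying one factor of the shape $\frac{g^{\rev}_{sw} - x g_{sw}}{x-1}$ and one strictly smaller Chow function $\H_{wt}$.

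\textbf{Key steps.} First I would set $\varphi_{sw}(x) := \dfrac{g^{\rev}_{sw}(x) - x g_{sw}(x)}{x-1}$ for $s<w$, so that~\eqref{eq:old-ncd} reads $\H_{st} = g_{st} + \sum_{s<w\le t}\varphi_{sw}\,\H_{wt}$; in the incidence-algebra language this is just $\H = g + (g\overline{\kappa} + \delta)\H$ restricted to proper intervals, or equivalently the statement that $\H = \bigl(\delta - (g\overline{\kappa}+\delta)_{>}\bigr)^{-1} g$ where $(\cdot)_>$ denotes keeping only the strictly-proper part. Second, I would iterate: substitute the same identity for $\H_{wt}$ inside the sum, then again for the resulting $\H$'s, and so on. Since $P$ is locally finite, every chain $s = p_0 < p_1 < \cdots < p_m \le t$ in $[s,t]$ has bounded length, so the iteration terminates, and after $m$ substitutions the ``leftover'' $g$-term that is not yet expanded contributes exactly $g_{p_m t}(x)$ at the bottom of a length-$m$ chain. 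Collecting all contributions, a chain $s = p_0 < p_1 < \cdots < p_m \le t$ contributes $\Bigl(\prod_{i=1}^m \varphi_{p_{i-1}p_i}(x)\Bigr) g_{p_m t}(x)$, which is precisely the summand in~\eqref{eq:chow-from-kl-on-chains} (the empty chain $m=0$, i.e.\ $p_0 = s \le t$, gives the bare $g_{st}(x)$). Third, I would make the induction rigorous: induct on the size of the interval $[s,t]$; the base case is $s=t$, where both sides equal $1$; for the inductive step, apply~\eqref{eq:old-ncd} once, then apply the inductive hypothesis to each $\H_{wt}$ with $s<w\le t$ (these live on strictly smaller intervals), and finally reindex the resulting double sum over $(w; w = q_0 < q_1 < \cdots < q_m \le t)$ as a sum over chains $s < w = q_0 < \cdots < q_m \le t$ starting at $s$, which matches the right-hand side of~\eqref{eq:chow-from-kl-on-chains}.

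\textbf{Main obstacle.} The conceptual content is minimal — it is a standard ``unwind the recursion into a sum over chains'' argument, formally the geometric-series expansion $(\delta - N)^{-1} = \sum_{k\ge 0} N^k$ for the nilpotent-in-each-interval element $N = (g\overline{\kappa} + \delta)_>$. The only points requiring care are bookkeeping: making sure the chain condition is $p_0 < p_1 < \cdots < p_m \le t$ (strict inequalities among the $p_i$, but the last step $p_m \le t$ allowed to be an equality, which is what lets $g_{p_m t}$ appear and in particular recovers the bare $g_{st}$ from the empty chain), and checking that the single factor $\frac{g^{\rev}_{sw} - xg_{sw}}{x-1}$ in~\eqref{eq:old-ncd} — rather than the ``symmetrized'' factor $\frac{g^{\rev}_{sw} - g_{sw}}{x-1}$ appearing in the cleaned-up form~\eqref{eq:ncd-g} — is the right one to iterate. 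I would double-check the very first step by verifying that the $m=1$ term of~\eqref{eq:chow-from-kl-on-chains}, namely $\varphi_{st}(x) + \sum_{s<p_1<t}\varphi_{sp_1}(x)g_{p_1 t}(x)$ plus the $m=0$ term $g_{st}(x)$, indeed reproduces~\eqref{eq:old-ncd} before any further expansion; everything else follows by the same reindexing at each level.
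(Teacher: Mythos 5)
Your proposal is correct and follows essentially the same route as the paper: start from the unsimplified numerical canonical decomposition $\H_{st} = g_{st} + \sum_{s<w\le t}\frac{g^{\rev}_{sw}-xg_{sw}}{x-1}\H_{wt}$ and iterate it, collecting a factor $\frac{g^{\rev}_{p_{i-1}p_i}-xg_{p_{i-1}p_i}}{x-1}$ at each step until the chain terminates in a bare $g_{p_m t}$ term. The paper's proof is a one-step substitution followed by ``iterating this, we obtain...''; your version is the same argument with the induction and termination made explicit.
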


\begin{proof}
    Once more, we will write the proof for the second of the two formulas, as the first is analogous. Note that by equation~\eqref{eq:old-ncd}, we can write 
    \[ \H_{st}(x) = g_{st}(x) + \sum_{s < p_1 \leq t}\frac{g^{\rev}_{sp_1}(x) - x g_{sp_1}(x)}{x-1}\, \H_{p_1t}(x).\]
    In turn, the polynomial $\H_{p_1t}(x)$ can be computed by the same recursion. That is,
    \begin{multline*}
        \H_{st}(x) = g_{st}(x) + \\
        \sum_{s < p_1 \leq t}\frac{g^{\rev}_{sp_1}(x) - x g_{sp_1}(x)}{x-1}\,\left(g_{p_1t}(x) + \sum_{p_1 < p_2 \leq t} \frac{g^{\rev}_{p_1p_2}(x) - x g_{p_1p_2}(x)}{x-1}\H_{p_2t}(x)\right) .
    \end{multline*}
    Iterating this, we obtain the formula claimed in equation~\eqref{eq:chow-from-kl-on-chains}.
\end{proof}

\subsection{Non-negativity and unimodality of Chow functions}

It is known that the right and left KLS functions arising from a $(P,\rho)$-kernel $\kappa$ can fail to be non-negative. Similarly, by our Example~\ref{example:negative-chow-functions} a Chow function can fail to be non-negative too. As we will show now, there is a striking connection between the non-negativity of the KLS functions and the non-negativity and unimodality of the Chow function. The proof will need the following preparatory lemma.

\begin{lemma}\label{lemma:prod-unimodals}
    Let $p(x)$ and $q(x)$ be polynomials that are non-negative, symmetric, and unimodal. Then $p(x)q(x)$ is non-negative, symmetric, and unimodal.
\end{lemma}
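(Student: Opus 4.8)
The plan is to reduce the statement to a well-known fact: the product of two symmetric unimodal polynomials with non-negative coefficients is again symmetric unimodal with non-negative coefficients. This is classical (it goes back to work on log-concavity and unimodality, e.g. it appears in Stanley's survey on unimodality), but since the paper is being written in a self-contained combinatorial style, I would give a short direct argument rather than merely cite it.

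First I would dispense with the trivial observations. If either $p$ or $q$ is the zero polynomial the claim is immediate, so assume both are nonzero. Non-negativity of the product is obvious since the coefficients of $p(x)q(x)$ are sums of products of non-negative numbers. Symmetry is also immediate from the multiplicativity of the operation $r(x)\mapsto x^{\deg r}r(x^{-1})$: if $p(x) = x^{d}p(x^{-1})$ and $q(x) = x^{e}q(x^{-1})$ then $p(x)q(x) = x^{d+e}p(x^{-1})q(x^{-1})$, so $p(x)q(x)$ is symmetric with center of symmetry $(d+e)/2$. The real content is unimodality.

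For unimodality I would invoke Lemma~\ref{lemma:char-symmetric-unimodality}: write
\[ p(x) = \sum_{i} a_i\, x^i(1 + x + \cdots + x^{d-2i}), \qquad q(x) = \sum_{j} b_j\, x^j(1 + x + \cdots + x^{e-2j}),\]
with all $a_i, b_j \geq 0$. Then $p(x)q(x)$ is a non-negative linear combination of products of the form $x^{i+j}(1+\cdots+x^{d-2i})(1+\cdots+x^{e-2j})$. Since a non-negative combination of symmetric unimodal polynomials all sharing the \emph{same} center of symmetry is again symmetric unimodal — and here each such product is symmetric with center $(d+e)/2$ — it suffices to prove the key special case: the product $(1+x+\cdots+x^a)(1+x+\cdots+x^b)$ is symmetric and unimodal. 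This is the elementary fact that the number of ways to write $k$ as an ordered sum of two integers, one in $[0,a]$ and one in $[0,b]$, first increases and then decreases as $k$ runs from $0$ to $a+b$; concretely this count equals $\min(k,a,b,a+b-k)+1$, which is visibly unimodal in $k$. Multiplying through by the monomial $x^{i+j}$ just shifts, preserving unimodality and symmetry, and then summing over $i,j$ with non-negative coefficients $a_i b_j$ and re-applying Lemma~\ref{lemma:char-symmetric-unimodality} in the reverse direction gives that $p(x)q(x)$ is unimodal.

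I do not expect any serious obstacle here; the only point requiring a little care is the assertion that a non-negative sum of symmetric unimodal polynomials with a \emph{common} center of symmetry is symmetric and unimodal, which is why routing everything through the basis $\{x^i(1+x+\cdots+x^{d-2i})\}$ of Lemma~\ref{lemma:char-symmetric-unimodality} is convenient — in that basis the statement is transparent. An alternative, even shorter route would be to cite the classical result directly (symmetric unimodal times symmetric unimodal is symmetric unimodal) and skip the computation entirely; given the paper's style I would include the two-line reduction above for completeness but keep it terse.
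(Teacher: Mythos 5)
Your argument is correct, but note that the paper does not actually prove this lemma: it states immediately after the lemma that ``The proof of the above lemma can be found in \cite[Proposition~1]{stanley-unimodality}'' and moves on. Your self-contained proof is essentially a reconstruction of the argument in that cited reference (Stanley also routes through the expansion $p(x)=\sum_i a_i\,x^i(1+x+\cdots+x^{d-2i})$ and reduces to the product of two staircase polynomials), so mathematically you are on the same track; the only difference is that the paper opts to cite rather than reproduce it.

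One remark on presentation, should you want to include the proof rather than cite it: the step where you pass from ``each product $x^{i+j}(1+\cdots+x^{d-2i})(1+\cdots+x^{e-2j})$ is symmetric unimodal with center $(d+e)/2$'' to ``the non-negative combination $p(x)q(x)$ is symmetric unimodal'' is valid, but it is worth being explicit that you first apply Lemma~\ref{lemma:char-symmetric-unimodality} \emph{forward} to each such product (re-expressing it in the common staircase basis centered at $(d+e)/2$) and only then sum and apply the lemma \emph{backward}. As written, the phrase ``re-applying Lemma~\ref{lemma:char-symmetric-unimodality} in the reverse direction'' compresses these two uses into one, which a careful reader might flag. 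The mathematics is fine; it is only the bookkeeping that could be spelled out slightly more.
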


The proof of the above lemma can be found in \cite[Proposition~1]{stanley-unimodality}. We mention that the assumption on the symmetry is essential, as in general the product of two non-negative unimodal polynomials may fail to be unimodal.

\begin{theorem}\label{thm:kls-positive-chow-unimodal}
    Let $\kappa$ be a $(P,\rho)$-kernel. If either the right KLS function $f$ or the left KLS function $g$ are non-negative, then the Chow function $\H$ is non-negative and unimodal.
\end{theorem}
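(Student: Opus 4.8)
The plan is to prove the statement by induction on the size of the interval $[s,t]$, using the non-recursive formula for $\H$ over chains from Theorem~\ref{thm:chow-from-kl} together with the characterization of symmetric unimodal polynomials in Lemma~\ref{lemma:char-symmetric-unimodality} and the closure of this class under products in Lemma~\ref{lemma:prod-unimodals}. By symmetry of the two formulas in Theorem~\ref{thm:chow-from-kl}, it suffices to treat the case where $g$ is non-negative and work with equation~\eqref{eq:chow-from-kl-on-chains}; the case where $f$ is non-negative is entirely analogous using the other formula.

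The first key step is to understand the building block $\frac{g^{\rev}_{uv}(x) - x g_{uv}(x)}{x-1}$ appearing in the products over chains. Writing $g_{uv}(x)$ as a non-negative polynomial of degree strictly less than $\tfrac12\rho_{uv}$, its reversal $g^{\rev}_{uv}(x) = x^{\rho_{uv}} g_{uv}(x^{-1})$ is a non-negative polynomial supported in degrees strictly above $\tfrac12\rho_{uv}$. I would verify directly that $\frac{g^{\rev}_{uv}(x) - x g_{uv}(x)}{x-1}$ is a polynomial (it is, by $(g\overline{\kappa})$ living in $\mathcal{I}_\rho(P)$ via Lemma~\ref{lem:kbarf-and-gkbar}), that it is non-negative, and that it is symmetric with center of symmetry $\tfrac12(\rho_{uv}-1)$. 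The cleanest way to see non-negativity and symmetry simultaneously: if $g_{uv}(x) = \sum_{j} a_j x^j$ with $a_j \geq 0$ and $a_j = 0$ for $2j \geq \rho_{uv}$, then $\frac{x^{\rho_{uv}-j} - x^{j+1}}{x-1} = x^{j+1} + x^{j+2} + \cdots + x^{\rho_{uv}-j}$, which is $x^{j+1}(1 + x + \cdots + x^{\rho_{uv}-2j-1})$, a non-negative symmetric polynomial with center $\tfrac12(\rho_{uv}-1)$; summing over $j$ with coefficients $a_j \geq 0$ preserves all three properties. In fact this exhibits each building block as a non-negative combination of the form in Lemma~\ref{lemma:char-symmetric-unimodality}(ii), hence it is non-negative, symmetric, and unimodal, all with the common center $\tfrac12(\rho_{uv}-1)$.

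The second key step is to assemble the product. Fix a chain $s = p_0 < p_1 < \cdots < p_m \leq t$. Each factor $\frac{g^{\rev}_{p_{i-1}p_i}(x) - x g_{p_{i-1}p_i}(x)}{x-1}$ is non-negative, symmetric, and unimodal with center $\tfrac12(\rho_{p_{i-1}p_i}-1)$, and $g_{p_m t}(x)$ is non-negative (given) with degree $< \tfrac12 \rho_{p_m t}$ — but for the product to be symmetric I need all factors symmetric, so I will instead pair $g_{p_m t}$ with an extra $\overline{\kappa}$-type factor or, more simply, note that the very last step of the recursion in the proof of Theorem~\ref{thm:chow-from-kl} can be taken with $p_m = t$ absorbing $g_{tt} = 1$; alternatively one absorbs $g_{p_m t}$ into the preceding building block. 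The robust way to handle this is to keep the recursive form~\eqref{eq:ncd-g} and induct directly: assuming $\H_{wt}$ is non-negative, symmetric with center $\tfrac12(\rho_{wt}-1)$, and unimodal for all $w$ with $s < w$, then each summand $\frac{g^{\rev}_{sw}(x) - x g_{sw}(x)}{x-1}\cdot \H_{wt}(x)$ is a product of two non-negative symmetric unimodal polynomials (centers $\tfrac12(\rho_{sw}-1)$ and $\tfrac12(\rho_{wt}-1)$ respectively, so the product is symmetric with center $\tfrac12(\rho_{sw}+\rho_{wt}-2) = \tfrac12(\rho_{st}) - 1 = \tfrac12(\rho_{st}-1) - \tfrac12$; I must check the centers line up). By Lemma~\ref{lemma:prod-unimodals} each summand is non-negative, symmetric, and unimodal. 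The remaining term $\frac{g^{\rev}_{st}(x) - g_{st}(x)}{x-1}$ must also be handled: here the numerator is $g^{\rev}_{st} - g_{st}$, not $g^{\rev}_{st} - xg_{st}$, and the same telescoping identity $\frac{x^{\rho-j} - x^j}{x-1} = x^j(1 + \cdots + x^{\rho - 2j})$ shows it is non-negative, symmetric with center $\tfrac12 \rho_{st}$ — wait, center $\tfrac12 \rho_{st}$, not $\tfrac12(\rho_{st}-1)$. This center discrepancy is the crux of the subtlety.

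The main obstacle, therefore, is the bookkeeping of centers of symmetry: the ``diagonal'' term $\frac{g^{\rev}_{st} - g_{st}}{x-1}$ is symmetric about $\tfrac12\rho_{st}$ while each chain-sum term is symmetric about $\tfrac12(\rho_{st}-1)$, so they cannot individually have the same center as $\H_{st}$, whose center is $\tfrac12(\rho_{st}-1)$ by Proposition~\ref{prop:degree-and-symmetry}\ref{it:chow-symmetry}. The resolution is that one must \emph{not} split off the diagonal term; instead, keep equation~\eqref{eq:old-ncd} in the form $\H_{st}(x) = \sum_{s < w \le t}\frac{g^{\rev}_{sw}(x) - x g_{sw}(x)}{x-1}\,\H_{wt}(x) + g_{st}(x)$, and observe that the true way to see non-negativity and unimodality is to group $g_{st}(x)$ together with the $w = t$ contribution, or equivalently to use the fully expanded chain formula~\eqref{eq:chow-from-kl-on-chains} where the last factor is $g_{p_m t}(x)$ — a non-negative polynomial of degree $< \tfrac12\rho_{p_m t}$, which when $p_m < t$ is \emph{not} symmetric. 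So in fact I will prove a slightly stronger statement by induction: that $\H_{st}(x) - g_{st}(x)$ is non-negative (handling a ``non-symmetric tail'' matched with its reverse), OR — cleaner — I will argue that in~\eqref{eq:chow-from-kl-on-chains}, summing over all chains, the contribution of $g_{p_m t}$ and of $g^{\rev}_{p_m t}$-type reversed chains pair up to produce symmetric unimodal pieces, exactly as in the standard argument that $Z = g^{\rev} f$ is symmetric. Concretely, I expect the right framing is: prove $\H_{st}$ is non-negative first (easy from~\eqref{eq:chow-from-kl-on-chains}, as it is a sum of products of non-negative polynomials once one checks each building block and $g_{p_m t}$ are non-negative), then prove unimodality by combining non-negativity with the already-established symmetry (Proposition~\ref{prop:degree-and-symmetry}\ref{it:chow-symmetry}) via the following trick: a non-negative symmetric polynomial $\H_{st}$ is unimodal provided one exhibits it as a non-negative combination $\sum c_i x^i(1 + x + \cdots + x^{\rho_{st}-1-2i})$, and such an expansion follows by induction from~\eqref{eq:ncd-g} once each building block is written in that form and one uses that a product $x^i(1+\cdots+x^{a}) \cdot x^j(1 + \cdots + x^b)$ of two such ``basic'' symmetric unimodal polynomials is again a non-negative combination of basic ones (this is exactly Lemma~\ref{lemma:prod-unimodals} repackaged). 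I anticipate the write-up will spend most of its effort on this last product-of-basic-polynomials closure and on carefully checking that $\tfrac12(\rho_{sw}-1) + \tfrac12(\rho_{wt}-1) + \tfrac12 = \tfrac12(\rho_{st}-1)$, reconciling the centers so that the induction closes.
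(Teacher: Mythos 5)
You have the right overall strategy, and it coincides with the paper's: induct on $\rho_{st}$ using the numerical canonical decomposition~\eqref{eq:ncd-g}, combined with Lemma~\ref{lemma:char-symmetric-unimodality} and Lemma~\ref{lemma:prod-unimodals}. But you derail on the center-of-symmetry arithmetic, and the ``center discrepancy'' you then spend most of the proposal trying to work around does not exist. Redo the two telescopes carefully: $x^{\rho - j} - x^{j+1} = x^{j+1}(x-1)(1+x+\cdots+x^{\rho - 2j - 2})$, so
$\frac{x^{\rho - j} - x^{j+1}}{x-1} = x^{j+1}(1 + x + \cdots + x^{\rho - 2j - 2})$, whose support runs from degree $j+1$ to degree $\rho - j - 1$ and whose center is therefore $\tfrac{1}{2}\rho$, not $\tfrac{1}{2}(\rho-1)$; consequently $\frac{g^{\rev}_{sw}(x) - x\,g_{sw}(x)}{x-1}$ has center $\tfrac{1}{2}\rho_{sw}$. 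Likewise $\frac{x^{\rho - j} - x^{j}}{x-1} = x^{j}(1 + x + \cdots + x^{\rho - 2j - 1})$ has center $\tfrac{1}{2}(\rho-1)$, so the diagonal term $\frac{g^{\rev}_{st}(x) - g_{st}(x)}{x-1}$ has center $\tfrac{1}{2}(\rho_{st}-1)$, not $\tfrac{1}{2}\rho_{st}$. You have exactly swapped the two centers. With the correct values, each off-diagonal product $\frac{g^{\rev}_{sw} - x\,g_{sw}}{x-1}\cdot\H_{wt}$ has center $\tfrac{1}{2}\rho_{sw} + \tfrac{1}{2}(\rho_{wt}-1) = \tfrac{1}{2}(\rho_{st}-1)$, matching the diagonal term and the known center of $\H_{st}$ from Proposition~\ref{prop:degree-and-symmetry}\ref{it:chow-symmetry}; all summands in \eqref{eq:ncd-g} are symmetric and unimodal with a common center, so the induction closes with no extra work.

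Because there is no mismatch, every workaround you propose is unnecessary and should be dropped: you need not fall back on the full chain expansion~\eqref{eq:chow-from-kl-on-chains} (whose terminal factor $g_{p_m t}$ is indeed not symmetric, as you correctly worry; that is a reason to prefer \eqref{eq:ncd-g}), nor prove non-negativity separately and then bootstrap unimodality, nor attempt to pair up chains with their reversals. The extra factor of $x$ distinguishing the off-diagonal building block $\frac{g^{\rev}_{sw}-x\,g_{sw}}{x-1}$ from the diagonal one $\frac{g^{\rev}_{st}-g_{st}}{x-1}$ is exactly what shifts the center by $\tfrac{1}{2}$ so that convolving with $\H_{wt}$ (center $\tfrac{1}{2}(\rho_{wt}-1)$) lands back at $\tfrac{1}{2}(\rho_{st}-1)$. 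Once the arithmetic is fixed, the proof you already sketched, in the ``robust'' recursive form via \eqref{eq:ncd-g}, is precisely the paper's proof.
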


\begin{proof}
    We will only indicate the proof for $g$, as the one for $f$ is very similar. Let us assume that the left KLS function $g$ is non-negative. We shall proceed by induction on $\rho_{st}$. By definition, when $\rho_{st}=0$, we have $s = t$, and $\H_{st}(x)=1$ which is non-negative and unimodal. Let us assume that $\H_{st}(x)$ is unimodal whenever $\rho_{st} \leq \ell$, and consider any interval $[s,t]$ such that $\rho_{st} = \ell + 1$. We will use the numerical canonical decomposition.
    We claim that the sum in~\eqref{eq:ncd-g} consists of symmetric unimodal polynomials all of which have center of symmetry $\frac{1}{2}(\rho_{st}-1)$. We rely crucially on the following two ingredients: i) the fact that $\deg g_{st} < \frac{1}{2}\rho_{st}$ for all $s < t$, and ii) the assumption that $g_{st}(x)$ has non-negative coefficients. For concreteness, let us write $g_{st}(x) = g_0 + g_1 \,x +\cdots + g_{d} \,x^d$, where $d = \lfloor \frac{\rho_{st}-1}{2}\rfloor$, and each $g_i\geq 0$. We have that
        \begin{align*}
            g^{\rev}_{st}(x) - g_{st}(x) &= x^{\rho_{st}} g_{st}(x^{-1}) - g_{st}(x)\\ 
            &= g_0\,x^{\rho_{st}} + g_1\, x^{\rho_{st}-1} + \cdots + g_d x^{\rho_{st}-d} - g_d\,x^{d} - \cdots - g_1\,x - g_0.
        \end{align*}
    Notice that $2d < \rho_{st}$ implies that $\rho_{st} - d > d$, so that each of the monomials appearing above has a different exponent. One can group the terms of degree $i$ and $\rho_{st} - i$ for $i=0,\ldots, d$ obtaining
        \[
            g^{\rev}_{st}(x) - g_{st}(x) = \sum_{i=0}^{d} g_i \left(x^{\rho_{st}-i} - x^i\right)\\
            = (x-1)\sum_{i=0}^{d} g_i \,x^i\,\left(1 + x + \cdots + x^{\rho_{st}-1-2i}\right).
        \]
    In particular, the non-negativity of the $g_i$'s yields that the polynomial $\frac{g^{\rev}_{st}(x) - g_{st}(x)}{x-1}$ is non-negative, symmetric, and unimodal, having center of symmetry $\frac{1}{2}(\rho_{st}-1)$. Similarly, for each $s < w < t$, let us write $g_{sw}(x) = g_0+\cdots + g_d\, x^d$, with each $g_i\geq 0$ and $d = \lfloor\frac{\rho_{sw}-1}{2}\rfloor$. Notice that even if we subtract $xg_{sw}(x)$ instead of just $g_{sw}(x)$ as in the previous computation, after possibly a single cancellation all the monomials have different exponents in $g^{\rev}_{sw}(x) - xg_{sw}(x)$. Therefore, we can compute
    \[g^{\rev}_{sw}(x) - x g_{sw}(x) = x\,(x-1) \sum_{i=0}^d g_i\, x^i \left(1+\cdots + x^{\rho_{sw}-2i}\right),\]
    which gives that $\frac{g^{\rev}_{sw}(x) - x g_{sw}(x)}{x-1}$ is non-negative, unimodal, and symmetric with center of symmetry $\frac{1}{2}\rho_{sw}$. 
    
    Now, the induction hypothesis guarantees that the polynomials $\H_{wt}(x)$ for $s < w < t$ are unimodal because $\rho_{wt} \leq \rho_{st} - 1 = \ell$. Furthermore, by Proposition~\ref{prop:degree-and-symmetry}, the center of symmetry is $\frac{\rho_{wt}-1}{2}$. On the other hand, by Lemma~\ref{lemma:prod-unimodals} the product 
    \[\frac{g^{\rev}_{sw}(x) - x g_{sw}(x)}{x-1}\, \H_{wt}(x)\] is non-negative, symmetric, and unimodal, and its center of symmetry is $\frac{1}{2}\rho_{sw} + \frac{1}{2}(\rho_{wt}-1) = \frac{1}{2}(\rho_{st} -1)$.
    Since we have proved that all the summands appearing are non-negative, symmetric, unimodal, and share a common center of symmetry, it follows that  $\H_{st}(x)$ fulfills the same property. By induction, the result follows.
\end{proof}

\subsection{Augmented Chow functions}\label{sec:augmented}

The goal in this section is to introduce ``augmented'' counterparts of the Chow function. 

\begin{definition}
    Let $\kappa$ be a $(P,\rho)$-kernel. Consider the following two elements of $\mathcal{I}_{\rho}(P)$:
        \begin{align*}
            F &= \H \cdot f^{\rev},\\
            G &= g^{\rev}\cdot \H.
        \end{align*}
    We call $F$ (resp. $G$) \emph{the right (resp. left) augmented Chow function associated to $\kappa$}.
\end{definition}

As was mentioned earlier, with KLS functions it is important to make the distinction between left and right. In some cases it may happen that these two functions exhibit a striking difference in their complexity (cf. Example~\ref{example:char-is-kernel}). Likewise, the augmented Chow functions $F$ and $G$ can behave in different ways (in contrast to the Chow functions, where there is no distinction between left and right). In the next section we will see this phenomenon in the context of $\kappa = \chi$, i.e., augmented characteristic Chow functions.

\begin{example}
    Going back once more to the poset in Example~\ref{example:ad-hoc-kernel}, the left and right augmented Chow polynomials happen to coincide, and they are equal to
    \[
    G_P(x) = x^3 + (m+10)x^2 + (m+10)x + 1.
    \]
\end{example}

We have the following augmented counterpart for Proposition~\ref{prop:degree-and-symmetry}.

\begin{proposition}\label{prop:augmented-degree-symmetry}
    Let $\kappa$ be a $(P,\rho)$-kernel, and let $F,G\in \mathcal{I}_{\rho}(P)$ be the corresponding augmented Chow functions. Then, the following properties hold:
    \begin{enumerate}[\normalfont(i)]
        \item For every $s \leq t$, we have that
            \[ [x^{\rho_{st}}] F_{st}(x) = [x^{\rho_{st}}] G_{st}(x) = [x^{\rho_{st}}] \kappa_{st}(x).\]
            In particular, if $\kappa$ is non-degenerate, we have that $\deg F_{st} = \deg G_{st} = \rho_{st}$ for every $s \leq t$. 
        \item The augmented Chow functions are symmetric, i.e.,
            \begin{align*} 
            F_{st}(x) &= x^{\rho_{st}}\, F_{st}(x^{-1}),\\
            G_{st}(x) &= x^{\rho_{st}}\, G_{st}(x^{-1}),
            \end{align*}
        for every $s\leq t$. In other words, $F^{\rev} = F$ and $G^{\rev} = G$.
    \end{enumerate}
\end{proposition}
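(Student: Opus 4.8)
The idea is to reduce both claims to properties already established for the Chow function $\H$ in Proposition~\ref{prop:degree-and-symmetry} and properties of the KLS functions in Theorem~\ref{thm:kls-functions} and Lemma~\ref{lem:g-and-non-degeneracy-of-k}, using only the defining formulas $F = \H\cdot f^{\rev}$ and $G = g^{\rev}\cdot \H$ together with the multiplicativity of the $\rev$ involution.

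\textbf{Step 1: Symmetry (part (ii)).} This is the quick part. Recall that $\rev$ is an algebra involution, so $(ab)^{\rev} = a^{\rev} b^{\rev}$ for all $a,b\in\mathcal{I}_\rho(P)$. Also, by Proposition~\ref{prop:alt-characterization-chow}, $\H^{\rev} = \kappa\,\H = \H\,\kappa$. Now compute
\[
F^{\rev} = (\H\cdot f^{\rev})^{\rev} = \H^{\rev}\cdot (f^{\rev})^{\rev} = \H^{\rev}\cdot f = (\H\,\kappa)\cdot f = \H\cdot(\kappa f) = \H\cdot f^{\rev} = F,
\]
where in the last step I used property Theorem~\ref{thm:kls-functions}\ref{it:f-iii}, namely $f^{\rev} = \kappa f$. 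Similarly,
\[
G^{\rev} = (g^{\rev}\cdot \H)^{\rev} = (g^{\rev})^{\rev}\cdot \H^{\rev} = g\cdot \H^{\rev} = g\cdot(\kappa\,\H) = (g\kappa)\cdot \H = g^{\rev}\cdot \H = G,
\]
using $g^{\rev} = g\kappa$ from Theorem~\ref{thm:kls-functions}\ref{it:g-iii}. Unwinding $a^{\rev}_{st}(x) = x^{\rho_{st}} a_{st}(x^{-1})$ gives the stated monomial identities for each interval. One should also check that $F, G$ genuinely lie in $\mathcal{I}_\rho(P)$, i.e., that $\deg F_{st}, \deg G_{st}\le \rho_{st}$; this follows since $\H, f, g \in \mathcal{I}_\rho(P)$ and the degree of a convolution product respects the rank-additivity of $\rho$, but combined with the symmetry just proved it is in any case subsumed by part (i).

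\textbf{Step 2: Top coefficient (part (i)).} Here I extract the coefficient of $x^{\rho_{st}}$ from $F_{st}(x) = \sum_{s\le w\le t}\H_{sw}(x)\,f^{\rev}_{wt}(x)$. For a fixed $w$, the product $\H_{sw}(x)\,f^{\rev}_{wt}(x)$ has degree at most $(\rho_{sw}-1) + \rho_{wt} = \rho_{st}-1$ whenever $s<w$, by Proposition~\ref{prop:degree-and-symmetry}\ref{it:degree-of-H} (which bounds $\deg\H_{sw}\le\rho_{sw}-1$) and $\deg f^{\rev}_{wt}\le\rho_{wt}$. Hence only the term $w=s$ can contribute in degree $\rho_{st}$, and that term is $\H_{ss}(x)\,f^{\rev}_{st}(x) = f^{\rev}_{st}(x)$ since $\H_{ss}=1$. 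Therefore $[x^{\rho_{st}}]F_{st}(x) = [x^{\rho_{st}}]f^{\rev}_{st}(x) = [x^0]f_{st}(x)$, and by Lemma~\ref{lem:g-and-non-degeneracy-of-k} this equals $[x^{\rho_{st}}]\kappa_{st}(x)$. The argument for $G_{st}(x) = \sum_{s\le w\le t} g^{\rev}_{sw}(x)\,\H_{wt}(x)$ is symmetric: only $w=t$ survives in top degree, giving $[x^{\rho_{st}}]g^{\rev}_{st}(x) = [x^0]g_{st}(x) = [x^{\rho_{st}}]\kappa_{st}(x)$, again by Lemma~\ref{lem:g-and-non-degeneracy-of-k}. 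The ``in particular'' clause about non-degeneracy then follows because non-degeneracy of $\kappa$ means $[x^{\rho_{st}}]\kappa_{st}(x)\ne 0$, forcing $\deg F_{st} = \deg G_{st} = \rho_{st}$, and conversely; combined with symmetry (part (ii)) this also pins down $\deg F_{ss}=\deg G_{ss}=0=\rho_{ss}$ trivially.

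\textbf{Main obstacle.} There is essentially no deep obstacle here — the statement is a formal bookkeeping consequence of the structure established earlier. The only point requiring a moment's care is the degree bookkeeping in Step 2: one must be sure the bound $\deg\H_{sw}\le\rho_{sw}-1$ for $s<w$ (as opposed to $\le\rho_{sw}$) is what makes the $w\ne s$ (resp. $w\ne t$) terms drop out in top degree, which is exactly the content of Proposition~\ref{prop:degree-and-symmetry}\ref{it:degree-of-H}. A secondary subtlety is invoking Lemma~\ref{lem:g-and-non-degeneracy-of-k} to identify $[x^0]f_{st} = [x^0]g_{st} = [x^{\rho_{st}}]\kappa_{st}$ rather than reproving it; since that lemma is already available in the excerpt, this is unproblematic.
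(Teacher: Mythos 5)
Your proof is correct and matches the paper's argument essentially line for line: part (ii) follows by the same chain $G^{\rev} = g\H^{\rev} = g\kappa\H = g^{\rev}\H = G$ (and the analogous computation for $F$), and part (i) follows by peeling off the boundary term of the convolution, bounding every other term's degree by $\rho_{st}-1$ using $\deg\H_{sw}\le\rho_{sw}-1$ for $s<w$, and then invoking Lemma~\ref{lem:g-and-non-degeneracy-of-k} to identify the surviving top coefficient with $[x^{\rho_{st}}]\kappa_{st}(x)$. The only cosmetic difference is that you prove (ii) before (i), which is immaterial since neither part uses the other.
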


\begin{proof}
    We prove the statement for $G$, as the proof for $F$ is analogous. The equation $G = g^{\rev}\,\H$ translates into the equation
    \[
    G_{st}(x) = g_{st}^{\rev}(x) + \sum_{s \leq w < t} g_{sw}^{\rev}(x) \H_{wt}(x) \qquad \text{for every $s\leq t$,}
    \]
    (where in the right-hand side we isolated one summand of the convolution).
    Every term in the sum has degree at most $\rho_{sw} + \rho_{wt} - 1 = \rho_{st}-1$, and therefore $[x^{\rho_{st}}]G_{st}(x) = [x^{\rho_{st}}]g_{st}^{\rev}(x) = [x^{\rho_{st}}]\kappa_{st}(x)$, where the last equality follows from Lemma~\ref{lem:g-and-non-degeneracy-of-k}. To prove the second statement we recall that Proposition~\ref{prop:alt-characterization-chow} guarantees that $\kappa\H = \H^{\rev}$, and thus
    \[G = g^{\rev} \H = (g \kappa) \H = g (\kappa \H) = g \H^{\rev} = \left( g^{\rev} \H \right)^{\rev} = G^{\rev}.\qedhere\]
\end{proof}

The following result provides an augmented analogue for the numerical canonical decomposition of Theorem~\ref{thm:ncd}.

\begin{theorem}[Augmented numerical canonical decomposition]\label{thm:aug-ncd}
    The augmented Chow functions can be computed from the $Z$-function and the KLS functions as follows:
    \begin{align}
    F_{st}(x) &= Z_{st}(x) + \sum_{s < w \leq t} \frac{g_{sw}^{\rev}(x) - xg_{sw}(x)}{x-1} \, F_{wt}(x),\label{eq:ncd-F}\\
    G_{st}(x) &= Z_{st}(x) + \sum_{s \leq w < t} G_{sw}(x) \, \frac{f_{wt}^{\rev}(x) - xf_{wt}(x)}{x-1}. \label{eq:ncd-G}
    \end{align}
\end{theorem}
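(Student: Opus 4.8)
The plan is to mimic the proof of the non-augmented version (Theorem~\ref{thm:ncd}) very closely, using the identities $F = \H f^{\rev}$ and $G = g^{\rev}\H$ together with the $Z$-function relations $Z = g^{\rev}f = gf^{\rev}$ and Lemma~\ref{lem:kbarf-and-gkbar}. I will only write out the argument for $G$, since the one for $F$ is entirely analogous (replacing left by right and using $F = \H f^{\rev}$, $(\overline{\kappa}f)$ in place of $(g\overline{\kappa})$).

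First I would start from the defining identity $G = g^{\rev}\H = -\,g^{\rev}(\overline{\kappa})^{-1}\cdot(-\overline\kappa)(-\overline\kappa)^{-1}$... more cleanly: write $G = g^{\rev}\H$ and insert $\delta = -\overline{\kappa}\,\H$ between $g^{\rev}$ and a copy of $\H$; but the slickest route is to expand $G$ directly from $\H = g_{st} + \sum_{s<w\le t}\frac{g^{\rev}_{sw}-xg_{sw}}{x-1}\H_{wt}$ — no wait, that recursion has $\H$ on the right. For $G = g^{\rev}\H$ I instead want a recursion with $G_{wt}$ on the \emph{right}. So the right move is: multiply $\H$ on the left by $g^{\rev}$, and use $g^{\rev} = g\kappa$ (Theorem~\ref{thm:kls-functions}\ref{it:g-iii}). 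Then $G = (g\kappa)\H$. Hmm, that gives $G = g(\kappa\H) = g\H^{\rev}$, which is the identity already used in Proposition~\ref{prop:augmented-degree-symmetry}, not obviously the stated recursion. The cleaner approach: observe $G = g^{\rev}\H$ and $f = (-\overline{\kappa}f)^{-1}\cdots$ — actually the parallel with Theorem~\ref{thm:ncd} suggests writing $G = G\delta = G(-\overline{\kappa})\H \cdot$... Let me instead imitate the proof of Theorem~\ref{thm:ncd} exactly: there one wrote $g = g\delta = g(-\overline\kappa)\H = -(g\overline\kappa)\H$. Here I would write $G = g^{\rev}\H$, and then use $G = g^{\rev}\H$ directly — I want to peel off $F$ or $Z$. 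Note $Z = g^{\rev}f$, so $Z\,(f^{-1}) \cdot$ ... The right identity is: since $\H = -(\overline\kappa)^{-1}$ and $f^{\rev} = \kappa f$, we get $F = \H f^{\rev} = \H\kappa f = \H^{\rev}f$; likewise $G = g^{\rev}\H = g\kappa\H = g\H^{\rev}$. Now compute $G = g\H^{\rev}$ and use $\H^{\rev} = x\H - $ (lower-order), or more directly: $(x-1)\H^{\rev}_{st} = $ expand via Proposition~\ref{prop:alt-characterization-chow}. The cleanest: from $G = g^{\rev}\H$, write $g^{\rev} = Z f^{-1}$ (since $Z = g^{\rev}f$), giving $G = Z f^{-1}\H$; but $f^{-1}\H$ is not obviously nice.

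Therefore the approach I would actually carry out: convolve directly. Write $G = g^{\rev}\H$. Since $\H = \delta + \sum_{s<w}(\cdot)$, I isolate: $G_{st} = g^{\rev}_{st}\H_{tt} + \sum_{s\le w<t} g^{\rev}_{sw}\H_{wt}$... no — I need $G_{wt}$ on the right. So instead use the \emph{other} factorization order: apply the recursion $\H = \delta\H$ won't help. The key realization: $G = g^{\rev}\H$ and separately $\H$ satisfies $\overline\kappa\H = \delta$, so $\H = \delta - (\overline\kappa+\delta)\H$... hmm. I think the honest plan is: start from $G = g^{\rev}\H$, substitute $g^{\rev} = Z\,f^{-1}$... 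Let me stop second-guessing and commit to the approach that mirrors Theorem~\ref{thm:ncd} line by line. In Theorem~\ref{thm:ncd}'s proof, the mechanism was $g = -(g\overline\kappa)\H$ and then expand using Lemma~\ref{lem:kbarf-and-gkbar} for $g\overline\kappa$. For the augmented version, I claim the analogue is: \emph{from $G = g^{\rev}\H$, write $G = g^{\rev}\H = (g^{\rev}f)(f^{-1}\H) = Z(f^{-1}\H)$, and separately show $f^{-1}\H = -\,(f^{-1}\H)(\overline\kappa)\cdots$}—getting convoluted. The actually correct mechanism (matching the stated formula with $G_{sw}$ on the left and $\frac{f^{\rev}-xf}{x-1}$ on the right) is: $G = g^{\rev}\H$, and $\H = -(\overline\kappa)^{-1} = -\,f\,(f^{-1}(\overline\kappa)^{-1}) = f\cdot(-(\overline\kappa f)^{-1})$, hence $G = (g^{\rev}f)\cdot(-(\overline\kappa f)^{-1}) = Z\cdot(-(\overline\kappa f)^{-1})$. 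Now by Lemma~\ref{lem:kbarf-and-gkbar}, $(\overline\kappa f)_{st} = \frac{f^{\rev}_{st}-xf_{st}}{x-1}$ for $s<t$ and $-1$ for $s=t$, so $-(\overline\kappa f)^{-1}$ is exactly the element whose recursion reads $(-(\overline\kappa f)^{-1})_{st} = \sum_{s<w\le t}\frac{f^{\rev}_{sw}-xf_{sw}}{x-1}(-(\overline\kappa f)^{-1})_{wt}$; convolving with $Z$ on the left and peeling off the $w=s$... wait, $w=t$ term and the $Z_{st}$ base term yields precisely \eqref{eq:ncd-G} after using $Z_{tt}=1$. Symmetrically, $F = \H f^{\rev}$, $\H = -(g(g^{-1}\overline\kappa^{-1})) = -(g\overline\kappa)^{-1}g$, wait: $\H = -(\overline\kappa)^{-1}$, and we want $\H$ with a $g$ pulled out on the \emph{right}: $\H = -(\overline\kappa)^{-1} = -((g\overline\kappa)g^{-1})^{-1} = -g(g\overline\kappa)^{-1}$, so $F = \H f^{\rev} = -g(g\overline\kappa)^{-1}f^{\rev} = -g(g\overline\kappa)^{-1}\kappa f$; hmm this doesn't immediately give $Z$. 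Let me recompute: I want $F$ expressed via $Z = gf^{\rev}$. So pull $f^{\rev}$... $F = \H f^{\rev}$, and $\H = -(\overline\kappa)^{-1}$; I want a left factor $g$: no, for $F$'s formula the sum has $F_{wt}$ on the \emph{right}, $\frac{g^{\rev}-xg}{x-1}$ on the left, so I need $F = (\text{something like }g\overline\kappa)^{-1}$-type object convolved with $Z$ on the right. Indeed: $F = \H f^{\rev}$, $\H = -(g\overline\kappa)^{-1}g$ (check: $(g\overline\kappa)\cdot(-(g\overline\kappa)^{-1}g)\cdot(-\overline\kappa) \overset{?}{=}$... we need $\overline\kappa\,\H = \delta$: $\overline\kappa(-(g\overline\kappa)^{-1}g)$, not obviously $\delta$). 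Correct factorization: from $\overline\kappa\H=\delta$, $\H = (\overline\kappa)^{-1}\cdot(-1)\cdot$... ugh, $\H = -(\overline\kappa)^{-1}$ exactly, and $(\overline\kappa)^{-1} = g^{-1}(g\overline\kappa)^{-1}\cdot$ no: $(\overline\kappa)^{-1} = ((g^{-1})(g\overline\kappa))^{-1} = (g\overline\kappa)^{-1}g$. So $\H = -(g\overline\kappa)^{-1}g$, and then $F = \H f^{\rev} = -(g\overline\kappa)^{-1}g f^{\rev} = -(g\overline\kappa)^{-1}Z$. Now by Lemma~\ref{lem:kbarf-and-gkbar} the element $-(g\overline\kappa)^{-1}$ has the recursion with $\frac{g^{\rev}_{sw}-xg_{sw}}{x-1}$ on the left; convolving on the right with $Z$ and peeling off the diagonal terms gives exactly \eqref{eq:ncd-F}.

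So the final plan, concretely: (1) record the two algebraic identities $F = -(g\overline\kappa)^{-1}\,Z$ and $G = Z\,(-(\overline\kappa f)^{-1})$, derived from $\H = -(\overline\kappa)^{-1}$, $(\overline\kappa)^{-1} = (g\overline\kappa)^{-1}g = f(\overline\kappa f)^{-1}$, and $Z = gf^{\rev} = g^{\rev}f$, plus $F = \H f^{\rev}$, $G = g^{\rev}\H$; (2) plug in the explicit formulas for $(g\overline\kappa)_{st}$ and $(\overline\kappa f)_{st}$ from Lemma~\ref{lem:kbarf-and-gkbar}, i.e., $-1$ on the diagonal and $\frac{g^{\rev}_{st}-xg_{st}}{x-1}$ resp. $\frac{f^{\rev}_{st}-xf_{st}}{x-1}$ off-diagonal; (3) expand the convolution $F = -(g\overline\kappa)^{-1}Z$ as $F_{st} = \sum_{s\le w\le t}(-(g\overline\kappa)^{-1})_{sw}Z_{wt}$, use that $(-(g\overline\kappa)^{-1})$ satisfies $(-(g\overline\kappa)^{-1})_{sw} = \sum_{s<u\le w}\frac{g^{\rev}_{su}-xg_{su}}{x-1}(-(g\overline\kappa)^{-1})_{ut}$-type recursion, isolate the $w=s$ term ($Z_{ss}=1$) to get the leading $Z_{st}$, and reindex the rest to obtain $\sum_{s<w\le t}\frac{g^{\rev}_{sw}-xg_{sw}}{x-1}F_{wt}$, which is \eqref{eq:ncd-F}; (4) do the mirror-image computation for $G = Z(-(\overline\kappa f)^{-1})$ to get \eqref{eq:ncd-G}. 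The main (and only) obstacle is bookkeeping: getting the left/right convolution orders straight and correctly identifying which of the two factorizations $(\overline\kappa)^{-1} = (g\overline\kappa)^{-1}g$ versus $f(\overline\kappa f)^{-1}$ feeds which augmented function — once that is nailed down, each step is a routine manipulation in the incidence algebra exactly parallel to the proof of Theorem~\ref{thm:ncd}, so I would present it compactly, spelling out $G$ in full and remarking that $F$ is obtained by the symmetric argument.
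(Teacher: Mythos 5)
Your final plan is correct and is essentially the paper's proof. The paper (for the $G$ case) writes $Z = g^{\rev}f = -g^{\rev}(\H\overline{\kappa})f = -G(\overline{\kappa}f)$ and then applies Lemma~\ref{lem:kbarf-and-gkbar} to $\overline{\kappa}f$; your identity $G = Z\cdot\bigl(-(\overline{\kappa}f)^{-1}\bigr)$ is the same equation multiplied by the invertible element $-(\overline{\kappa}f)^{-1}$, and the proof for $F$ mirrors it via $F = -(g\overline{\kappa})^{-1}Z$, exactly as you say. The lengthy exploratory middle of your write-up has some wrong turns and a couple of index slips (e.g.\ you attribute the $Z_{st}$ summand to a ``$Z_{ss}=1$'' term when it really comes from the diagonal value $\bigl(-(g\overline{\kappa})^{-1}\bigr)_{ss}=1$, and there is an $ut$/$uw$ typo in the recursion you state), but the cleaned-up plan at the end, once written out, reproduces the paper's argument.
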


\begin{proof}
    We prove the statement only for $G$ as the proof for $F$ is analogous. We know that \[
    Z = g^{\rev}f = -g^{\rev}(\H\overline{\kappa})f = -(g^{\rev}\H)\overline{\kappa}f = -G(\overline{\kappa}f).\]
    By virtue of Lemma~\ref{lem:kbarf-and-gkbar}, isolating one term of the convolution on the right-hand side of the above display, for every $s \leq t$ we have
    \[ Z_{st}(x) = -\left( -G_{st}(x) + \sum_{s\leq w< t} G_{sw}(x) \frac{f_{wt}^{\rev}(x) - xf_{wt}(x)}{x-1}\right),\]
    which after a rearrangement of the terms yields a proof of equation~\eqref{eq:ncd-G}.
\end{proof}

Now we have the tools to state and prove a result relating properties of the KLS and $Z$-functions to properties of the augmented Chow functions. 

\begin{theorem}
    Let $\kappa$ be a $(P,\rho)$-kernel, and let $F$ (resp. $G$) be the right (resp. left) augmented Chow functions. The following hold:
    \begin{enumerate}[\normalfont(i)]
        \item If $f$ (resp. $g$) is non-negative, then $F$ (resp. $G$) is non-negative.
        \item If $Z$ is non-negative and unimodal and $g$ (resp. $f$) is non-negative, then $F$ (resp. $G$) is unimodal.
    \end{enumerate}
\end{theorem}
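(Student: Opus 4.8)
The plan is to treat the two assertions separately, in each case proving the ``right'' statement about $F = \H f^{\rev}$; the ``left'' statement about $G = g^{\rev}\H$ follows verbatim after interchanging the roles of $f$ and $g$ and using~\eqref{eq:ncd-G} in place of~\eqref{eq:ncd-F}. For part (i), assume $f \geq 0$. Then $f^{\rev}\geq 0$, since $\rev$ merely reverses coefficients, and Theorem~\ref{thm:kls-positive-chow-unimodal} gives $\H \geq 0$. Expanding the convolution $F_{st}(x) = \sum_{s \leq w \leq t}\H_{sw}(x)\,f^{\rev}_{wt}(x)$, every summand is a product of polynomials with non-negative coefficients, hence non-negative; so $F \geq 0$, and (i) is done.

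For part (ii), assume $Z$ is non-negative and unimodal and $g \geq 0$. I would prove by induction on $\rho_{st}$ the three properties: $F_{st}(x)$ is non-negative, unimodal, and symmetric with center of symmetry $\tfrac12\rho_{st}$. The symmetry is already provided by Proposition~\ref{prop:augmented-degree-symmetry}, so the actual work is non-negativity and unimodality, proved \emph{in tandem} because Lemma~\ref{lemma:prod-unimodals} requires both factors of a product to be non-negative. The base $\rho_{st}=0$ is clear since $F_{ss}(x)=1$. For the step, apply the augmented numerical canonical decomposition~\eqref{eq:ncd-F}
\[ F_{st}(x) = Z_{st}(x) + \sum_{s < w \leq t}\frac{g^{\rev}_{sw}(x) - xg_{sw}(x)}{x-1}\,F_{wt}(x),\]
and check that each summand on the right is non-negative, unimodal, and symmetric with center $\tfrac12\rho_{st}$; since a sum of non-negative symmetric unimodal polynomials with a common center is again of that kind (Lemma~\ref{lemma:char-symmetric-unimodality}), the induction closes. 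For this one uses: $Z_{st}(x)$ is non-negative and unimodal by hypothesis and symmetric with center $\tfrac12\rho_{st}$ because $Z^{\rev} = Z$; the factor $\dfrac{g^{\rev}_{sw}(x) - xg_{sw}(x)}{x-1}$ is non-negative, unimodal, and symmetric with center $\tfrac12\rho_{sw}$ by precisely the computation in the proof of Theorem~\ref{thm:kls-positive-chow-unimodal} (which invokes $\deg g_{sw}<\tfrac12\rho_{sw}$ and $g_{sw}\geq 0$); and $F_{wt}(x)$ is non-negative and unimodal by the inductive hypothesis (as $\rho_{wt}<\rho_{st}$, with the reading $F_{tt}=1$ for $w=t$) and symmetric with center $\tfrac12\rho_{wt}$ by Proposition~\ref{prop:augmented-degree-symmetry}. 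Lemma~\ref{lemma:prod-unimodals} then makes the product non-negative, unimodal, and symmetric with center $\tfrac12\rho_{sw} + \tfrac12\rho_{wt} = \tfrac12\rho_{st}$, as required. (Observe that this argument also re-derives $F\geq 0$ under the hypotheses of (ii), which is what lets us feed $F_{wt}$ back in.)

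I expect the only genuinely delicate point to be the bookkeeping of centers of symmetry: one must notice that passing from $g_{sw}$ to $xg_{sw}$ shifts the center of $\frac{g^{\rev}_{sw}-xg_{sw}}{x-1}$ from $\frac{\rho_{sw}-1}{2}$ up to $\frac{\rho_{sw}}{2}$, and that this shift is exactly what makes the product with $F_{wt}$ (centered at $\frac{\rho_{wt}}{2}$, unlike $\H_{wt}$ which is centered at $\frac{\rho_{wt}-1}{2}$) land at $\frac{\rho_{st}}{2}$, matching the center of $Z_{st}$. Once this is verified, everything else is a routine assembly of Theorem~\ref{thm:kls-positive-chow-unimodal}, Proposition~\ref{prop:augmented-degree-symmetry}, and Lemmas~\ref{lemma:char-symmetric-unimodality} and~\ref{lemma:prod-unimodals}. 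For the ``left'' versions: in (i) use $G = g^{\rev}\H$ with $\H\geq 0$ guaranteed by $g\geq 0$; in (ii) run the same induction on~\eqref{eq:ncd-G}, where now the inductively controlled factor $G_{sw}(x)$ is multiplied by $\dfrac{f^{\rev}_{wt}(x)-xf_{wt}(x)}{x-1}$, which is non-negative, unimodal, and symmetric with center $\tfrac12\rho_{wt}$ because $f\geq 0$.
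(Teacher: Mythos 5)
Your proof is correct and follows essentially the same route as the paper: part (i) is the observation that $F=\H f^{\rev}$ is a convolution of non-negative elements once $\H\geq 0$ is known from Theorem~\ref{thm:kls-positive-chow-unimodal}, and part (ii) is an induction on $\rho_{st}$ driven by the augmented numerical canonical decomposition~\eqref{eq:ncd-F}, matching centers of symmetry. One small remark: you correctly identify the center of $\frac{g^{\rev}_{sw}-xg_{sw}}{x-1}$ as $\tfrac12\rho_{sw}$ (consistent with the computation in the proof of Theorem~\ref{thm:kls-positive-chow-unimodal}), whereas the paper's proof of this theorem misstates it as $\tfrac12(\rho_{sw}-1)$; your bookkeeping — noting the shift caused by the extra factor of $x$, which makes the product with $F_{wt}$ (centered at $\tfrac12\rho_{wt}$) land at $\tfrac12\rho_{st}$ — is exactly what is needed. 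Your choice to carry non-negativity of $F$ along with unimodality in the inductive hypothesis is also a sensible tightening, since unimodality as defined in the paper presupposes non-negative coefficients and this is not a direct consequence of the hypotheses of (ii).
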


\begin{proof}
    We do the proof for $F$, since the proof for $G$ is almost identical. The non-negativity of $f$ implies the non-negativity of $\H$ via Theorem~\ref{thm:kls-positive-chow-unimodal}, so the convolution $F = \H\cdot f^{\rev}$ is obviously non-negative. This proves the first property. 

    Assuming that $g$ is non-negative, from the proof of Theorem~\ref{thm:kls-positive-chow-unimodal} we know that the element
    \[\frac{g_{sw}^{\rev}(x) - xg_{sw}(x)}{x-1}\] is non-negative, symmetric, and unimodal, and its center of symmetry is $\frac{1}{2}(\rho_{sw}-1)$. In particular, we can induct using the augmented numerical canonical decomposition. Assuming that $F$ is unimodal on all proper intervals of $[s,t]$, we have that 
        \[ \sum_{s < w \leq t} \frac{g_{sw}^{\rev}(x) - xg_{sw}(x)}{x-1} \, F_{wt}(x)\]
    is a sum of unimodal and symmetric polynomials having the same center of symmetry. Observe that $Z$ also has that center of symmetry, so equation~\eqref{eq:ncd-F} gives that $F_{st}(x)$ is unimodal.
\end{proof}

\begin{remark}
    We do not know if it is possible to remove some of the assumptions for the second part of the prior statement. We have tried to construct examples showing that the requirements are all essential, but we could not find any. In particular, it would be very interesting to have an example in which $f$ and $g$ are non-negative but $Z$ is not unimodal. 
\end{remark}

\begin{remark}
    It is possible to define $F$ and $G$ without making explicit reference to the Chow function. Consider the elements $F^{\perp},G^{\perp} \in \mathcal{I}_\rho(P)$ defined by
    \[
    F^{\perp}_{st}(x) := \frac{x(f^{-1})^{\rev}_{st}(x) - f_{st}^{-1}(x)}{x-1}, \qquad G^{\perp}_{st}(x) := \frac{x(g^{-1})^{\rev}_{st}(x) - g_{st}^{-1}(x)}{x-1}.
    \]
    Notice that these are defined in terms of the inverses of the right and left KLS functions. It can be proved that
     \[F = \left(F^{\perp}\right)^{-1}, \qquad G = \left(G^{\perp}\right)^{-1}. \]
    Or, equivalently, that $F^{\perp} = F^{-1}$ and $G^{\perp} = G^{-1}$. Since we will not need these formulas in the remainder of the paper, we omit the proof.
\end{remark}

\section{Characteristic Chow functions of graded posets and geometric lattices}\label{sec:four}

In this section, we will study the properties of the Chow function that arises from the characteristic function in a finite graded bounded poset $P$. Under these assumptions, as was explained in Section~\ref{subsec:cohen-macaulay}, the rank function $\rho_{st}$ is given by the length of an arbitrary saturated chain from $s$ to $t$ or, equivalently, $\rho_{st} = \rho(t) - \rho(s)$, where $\rho(w)$ stands for the length of an arbitrary saturated chain from $\widehat{0}$ to $w$.

We refer to the Chow functions arising from $\chi$ as \emph{characteristic Chow functions} or \emph{$\chi$-Chow functions}. We will first establish a number of general properties that $\chi$-Chow polynomials of finite graded bounded posets satisfy, and later we will explain the consequences for the central example of matroids.

We start by noting explicitly that the characteristic function in a graded poset is combinatorially invariant. This readily implies that the KLS functions, the Chow function, and the augmented Chow functions are combinatorially invariant as well. 

Some properties of posets (e.g. being graded, or being Cohen--Macaulay) are \emph{hereditary on closed intervals}, that is, if $P$ satisfies them, so do all the closed intervals of $P$. By definition a family $\mathcal{C}$ of (isomorphism classes of) posets is said to be \emph{hereditary} if $P\in \mathcal{C}$ implies that all the closed intervals of $P$ lie in $\mathcal{C}$.

As a consequence of the above paragraph, if we prove a theorem (e.g. positivity) about all Chow polynomials of posets belonging to a hereditary class of posets, the same theorem will be true for the Chow functions of these posets. Therefore, we will often write our statements referring only to Chow polynomials of bounded posets, understanding that they carry over verbatim to Chow functions. The entirety of the preceding discussion also applies to KLS polynomials and augmented Chow polynomials.

\subsection{Basic properties and examples}

As we explained in Example~\ref{example:char-is-kernel}, when $\kappa = \chi$, the left KLS function is $g=\upzeta$. In particular, for every $s\leq t$ we have $g^{\rev}_{st}(x) = x^{\rho_{st}}$. By plugging this into the second of the two formulas in Theorem~\ref{thm:chow-from-kl}, we obtain the following result.

\begin{theorem}\label{thm:chi-chow-graded}
    Let $P$ be a finite graded bounded poset. The $\chi$-Chow polynomial of $P$ is given by
    \[ \H_P(x) = \sum_{\widehat{0} = p_0 < p_1 < \cdots < p_m\leq \widehat{1}}\; \prod_{i=1}^m\frac{x(x^{\rho(p_{i}) - \rho(p_{i-1})-1} \; - 1)}{x-1}.\]
\end{theorem}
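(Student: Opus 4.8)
The plan is to specialize the chain formula from Theorem~\ref{thm:chow-from-kl} to the case $\kappa = \chi$. First I would recall, as established in Example~\ref{example:char-is-kernel}, that the left KLS function associated to the characteristic kernel is $g = \upzeta$, that is, $g_{st}(x) = 1$ for all $s \leq t$. Consequently $g_{st}^{\rev}(x) = x^{\rho_{st}}$, and therefore
\[
\frac{g^{\rev}_{p_{i-1}p_i}(x) - x g_{p_{i-1}p_i}(x)}{x-1} = \frac{x^{\rho_{p_{i-1}p_i}} - x}{x-1} = \frac{x\left(x^{\rho_{p_{i-1}p_i}-1} - 1\right)}{x-1}.
\]
Here $\rho_{p_{i-1}p_i} = \rho(p_i) - \rho(p_{i-1})$ since $P$ is graded, so this matches the factors appearing in the claimed product.

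Next I would substitute this into equation~\eqref{eq:chow-from-kl-on-chains} of Theorem~\ref{thm:chow-from-kl}, taken over the interval $[s,t] = [\widehat{0},\widehat{1}]$. That formula reads
\[
\H_{st}(x) = \sum_{s = p_0 < p_1 < \ldots < p_m \leq t} \left(\prod_{i=1}^m \frac{g^{\rev}_{p_{i-1}p_i}(x) - x g_{p_{i-1}p_i}(x)}{x-1}\right) g_{p_m t}(x).
\]
Since $g_{p_m t}(x) = 1$, the trailing factor disappears, and the sum runs over all chains $\widehat{0} = p_0 < p_1 < \cdots < p_m \leq \widehat{1}$ (including the empty chain $m = 0$, contributing the term $1$). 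Replacing each factor by the simplified expression above yields exactly
\[
\H_P(x) = \sum_{\widehat{0} = p_0 < p_1 < \cdots < p_m \leq \widehat{1}} \prod_{i=1}^m \frac{x\left(x^{\rho(p_i) - \rho(p_{i-1}) - 1} - 1\right)}{x-1},
\]
which is the desired identity.

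There is essentially no obstacle here beyond bookkeeping: the only point requiring a small remark is that $\frac{x^{\rho_{p_{i-1}p_i}-1}-1}{x-1} = 1 + x + \cdots + x^{\rho_{p_{i-1}p_i}-2}$ is a genuine polynomial (so the factors lie in $\mathbb{Z}[x]$ as required for membership in $\mathcal{I}_\rho(P)$), which holds because $p_{i-1} < p_i$ forces $\rho_{p_{i-1}p_i} \geq 1$. I would also note in passing that when $p_{i-1} \lessdot p_i$ is a covering relation the corresponding factor is simply $x$, recovering consistency with the covering-case computation in the proof of Proposition~\ref{prop:degree-and-symmetry}. The whole proof is then a one-line substitution, which is why the statement is presented as an immediate corollary of Theorem~\ref{thm:chow-from-kl}.
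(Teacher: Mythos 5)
Your proposal is correct and follows exactly the route the paper takes: observe that $g = \upzeta$ for $\kappa = \chi$, compute $g^{\rev}_{st}(x) = x^{\rho_{st}}$, and substitute into the chain formula~\eqref{eq:chow-from-kl-on-chains} of Theorem~\ref{thm:chow-from-kl}. The paper's own proof is precisely this one-line substitution, so there is nothing to add.
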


\begin{example}
    If $P=C_n$ is a chain on $n\geq 2$ elements, the $\chi$-Chow polynomial of $P$ is given by
        \[ \H_P(x) = (x+1)^{n-2}.\]
    The above identity can be easily proved by induction. On the other hand, if $P=B_n$ is a Boolean lattice on $n\geq 1$ atoms, the Chow polynomial of $P$ is
        \[ \H_P(x) = A_n(x),\]
    the $n$-th \emph{Eulerian polynomial}, which has $i$-th coefficient equal to the number of permutations $\sigma\in \mathfrak{S}_n$ having exactly $i$ descents. This can be proved directly by induction, or  via Theorem~\ref{thm:chow-matroid} below (since Boolean lattices are geometric). Note that $\chi$-Chow polynomials behave erratically under Cartesian product: one can observe this by noting that $B_n$ is the $n$-fold Cartesian product of $B_1$ with itself.
\end{example}

\begin{example}\label{ex:some-chow-polys}
    Consider the graded posets $P$ (on the left) and $Q$ (on the right) depicted in Figure~\ref{fig:poset1}. Neither of these posets is Cohen--Macaulay. This is apparent for $P$ since the flag $h$-vector has $\beta_P(\{3,4,6\}) = -1$, whereas for $Q$ the flag $h$-vector is non-negative (but Cohen--Macaulayness still fails). Using the characteristic function as kernel, the left and right KLS polynomials of $P$ and $Q$ are given by
    \begin{align*}
    f_P(x) &= x^2 + 1, & g_P(x) &= 1,\\
    f_Q(x) &= 1, & g_Q(x) &= 1.
    \end{align*}
    The Chow polynomials can be calculated via the formula of Theorem~\ref{thm:chi-chow-graded}, yielding 
    \begin{align*}
        \H_P(x) &= x^5 + 8x^4 + 20x^3 + 20 x^2 + 8 x + 1,\\
        \H_Q(x) &= x^4 + 13x^3 + 25 x^2 + 13 x + 1.
    \end{align*}
    
    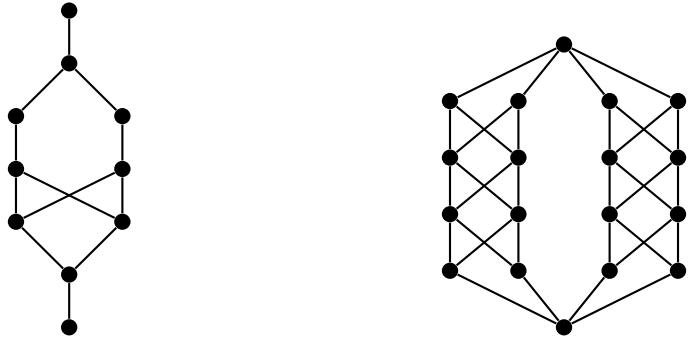
\begin{figure}[ht]
    \centering
	\begin{tikzpicture}  
	[scale=0.7,auto=center,every node/.style={circle,scale=0.8, fill=black, inner sep=2.7pt}] 
	\tikzstyle{edges} = [thick];
	
	\node[] (a1) at (0,0) {};  
	\node[] (a2) at (0,1)  {};  
	\node[] (a3) at (-1,2) {};
	\node[] (a4) at (1,2) {};
	\node[] (a5) at (-1,3)  {};  
	\node[] (a6) at (1,3)  {};  
	\node[] (a7) at (-1,4)  {};  
	\node[] (a8) at (1,4)  {};  
	\node[] (a9) at (0,5) {};
        \node[] (a10) at (0,6) {};
	
	\draw[edges] (a1) -- (a2);  
	\draw[edges] (a2) -- (a3);  
	\draw[edges] (a2) -- (a4);
	\draw[edges] (a3) -- (a5);
        \draw[edges] (a3) -- (a6);
	\draw[edges] (a4) -- (a6);
	\draw[edges] (a4) -- (a5);
	\draw[edges] (a5) -- (a7);
	\draw[edges] (a6) -- (a8);
	\draw[edges] (a7) -- (a9);
	\draw[edges] (a8) -- (a9);
	\draw[edges] (a9) -- (a10);
	\end{tikzpicture}\hspace*{4cm}
        \begin{tikzpicture}  
	[scale=0.75,auto=center,every node/.style={circle,scale=0.8, fill=black, inner sep=2.7pt}] 
	\tikzstyle{edges} = [thick];
	
	\node[] (cero) at (0,0) {};  
	\node[] (a1) at (0.8,1)  {};  
	\node[] (a2) at (2,1) {};
	\node[] (a3) at (0.8,2) {};
	\node[] (a4) at (2,2)  {};  
	\node[] (a5) at (0.8,3)  {};  
	\node[] (a6) at (2,3)  {};  
	\node[] (a7) at (0.8,4)  {};  
	\node[] (a8) at (2,4) {};
        \node[] (uno) at (0,5) {};
        \node[] (b1) at (-0.8,1)  {};  
	\node[] (b2) at (-2,1) {};
	\node[] (b3) at (-0.8,2) {};
	\node[] (b4) at (-2,2)  {};  
	\node[] (b5) at (-0.8,3)  {};  
	\node[] (b6) at (-2,3)  {};  
	\node[] (b7) at (-0.8,4)  {};  
	\node[] (b8) at (-2,4) {};

	\draw[edges] (a1) -- (a3);  
	\draw[edges] (a2) -- (a3);
	\draw[edges] (a1) -- (a4);
	\draw[edges] (a2) -- (a4);

        \draw[edges] (a3) -- (a5);  
	\draw[edges] (a4) -- (a6);
	\draw[edges] (a3) -- (a6);
	\draw[edges] (a4) -- (a5);

        \draw[edges] (a5) -- (a7);  
	\draw[edges] (a6) -- (a8);
	\draw[edges] (a5) -- (a8);
	\draw[edges] (a6) -- (a7);

        \draw[edges] (b1) -- (b3);  
	\draw[edges] (b2) -- (b3);
	\draw[edges] (b1) -- (b4);
	\draw[edges] (b2) -- (b4);

        \draw[edges] (b3) -- (b5);  
	\draw[edges] (b4) -- (b6);
	\draw[edges] (b3) -- (b6);
	\draw[edges] (b4) -- (b5);

        \draw[edges] (b5) -- (b7);  
	\draw[edges] (b6) -- (b8);
	\draw[edges] (b5) -- (b8);
	\draw[edges] (b6) -- (b7);

        \draw[edges] (cero) -- (a1);  
	\draw[edges] (cero) -- (a2);
        \draw[edges] (cero) -- (b1);  
	\draw[edges] (cero) -- (b2);
        \draw[edges] (uno) -- (a7);  
	\draw[edges] (uno) -- (a8);
        \draw[edges] (uno) -- (b7);  
	\draw[edges] (uno) -- (b8);
	
	\end{tikzpicture}\caption{The posets $P$ and $Q$ in Example~\ref{ex:some-chow-polys}.}\label{fig:poset1}
\end{figure}
\end{example}

\subsection{Combinatorics of (left) augmented Chow polynomials}

Now we investigate the augmented Chow polynomials arising from this setting. The left augmented Chow polynomial admits a beautiful combinatorial description (see Corollary~\ref{coro:aug-from-tail}), while the right augmented Chow polynomial is much more complicated to understand.

\begin{definition}
    Let $P$ be any poset. We define the \emph{augmentation} of $P$, denoted $\operatorname{aug}(P)$, as the poset obtained by adding a minimum element to $P$. 
\end{definition}

At the level of the topology of the order complexes, $|\Delta(\aug(P))|$ is homeomorphic to a cone over $|\Delta(P)|$. We also note that if $P$ already has a minimum element $\widehat{0}$, then $\widehat{0}$ becomes an atom of $\operatorname{aug}(P)$. The poset $P$ of Figure~\ref{fig:poset1} is the augmentation of a poset that already had a minimum. 

Recall that whenever we have two posets $P$ and $Q$, their \emph{ordinal sum} $P\oplus Q$ is defined as the poset on $P\sqcup Q$, preserving the ordering relations of both $P$ and $Q$, and imposing that $s\leq t$ for every $s\in P$ and $t\in Q$. For two graded bounded posets $P$ and $Q$, we define a similar operation that we will call the \emph{join} of $P$ and $Q$ and denote by $P*Q$. Precisely, we define $P*Q = P\oplus (Q\smallsetminus \{\widehat{0}_Q\})$. Note that $P*Q$ also equals $(P\smallsetminus \{\widehat{1}_P\})\oplus Q$. We point out that in other sources the join is defined in a slightly different way (see, e.g., \cite[p.~485]{stanley-cd}). Also, it is clear from the definitions that $\aug(P) \cong C_2 * P$, where $C_2$ is a chain on two elements.

\begin{proposition}\label{prop:chow-of-join}
    Let $P$ and $Q$ be two graded bounded posets. The following formula for the characteristic Chow function of $P*Q$ holds:
    \[ \H_{P*Q}(x) = \H_P(x)\cdot G_Q(x) = \H_P(x) \cdot \H_{\aug(Q)}(x).\]
\end{proposition}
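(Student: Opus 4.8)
The plan is to reduce the statement to a computation inside the incidence algebra, using the non-recursive formula for $\H$ in terms of the left KLS function (Theorem~\ref{thm:chow-from-kl}, equation~\eqref{eq:chow-from-kl-on-chains}) together with the fact that for $\kappa = \chi$ the left KLS function is $g = \upzeta$, so that for every interval $[s,t]$ one has $\frac{g^{\rev}_{st}(x) - x g_{st}(x)}{x-1} = \frac{x^{\rho_{st}} - x}{x-1} = x(1 + x + \cdots + x^{\rho_{st}-2})$, depending only on $\rho_{st}$. In particular, by Theorem~\ref{thm:chi-chow-graded}, the $\chi$-Chow polynomial of any graded bounded poset $R$ is $\H_R(x) = \sum_{\widehat 0 = p_0 < \cdots < p_m \le \widehat 1} \prod_{i=1}^m \frac{x(x^{\rho(p_i) - \rho(p_{i-1}) - 1} - 1)}{x-1}$, a sum over chains that start at $\widehat 0$ and whose top element may be anything $\le \widehat 1$. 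Note also that $G_Q = \H_{\aug(Q)}$: indeed $\aug(Q) \cong C_2 * Q$, and one checks $G_Q(x) = g^{\rev}_Q \cdot \H$ specializes, since $g = \upzeta$, to $\sum_{\widehat 0 = q_0 < \cdots < q_m = \widehat 1_Q}$-type sum with an extra factor $x^{\rho_{q_m \widehat 1}} = 1$, so the second claimed equality $\H_P \cdot G_Q = \H_P \cdot \H_{\aug(Q)}$ can be treated on the same footing; I would either record it as a separate short lemma or simply verify it from the chain formula in parallel.

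The core of the argument is a bijection between chains in $P*Q$ and pairs of chains, one in $P$ (ending anywhere $\le \widehat 1_P$) and one in $Q$ (starting at $\widehat 0_Q$, i.e. the augmented-type chain). Recall $P*Q = P \oplus (Q \smallsetminus \{\widehat 0_Q\})$ as a set, with $\widehat 0_{P*Q} = \widehat 0_P$, $\widehat 1_{P*Q} = \widehat 1_Q$, and the rank function satisfying $\rho_{P*Q}(y) = \rho_P(\widehat 1_P) + \rho_Q(y)$ for $y \in Q \smallsetminus\{\widehat 0_Q\}$ (using $\rho_P(\widehat 1_P) = \rho_Q(\widehat 1_P)$ where $\widehat 1_P = \widehat 0_Q$ is identified). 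Given a chain $\widehat 0_P = p_0 < \cdots < p_k < q_1 < \cdots < q_\ell \le \widehat 1_Q$ in $P*Q$, where $p_0,\dots,p_k \in P$ and $q_1 < \cdots < q_\ell$ lie in $Q\smallsetminus\{\widehat 0_Q\}$, I would split it at the transition between $P$ and $Q$: the $P$-part is the chain $p_0 < \cdots < p_k$ in $P$ (with $p_k$ unconstrained, $\le \widehat 1_P$); the $Q$-part is the chain $\widehat 0_Q < q_1 < \cdots < q_\ell$ in $Q$, with $\widehat 0_Q$ prepended. One edge case needs care: if the original chain has no $Q$-part at all ($\ell = 0$) then $p_k \le \widehat 1_P = \widehat 0_Q$ and the $Q$-part should be taken to be the trivial chain consisting of just $\widehat 0_Q$, whose contribution to $G_Q$ is the summand $1$ (the $m=0$ term); and if there is no $P$-part below the transition this cannot happen since $p_0 = \widehat 0_P$ is forced. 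The monomial weight $\prod_i \frac{x(x^{\rho(p_i)-\rho(p_{i-1})-1}-1)}{x-1}$ attached to the $P*Q$-chain factors exactly as the product of the $P$-weight and the $Q$-weight, because the rank gaps are preserved under the splitting and the gap across the transition $p_k \to q_1$ becomes, after inserting $\widehat 0_Q$, the gap $p_k \to \widehat 0_Q$ (contributing to the $P$-sum for $R = P$, where the top element is allowed to be anything $\le \widehat 1_P$) times the gap $\widehat 0_Q \to q_1$ (the first factor of the $Q$-sum). Summing the resulting product over all such pairs and using the distributive law gives $\H_P(x) \cdot \H_{\aug(Q)}(x)$.

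I would carry out the steps in this order: (1) record that $g = \upzeta$ for $\kappa = \chi$ and hence the reduced-kernel factors $\frac{g^{\rev}_{st} - x g_{st}}{x-1}$ depend only on $\rho_{st}$, re-deriving the chain formula of Theorem~\ref{thm:chi-chow-graded}; (2) specialize equation~\eqref{eq:ncd-G} or the chain expansion to get $G_Q(x) = \H_{\aug(Q)}(x) = \sum_{\widehat 0_Q = q_0 < \cdots < q_\ell \le \widehat 1_Q} \prod_{i=1}^\ell \frac{x(x^{\rho(q_i) - \rho(q_{i-1}) - 1} - 1)}{x-1}$ (same shape, over chains rooted at $\widehat 0_Q$); (3) describe the rank function of $P*Q$ and verify additivity of ranks across the join; (4) set up the chain-splitting bijection, paying explicit attention to the $\ell = 0$ edge case; (5) check multiplicativity of the monomial weights under the bijection; (6) sum and conclude. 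The main obstacle I anticipate is bookkeeping at the transition point: one must be careful that the $P$-sum in $\H_P$ genuinely ranges over chains whose top is $\le \widehat 1_P$ (not equal to $\widehat 1_P$), which is exactly what Theorem~\ref{thm:chi-chow-graded} provides, and that the "gap" factor $\frac{x(x^{\rho_{p_k, \widehat 0_Q} - 1} - 1)}{x-1}$ is correctly assigned to the $P$-side; getting the boundary term ($p_k = \widehat 1_P$, i.e. no elements of $Q$ strictly below, matched with the empty $Q$-tail) to line up with the constant term $1$ of $G_Q$ is the one place where an off-by-one slip is easy. A cleaner alternative to the bijective argument, which I would mention or use as a cross-check, is purely algebraic: write everything in $\mathcal{I}_\rho$ and use that $\H = -(\overline{\chi})^{-1}$ together with the fact that $\overline\chi$ restricted to $P*Q$ is "block triangular" with respect to the decomposition $P*Q = P \sqcup (Q\smallsetminus\{\widehat 0_Q\})$, so that inverting it on the full interval $[\widehat 0_P, \widehat 1_Q]$ factors through the inverses on $[\widehat 0_P, \bullet]$ in $P$ and on $[\widehat 0_Q, \widehat 1_Q]$ in $\aug(Q)$; this makes the multiplicativity transparent but requires spelling out how convolution interacts with the ordinal-sum structure.
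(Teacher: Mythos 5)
There is a genuine gap in your primary argument. You propose to expand both sides via the chain formula of Theorem~\ref{thm:chi-chow-graded} and then exhibit a weight-preserving bijection between chains in $P*Q$ and pairs (chain in $P$, chain in $\aug(Q)$) obtained by splitting at the $P$-to-$Q$ transition. But the transition factor does not factor as you claim. If a $P*Q$-chain jumps from $p_k\in P$ directly to $q_1\in Q\smallsetminus\{\widehat 0_Q\}$, the corresponding factor in the $\H_{P*Q}$ sum is
\[
\frac{x\bigl(x^{d_1+d_2-1}-1\bigr)}{x-1},
\qquad
d_1=\rho(\widehat 1_P)-\rho_P(p_k),\quad d_2=\rho_Q(q_1),
\]
whereas in the pair $\bigl(\widehat 0_P=p_0<\cdots<p_k,\ \widehat 0_{\aug(Q)}<q_1<\cdots\bigr)$ the only factor coming from this jump is the first factor of the $\aug(Q)$-weight, namely $\frac{x(x^{d_2}-1)}{x-1}$ (the $\H_P$-weight of the chain $p_0<\cdots<p_k$ contains no factor for the interval $[p_k,\widehat 1_P]$, so there is nothing to "contribute to the $P$-sum"). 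These differ whenever $d_1\ge 2$. For instance, already for $P=C_3$, $Q=C_2$ (so $P*Q=C_4$, $\aug(Q)=C_3$), the chain $\widehat 0_{P*Q}<\widehat 1_{P*Q}$ has weight $x+x^2$ in $\H_{P*Q}$, but under your splitting it maps to a pair of weight $1\cdot x=x$; and the pair $\bigl(\{\widehat 0_P,\widehat 1_P\},\{\widehat 0_{\aug(Q)},\widehat 1_Q\}\bigr)$ has weight $x^2$ in the product, while its would-be preimage $\widehat 0_{P*Q}<\widehat 1_P<\widehat 1_{P*Q}$ has weight $0$. So the identity holds only in aggregate, and a naive chain-by-chain bijection cannot work; one needs either a global telescoping/reindexing of the expanded geometric series $x+x^2+\cdots+x^{d-1}$, or an algebraic argument. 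Your short justification of $G_Q=\H_{\aug(Q)}$ has the same character: the phrase \emph{an extra factor $x^{\rho_{q_m\widehat 1}}=1$} does not describe a correct step (and in the paper that identity is derived as a corollary of the very proposition you are proving, applied with $P=C_2$).

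The alternative you mention at the very end — using that $\overline\chi$ on $P*Q$ is block-triangular across the ordinal-sum cut — is essentially the paper's actual approach. The paper observes $\chi_{st}(x)=\chi_{s,\widehat 1_P}(x)\,\upzeta^{\rev}_{\widehat 0_Q,t}$ for $s\in P$, $t\in Q$ (since $\mu_{st}=0$ whenever $s<\widehat 1_P$ and $t>\widehat 0_Q$), then splits the defining recursion $\overline\chi\,\H=\delta$ for the full interval into the two ranges for the intermediate element and uses induction together with $G=g^{\rev}\H$ with $g=\upzeta$. Developing that route would give a correct proof; the chain-splitting route as you have set it up does not.
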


\begin{proof}
    The key observation is that for every $s\in P$ and $t \in Q$, we have that \[
    \chi_{st}(x) = \chi_{s,\widehat{1}_P}(x)\upzeta^{\rev}_{\widehat{0}_Q, t}, 
    \]
    as $(\mu_{P*Q})_{st} = 0$ whenever $s< \widehat{1}_P$ and $t>\widehat{0}_Q$. To conclude, we may apply the formula in equation~\eqref{eq:recurrence-chow} with $\kappa = \chi$. Unravelling the convolution in that equation, we obtain: 
    \begin{align*}
        \H_{P*Q}(x) &= \sum_{\widehat{0}_P<w < \widehat{1}_P}\overline{\chi}_{\widehat{0}_Pw}(x) \H_{[w,\widehat{1}_P]*Q}(x) + \sum_{\widehat{0}_Q \leq w \leq \widehat{1}_Q} \overline{\chi}_{P*[\widehat{0}_Q,w]}(x) \H_{w,\widehat{1}_Q}(x) \\
        &= \sum_{\widehat{0}_P<w < \widehat{1}_P}\overline{\chi}_{\widehat{0}_Pw}(x) \H_{w,\widehat{1}_P}(x) G_Q(x) + \sum_{\widehat{0}_Q \leq w \leq \widehat{1}_Q} \overline{\chi}_{P}(x) \zeta^{\rev}_{\widehat{0}_Q,w} \H_{w,\widehat{1}_Q}(x) \\
        &= \left(\H_P(x) - \overline{\chi}_P(x)\right)G_Q(x) + \overline{\chi}_P(x)G_Q(x) \\
        &= \H_P(x)G_Q(x).\qedhere
    \end{align*}
\end{proof}

If we apply the last proposition in the case of $P = C_2$ (a chain on two elements), we can deduce a formula for the left augmented Chow polynomial associated to the characteristic function.

\begin{corollary}\label{coro:aug-from-tail}
    Let $P$ be a graded bounded poset. The left $\chi$-augmented Chow polynomial of $P$ is the $\chi$-Chow polynomial of $\operatorname{aug}(P)$, i.e.,
        \[ G_P(x) = \H_{\aug(P)}(x).\]
\end{corollary}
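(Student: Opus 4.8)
The plan is to obtain this as the special case $P = C_2$ (a two-element chain) of Proposition~\ref{prop:chow-of-join}. Recall from the discussion preceding the corollary that $\aug(P) \cong C_2 * P$, and that $C_2$ is a graded bounded poset, so Proposition~\ref{prop:chow-of-join} applies with its ``$P$'' taken to be $C_2$ and its ``$Q$'' taken to be the poset $P$ of the corollary. Using only the first of the two equalities in that proposition (the one actually established in its proof) we get
\[ \H_{\aug(P)}(x) \;=\; \H_{C_2 * P}(x) \;=\; \H_{C_2}(x)\cdot G_P(x). \]

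It then remains to check that $\H_{C_2}(x) = 1$. In $C_2$ the minimum $\widehat 0$ is covered by the maximum $\widehat 1$, so the recursion~\eqref{eq:recurrence-chow} (as already noted in the proof of Proposition~\ref{prop:degree-and-symmetry}, where the sum collapses to a single term) gives $\H_{C_2}(x) = \overline{\chi}_{\widehat 0\widehat 1}(x) = \tfrac{1}{x-1}\chi_{C_2}(x)$; and $\chi_{C_2}(x) = \mu_{\widehat 0\widehat 0}\,x^{1} + \mu_{\widehat 0\widehat 1}\,x^{0} = x - 1$, whence $\H_{C_2}(x) = 1$. (Equivalently, this is the $n=2$ instance of the identity $\H_{C_n}(x) = (x+1)^{n-2}$ recorded earlier.) Substituting back yields $\H_{\aug(P)}(x) = G_P(x)$, as claimed.

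This argument is essentially immediate; the only point requiring a little care is bookkeeping — verifying that ``adjoining a new minimum'' corresponds to the \emph{left} factor in the join, i.e. to $C_2 * P$ rather than $P * C_2$, so that the left augmented Chow function $G_Q$ appearing in Proposition~\ref{prop:chow-of-join} is attached to $P$ and not to $C_2$. (Had we instead computed $\H_{P * C_2}$ we would obtain $\H_P(x)\cdot G_{C_2}(x)$, which is not the desired quantity.) As a byproduct, this specialization also confirms the second equality stated in Proposition~\ref{prop:chow-of-join}.
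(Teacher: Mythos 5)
Your proof is correct and matches the paper's intended argument exactly: set the first factor in Proposition~\ref{prop:chow-of-join} to $C_2$, use $\aug(P)\cong C_2 * P$ and $\H_{C_2}(x)=1$, and read off $G_P(x)=\H_{\aug(P)}(x)$. Your parenthetical point about relying only on the first equality in that proposition (to avoid circularity with the second, which \emph{is} this corollary) is a good and careful observation.
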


A consequence of the above corollary is that for $\kappa = \chi$, all left augmented Chow polynomials are themselves Chow polynomials. A natural guess for the right $\chi$-augmented polynomial of $P$ would be that it results from augmenting \emph{from the top}, i.e., adding a maximum element above $P$. However, it is easy to see that this does not work.

A further corollary of Proposition~\ref{prop:chow-of-join} is the following product formula for the left augmented Chow polynomial of a join of posets.

\begin{corollary}\label{coro:product-formula-left-aug-chow}
    Let $P$ and $Q$ be two graded bounded posets. The left $\chi$-augmented Chow function of $P*Q$ can be calculated as
    \[ G_{P*Q}(x) = G_P(x)\cdot G_{Q}(x).\]
\end{corollary}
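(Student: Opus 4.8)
The plan is to reduce the corollary to Proposition~\ref{prop:chow-of-join} and Corollary~\ref{coro:aug-from-tail} by means of a single elementary poset identity. First I would record the observation that augmentation commutes with the join on the left: for graded bounded posets $P$ and $Q$ one has
\[ \aug(P * Q) \cong \aug(P) * Q. \]
Indeed, adjoining a new minimum below $P * Q = P \oplus (Q \smallsetminus \{\widehat{0}_Q\})$ affects only the $P$-part, turning $P$ into $\aug(P)$ while $Q \smallsetminus \{\widehat{0}_Q\}$ continues to sit on top; equivalently, this is associativity of $*$ applied to $C_2 * P * Q$ together with the identity $\aug(R) \cong C_2 * R$ valid for any graded bounded $R$. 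One should also check along the way that $\aug(P)$ is again graded and bounded whenever $P$ is, since a maximal chain of $\aug(P)$ is the new covering relation followed by a maximal chain of $P$; this is what makes it legitimate to invoke Proposition~\ref{prop:chow-of-join} with $\aug(P)$ in the role of $P$.

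With that identity in hand, the computation would be immediate. By Corollary~\ref{coro:aug-from-tail},
\[ G_{P*Q}(x) = \H_{\aug(P*Q)}(x) = \H_{\aug(P)*Q}(x); \]
applying Proposition~\ref{prop:chow-of-join} to the pair $\aug(P), Q$ gives $\H_{\aug(P)*Q}(x) = \H_{\aug(P)}(x)\cdot G_Q(x)$; and a second application of Corollary~\ref{coro:aug-from-tail} rewrites $\H_{\aug(P)}(x) = G_P(x)$, yielding $G_{P*Q}(x) = G_P(x)\cdot G_Q(x)$ as claimed.

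I do not expect any genuine obstacle here: the statement is a formal consequence of results already established in this section, and there is no analytic or combinatorial subtlety. The only point requiring a line of care would be the poset identity $\aug(P * Q) \cong \aug(P) * Q$ and the verification that $\aug(P)$ remains graded and bounded, so that the hypotheses of Proposition~\ref{prop:chow-of-join} are actually met. As an alternative, one could bypass augmentation entirely and prove the corollary directly by the same convolution bookkeeping used in the proof of Proposition~\ref{prop:chow-of-join}, peeling off the minimum of $P * Q$ and inducting with $G_{P'*Q} = G_{P'}\,G_Q$ on the intervals $[w, \widehat{1}_P]$; but the route through augmentation is shorter and reuses the existing machinery verbatim, so I would present that one.
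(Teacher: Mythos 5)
Your proof is correct and follows essentially the same route as the paper: the paper's proof also applies Proposition~\ref{prop:chow-of-join} to $C_2 * P * Q$ (viewed as $(C_2 * P) * Q$), combined with Corollary~\ref{coro:aug-from-tail}, exploiting the identity $\aug(P*Q) \cong \aug(P) * Q$. You simply make the associativity step and the gradedness of $\aug(P)$ explicit, which the paper leaves implicit.
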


\begin{proof}
    This follows by applying twice the formula in Proposition~\ref{prop:chow-of-join} to the poset $C_2 * P * Q$. This yields
    \[ G_{P*Q}(x) = \H_{C_2 *P}(x) G_{Q}(x) = G_{P}(x) G_{Q}(x). \qedhere\]
\end{proof}

To the best of our knowledge, there is no nice product formula for the right $\chi$-Chow polynomial of a join of posets.

\begin{example}
    The poset $P$ in Example~\ref{ex:some-chow-polys} can be obtained as $C_2 * P' * C_2$, where $P'$ is depicted in Figure~\ref{fig:non-augmented}. In particular, by applying the last proposition twice, it follows that
        \[ \H_{P}(x) = \H_{C_2}(x)\cdot G_{P'}(x)\cdot G_{C_2}(x). \]
    As mentioned earlier, we have $\H_{C_2}(x) = 1$, and it is easy to see that $G_{C_2}(x) = x + 1$, from which we conclude that
    \[ G_{P'}(x) = \frac{1}{x+1} \H_P(x) = x^4+7x^3+13x^2+7x+1.\]
    On the other hand, the product formula of Corollary~\ref{coro:product-formula-left-aug-chow} gives
    \[ G_P(x) = G_{C_2}(x) G_{P'}(x) G_{C_2}(x) = (x+1)(x^4+7x^3+13x^2+7x+1)(x+1).\]
    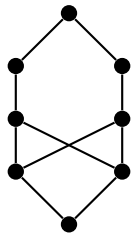
\begin{figure}[ht]
    \centering
	\begin{tikzpicture}  
	[scale=0.7,auto=center,every node/.style={circle,scale=0.8, fill=black, inner sep=2.7pt}] 
	\tikzstyle{edges} = [thick];
	
	\node[] (a2) at (0,1)  {};  
	\node[] (a3) at (-1,2) {};
	\node[] (a4) at (1,2) {};
	\node[] (a5) at (-1,3)  {};  
	\node[] (a6) at (1,3)  {};  
	\node[] (a7) at (-1,4)  {};  
	\node[] (a8) at (1,4)  {};  
	\node[] (a9) at (0,5) {};
	
	\draw[edges] (a2) -- (a3);  
	\draw[edges] (a2) -- (a4);
	\draw[edges] (a3) -- (a5);
        \draw[edges] (a3) -- (a6);
	\draw[edges] (a4) -- (a6);
	\draw[edges] (a4) -- (a5);
	\draw[edges] (a5) -- (a7);
	\draw[edges] (a6) -- (a8);
	\draw[edges] (a7) -- (a9);
	\draw[edges] (a8) -- (a9);
	\end{tikzpicture}\caption{The poset $P'$.}\label{fig:non-augmented}
    \end{figure}
\end{example}

\subsection{From characteristic Chow functions to Chow rings of matroids}

We now turn our attention to the case where the poset $P$ is a geometric lattice. As a consequence of Theorem~\ref{thm:chi-chow-graded}, we have the following key connection with Chow rings of matroids. Recall that to any loopless matroid $\M$ one can associate its \emph{Chow ring} via the following procedure. Denote by $\mathcal{L}(\M)$ the lattice of flats of $\M$. Consider the polynomial ring $S = \mathbb{Q}[x_F: F\in \mathcal{L}\smallsetminus\{\varnothing,E\}]$, and the homogeneous ideals 
    \begin{align*}
        I &= \left< x_{F_1} x_{F_2} \,:\, F_1,F_2 \in \mathcal{L}(\M)\smallsetminus\{\varnothing,E\} \text{ are incomparable}\right>,\\
        J &= \left< \sum_{F\ni i} x_F - \sum_{F\ni j} x_F \,:\, i,j\in E\right>.
    \end{align*}

The \emph{Chow ring} of $\M$, denoted $\uCH(\M)$ is defined as the quotient ring $S/(I+J)$. This is a graded ring, admitting a decomposition into $\Q$-vector spaces: $\uCH(\M) = \uCH^0(\M) \oplus \cdots \oplus \uCH^{r-1}(\M)$, where $r = \rk(\M)$ is the rank of the matroid. The following was one of the main results of \cite{ferroni-matherne-stevens-vecchi}. For the sake of completeness, we reprove it in the framework of the present paper.

\begin{theorem}\label{thm:chow-matroid}
    Let $\M$ be a loopless matroid. The $\chi$-Chow polynomial of $\mathcal{L}(\M)$ equals the Hilbert series of the Chow ring $\uCH(\M)$.
\end{theorem}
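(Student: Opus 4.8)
The plan is to recognize both sides as encoding the same combinatorial data through the formula of Theorem~\ref{thm:chi-chow-graded}. On the one hand, the celebrated result of Feichtner and Yuzvinsky \cite{feichtner-yuzvinsky} gives an explicit monomial basis for $\uCH(\M)$ indexed by certain ``admissible'' sequences attached to flags of proper flats of $\M$. Concretely, each basis element corresponds to a flag $\varnothing = F_0 \subsetneq F_1 \subsetneq \cdots \subsetneq F_m \subsetneq E$ of flats together with an exponent vector $(a_1,\dots,a_m)$ satisfying $1 \leq a_i \leq \rk(F_i) - \rk(F_{i-1}) - 1$, and such a basis element lives in degree $a_1 + \cdots + a_m$. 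Therefore
\[
\Hilb(\uCH(\M),x) = \sum_{\varnothing = F_0 \subsetneq \cdots \subsetneq F_m \subsetneq E}\ \prod_{i=1}^m \left(x + x^2 + \cdots + x^{\rk(F_i) - \rk(F_{i-1}) - 1}\right),
\]
where the empty product (the flag consisting only of $\varnothing$ and $E$ contributes, as do flags where some consecutive ranks differ by $1$) is interpreted appropriately.

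First I would recall the Feichtner--Yuzvinsky basis and write down the Hilbert series in exactly the form above; this is a direct bookkeeping step. Second, I would invoke Theorem~\ref{thm:chi-chow-graded}: since a geometric lattice $\mathcal{L}(\M)$ is graded and bounded with rank function given by matroid rank, that theorem states
\[
\H_{\mathcal{L}(\M)}(x) = \sum_{\widehat{0} = p_0 < p_1 < \cdots < p_m \leq \widehat{1}}\ \prod_{i=1}^m \frac{x\bigl(x^{\rho(p_i) - \rho(p_{i-1}) - 1} - 1\bigr)}{x - 1}.
\]
Third, I would observe that the two sums range over the same index set — chains in $\mathcal{L}(\M)$ starting at $\widehat{0} = \varnothing$ and ending at (or below) $\widehat{1} = E$ — and that the factors match termwise, because
\[
\frac{x\bigl(x^{k-1} - 1\bigr)}{x-1} = x + x^2 + \cdots + x^{k-1}
\]
for every integer $k \geq 1$ (with the convention that this is $0$ when $k = 1$, matching a covering relation contributing no exponent freedom). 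Here one must be slightly careful about the boundary cases: the chains with $p_m = \widehat{1}$ versus $p_m < \widehat{1}$, and how the Feichtner--Yuzvinsky convention treats the top flat $E$; I would check that in both formulas the flag $\varnothing \subsetneq \cdots \subsetneq E$ is allowed to end at $E$ and the corresponding last factor is the one coming from the interval to $E$, so the ranges genuinely coincide.

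The main obstacle I anticipate is purely one of conventions and indexing: matching the exact form of the Feichtner--Yuzvinsky monomial basis (there are several equivalent presentations in the literature, differing in whether one cuts at flags of $\mathcal{L} \setminus \{\varnothing, E\}$ or allows the ground set, and in the precise admissibility inequalities) against the chain sum in Theorem~\ref{thm:chi-chow-graded}, and verifying that the degree of a basis monomial equals $\sum a_i$ with each $a_i$ ranging exactly over $\{1, \dots, \rk(F_i) - \rk(F_{i-1}) - 1\}$. Once the dictionary is set up correctly, the identity is term-by-term and the proof concludes. Since the paper says it merely reformulates the proof from \cite{ferroni-matherne-stevens-vecchi} within the present framework, I would keep this short, citing \cite{feichtner-yuzvinsky} for the basis and Theorem~\ref{thm:chi-chow-graded} for the chain formula, and devote the remaining space to spelling out the termwise matching of factors.
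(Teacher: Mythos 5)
Your proposal is correct and follows essentially the same route as the paper: invoke the Feichtner--Yuzvinsky monomial basis for $\uCH(\M)$, read off the Hilbert series as a sum over flags of flats, and observe that it matches the chain formula of Theorem~\ref{thm:chi-chow-graded} term by term via the identity $x + x^2 + \cdots + x^{k-1} = \frac{x(x^{k-1}-1)}{x-1}$. Your care with the indexing conventions is warranted (the exponent bounds in the FY basis must be $1 \leq a_i \leq \rk(F_i)-\rk(F_{i-1})-1$ to produce the stated factors), and apart from this bookkeeping your argument is the paper's argument.
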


\begin{proof}
    In \cite[Theorem~1]{feichtner-yuzvinsky} Feichtner and Yuzvinsky computed a Gr\"obner basis for the ring $\uCH(\M)$. From their computation, it follows that $\uCH(\M)$ is isomorphic (as a $\Q$-vector space) to the span of the monomials
        \[ \left\{x_{F_1}^{e_1}\cdots x_{F_m}^{e_m} \enspace : \enspace \varnothing = F_0\subsetneq\cdots \subsetneq F_m :\enspace 0\leq  e_i < \rk(F_i)-\rk(F_{i-1}) - 1 \text{ for $1\leq i\leq m$}\right\}.\]
    From this, it follows that
    \[ \Hilb(\uCH(\M), x) = \sum_{\varnothing = F_0 \subsetneq F_1 \subsetneq \cdots \subsetneq F_m} \prod_{i=1}^m \frac{x ( 1 - x^{\rk(F_i)-\rk(F_{i-1})-1})}{1-x},\]
    which agrees with the $\chi$-Chow polynomial of $\mathcal{L}(\M)$.
\end{proof}

\begin{remark}
    Pagaria and Pezzoli found a Gr\"obner basis for the Chow ring of a loopless polymatroid \cite[Theorem~2.7]{pagaria-pezzoli}. In fact, the preceding proof yields that the Hilbert series of the Chow ring of a loopless polymatroid corresponds to the $\chi$-Chow function of the lattice of flats of a polymatroid (with the caveat that the rank function $\rho$ is the polymatroid rank function). It is tempting to ask if it is possible to construct a ``Chow ring'' for any graded poset $P$. We will address this question in Section~\ref{subsec:chowish-ring}. 
\end{remark}

Let us recapitulate the following construction by Braden, Huh, Matherne, Proudfoot, and Wang \cite{braden-huh-matherne-proudfoot-wang}. The \textit{augmented Chow ring} of a matroid $\M$ is the quotient
        \[
        \CH(\M) = \mathbb{Q}[x_F,\, y_i : F\in\mathcal{L}(\M)\smallsetminus\{E\} \text{ and } i\in E]/{(I+J)},
        \]
    where $E$ is the ground set of the matroid, and the ideals $I$ and $J$ are defined respectively by
    \begin{align*}
        I &= \left< y_i - \sum_{F\notni i} x_F \,:\, i\in E\right>,\\
        J &= \left< x_{F_1} x_{F_2} \,:\, F_1,F_2 \in \mathcal{L}(\M)\smallsetminus\{E\} \text{ are incomparable}\right> + \left< y_i x_F :F\in\mathcal{L}(\M)\smallsetminus\{E\},\, i \notin F\right>.
    \end{align*}

The augmented Chow ring of $\M$ is a graded ring, admitting a decomposition into $\Q$-vector spaces: $\CH(\M) = \CH^0(\M) \oplus \cdots \oplus \CH^r(\M)$, where $r = \rk(\M)$. We have the following description of the left augmented Chow polynomial arising from the characteristic function of a geometric lattice.

\begin{theorem}\label{thm:aug-chow-matroid}
    Let $\M$ be a loopless matroid. The left $\chi$-augmented Chow polynomial equals the Hilbert series of the augmented Chow ring of $\M$.
\end{theorem}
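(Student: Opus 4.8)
The plan is to imitate the proof of Theorem~\ref{thm:chow-matroid} almost verbatim, using a Gröbner basis of the \emph{augmented} Chow ring in place of the Feichtner--Yuzvinsky basis, and then to recognize the resulting Hilbert series as $G_{\mathcal{L}(\M)}(x)$ by means of Corollary~\ref{coro:aug-from-tail}.

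First I would recall the monomial $\mathbb{Z}$-basis of $\CH(\M)$ obtained from the Gröbner basis computed in \cite{braden-huh-matherne-proudfoot-wang} (the augmented analogue of \cite{feichtner-yuzvinsky}). After replacing the classes $y_i$ by the single degree-one class $\alpha:=\sum_{\varnothing\subseteq F\subsetneq E}x_F$ (one checks $\alpha=y_i+\sum_{F\ni i}x_F$ in $\CH(\M)$ for every $i\in E$, so $\alpha$ is well defined) and setting $x_E:=\alpha$ by convention, this basis consists of the monomials
\[
x_{F_1}^{e_1}\cdots x_{F_k}^{e_k},\qquad \varnothing\subsetneq F_1\subsetneq\cdots\subsetneq F_k\subseteq E \text{ a flag of nonempty flats,}
\]
with $1\le e_1\le \rk(F_1)$ and $1\le e_i\le \rk(F_i)-\rk(F_{i-1})-1$ for $2\le i\le k$ (the empty flag contributing the monomial $1$). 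Summing $x^{\deg}$ over this basis and using $\sum_{e=1}^{n}x^e=\tfrac{x(x^n-1)}{x-1}$ on each factor yields
\[
\Hilb(\CH(\M),x)=\sum_{\varnothing\subsetneq F_1\subsetneq\cdots\subsetneq F_k\subseteq E}\frac{x\bigl(x^{\rk(F_1)}-1\bigr)}{x-1}\prod_{i=2}^{k}\frac{x\bigl(x^{\rk(F_i)-\rk(F_{i-1})-1}-1\bigr)}{x-1}.
\]

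Next I would compute $G_{\mathcal{L}(\M)}(x)$ and match it with the above. By Corollary~\ref{coro:aug-from-tail} we have $G_{\mathcal{L}(\M)}(x)=\H_{\aug(\mathcal{L}(\M))}(x)$, and $\aug(\mathcal{L}(\M))$ is a finite graded bounded poset of rank $\rk(\M)+1$ whose rank function $\rho$ satisfies $\rho(\widehat{0}')=0$ (with $\widehat{0}'$ the adjoined minimum) and $\rho(F)=\rk(F)+1$ for $F\in\mathcal{L}(\M)$, while $\widehat{1}=E$. Applying Theorem~\ref{thm:chi-chow-graded} to this poset, a chain $\widehat{0}'=p_0<p_1<\cdots<p_m\le\widehat{1}$ is exactly a chain of flats $F_1:=p_1\subsetneq\cdots\subsetneq F_m:=p_m$ of $\M$; the factor associated to $p_1$ is $\tfrac{x(x^{\rho(F_1)-1}-1)}{x-1}$, which vanishes precisely when $F_1=\varnothing$, so only chains of nonempty flats contribute, and for those $\rho(F_1)-\rho(\widehat{0}')-1=\rk(F_1)$ while $\rho(F_i)-\rho(F_{i-1})-1=\rk(F_i)-\rk(F_{i-1})-1$ for $i\ge2$. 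Hence $\H_{\aug(\mathcal{L}(\M))}(x)$ equals the displayed sum above, so $G_{\mathcal{L}(\M)}(x)=\Hilb(\CH(\M),x)$, as claimed; the hereditary remarks at the start of the section then extend the identity from Chow polynomials to Chow functions.

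The main obstacle is the first step: pinning down the exact monomial basis of the augmented Chow ring, i.e., running the Feichtner--Yuzvinsky-style argument for the presentation $\mathbb{Q}[x_F,y_i]/(I+J)$ with the additional relations $y_i=\sum_{F\not\ni i}x_F$ and $y_ix_F$ $(i\notin F)$, and verifying that the initial ideal for a suitable monomial order is generated exactly by the non-admissible monomials, so that the admissible ones are those listed above. The one genuinely new feature relative to the non-augmented case is that the exponent of the \emph{smallest} flat of a flag is allowed to run up to $\rk(F_1)$ rather than $\rk(F_1)-1$; this is the numerical shadow of $\CH(\M)$ having top degree $\rk(\M)$ rather than $\rk(\M)-1$, consistent with $\chi$ being non-degenerate and with Proposition~\ref{prop:augmented-degree-symmetry}. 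This Gröbner-basis computation is carried out in \cite{braden-huh-matherne-proudfoot-wang}; alternatively, one can bypass it entirely by checking that $\Hilb(\CH(\M),x)$ and $G_{\mathcal{L}(\M)}(x)$ satisfy a common recursion over $\mathcal{L}(\M)$ — for the latter via the augmented numerical canonical decomposition (Theorem~\ref{thm:aug-ncd}) specialized to $\kappa=\chi$, and for the former via a graded module decomposition of $\CH(\M)$ as in \cite{braden-huh-matherne-proudfoot-wang,semismall}.
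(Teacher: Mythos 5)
Your proof matches the paper's own strategy: the paper's proof is a one-line sketch citing the Gröbner-basis monomial basis of the augmented Chow ring (Feichtner--Yuzvinsky plus BHMPW/stellahedral) and referring to \cite[Proposition~6.3]{ferroni-matherne-stevens-vecchi} for details, and you carry out exactly that argument, summing $x^{\deg}$ over the same monomial basis and matching it with $G_{\mathcal{L}(\M)}(x)=\H_{\aug(\mathcal{L}(\M))}(x)$ through Corollary~\ref{coro:aug-from-tail} and Theorem~\ref{thm:chi-chow-graded}. The alternative recursive route you sketch at the end is not what the paper does but is also sound.
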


\begin{proof}
    The proof is very similar to that of Theorem~\ref{thm:chow-matroid}. The key ingredient is again the Gr\"obner basis computation of Feichtner and Yuzvinsky~\cite{feichtner-yuzvinsky}, together with \cite[Section~5]{stellahedral}. We omit the details here and refer to~\cite[Proposition~6.3]{ferroni-matherne-stevens-vecchi} instead.
\end{proof}

Recently, Larson~\cite{larson} studied matroid Chow rings from the perspective of Hodge algebras (which are also known as algebras with a \emph{straightening law}). He proved a decomposition of matroid Chow rings and augmented Chow rings in terms of matroid truncations. As a consequence, in \cite[Corollary~3.5]{larson} he derived recursions for the Chow polynomials. As we will now demonstrate, Larson's recursions continue to hold for arbitrary graded bounded posets. Furthermore, they hint at an analogous formula for the right augmented Chow polynomials that is quite non-obvious when specialized to the case of matroids.

To this end, for a graded bounded poset $P$, define the \emph{truncation} of $P$ to be the subposet consisting of all elements of $P$ except the coatoms of $P$. We will denote this poset by $\trunc(P)$. 

\begin{proposition}[Larson's recursions]\label{prop:larson-recursion}
    Let $P$ be a graded bounded poset. The $\chi$-Chow polynomial of $P$ satisfies
    \[ \H_P(x) = 1 + x \sum_{\substack{t\in P\\ \rho(t) > 1}} \H_{\trunc([\widehat{0},t])}(x) .\]
    Furthermore, the left $\chi$-augmented Chow polynomial of $P$ satisfies
    \[ G_P(x) = 1 + x \sum_{t\neq \widehat{0}} G_{\trunc([\widehat{0},t])}(x).\]
\end{proposition}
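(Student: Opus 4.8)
The plan is to deduce both identities from the chain-sum formula of Theorem~\ref{thm:chi-chow-graded}, obtaining the statement for $G_P$ from the one for $\H_P$ by means of Corollary~\ref{coro:aug-from-tail}. For the $\H$-recursion, introduce the auxiliary polynomial
\[
\widehat{\H}_{[\widehat 0, t]}(x) \;:=\; \sum_{\widehat 0 = p_0 < p_1 < \cdots < p_m = t}\ \prod_{i=1}^{m}\frac{x\bigl(x^{\rho(p_i)-\rho(p_{i-1})-1}-1\bigr)}{x-1},
\]
namely the same sum as in Theorem~\ref{thm:chi-chow-graded} but restricted to chains whose \emph{top} element is exactly $t$, with the convention $\widehat{\H}_{[\widehat 0, \widehat 0]} := 1$ for the empty chain. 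Grouping the chains in Theorem~\ref{thm:chi-chow-graded} by their top element immediately gives $\H_P(x) = 1 + \sum_{\widehat 0 < t \le \widehat 1}\widehat{\H}_{[\widehat 0, t]}(x)$, while isolating the last step of a chain yields, for $t \ne \widehat 0$, the recursion $\widehat{\H}_{[\widehat 0,t]}(x) = \sum_{\widehat 0 \le w < t}\widehat{\H}_{[\widehat 0, w]}(x)\cdot\frac{x(x^{\rho(t)-\rho(w)-1}-1)}{x-1}$.

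The crux is the identity $\widehat{\H}_{[\widehat 0, t]}(x) = x\,\H_{\trunc([\widehat 0, t])}(x)$ for every $t$ with $\rho(t) \ge 2$ (the case $\rho(t) = 1$ being trivial, as then $\widehat{\H}_{[\widehat 0,t]}(x) = \frac{x(x^{0}-1)}{x-1} = 0$). To prove it I would apply Theorem~\ref{thm:chi-chow-graded} to the graded bounded poset $T := \trunc([\widehat 0,t])$, which has rank $\rho(t)-1$, and sort the chains of $T$ by their top element $u$. The elements of $T$ other than $t$ are precisely the $w < t$ with $\rho(w) \le \rho(t)-2$, and on these $T$ coincides with $[\widehat 0, t]$ as a ranked poset; hence the chains of $T$ with top $u \ne t$ contribute $\widehat{\H}_{[\widehat 0, u]}(x)$, while those with top $t$ (whose penultimate element $w$ satisfies $\rho(w) \le \rho(t) - 2$, and for which $\rho_T(t) = \rho(t)-1$) contribute $\widehat{\H}_{[\widehat 0, w]}(x)\cdot\frac{x(x^{\rho(t)-\rho(w)-2}-1)}{x-1}$. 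Adding the two contributions and using the elementary identity $1 + \frac{x(x^{a-1}-1)}{x-1} = \frac{x^{a}-1}{x-1}$ gives $\H_T(x) = \sum_{w < t,\ \rho(w) \le \rho(t)-2}\widehat{\H}_{[\widehat 0, w]}(x)\cdot\frac{x^{\rho(t)-\rho(w)-1}-1}{x-1}$. Comparing this with the recursion for $\widehat{\H}_{[\widehat 0,t]}(x)$ above, and observing that the coatom terms there (those with $\rho(w) = \rho(t)-1$) vanish because $\frac{x^{0}-1}{x-1} = 0$, we conclude $\widehat{\H}_{[\widehat 0, t]}(x) = x\,\H_T(x)$. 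Substituting into $\H_P(x) = 1 + \sum_{\widehat 0 < t \le \widehat 1}\widehat{\H}_{[\widehat 0, t]}(x)$ and dropping the vanishing rank-$1$ summands yields the first recursion.

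For the $G$-recursion, start from $G_P(x) = \H_{\aug(P)}(x)$ (Corollary~\ref{coro:aug-from-tail}) and apply the first recursion to the graded bounded poset $\aug(P)$, whose rank function satisfies $\rho_{\aug(P)}(t) = \rho_P(t) + 1$ for $t \in P$. Then the constraint $\rho_{\aug(P)}(t) > 1$ becomes $t \in P \smallsetminus \{\widehat 0\}$, the interval from $\min\aug(P)$ to $t$ equals $\aug([\widehat 0, t])$, and since $[\widehat 0, t]$ has rank at least $1$ one checks directly that $\trunc(\aug([\widehat 0, t])) = \aug(\trunc([\widehat 0, t]))$ (adjoining a new minimum does not alter the set of coatoms). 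A second application of Corollary~\ref{coro:aug-from-tail}, now identifying $\H_{\aug(\trunc([\widehat 0, t]))}$ with $G_{\trunc([\widehat 0, t])}$, produces $G_P(x) = 1 + x\sum_{t \ne \widehat 0}G_{\trunc([\widehat 0, t])}(x)$.

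The main obstacle is the bookkeeping in the Key Lemma $\widehat{\H}_{[\widehat 0, t]} = x\,\H_{\trunc([\widehat 0, t])}$: one must keep careful track of the single rank shift created by deleting the coatoms --- which turns each exponent $\rho(t)-\rho(w)-1$ into $\rho(t)-\rho(w)-2$ on the chains of $\trunc([\widehat 0,t])$ that reach the new top --- and check that this discrepancy is cancelled exactly by the chains of $\trunc([\widehat 0,t])$ that terminate strictly below the top. A few degenerate low-rank cases (for instance $[\widehat 0,t]\cong C_2$ when $t$ is an atom of $P$, whose truncation is a one-element poset) should be verified separately, but they present no genuine difficulty.
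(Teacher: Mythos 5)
Your proof is correct and follows the same strategy as the paper's: both group the chains in the chain-sum formula of Theorem~\ref{thm:chi-chow-graded} by their top element $t$ and observe that only elements of $\trunc([\widehat 0, t])$ can appear (since the penultimate element cannot be a coatom), and both derive the $G$-identity by applying the $\H$-recursion to $\aug(P)$ via Corollary~\ref{coro:aug-from-tail}. You actually fill in a step that the paper glosses over: the paper claims ``an explicit bijection between chains ending in $t$ and chains in the truncation of the interval $[\widehat 0, t]$,'' but because the rank of $t$ drops by one in the truncation, the naive weight matching fails; your Key Lemma $\widehat{\H}_{[\widehat 0, t]}(x) = x\,\H_{\trunc([\widehat 0, t])}(x)$, proved via the elementary identity $1 + \frac{x(x^{a-1}-1)}{x-1} = \frac{x^a-1}{x-1}$ that merges the truncation's chains ending at $t$ with those ending strictly below it, is exactly the bookkeeping that the paper's terse ``bijection'' suppresses.
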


\begin{proof}
    We proceed by induction on the rank of $P$. If $\rho(P) = 0$, then the statement trivially holds. If $\rho(P) \geq 1$, then we need to show that the right-hand side of the equation counts the chains of elements as in Theorem \ref{thm:chi-chow-graded}. If the chain is empty then the corresponding monomial is $1$. If the chain is not empty then it has a maximal element, say $p_m=t$. The rank of $t$ must be greater than one; otherwise, the corresponding monomial is equal to zero. Moreover, the second to last element in the chain $p_{m-1}$ has to satisfy $\rho_{p_{m-1}t} > 1$, since otherwise $t$ would not give us a non-zero monomial. This gives an explicit bijection between chains ending in $t$ and chains in the truncation of the interval $[\widehat{0},t]$. The formula for $G$ follows immediately, after considering the augmentation of $P$ and using Corollary~\ref{coro:aug-from-tail}.
\end{proof}

The preceding two formulas by Larson motivated us to search for an analogous recursion for the right augmented Chow polynomial. The following result achieves this, but in a much less straightforward way. Furthermore, this is a non-trivial decomposition when viewed under the lens of the Hodge theory of matroids (see the discussion at the end of Section~\ref{sec:unimodality-gamma-flag-char}).

\begin{proposition}\label{prop:right-larson}
    Let $P$ be a graded bounded poset. Then the right augmented $\chi$-Chow polynomial of $P$ satisfies
    \[ F_P(x) = Z_P(x) + x \sum_{\substack{t\in P\\ \rho(t)>1}} \H_{\trunc([\widehat{0},t])}(x) Z_{t\widehat{1}}(x).\]
\end{proposition}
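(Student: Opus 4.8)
The plan is to read the formula off directly from Larson's recursion (Proposition~\ref{prop:larson-recursion}) together with a simple description of the $Z$-function available in this setting. The first step is to record the only fact about $Z$ that I will need. Since $\kappa=\chi$, the left KLS function is $g=\upzeta$ (Example~\ref{example:char-is-kernel}), so $g_{sw}(x)=1$ for all $s\le w$; hence the identity $Z=gf^{\rev}$ from Definition~\ref{def:zeta-functions} collapses, on every closed interval, to
\[ Z_{st}(x)=\sum_{s\le w\le t} f^{\rev}_{wt}(x). \]
In particular $Z_P(x)=\sum_{\widehat 0\le w\le\widehat 1} f^{\rev}_{w\widehat 1}(x)$, and $Z_{t\widehat 1}(x)=\sum_{t\le w\le\widehat 1} f^{\rev}_{w\widehat 1}(x)$ for every $t\in P$.

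The second step is to unravel the definition $F=\H\cdot f^{\rev}$ at the top interval, getting $F_P(x)=\sum_{\widehat 0\le w\le\widehat 1}\H_{\widehat 0 w}(x)\,f^{\rev}_{w\widehat 1}(x)$, and then to apply Larson's recursion to each $\H_{\widehat 0 w}(x)$ with the closed interval $[\widehat 0,w]$ playing the role of the ambient graded bounded poset (legitimate since gradedness is hereditary on closed intervals), giving
\[ \H_{\widehat 0 w}(x)=1+x\sum_{\substack{\widehat 0\le t\le w\\ \rho(t)>1}}\H_{\trunc([\widehat 0,t])}(x), \]
with $\trunc([\widehat 0,t])$ depending only on $t$. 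Substituting this and interchanging the order of summation, the ``$1$'' contributes $\sum_{\widehat 0\le w\le\widehat 1} f^{\rev}_{w\widehat 1}(x)=Z_P(x)$, and the remainder becomes $x\sum_{t:\,\rho(t)>1}\H_{\trunc([\widehat 0,t])}(x)\bigl(\sum_{t\le w\le\widehat 1}f^{\rev}_{w\widehat 1}(x)\bigr)=x\sum_{t:\,\rho(t)>1}\H_{\trunc([\widehat 0,t])}(x)\,Z_{t\widehat 1}(x)$ by the second $Z$-identity above. Adding the two pieces yields the claim.

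Perhaps the tidiest way to organize the bookkeeping is to observe that both sides of the asserted identity can be written in the form $\sum_{\widehat 0\le w\le\widehat 1}(\cdot)\,f^{\rev}_{w\widehat 1}(x)$: on the left the coefficient of $f^{\rev}_{w\widehat 1}(x)$ is $\H_{\widehat 0 w}(x)$, while on the right, after expanding $Z_P$ and each $Z_{t\widehat 1}$ via the identity above and regrouping by $w$, the coefficient is $1+x\sum_{\widehat 0\le t\le w,\ \rho(t)>1}\H_{\trunc([\widehat 0,t])}(x)$; these agree termwise by Larson's recursion applied to $[\widehat 0,w]$. I do not expect a genuine obstacle: once Proposition~\ref{prop:larson-recursion} is in hand, the argument is just a short resummation. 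The two points that need attention are (i) using the expression $Z=gf^{\rev}$ (not $Z=g^{\rev}f$), so that $f^{\rev}_{w\widehat 1}$ appears as a common right-hand factor in both $F$ and $Z$; and (ii) the degenerate cases $\rho(P)\le 1$, where both sides equal $Z_P(x)$ because no $t$ has rank $>1$.
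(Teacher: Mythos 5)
Your proof is correct. It reaches the same identity as the paper's, but by a noticeably different (and shorter) organization. The paper's proof does \emph{not} invoke Proposition~\ref{prop:larson-recursion}; instead it establishes, by a fresh induction built on $\H\overline{\chi}=-\delta$, the intermediate identity
\[
\sum_{s\le w\le t}\H_{sw}(x)\mu_{wt}=
\begin{cases}1&\text{if }\rho_{st}=0,\\ 0&\text{if }\rho_{st}=1,\\ x\,\H_{\trunc([s,t])}(x)&\text{if }\rho_{st}\ge 2,\end{cases}
\]
and then expands the triple convolution $F=\H f^{\rev}=\H\chi f=\H\mu\upzeta^{\rev}f=\H\mu Z$ using that identity. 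Your route instead treats Larson's recursion as a black box and expands $F=\H f^{\rev}$ together with $Z=g f^{\rev}=\upzeta f^{\rev}$ (the paper factors $Z$ the other way, as $\upzeta^{\rev}f$), substitutes Larson's recursion into each $\H_{\widehat 0 w}$, and interchanges the two sums. The two arguments rest on the same underlying fact: the paper's intermediate identity is exactly the M\"obius inversion of Larson's recursion applied to an interval. What your version buys is brevity and the methodological economy of reusing the already-proved Proposition~\ref{prop:larson-recursion} rather than re-deriving an equivalent of it inline; what the paper's version buys is a self-contained derivation that does not depend on the earlier proposition. Your remark that gradedness is hereditary (so Larson's recursion applies to each $[\widehat 0,w]$) and your attention to using $Z=gf^{\rev}$ so that $f^{\rev}_{w\widehat 1}$ is the common right factor are precisely the two places where slipping would have broken the argument, and you handled both correctly.
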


\begin{proof}
    Let $\H$ be the $\chi$-Chow function of a finite graded bounded poset $P$. Let $\trunc(P)$ be the truncation as in Proposition \ref{prop:larson-recursion}. We claim that
    \[
    \sum_{s\leq w \leq t}\H_{sw}(x) \mu_{wt} = \begin{cases}
        1 & \text{if $\rho_{st} = 0$}, \\
        0 & \text{if $\rho_{st} = 1$}, \\
        x\H_{\trunc([s,t])}(x) & \text{otherwise.}
    \end{cases}
    \]
    To see this, let us proceed by induction. If $\rho_{st} \leq 1$, the result is trivial. If $s<t$, then
    \begin{align*}
        \sum_{s\leq w \leq t}\H_{sw}(x) \mu_{wt} &= \mu_{st} + \sum_{s< w \leq t}\H_{sw}(x) \mu_{wt}\\
        &= \mu_{st} + \sum_{s< w \leq t}\left(\sum_{s<u \leq w}\overline{\chi}_{su}(x)\H_{uw}(x) \right) \mu_{wt}\\
        &= \mu_{st} + \sum_{s < u \leq t}\overline{\chi}_{su}(x) \left( \sum_{u \leq w\leq t} \H_{uw}(x) \mu_{wt} \right) \\
        &= \overline{\chi}_{st}(x) + \mu_{st} + x\sum_{\substack{s< u < t \\ \rho_{ut} > 1}}\overline{\chi}_{su}\H_{\trunc([u,1])}(x) \\
        &= x\overline{\chi}_{\trunc([s,t])}(x) + x\sum_{\substack{u \in \trunc([s,t]) \\ u \neq s}}\overline{\chi}_{su}\H_{\trunc([u,1])}(x),
    \end{align*}
    where in the fourth equality we used the inductive hypothesis (notice how asking for $\rho_{ut} > 1$ coincides with considering all the elements except the coatoms, i.e., truncating the poset) and in the fifth equality we used that $\chi_{\trunc(P)}(x) = \chi_P(x) + (x-1)\mu_P$. We can conclude via the definition of Chow function as $\H = -(\overline{\chi})^{-1}$. Now, to prove the formula of the statement, we first write 
    \[
    F = \H f^{\rev} = \H\chi f = \H\mu\upzeta^{\rev}f = \H\mu Z.
    \]
    We can then use the above convolution to write
    \[
    F_P(x) = Z_P(x) + x\sum_{\substack{t\in P\\ \rho(t)>1}}\H_{\trunc([\widehat{0},t])}(x)Z_{t\widehat{1}}(x),
    \]
    and the proof is complete.
\end{proof}

\subsection{The interplay with matroid Hodge theory}\label{subsec:hodge-theory}

The main result of Adiprasito, Huh, and Katz in \cite{adiprasito-huh-katz} is a remarkable feature of Chow rings of matroids. They satisfy a trio of properties known as the \emph{K\"ahler package}. These properties are, respectively, \emph{Poincar\'e duality} (PD), \emph{the Hard Lefschetz theorem} (HL), and \emph{the Hodge--Riemann bilinear relations} (HR). In the Chow ring there is a distinguished map $\deg_{\M}\colon\uCH^{r-1}(\M)\to \mathbb{Q}$ called the \emph{degree map}, defined by requiring that the product of the variables corresponding to a maximal flags of non-empty flats is sent to $1$.

\begin{theorem}[{\cite[Theorem~1.4 and Theorem~6.19]{adiprasito-huh-katz}}]\label{thm:kahler-package}
    Let $\M$ be a loopless matroid of rank $r$ and let $\ell\in \uCH^1(\M)$. Then the following holds.
    \begin{enumerate}
        \item[{\normalfont (PD)}]\label{it:PD} For every $0\leq j\leq r-1$, the bilinear pairing $\uCH^j(\M) \times \uCH^{r-1-j}(\M)\to \mathbb{Q}$ defined by 
            \[ (\upeta,\upxi) \longmapsto \deg_{\M} \left(\upeta\,\upxi\right)\]
        is non-degenerate, i.e., the map $\uCH^j(\M)\to \operatorname{Hom}(\uCH^{r-1-j}(\M),\mathbb{Q})$ defined by \[\upeta \mapsto \left(\upxi \mapsto \deg_{\M}(\upeta\,\upxi)\right)\] is an isomorphism.
        \item[{\normalfont (HL)}]\label{it:HL} For every $0\leq j \leq \left\lfloor\frac{r-1}{2}\right\rfloor$, the map $\uCH^j(\M) \to \uCH^{r-1-j}(\M)$ defined by
            \[ \upxi \longmapsto \ell^{r-1-2j}\, \upxi\]
        is an isomorphism.
        \item[{\normalfont (HR)}]\label{it:HR} For every $0\leq j \leq \left\lfloor\frac{r-1}{2}\right\rfloor$, the symmetric bilinear form $\uCH^j(\M)\times \uCH^j(\M) \to \mathbb{Q}$ defined by
            \[ (\upeta, \upxi) \longmapsto (-1)^j \deg_{\M} \left(\upeta\, \ell^{r-1-2j} \,\upxi\right) \]
        is positive definite when restricted to $\{\alpha \in \uCH^j(\M) : \ell^{r-2j} \alpha = 0\}$. 
    \end{enumerate}
\end{theorem}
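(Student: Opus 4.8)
The plan is to establish (PD), (HL), and (HR) by a single simultaneous induction on the rank $r$ of $\M$, following the strategy of \cite{adiprasito-huh-katz}. The base cases $r\le 2$ are immediate: $\uCH^0(\M)=\mathbb{Q}$, and when $r=2$ the ring is concentrated in degrees $0$ and $1$ with the evident perfect pairing. For the inductive step, the first move is to dispatch (PD) on its own. Using the Feichtner--Yuzvinsky monomial basis recalled in the proof of Theorem~\ref{thm:chow-matroid}, one checks that $\uCH^{r-1}(\M)\cong\mathbb{Q}$ via $\deg_{\M}$ and that $\uCH(\M)$ is a standard graded Artinian Gorenstein algebra; concretely, for each basis monomial in degree $j$ one exhibits a complementary monomial in degree $r-1-j$ whose product is the fundamental class plus monomials that are ``later'' in a suitable term order, which makes the multiplication pairing matrix triangular with unit diagonal, hence nondegenerate. (When $\M$ is realizable this is just the statement that the Bergman fan is a smooth complete fan; the content of the argument is that the same combinatorial bookkeeping survives in general.)

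The heart of the matter is (HL) and (HR), which must be proven together. The logical engine is: if (HR) holds for a fixed $\ell$ in all degrees $\le j$, then (HL) holds for $\ell$ in those degrees as well, since a positive definite quadratic form cannot vanish on a nonzero subspace, so the Lefschetz map is injective, and (PD) then upgrades injectivity to bijectivity by a dimension count. Moreover the set of $\ell\in\uCH^1(\M)$ satisfying (HR) in every degree is open, so it suffices to exhibit one such $\ell$. One produces it by deforming toward a degenerate corner of the cone of valid Lefschetz classes, where $\M$ ``breaks'' into simpler matroids for which the Kähler package is already known by the outer induction.

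The deformation is carried out through \emph{matroid flips}. One fixes a total order on the proper flats refining containment and interpolates between the presentation above (the minimal building set) and a ring assembled from projective-space-like factors (the maximal building set), producing a chain of graded algebras $\uCH(\M)=A_0,A_1,\dots,A_N$ in which consecutive terms $A_k,A_{k+1}$ embed into a common ambient ring and differ only by a summand that is an explicit shift of the Chow ring of a lower-rank minor of $\M$. One then shows that (HL) and (HR) for $A_{k+1}$ imply them for $A_k$: the Hodge--Riemann form on $A_k$ splits as an orthogonal direct sum over this flip data, each block being the Hodge--Riemann form of the smaller matroid up to sign and shift, and the signs are pinned down by the inductive hypothesis together with a direct computation. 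Propagating this from the terminal ring $A_N$ (for which everything is classical) back to $A_0$ yields (HL) and (HR) for $\uCH(\M)$ with respect to the flip class; since (HL) then holds throughout the connected ample cone, the signature of the Hodge--Riemann form is locally constant there, so (HR) holds for every $\ell$ in the ample cone.

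I expect the propagation step to be the main obstacle: one must identify the precise module decomposition of $A_k$ over the smaller Chow ring that makes the Hodge--Riemann form block-diagonal, and then control the sign on each block, which forces careful tracking of Lefschetz operators and primitive subspaces across the flip and uses the codimension-one case of (HR) as a separate input. An alternative route, conceptually cleaner but relying on the machinery of \cite{semismall}, replaces the flips by the \emph{semi-small decomposition} of $\uCH(\M)$: fixing an element $i\in E$, one writes $\uCH(\M)$ as a direct sum of shifted Chow rings of proper minors, deduces (HL) and (HR) for $\uCH(\M)$ from the same properties for the summands (available by the outer induction) together with positivity of the cross terms, and organizes the bookkeeping through the formalism of Lefschetz modules. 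This trades the intricate flip analysis for an equally delicate but more structural compatibility check between the decomposition and the bilinear form.
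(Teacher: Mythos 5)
The paper does not prove this theorem at all: it is quoted verbatim from Adiprasito--Huh--Katz \cite[Theorem~1.4 \& Theorem~6.19]{adiprasito-huh-katz} and used as a black box throughout Section~4. So there is no internal proof to compare against; what you have written is effectively a sketch of the argument in \cite{adiprasito-huh-katz} (together with an accepted alternative). At that level, your sketch does capture the correct strategy: a simultaneous induction on rank, a deformation through a chain of graded rings interpolating between the minimal and the maximal building set (the combinatorial shadow of a Morelli--W{\l}odarczyk factorization), a sign-propagation argument for (HR) where each flip contributes a block isomorphic to a shifted Chow ring of a smaller matroid, and finally a cone argument to go from one Lefschetz class to the whole ample cone. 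The alternative route you mention via the semi-small decomposition of \cite{semismall} is also a genuine, now-standard re-proof of (HL) and (HR) for $\uCH(\M)$, organized through the formalism of Lefschetz modules rather than through flips.

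Two places where your proposal is more confident than the argument allows. First, you claim (PD) can be dispatched \emph{on its own} by a term-order triangularity argument against the Feichtner--Yuzvinsky basis. I do not know of a term order making the degree pairing unitriangular on that basis; AHK in fact establish Poincar\'e duality \emph{inside} the flip induction, by controlling how each flip changes the socle of the relevant ring, rather than in advance of it. If you want a standalone proof of (PD), the cleanest route is the Gorenstein/Cohen--Macaulay argument through the Gr\"obner basis, not a triangularity claim that would need to be verified monomial by monomial. Second, ``the set of $\ell$ satisfying (HR) is open, so it suffices to exhibit one such $\ell$'' is not, as stated, a valid deduction. Openness alone does not propagate across a cone. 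The actual mechanism is: (HL) holds on the entire connected ample cone, the signature of the Hodge--Riemann form is locally constant on the (HL) locus, and (HR) is verified at one degenerate limit; connectedness then transports the signature throughout. You gesture at this at the end of the penultimate paragraph, but the ``so it suffices to exhibit one'' phrasing on its own is a gap.
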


Poincar\'e duality guarantees that $\dim \uCH^j(\M) = \dim \uCH^{r-1-j}(\M)$, which in turn says that the Hilbert series of $\uCH(\M)$ is a symmetric polynomial with center of symmetry $\frac{1}{2}(r-1)$. Furthermore, the Hard Lefschetz theorem guarantees that this Hilbert series is unimodal. An analogous result was proved for the augmented Chow ring $\CH(\M)$ by Braden, Huh, Matherne, Proudfoot, and Wang~\cite{braden-huh-matherne-proudfoot-wang}, which thus says that the Hilbert series of the augmented Chow ring is symmetric with center of symmetry $\frac{1}{2} r$ and is unimodal.

A prominent object in the \emph{singular} Hodge theory of matroids is the intersection cohomology module of a matroid. We briefly indicate how it is defined. First, consider the \emph{graded M\"obius algebra} of $\mathcal{L}(\M)$. This has a variable $y_F$ for each element $F\in \mathcal{L}(\M)$ and the product is defined by $y_F\cdot y_{F'} = y_{F\vee F'}$ whenever $\rk(F) + \rk(F') = \rk(F\vee F')$, and zero otherwise. The augmented Chow ring $\CH(\M)$ is naturally an $\H(\M)$-module, and by the Krull-Schmidt theorem there exists a unique (up to isomorphism) indecomposable graded $\H(\M)$-submodule of $\CH(\M)$ containing the degree zero piece $\CH^0(\M)$. This is the  \emph{intersection cohomology module} of $\M$, and it is denoted $\IH(\M)$. The tensor product $\IH(\M)\otimes_{\H(\M)} \mathbb{Q}$ is called the \emph{stalk of $\IH(\M)$ at the empty flat}, and is denoted $\IH(\M)_{\varnothing}$.

One of the main results of Braden, Huh, Matherne, Proudfoot, and Wang~\cite[Theorem~1.9]{braden-huh-matherne-proudfoot-wang} is that the $Z$-polynomial arising from $P=\mathcal{L}(\M)$ using $\chi$ as the $P$-kernel is the Hilbert series of $\IH(\M)$, whereas the right KLS polynomial is the Hilbert series of $\IH(\M)_{\varnothing}$. 

A crucial step in the big induction performed in \cite{braden-huh-matherne-proudfoot-wang} are the so-called \emph{canonical decompositions} appearing in \cite[Definition~3.8]{braden-huh-matherne-proudfoot-wang}. 
    \begin{itemize}
        \item $\underline{\operatorname{CD}}(\M)$: The Chow ring can be decomposed as 
        \begin{equation}\label{eq:canonical-dec-chow-ring}
        \uCH(\M) = \underline{\IH}(\M) \oplus \bigoplus_{\varnothing \neq F < E} \underline{\mathrm{K}}_F(\M).
        \end{equation}
        \item $\operatorname{CD}(\M)$: The augmented Chow ring can be decomposed as 
        \begin{equation}\label{eq:canonical-dec-aug-chow}
        \CH(\M) = \IH(\M) \oplus \;\bigoplus_{F < E} \mathrm{K}_F(\M).
        \end{equation}
    \end{itemize}
The modules $\underline{\IH}(\M)$, $\mathrm{K}_F(\M)$, and $\underline{\mathrm{K}}_F(\M)$ have more complicated definitions, so we refer the reader to \cite[Definition~3.1]{braden-huh-matherne-proudfoot-wang}. The numerical canonical decomposition that we proved in equation~\eqref{eq:ncd-f} is precisely what results from computing the graded dimensions of each of the individual summands appearing in the canonical decomposition for the Chow ring \eqref{eq:canonical-dec-chow-ring}. Analogously, the augmented numerical canonical decomposition appearing in equation~\eqref{eq:ncd-G} is what results from ~\eqref{eq:canonical-dec-aug-chow} after computing the Hilbert series. We stress once more the relevance of Theorem~\ref{thm:chow-from-kl} and Theorem~\ref{thm:aug-ncd}, as they are statements that hold regardless of the existence of analogs of the modules.

\begin{remark}
    As said above, the canonical decompositions of Braden, Huh, Matherne, Proufoot, and Wang can be seen as categorical versions of equations~\eqref{eq:ncd-f} and \eqref{eq:ncd-G}. We do not know if it is possible to construct modules that explain the validity of the numerical canonical decompositions appearing in~\eqref{eq:ncd-g} and \eqref{eq:ncd-F}. For example, while by Theorem~\ref{thm:aug-chow-matroid} we know that $G_{\mathcal{L}(\M)}(x)$ is the Hilbert series of the augmented Chow ring of $\M$, we were not able to find in the literature any known structure (e.g., a graded ring) having $F_{\mathcal{L}(\M)}(x)$ as its Hilbert series. 
\end{remark}

\begin{question}
    Let $\M$ be a loopless matroid. Does there exist a graded ring (or module) having $F_{\mathcal{L}(\M)}(x)$ as its Hilbert series? 
\end{question}

Botong Wang (private communication) observed that the apparent geometric object one should consider (in the realizable case) is the closure of the affine cone of the reciprocal plane in the stellahedral variety. In the language of \cite{braden-huh-matherne-proudfoot-wang}, one should be able to provide a module-theoretic definition of this ``right augmented Chow module'', motivated from the geometric picture.

A natural follow-up question is whether this purported ``right augmented Chow module'' satisfies the K\"ahler package. As we will explain below, there are combinatorial reasons to believe so, at least for what concerns the Hard Lefschetz property. Specifically, as we will show in Corollary~\ref{coro:right-aug-gamma-pos}, the right augmented Chow polynomial of a geometric lattice is unimodal (in fact, $\gamma$-positive).

Before finishing this section, we comment that the numerical canonical decomposition appearing in equation~\eqref{eq:ncd-f} gives as an immediate corollary the following new identity at the level of matroids. For the sake of future reference, we will state it using the well-established notation in matroid theory, i.e., $P_{\M}(x)$ will denote the Kazhdan--Lusztig polynomial of $\M$ and $\uH_{\M}(x)$ the Chow polynomial of $\M$.

\begin{corollary}
    Let $\M$ be a loopless matroid of rank $r$ on $E$. Then,
    \[ \uH_{\M}(x) = \frac{x^{\rk(\M)} P_{\M}(x^{-1}) - P_{\M}(x)}{x-1} + \sum_{\substack{F\neq\varnothing\\F\neq E}} \uH_{\M|_F}(x)\, \frac{x^{r-\rk(F)}P_{\M/F}(x^{-1}) - xP_{\M/F}(x)}{x-1}.\]
\end{corollary}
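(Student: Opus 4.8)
The plan is to obtain the identity as the $[\widehat{0},\widehat{1}]$-instance of the numerical canonical decomposition~\eqref{eq:ncd-f} of Theorem~\ref{thm:ncd}, specialized to the poset $P=\mathcal{L}(\M)$ with kernel $\kappa=\chi$. First I would set up the standard dictionary between KLS theory on the lattice of flats and matroid invariants. By Theorem~\ref{thm:chow-matroid} the $\chi$-Chow function of $\mathcal{L}(\M)$ evaluated on $[\varnothing,E]$ is the Chow polynomial $\uH_\M(x)$; the right KLS function of $\chi$ on $\mathcal{L}(\M)$ is, by definition of Elias--Proudfoot--Wakefield (and its agreement with Stanley's $f$, cf.\ \cite{braden-huh-matherne-proudfoot-wang}), the Kazhdan--Lusztig function of $\M$, so $f_{\varnothing E}(x)=P_\M(x)$; and for every flat $F$ one has the classical identifications of closed intervals of a geometric lattice, namely $[\varnothing,F]\cong\mathcal{L}(\M|_F)$ and $[F,E]\cong\mathcal{L}(\M/F)$, both again lattices of flats of loopless matroids. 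Since $\M$ is loopless and graded of rank $r$, the rank function satisfies $\rho_{\varnothing E}=\rk(\M)=r$, $\rho_{\varnothing F}=\rk(F)$, and $\rho_{FE}=r-\rk(F)=\rk(\M/F)$.

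Next I would substitute. The left-hand side of~\eqref{eq:ncd-f} is $\H_{\varnothing E}(x)=\uH_\M(x)$. For the leading term, $f^{\rev}_{\varnothing E}(x)=x^{\rho_{\varnothing E}}f_{\varnothing E}(x^{-1})=x^{r}P_\M(x^{-1})$, so that term is $\dfrac{x^{r}P_\M(x^{-1})-P_\M(x)}{x-1}$. In the sum over $\varnothing<F<E$ I would invoke the combinatorial invariance of $\H$ and of $f$ for $\kappa=\chi$ on graded posets (recorded at the start of Section~\ref{sec:four}): this gives $\H_{\varnothing F}(x)=\uH_{\M|_F}(x)$, $f_{FE}(x)=P_{\M/F}(x)$, and $f^{\rev}_{FE}(x)=x^{\rho_{FE}}f_{FE}(x^{-1})=x^{r-\rk(F)}P_{\M/F}(x^{-1})$. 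Assembling the pieces yields
\[
\uH_\M(x)=\frac{x^{r}P_\M(x^{-1})-P_\M(x)}{x-1}+\sum_{\substack{F\neq\varnothing\\ F\neq E}}\uH_{\M|_F}(x)\,\frac{x^{r-\rk(F)}P_{\M/F}(x^{-1})-xP_{\M/F}(x)}{x-1},
\]
which is the claimed formula upon writing $\rk(\M)=r$.

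There is no genuine obstacle here: the whole argument is a direct specialization, and the only points needing a sentence of justification are the three dictionary items (Chow function $\leftrightarrow$ Chow polynomial via Theorem~\ref{thm:chow-matroid}; right KLS function $\leftrightarrow$ matroid Kazhdan--Lusztig polynomial via \cite{elias-proudfoot-wakefield,braden-huh-matherne-proudfoot-wang}; and the restriction/contraction description of intervals in a geometric lattice, which is classical) together with the combinatorial invariance already established in Section~\ref{sec:four} that lets one replace the interval-indexed functions $\H_{\varnothing F}$ and $f_{FE}$ by the matroid quantities $\uH_{\M|_F}$ and $P_{\M/F}$. If anything, the mildly delicate step is keeping the bookkeeping of which interval feeds which factor in~\eqref{eq:ncd-f} — the Chow factor comes from the lower interval $[\varnothing,F]$ and the Kazhdan--Lusztig factor from the upper interval $[F,E]$ — but once that is fixed the computation is mechanical.
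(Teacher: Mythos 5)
Your proof is correct and is exactly the route the paper takes: the corollary is obtained by specializing the numerical canonical decomposition~\eqref{eq:ncd-f} to the interval $[\varnothing,E]$ in $\mathcal{L}(\M)$ with $\kappa=\chi$, using the standard identifications $[\varnothing,F]\cong\mathcal{L}(\M|_F)$, $[F,E]\cong\mathcal{L}(\M/F)$, and the agreement of the right KLS function with the matroid Kazhdan--Lusztig polynomial. The paper treats this as an ``immediate corollary'' and omits the bookkeeping you have written out; nothing more is needed.
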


\subsection{Unimodality, gamma-positivity, and flag \texorpdfstring{$h$}{h}-vectors}\label{sec:unimodality-gamma-flag-char}

We now turn back to the more general case of graded bounded posets. Since the left KLS function associated to $\chi$ is non-negative, we can apply Theorem~\ref{thm:kls-positive-chow-unimodal} to conclude the following.

\begin{theorem}\label{thm:chi-chow-unimodal}
    Let $P$ be a graded bounded poset. The $\chi$-Chow polynomial of $P$ is non-negative and unimodal.
\end{theorem}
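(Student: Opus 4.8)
The plan is to deduce this directly from the general criterion of Theorem~\ref{thm:kls-positive-chow-unimodal}, together with the observation that the left KLS function attached to the characteristic kernel is trivial. Since graded bounded posets form a hereditary class (every closed interval of such a poset is again graded and bounded), it suffices to treat the Chow \emph{polynomial} $\H_P(x) = \H_{\widehat{0}\,\widehat{1}}(x)$, as explained at the beginning of this section.

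First I would record the identity of the left KLS function for $\kappa = \chi$. By the very definition $\chi = \upzeta^{-1}\cdot\upzeta^{\rev}$ in equation~\eqref{eq:char-function}, we may rewrite this as $\upzeta^{\rev} = \upzeta\cdot\chi$. Now $g := \upzeta$ clearly satisfies $g_{ss}(x) = 1$ for all $s\in P$ and $\deg g_{st}(x) = 0 < \tfrac{1}{2}\rho_{st}$ for every $s < t$, while the displayed identity is exactly property~\ref{it:g-iii} of Theorem~\ref{thm:kls-functions}. By the uniqueness part of that theorem, $g = \upzeta$ is the left KLS function associated to $\chi$; in particular $g_{st}(x) = 1$ for all $s \leq t$, so $g$ is non-negative. (This step is nothing more than the inclusion--exclusion principle.)

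With this in hand, the theorem follows by a single appeal to Theorem~\ref{thm:kls-positive-chow-unimodal}: because the left KLS function $g$ is non-negative, the $\chi$-Chow function $\H$ is non-negative and unimodal, and specializing to the interval $[\widehat{0},\widehat{1}]$ gives the claim for $\H_P(x)$.

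I do not anticipate any genuine obstacle here; the entire content sits inside Theorem~\ref{thm:kls-positive-chow-unimodal}, whose proof runs an induction on $\rho_{st}$ using the numerical canonical decomposition~\eqref{eq:ncd-g} and the preservation of symmetry-plus-unimodality under products (Lemma~\ref{lemma:prod-unimodals}). The only point worth stating explicitly in the write-up is the identification $g = \upzeta$ above, so that the reader sees precisely why no version of the Hard Lefschetz theorem is needed.
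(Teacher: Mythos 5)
Your proof is correct and matches the paper's own argument exactly: the paper also identifies $g=\upzeta$ as the left KLS function for $\kappa=\chi$ (this is recorded earlier in Example~\ref{example:char-is-kernel}) and then invokes Theorem~\ref{thm:kls-positive-chow-unimodal}. No differences to flag.
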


As explained in the last subsection, in the case of geometric lattices (that is, lattices of flats of matroids), the above result follows from applying the Hard Lefschetz theorem on the Chow ring. In our case, there is a priori no such ring (cf. Section~\ref{subsec:chowish-ring}), but nonetheless the numerical shadow of its validity continues to hold.

When $P=\mathcal{L}(\M)$ is the lattice of flats of the matroid $\M$, then the $\chi$-Chow polynomial and the left $\chi$-Chow polynomial are in fact $\gamma$-positive (see \cite[Theorem~3.25]{ferroni-matherne-stevens-vecchi}). The key ingredient in the proof is the semi-small decomposition for Chow rings and augmented Chow rings proven by Braden, Huh, Matherne, Proudfoot, and Wang \cite{semismall}. 

Recently, using notions in a preliminary version of the theory developed in the present manuscript, Stump \cite{stump} proved a more general result: if $P$ is a poset admitting an $R$-labelling, then the $\chi$-Chow polynomial and the left augmented $\chi$-Chow polynomial are $\gamma$-positive. There is a strict chain of implications:

\[ \text{Geometric lattice} \Longrightarrow \text{Cohen--Macaulay poset} \Longrightarrow \text{Graded poset}.\]

(There are numerous notions of shellability that can be added to the above chain of implications, but we will not deal with them here, so we omit them.) We already know that only assuming that the poset is graded, the Chow function is unimodal, but the following example shows that $\gamma$-positivity may fail.

\begin{example}\label{example:non-gamma-positive}
    Consider the poset $P$ whose Hasse diagram is depicted on the left in Figure~\ref{fig:posets2}. The $\chi$-Chow polynomial equals
    \[ \H_P(x) = x^4 + 7 x^3 + 11 x^2 + 7 x + 1.\]
    This polynomial is not $\gamma$-positive because $\gamma_P(x) = -x^2+3x+1$. Of course, one expects that $P$ is not Cohen--Macaulay, which can be seen from the fact that $\beta_P(\{2,4\}) = -1$.
    \begin{figure}[ht]
    \centering
	\begin{tikzpicture}  
	[scale=0.6,auto=center,every node/.style={circle,scale=0.8, fill=black, inner sep=2.7pt}] 
	\tikzstyle{edges} = [thick];
	
	\node[] (a1) at (0,0) {};  
	\node[] (a2) at (0,1)  {};  
	\node[] (a3) at (-1,2) {};
	\node[] (a4) at (1,2) {};
	\node[] (a5) at (-1,3)  {};  
	\node[] (a6) at (1,3)  {};  
	\node[] (a7) at (-1,4)  {};  
	\node[] (a8) at (1,4)  {};  
	\node[] (a9) at (0,5) {};
	
	\draw[edges] (a1) -- (a2);  
	\draw[edges] (a2) -- (a3);  
	\draw[edges] (a2) -- (a4);  
	\draw[edges] (a3) -- (a5);
	\draw[edges] (a4) -- (a6);
	\draw[edges] (a5) -- (a7);
	\draw[edges] (a6) -- (a8);
	\draw[edges] (a7) -- (a9);
	\draw[edges] (a8) -- (a9);
	\end{tikzpicture}\qquad\qquad
 \begin{tikzpicture}  
	[scale=0.6,auto=center,every node/.style={circle,scale=0.8, fill=black, inner sep=2.7pt}] 
	\tikzstyle{edges} = [thick];
	
	\node[] (a1) at (0,0) {};  
	\node[] (a2) at (-2,1)  {};  
	\node[] (a3) at (-1,1) {};
	\node[] (a4) at (1,1) {};
	\node[] (a5) at (2,1)  {};  
	
        \node[] (a6) at (-3.5,3)  {};  
	\node[] (a7) at (-2.5,3)  {};  
	\node[] (a8) at (-1.5,3)  {};  
	\node[] (a9) at (-0.5,3) {};
	\node[] (a10) at (0.5,3)  {};  
	\node[] (a11) at (1.5,3)  {};  
	\node[] (a12) at (2.5,3)  {};  
	\node[] (a13) at (3.5,3) {};
 
	\node[] (a14) at (0,4) {};
	\node[] (a15) at (0,5) {};
	
        \draw[edges] (a1) -- (a2);
        \draw[edges] (a1) -- (a3);
        \draw[edges] (a1) -- (a4);
        \draw[edges] (a1) -- (a5);
        \draw[edges] (a2) -- (a6);
        \draw[edges] (a2) -- (a7);
        \draw[edges] (a2) -- (a8);
        \draw[edges] (a2) -- (a9);
        \draw[edges] (a2) -- (a10);
        \draw[edges] (a2) -- (a11);
        \draw[edges] (a2) -- (a12);
        \draw[edges] (a2) -- (a13);
        \draw[edges] (a3) -- (a6);
        \draw[edges] (a3) -- (a7);
        \draw[edges] (a3) -- (a8);
        \draw[edges] (a3) -- (a9);
        \draw[edges] (a3) -- (a10);
        \draw[edges] (a3) -- (a11);
        \draw[edges] (a3) -- (a12);
        \draw[edges] (a3) -- (a13);
        \draw[edges] (a4) -- (a6);
        \draw[edges] (a4) -- (a7);
        \draw[edges] (a4) -- (a8);
        \draw[edges] (a4) -- (a9);
        \draw[edges] (a4) -- (a10);
        \draw[edges] (a4) -- (a11);
        \draw[edges] (a4) -- (a12);
        \draw[edges] (a4) -- (a13);
        \draw[edges] (a5) -- (a6);
        \draw[edges] (a5) -- (a7);
        \draw[edges] (a5) -- (a8);
        \draw[edges] (a5) -- (a9);
        \draw[edges] (a5) -- (a10);
        \draw[edges] (a5) -- (a11);
        \draw[edges] (a5) -- (a12);
        \draw[edges] (a5) -- (a13);
        \draw[edges] (a14) -- (a6);
        \draw[edges] (a14) -- (a7);
        \draw[edges] (a14) -- (a8);
        \draw[edges] (a14) -- (a9);
        \draw[edges] (a14) -- (a10);
        \draw[edges] (a14) -- (a11);
        \draw[edges] (a14) -- (a12);
        \draw[edges] (a14) -- (a13);
        \draw[edges] (a14) -- (a15);
	\end{tikzpicture}\caption{The posets of Example~\ref{example:non-gamma-positive} and Remark~\ref{rem:F-horrible}.}\label{fig:posets2}
 \end{figure}
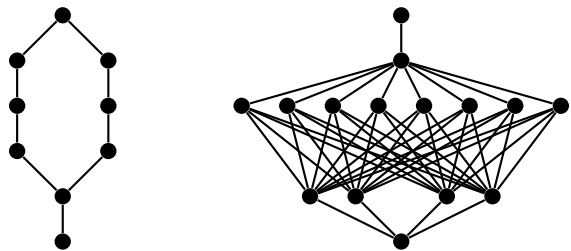
\end{example}

\begin{theorem}\label{thm:cohen-macaulay-chow-gamma-positive}
    Let $P$ be a Cohen--Macaulay poset. The $\chi$-Chow polynomial of $P$ is $\gamma$-positive. 
\end{theorem}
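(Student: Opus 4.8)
The plan is to deduce this from a purely formal identity, valid for \emph{every} finite graded bounded poset $P$, that expresses the $\chi$-Chow polynomial directly in the $\gamma$-basis with coefficients read off from the flag $h$-vector, and then to invoke the non-negativity of the flag $h$-vector of a Cohen--Macaulay poset (the theorem of Bj\"orner--Garsia--Stanley quoted above). Writing $r=\rho(\widehat1)$, the target identity is
\[ \H_P(x)\;=\;\sum_{T}\beta_P(T)\,x^{|T|}(1+x)^{\,r-1-2|T|}, \]
the sum ranging over subsets $T\subseteq\{2,\dots,r-1\}$ with no two consecutive elements (equivalently, such that every gap of $\{0\}\cup T$ has size at least $2$). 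Since $\{x^i(1+x)^{r-1-2i}\}_i$ is a basis of the symmetric polynomials of degree $r-1$, this identity \emph{is} the $\gamma$-expansion of $\H_P$, giving $\gamma_i(\H_P)=\sum_{|T|=i}\beta_P(T)$ over such sparse $T$. When $P$ is Cohen--Macaulay, Bj\"orner--Garsia--Stanley yields $\beta_P(T)\ge0$, so every $\gamma_i(\H_P)\ge0$; together with the symmetry from Proposition~\ref{prop:degree-and-symmetry}\ref{it:chow-symmetry}, this is exactly $\gamma$-positivity. (Since Cohen--Macaulayness is hereditary on closed intervals, the statement for Chow polynomials upgrades automatically to Chow \emph{functions}.)

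To obtain the displayed identity I would start from Theorem~\ref{thm:chi-chow-graded} and group the chains it sums over by their rank set. A chain $\widehat0=p_0<p_1<\dots<p_m\le\widehat1$ contributes $\prod_i q_{g_i}(x)$ with $g_i=\rho(p_i)-\rho(p_{i-1})$, where $q_k(x):=x+x^2+\dots+x^{k-1}$ (so $q_1=0$); this weight depends only on the rank set of $\{p_1,\dots,p_m\}$, and the number of chains with a given internal rank set is $\alpha_P(S)$. Collecting terms gives $\H_P(x)=\sum_{S\subseteq\{1,\dots,r-1\}}\alpha_P(S)\,\widetilde u_S(x)$, where $\widetilde u_S(x)$ is the product of the $q$'s over the gaps of $\{0\}\cup S$ multiplied by a top factor $1+x+\dots+x^{\,r-1-\max(S\cup\{0\})}$ that accounts for whether the chain reaches $\widehat1$. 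The computational crux is the generating function $\Phi(y):=\sum_{g\ge0}\Phi_g(x)y^g$, where $\Phi_g(x)$ is the sum of $\prod q$ over all compositions of $g$: starting from $\sum_{k\ge1}q_k(x)y^k=\frac{xy^2}{(1-xy)(1-y)}$ and inverting the geometric series, one gets the clean closed form
\[ \Phi(y)\;=\;\frac{(1-xy)(1-y)}{1-(1+x)y}, \]
so that $\Phi_0=1$, $\Phi_1=0$, and $\Phi_g(x)=x(1+x)^{g-2}$ for $g\ge2$.

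Finally I would M\"obius-invert $\alpha_P(S)=\sum_{T\subseteq S}\beta_P(T)$, obtaining $\H_P=\sum_T\beta_P(T)\,v_T$ with $v_T(x)=\sum_{S\supseteq T}\widetilde u_S(x)$. Expanding $S=T\sqcup A$ and summing over $A$ freely super-gap by super-gap of $\{0\}\cup T=\{0=t_0<\dots<t_k\}$ (with $k=|T|$) factors $v_T$ as $\bigl(\prod_{j=1}^k\Phi_{t_j-t_{j-1}}(x)\bigr)$ times a tail factor; a second short computation, using $\sum_{j\ge1}(1+x+\dots+x^{j-1})y^j=\frac{y}{(1-xy)(1-y)}$, which cancels against $\Phi(y)$, shows the tail factor equals $(1+x)^{\,r-1-t_k}$. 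Hence $v_T$ vanishes whenever $\{0\}\cup T$ has a gap of size $1$, and otherwise telescopes, via $\Phi_{t_j-t_{j-1}}(x)=x(1+x)^{t_j-t_{j-1}-2}$, to $x^{|T|}(1+x)^{\,r-1-2|T|}$, which is the claimed identity. I expect the main obstacle to be organizational rather than conceptual: keeping the M\"obius inversion and especially the top/tail bookkeeping straight — checking that the extremal $\gamma$-index $\lfloor(r-1)/2\rfloor$ and the small-rank cases $r\le2$ behave correctly — whereas the two generating-function identities, although the heart of the argument, are short.
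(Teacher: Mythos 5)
Your proof is correct, and it arrives at exactly the same key identity as the paper --- the paper states it as Theorem~\ref{thm:chow-from-flag-h}, $\gamma(\H_P,x)=\sum_{S\text{ good}}\beta_P(S)\,x^{|S|}$, which is your displayed identity divided by $(1+x)^{r-1}$ --- but you reach it by a genuinely different computational route. The paper's proof is driven by the numerical canonical decomposition: one first passes to $\gamma$-polynomials at every step (Lemma~\ref{lemma:gamma-recursion} is obtained as the $\gamma$-image of equation~\eqref{eq:ncd-g}), introduces the auxiliary polynomials $W_r(x)$ defined as $\gamma$-polynomials of $1+\cdots+x^r$, proves a combinatorial interpretation of $W_r$ in terms of good sets and a product lemma (Lemma~\ref{lemma:good-containing}), and then unwinds the chain sum. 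Your version works entirely on the $\H$-side from the chain formula of Theorem~\ref{thm:chi-chow-graded}, groups by internal rank set, M\"obius-inverts, and evaluates the resulting weights $v_T$ by a transfer-matrix generating-function computation in which $\Phi(y)=\frac{(1-xy)(1-y)}{1-(1+x)y}$ and the tail series $\frac{y}{(1-xy)(1-y)}$ cancel against each other, producing $(1+x)^{r-1-t_k}$ cleanly. Your approach bypasses the $W$-polynomials and the NCD entirely and is arguably more self-contained and shorter; the paper's approach is more ``structural'' in that it makes visible the parallelism with the module-theoretic canonical decomposition of \cite{braden-huh-matherne-proudfoot-wang} and recycles machinery used elsewhere in the paper. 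The only detail worth flagging, which you handle correctly anyway, is the coupling between elements inserted above $\max T$ and the top factor $1+\cdots+x^{r-1-\max S}$: treating this honestly is exactly what forces the tail factor computation rather than a naive product over super-gaps.
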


The proof that we will provide is inherently technical, as one cannot a priori rely on any nice labelling for the poset. We will in fact prove a general relation between the flag $h$-vector and the $\chi$-Chow polynomial of $P$ (cf. Theorem~\ref{thm:chow-from-flag-h}), from which Theorem~\ref{thm:cohen-macaulay-chow-gamma-positive} and Stump's result will be immediate corollaries.

Before moving to the proof of Theorem~\ref{thm:cohen-macaulay-chow-gamma-positive}, let us point out that this result along with Corollary~\ref{coro:aug-from-tail} imply that if $P$ is Cohen--Macaulay, then the left augmented $\chi$-Chow polynomial $G_P(x)$ is $\gamma$-positive. The reason is that $\aug(P)$ is Cohen--Macaulay too, as the order complex $\Delta(\aug(P))$ is a cone over $\Delta(P)$ and coning preserves Cohen--Macaulayness.

\begin{corollary}
    Let $P$ be a Cohen--Macaulay poset. The left augmented $\chi$-Chow polynomial $G_P(x)$ is $\gamma$-positive.
\end{corollary}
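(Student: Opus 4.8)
The plan is to derive this from Theorem~\ref{thm:cohen-macaulay-chow-gamma-positive} by passing to the augmented poset. By Corollary~\ref{coro:aug-from-tail} we have $G_P(x) = \H_{\aug(P)}(x)$, so it suffices to prove that $\aug(P)$ is again a finite graded bounded Cohen--Macaulay poset; applying Theorem~\ref{thm:cohen-macaulay-chow-gamma-positive} to $\aug(P)$ then shows that $\H_{\aug(P)}(x) = G_P(x)$ is $\gamma$-positive.

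First I would clear away the bookkeeping. The poset $\aug(P)$ is finite and bounded, with a new minimum $\widehat{0}_{\aug(P)}$ and with $\widehat{1}_{\aug(P)} = \widehat{1}_P$; the old minimum $\widehat{0}_P$ becomes the unique atom of $\aug(P)$. It is graded, since every maximal chain of $\aug(P)$ arises by prepending $\widehat{0}_{\aug(P)}$ to a maximal chain of $P$, so they all have length $\rho(P)+1$. Next I would check the homological condition, namely that for all $s<t$ in $\aug(P)$ the reduced rational homology of the order complex of the open interval $(s,t)$ is concentrated in degree $\rho_{st}-2$. If $s \neq \widehat{0}_{\aug(P)}$, then $s,t \in P$, the open interval $(s,t)$ computed in $\aug(P)$ coincides with the one computed in $P$, and $\rho_{st}$ is unchanged, so the condition is inherited from the Cohen--Macaulayness of $P$. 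If $s = \widehat{0}_{\aug(P)}$ and $t = \widehat{0}_P$, then $(s,t) = \varnothing$, whose order complex has $\widetilde{H}_{-1} \neq 0$ as its only nonvanishing reduced homology group, and since $\rho_{st} = 1$, so that $\rho_{st} - 2 = -1$, the condition holds. Finally, if $s = \widehat{0}_{\aug(P)}$ and $t > \widehat{0}_P$, then $(s,t) = \{\widehat{0}_P\} \cup (\widehat{0}_P, t)$ has $\widehat{0}_P$ as a minimum, so $\Delta(s,t)$ is a cone with apex $\widehat{0}_P$ over $\Delta(\widehat{0}_P,t)$, hence contractible, and its reduced homology vanishes in every degree, which a fortiori gives the required concentration. (Alternatively, one may observe that the proper part of $\aug(P)$ is $P \smallsetminus \{\widehat{1}_P\}$, whose order complex is a cone over the order complex of the proper part of $P$, and invoke that coning preserves Cohen--Macaulayness.)

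With $\aug(P)$ now known to be a finite graded bounded Cohen--Macaulay poset, Theorem~\ref{thm:cohen-macaulay-chow-gamma-positive} gives the $\gamma$-positivity of $\H_{\aug(P)}(x)$, and Corollary~\ref{coro:aug-from-tail} identifies this polynomial with $G_P(x)$, finishing the proof. I do not anticipate a genuine obstacle: the statement is a short reduction, and the only delicate point is seeing that adjoining a new bottom element does not spoil Cohen--Macaulayness — concretely, that $\widehat{0}_P$ survives as an atom of the proper part of $\aug(P)$ and that coning is harmless — everything else being routine.
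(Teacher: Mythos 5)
Your proposal is correct and takes the same route as the paper: identify $G_P(x)$ with $\H_{\aug(P)}(x)$ via Corollary~\ref{coro:aug-from-tail}, verify that $\aug(P)$ is again Cohen--Macaulay, and apply Theorem~\ref{thm:cohen-macaulay-chow-gamma-positive}. The paper justifies the Cohen--Macaulayness of $\aug(P)$ in one line by the observation that coning preserves Cohen--Macaulayness (exactly your parenthetical alternative), whereas you spell out the open-interval homology check case by case; the substance is identical.
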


We will prepare the proof with a few preliminary lemmas. The following notation will be instrumental throughout the proof. 

\vspace{-4mm}
\[ \text{$S\subseteq \mathbb{Z}$ is said to be \emph{good} if $S$ does not contain $1$ nor any two consecutive integers}.\]

We now define an auxiliary family of polynomials. For each non-negative integer $r$, the polynomial $1 + x + \cdots + x^r$ is symmetric, so we can consider its associated $\gamma$-polynomial, which we will denote by $W_r(x)$. We additionally define $W_{-1}(x) = 0$. Notice that $W_r(x)$ has negative coefficients for $r \geq 2$, as the polynomial $1 + \cdots + x^r$ is not $\gamma$-positive when $r>1$.

\begin{lemma}
    The polynomial $W_r(x)$ can be computed as
    \[
        W_r(x) = \sum_{j=0}^{\lfloor r/2\rfloor} (-1)^j \binom{r-j}{j}\, x^j = \sum_{\substack{T\subseteq [m]\\ T \text{ good}}} (-x)^{|T|}.
    \]
\end{lemma}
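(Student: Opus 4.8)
The plan is to split the lemma into its two asserted equalities and prove them separately. The rightmost equality is an elementary enumeration of good subsets, while the leftmost is a polynomial identity which, once verified, pins down the $\gamma$-polynomial of $1+x+\cdots+x^{r}$.

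For the rightmost equality, I would note that a subset $T\subseteq[r]$ is good precisely when $1\notin T$ and $T$ contains no two consecutive integers, equivalently when $T$ is a subset of the $(r-1)$-element chain $\{2,3,\dots,r\}$ with no two consecutive elements. Since the number of $j$-element subsets of an $n$-element chain with no two consecutive elements is $\binom{n-j+1}{j}$, taking $n=r-1$ gives $\binom{r-j}{j}$ such sets of size $j$. Summing with signs, $\sum_{\substack{T\subseteq[r]\\ T\text{ good}}}(-x)^{|T|}=\sum_{j\ge 0}\binom{r-j}{j}(-x)^{j}=\sum_{j=0}^{\lfloor r/2\rfloor}(-1)^{j}\binom{r-j}{j}x^{j}$, which is the middle expression of the lemma.

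For the leftmost equality, recall that $W_{r}(x)$ is by definition the $\gamma$-polynomial of the palindromic polynomial $p_{r}(x):=1+x+\cdots+x^{r}$, that is, the unique polynomial with $p_{r}(x)=\sum_{j\ge 0}\bigl([x^{j}]W_{r}(x)\bigr)x^{j}(1+x)^{r-2j}$. This is well defined even though $p_{r}$ is not $\gamma$-positive for $r\ge 2$, because the polynomials $x^{j}(1+x)^{r-2j}$ with $0\le j\le\lfloor r/2\rfloor$ are palindromic of degree at most $r$ with pairwise distinct lowest-degree terms $x^{j}$, hence linearly independent, hence a basis of the $(\lfloor r/2\rfloor+1)$-dimensional space of polynomials $p$ of degree at most $r$ satisfying $p(x)=x^{r}p(1/x)$. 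It therefore suffices to check the identity $p_{r}(x)=R_{r}(x)$, where $R_{r}(x):=\sum_{j=0}^{\lfloor r/2\rfloor}(-1)^{j}\binom{r-j}{j}x^{j}(1+x)^{r-2j}$. I would prove this by induction on $r$: the left side satisfies $p_{r}=(1+x)p_{r-1}-xp_{r-2}$, since $p_{r-1}+x(p_{r-1}-p_{r-2})=p_{r-1}+x\cdot x^{r-1}=p_{r}$; and $R_{r}$ satisfies the same recursion by Pascal's rule $\binom{r-j}{j}=\binom{r-1-j}{j}+\binom{r-1-j}{j-1}$, the first term contributing $(1+x)R_{r-1}$ and the second, after the substitution $j\mapsto j-1$, contributing $-xR_{r-2}$. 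The base cases $R_{0}=1=p_{0}$ and $R_{1}=1+x=p_{1}$ are immediate, and the empty sum matches the convention $W_{-1}=0$. Uniqueness of the expansion then gives $W_{r}(x)=\sum_{j=0}^{\lfloor r/2\rfloor}(-1)^{j}\binom{r-j}{j}x^{j}$, as desired.

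The argument is essentially bookkeeping, so I do not expect a real obstacle; the only point that calls for care is confirming that ``the $\gamma$-polynomial of $p_{r}$'' makes sense despite $p_{r}$ failing to be $\gamma$-positive, which is precisely the basis statement used above, together with a quick check of the small cases $r\in\{-1,0,1\}$.
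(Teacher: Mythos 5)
Your proof is correct, and both you and the paper follow the same overall strategy: first verify by direct computation the closed form $W_r(x)=\sum_{j}(-1)^j\binom{r-j}{j}x^j$ coming from the definition of $W_r$ as the $\gamma$-polynomial of $1+x+\cdots+x^r$, and then match this against a count of good subsets $T\subseteq[r]$ of each size $j$. The differences are in the bookkeeping. For the first equality the paper only asserts an ``elementary computation''; you actually supply one, via the linear recursions $p_r=(1+x)p_{r-1}-xp_{r-2}$ and, by Pascal's rule, $R_r=(1+x)R_{r-1}-xR_{r-2}$, together with the basis argument showing the $\gamma$-expansion of a non-$\gamma$-positive palindromic polynomial is still well defined — a worthwhile remark that the paper omits. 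For the second equality, the paper inducts on $j$ by summing over the minimum element of $T$ and invoking the hockey-stick identity, whereas you appeal directly to the standard formula $\binom{n-j+1}{j}$ for $j$-element subsets of an $n$-chain with no two consecutive elements, applied to the $(r-1)$-chain $\{2,\ldots,r\}$; these are equivalent, and your version is marginally cleaner. One cosmetic remark: the statement as printed writes $T\subseteq[m]$ but clearly means $T\subseteq[r]$, which both you and the paper's proof silently correct.
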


\begin{proof}
    The first equality follows from an elementary computation using the definition of the polynomial $W_r(x)$ as the $\gamma$-polynomial of $1 + \cdots + x^r$. To prove the second equality, it suffices to show that
    \[
    \left|\left\{ T \subseteq [r] : |T| = j, \text{ $T$ good} \right\}\right| = \binom{r-j}{j}.
    \]
    We do this by induction. For $j = 0$, the last equality is trivially true. Now, for $j\geq 1$, we enumerate the good sets as follows. We choose an element $i \in [2,m]$ and count how many good sets of size $j$ have $i$ as minimum element. Note that $i \leq m - 2(j-1)$, otherwise we cannot find  $j-1$ elements bigger than $i$ to form a good set, as necessarily there would be two consecutive integers among the choice. Inductively, we shall assume that the number of good sets of size $j-1$ in the interval $[i,m]$ is $\binom{m-i-j}{j-1}$. Notice that
    \[
    \sum_{i=2}^{m-2j+2}\binom{m-i-j}{j-1} = \sum_{\ell=0}^{m-2j}\binom{j-1+\ell}{j-1}= \binom{m-j}{j},
    \]
    where the last equality follows from applying the ``hockey-stick identity''.
\end{proof}

We will also need the following refinement of the previous lemma.

\begin{lemma}\label{lemma:good-containing}
    Let $T = \{r_1,\ldots,r_m\}\subseteq [r-1]$ be a good set. Then, we have
    \[
    \left[\prod_{i=1}^m W_{r_i - r_{i-1} - 2}(x) \right]W_{r-r_m-1}(x) = \sum_{\substack{S\supseteq T \\ S \text{ good}}}(-x)^{|S\smallsetminus T|}.
    \]
\end{lemma}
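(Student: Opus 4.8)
The plan is to deduce the identity from an explicit weight-preserving bijection, using the expansion of each $W$-factor provided by the previous lemma. Throughout, set $r_0 := 0$ (and, if $m = 0$, read $r_m := 0$ as well). First I would substitute the formula $W_n(x) = \sum_{A \subseteq [n],\, A \text{ good}} (-x)^{|A|}$ into every factor on the left-hand side and expand the product, obtaining
\[
\left[\prod_{i=1}^m W_{r_i - r_{i-1} - 2}(x)\right] W_{r-r_m-1}(x) \;=\; \sum_{(A_1,\dots,A_m,B)} (-x)^{|A_1| + \cdots + |A_m| + |B|},
\]
where the sum ranges over all tuples with $A_i \subseteq [r_i - r_{i-1} - 2]$ good for every $i$ and $B \subseteq [r-r_m-1]$ good. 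It then suffices to build a bijection between these tuples and the good sets $S$ with $T \subseteq S \subseteq [r-1]$ under which $|A_1| + \cdots + |A_m| + |B| = |S \smallsetminus T|$, since the signs then match monomial by monomial.

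The bijection I have in mind is the ``gap decomposition'' of $S$ relative to $T$. In one direction, given a tuple $(A_1,\dots,A_m,B)$, set
\[
S \;=\; T \;\cup\; \bigcup_{i=1}^{m} \{\, r_{i-1}+a \;:\; a \in A_i \,\} \;\cup\; \{\, r_m + b \;:\; b \in B \,\}.
\]
In the other direction, given a good $S \supseteq T$, let $A_i := \{\, s - r_{i-1} \;:\; s \in S,\ r_{i-1} < s < r_i \,\}$ for $1 \leq i \leq m$ and $B := \{\, s - r_m \;:\; s \in S,\ s > r_m \,\}$. Because $T \subseteq S \subseteq [r-1]$, every element of $S \smallsetminus T$ lands in exactly one of the open intervals $(r_{i-1},r_i)$ or in the tail $(r_m, r)$; hence these two maps are mutually inverse and obviously additive in cardinality. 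So the only thing left to verify is that they carry the ``good'' condition on one side to the ``good'' condition on the other.

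That verification is the step that needs the most care, although it is elementary. For a gap with $1 \leq i \leq m$: an element $s \in S$ with $r_{i-1} < s < r_i$ cannot be $r_{i-1}+1$ (it would be consecutive with $r_{i-1} \in S$; note that when $i = 1$, since $r_0 = 0$, this is exactly the prohibition $s \neq 1$) and cannot be $r_i - 1$ (consecutive with $r_i \in S$); so $s - r_{i-1} \in \{2,\dots,r_i - r_{i-1} - 2\}$, and together with non-consecutiveness inside $S$ this says precisely that $A_i$ is a good subset of $[r_i - r_{i-1} - 2]$, and the shift is visibly reversible. For the tail: an element $s \in S$ with $s > r_m$ cannot be $r_m + 1$ but has no constraint from above (no element of $S$ exceeds $r-1$), so $s - r_m \in \{2,\dots,r-r_m-1\}$ and $B$ is a good subset of $[r-r_m-1]$ --- the asymmetric index $-1$ versus $-2$ being exactly the absence of an upper neighbour in $T$. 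Conversely, the shifted blocks $\{r_{i-1}+a : a \in A_i\}$ and $\{r_m + b : b \in B\}$ sit strictly inside their gaps, at distance $\geq 2$ from every element of $T$ and avoiding $1$, so their union with $T$ is again good. Finally, the degenerate case $m = 0$ (so $T = \varnothing$, $r_m = 0$) is just the previous lemma with $n = r-1$. Assembling these observations gives the bijection, and hence the claimed identity.
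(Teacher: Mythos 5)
Your proof is correct and essentially follows the same route as the paper: both arguments use the expansion $W_n(x) = \sum_{A \subseteq [n],\, A\text{ good}} (-x)^{|A|}$ from the preceding lemma and then carry out a gap decomposition of a good $S \supseteq T$ into independent pieces in the blocks $[r_{i-1}+1,r_i]$ (and the final tail). The only difference is one of presentation: you spell out the weight-preserving bijection and its inverse explicitly, and verify the boundary constraints (avoiding $1$, avoiding $r_{i-1}+1$ and $r_i-1$) more carefully than the paper's terser phrasing, which is a welcome clarification but not a different method.
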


\begin{proof}
    To show this, we fix a good set $T$ and partition the interval $[r]$ as
    \[
    [r] = [1,r_1]\sqcup [r_1+1,r_2]\sqcup \cdots \sqcup [r_{m-1}+1,r_m]\sqcup [r_m+1,r].
    \]
    Notice that by taking the last element of each block (except for the last one) we recover $T$. Moreover, each interval $T_i = [r_{i-1}+1,r_i]$ has length $r_i-r_{i-1}$ (where we set $r_0 = 0$ and $r_{m+1}=r$). The good sets $S\supseteq T$ arise by picking subsets $S_i \subseteq T_i$ such that
    \begin{itemize}
        \item $r_i \in S_i$,
        \item $r_{i-1}+1 \notin S_i$, and
        \item $S_i$ does not contain two consecutive integers.
    \end{itemize}
    These are clearly in bijection with the good sets of $[r_{i-1}+1, r_i - 1]$, which are counted by $W_{r_i - r_{i-1}-2}(x)$ for $i \leq s$ and $W_{r-r_m-1}(x)$ for $i=m+1$. We conclude using the previous lemma.
\end{proof}
The relevance of the polynomials $W_r(x)$ stems from the following interpretation of the numerical canonical decomposition for $\chi$-Chow polynomials. To avoid overloading our notation, we will write
    \[ \gamma_P(x) := \gamma(\H_P, x).\]

\begin{lemma}\label{lemma:gamma-recursion}
    Let $P$ be a graded bounded poset of rank $r$. The $\chi$-Chow polynomial of $P$ satisfies the following recursion:
    \[ \gamma_P(x) = W_{r-1}(x) + x \,\sum_{\widehat{0} < t < \widehat{1}} W_{\rho(t)-2}(x)\, \gamma_{[t,\widehat{1}]}(x).\]
    In particular, the polynomial $\gamma_P(x)$ can be computed with the following non-recursive formula:
    \[
    \gamma_P(x) = \sum_{\substack{S\subseteq [r-1]\\S \text{ good}\\ S=\{r_1,\ldots,r_m\}}}x^{m}\alpha_P(S) \left[\prod_{i=1}^m W_{r_i - r_{i-1} - 2}(x) \right]W_{r-r_s-1}(x).
    \]
\end{lemma}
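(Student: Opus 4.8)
The plan is to reduce the statement to a recursion for the Chow polynomial $\H_P$ itself and then pass to the $\gamma$-operator. First I would specialize equation~\eqref{eq:ncd-g} of Theorem~\ref{thm:ncd} to the kernel $\kappa=\chi$, whose left KLS function is $g=\upzeta$ by Example~\ref{example:char-is-kernel}, so that $g_{st}^{\rev}(x)=x^{\rho_{st}}$ and $g_{st}(x)=1$. Using $\tfrac{x^{\rho_{st}}-1}{x-1}=1+x+\cdots+x^{\rho_{st}-1}$ and $\tfrac{x^{\rho_{st}}-x}{x-1}=x(1+x+\cdots+x^{\rho_{st}-2})$ (the latter being $0$ when $\rho_{st}=1$), equation~\eqref{eq:ncd-g} applied to the full interval $[\widehat{0},\widehat{1}]$ gives
\[
\H_P(x) \;=\; (1+x+\cdots+x^{r-1}) \;+\; \sum_{\widehat{0}<t<\widehat{1}} x\,(1+x+\cdots+x^{\rho(t)-2})\,\H_{[t,\widehat{1}]}(x),
\]
where $r=\rho(\widehat{1})$ and the summand for any $t$ with $\rho(t)=1$ is understood to vanish.

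The key observation is that every term on the right-hand side of this identity is symmetric with the \emph{same} center of symmetry $\tfrac12(r-1)$: this is clear for $1+x+\cdots+x^{r-1}$, and for the $t$-summand note that $1+x+\cdots+x^{\rho(t)-2}$ has center $\tfrac12(\rho(t)-2)$, that $\H_{[t,\widehat{1}]}(x)$ has center $\tfrac12(\rho_{t\widehat{1}}-1)=\tfrac12(r-\rho(t)-1)$ by Proposition~\ref{prop:degree-and-symmetry}\ref{it:chow-symmetry}, and that multiplying by $x$ raises the center by $1$, for a total of $\tfrac12(r-1)$. Now recall that $\gamma(p,x)$, determined by the identity $p(x)=(1+x)^{d}\,\gamma\!\left(p,\tfrac{x}{(1+x)^2}\right)$ on symmetric polynomials $p$ with center $\tfrac{d}{2}$, is additive on symmetric polynomials sharing a common center, is multiplicative on products (with centers added), and satisfies $\gamma(xp,x)=x\,\gamma(p,x)$ — all of which are immediate from that defining identity together with the uniqueness of $\gamma$ — while $\gamma(1+x+\cdots+x^{k},x)=W_k(x)$ by the very definition of $W_k$. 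Applying $\gamma$ term by term to the displayed recursion therefore produces
\[
\gamma_P(x) \;=\; W_{r-1}(x) \;+\; x\sum_{\widehat{0}<t<\widehat{1}} W_{\rho(t)-2}(x)\,\gamma_{[t,\widehat{1}]}(x),
\]
with $W_{-1}(x)=0$ absorbing the terms with $\rho(t)=1$; this is the asserted recursion.

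For the closed formula I would iterate this recursion by induction on $r$, the base case $r=1$ being immediate since then $\H_P(x)=1=W_0(x)$. For $r\ge 2$ and $\widehat{0}<t<\widehat{1}$, the interval $[t,\widehat{1}]$ has rank $r-\rho(t)<r$, so the inductive hypothesis expresses $\gamma_{[t,\widehat{1}]}(x)$ as a sum over good subsets $S'=\{r_1'<\cdots<r_k'\}\subseteq[r-\rho(t)-1]$. Setting $r_1:=\rho(t)$ and $r_{j+1}:=r_j'+\rho(t)$ turns $\{\rho(t)\}\cup(S'+\rho(t))$ into a good subset $S=\{r_1<\cdots<r_m\}\subseteq[r-1]$ with least element $\rho(t)$, and every good $S\subseteq[r-1]$ with a fixed least element arises in this way from a unique $S'$. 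Under this correspondence the $W$-factors match (with the convention $r_0=0$, so that $W_{\rho(t)-2}=W_{r_1-r_0-2}$ and $W_{(r-\rho(t))-r_k'-1}=W_{r-r_m-1}$) and the powers of $x$ match ($x\cdot x^{|S'|}=x^{|S|}$). Finally, since a good set $S=\{r_1<\cdots<r_m\}\subseteq[r-1]$ has $r_1\ge 2$ and $r_m\le r-1$, every chain $w_1<\cdots<w_m$ of $P$ with $\rho(w_i)=r_i$ automatically lies strictly between $\widehat{0}$ and $\widehat{1}$, so $\sum_{t:\rho(t)=r_1}\alpha_{[t,\widehat{1}]}(S')=\alpha_P(S)$; regrouping the double sum by $S$ then yields
\[
\gamma_P(x) \;=\; \sum_{\substack{S\subseteq[r-1]\ \text{good}\\ S=\{r_1,\dots,r_m\}}} x^{m}\,\alpha_P(S)\left[\prod_{i=1}^{m} W_{r_i-r_{i-1}-2}(x)\right]W_{r-r_m-1}(x),
\]
the empty set $S=\varnothing$ contributing exactly the term $W_{r-1}(x)$.

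The only genuinely substantive point is the common-center-of-symmetry claim in the second paragraph; once it is in place, passing to $\gamma$ term by term is legitimate and the remainder is bookkeeping. I would flag the easily-missed subtlety that $\gamma$ must be applied with \emph{consistent} centers: the argument works precisely because the prefactor $x\,(1+x+\cdots+x^{\rho(t)-2})$ supplies exactly the shift needed to align the center $\tfrac12(r-\rho(t)-1)$ of $\H_{[t,\widehat{1}]}$ with the center $\tfrac12(r-1)$ of $\H_P$. The iteration in the third paragraph is routine but is the lengthiest part of the write-up.
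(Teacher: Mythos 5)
Your proof is correct and follows essentially the same route as the paper's: specialize the numerical canonical decomposition~\eqref{eq:ncd-g} to $\kappa=\chi$ (so $g=\upzeta$), observe that all summands share the center of symmetry $\tfrac12(r-1)$, pass to $\gamma$-polynomials using linearity and multiplicativity on a common center, and iterate. The paper compresses your inductive iteration into the phrase ``composing this recursion with itself'' and writes the result directly as a sum over chains before grouping by rank sets; your explicit induction, the good-set correspondence $S \leftrightarrow (\rho(t),S')$, and the identity $\sum_{t:\rho(t)=r_1}\alpha_{[t,\widehat{1}]}(S')=\alpha_P(S)$ are precisely the bookkeeping the paper leaves implicit.
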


\begin{proof}
    From the numerical canonical decomposition \eqref{eq:ncd-g}, the fact that $g$ is identically one allows us to write
    \[ \H_P(x) = \frac{x^r - 1}{x-1} + \sum_{\widehat{0} < w < \widehat{1}} \frac{x^{\rho(w)} - x}{x-1} \,\H_{[w,\widehat{1}]}(x).\]
    All the summands in the above display have the same center of symmetry, and hence the $\gamma$-polynomial can be computed using the asserted recursion. Now, by composing this recursion with itself, we can rewrite $\gamma_P(x)$ as a sum over chains of $P$:
    \[ \gamma_P(x) = \sum_{\widehat{0} = t_0 < \cdots < t_m \leq \widehat{1}} W_{r-\rho(t_m)-1} \prod_{i=1}^m x\,W_{\rho(t_i)-\rho(t_{i-1})-2}. \]
    Since by definition $W_{-1}(x)=0$, we shall assume that each summand is indexed by a chain whose elements have ranks forming a good set. Thus, we obtain the non-recursive formula of our statement.
\end{proof}

By combining all the previous lemmas, we may prove the central statement of this section.

\begin{theorem}\label{thm:chow-from-flag-h}
    Let $P$ be a graded bounded poset of rank $r$. The $\chi$-Chow polynomial of $P$ can be computed from the flag $h$-vector as follows:
    \begin{equation}\label{eq:gamma-flag}
        \gamma_P(x) = \sum_{\substack{S\subseteq [r-1]\\ S \text{ good}}} \beta_P(S)\, x^{|S|}.
    \end{equation}
\end{theorem}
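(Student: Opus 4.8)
The plan is to obtain \eqref{eq:gamma-flag} purely formally from the non-recursive formula for $\gamma_P(x)$ in Lemma~\ref{lemma:gamma-recursion}, namely
\[
\gamma_P(x) \;=\; \sum_{\substack{S\subseteq[r-1]\text{ good}\\ S=\{r_1,\ldots,r_m\}}} x^{m}\,\alpha_P(S)\left[\prod_{i=1}^m W_{r_i-r_{i-1}-2}(x)\right]W_{r-r_m-1}(x),
\]
where by convention $r_0:=0$. The first step is to apply Lemma~\ref{lemma:good-containing} to every summand, which rewrites the bracketed product of $W$-polynomials as the signed sum $\sum_{S'\supseteq S,\ S'\text{ good}}(-x)^{|S'\smallsetminus S|}$; after this substitution the $W$-polynomials disappear entirely and we are left with
\[
\gamma_P(x)\;=\;\sum_{S\subseteq[r-1]\text{ good}} x^{|S|}\,\alpha_P(S)\sum_{\substack{S'\supseteq S\\ S'\text{ good}}}(-x)^{|S'\smallsetminus S|}.
\]

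Next I would replace $\alpha_P(S)$ by its expansion $\sum_{U\subseteq S}\beta_P(U)$ in terms of the flag $h$-vector, and then interchange the order of summation so as to collect, for each fixed $U\subseteq[r-1]$, the full coefficient of $\beta_P(U)$. Here two elementary closure properties of good sets do the work: good sets are closed under passing to subsets, so $S\subseteq S'$ with $S'$ good forces $S$ good automatically; and, dually, any superset of a non-good set is non-good, so if $U$ is not good there is no good $S$ with $U\subseteq S$ and $\beta_P(U)$ never appears. For a fixed \emph{good} $U$, swapping the roles of $S$ and $S'$ turns the coefficient of $\beta_P(U)$ into
\[
\sum_{\substack{S'\text{ good}\\ U\subseteq S'\subseteq[r-1]}}\ \sum_{U\subseteq S\subseteq S'} x^{|S|}(-x)^{|S'|-|S|}.
\]
Writing $S=U\sqcup A$ with $A\subseteq S'\smallsetminus U$ and setting $k:=|S'\smallsetminus U|$, the inner sum equals $x^{|U|}\sum_{j=0}^{k}\binom{k}{j}x^{j}(-x)^{k-j}=x^{|U|}(x-x)^{k}$, which is $x^{|U|}$ when $k=0$ and $0$ otherwise. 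Hence only the term $S'=U$ survives, the coefficient of $\beta_P(U)$ is exactly $x^{|U|}$, and summing over all good $U\subseteq[r-1]$ yields $\gamma_P(x)=\sum_{S\subseteq[r-1],\ S\text{ good}}\beta_P(S)\,x^{|S|}$, which is exactly equation~\eqref{eq:gamma-flag}.

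The main obstacle is not conceptual but purely organizational: one has three nested summations in play (the indexing good set $S$, the good superset $S'$ produced by Lemma~\ref{lemma:good-containing}, and the subset $U\subseteq S$ coming from the flag $h$-expansion), and the argument hinges on interchanging them in the right order while tracking the index ranges, together with the edge cases $S=\varnothing$, $U$ non-good, and the convention $r_0=0$ in Lemma~\ref{lemma:gamma-recursion}; once the interchange is set up correctly, the cancellation is nothing more than the binomial theorem applied to $(x-x)^k$. As indicated after the statement, feeding in the non-negativity of the flag $h$-vector of a Cohen--Macaulay poset then gives Theorem~\ref{thm:cohen-macaulay-chow-gamma-positive} at once, and the case of an $R$-labelling recovers Stump's result \cite[Theorem~1.1]{stump}.
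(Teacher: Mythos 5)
Your proof is correct and rests on exactly the same ingredients as the paper's own argument: Lemma~\ref{lemma:gamma-recursion}, Lemma~\ref{lemma:good-containing}, M\"obius inversion between $\alpha_P$ and $\beta_P$, and the fact that good sets are closed under taking subsets. The only difference is directional: the paper defines $\widetilde{\gamma}_P$ by the right-hand side of \eqref{eq:gamma-flag}, M\"obius-inverts, interchanges, and applies Lemma~\ref{lemma:good-containing} to \emph{reassemble} the $W$-products and match Lemma~\ref{lemma:gamma-recursion} directly, whereas you run the same manipulations forward from Lemma~\ref{lemma:gamma-recursion}, which forces an extra binomial-theorem cancellation $\sum_{A\subseteq S'\smallsetminus U}x^{|A|}(-x)^{k-|A|}=0^k$ at the end---same proof, in reverse.
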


\begin{proof}
    Define the polynomial $\widetilde{\gamma}_P(x)$ as the right-hand side in equation~\eqref{eq:gamma-flag}, and rewrite $\widetilde{\gamma}_P(x)$ in terms of the flag $f$-vector. One obtains
    \begin{align*}
        \widetilde{\gamma}_P(x) &= \sum_{\substack{S\subseteq [r-1]\\ S \text{ good}}} \left(\sum_{T\subseteq S} (-1)^{|S|-|T|} \alpha_P (T)\right)\, x^{|S|}\\
        &= \sum_{\substack{T\subseteq [r-1]\\ T \text{ good}}} (-1)^{|T|} \alpha_P(T) \left(\sum_{\substack{S\supseteq T\\ S \text{ good}}} (-x)^{|S|}\right).
    \end{align*}
    Notice that we interchanged the order of the summation, and used that if $S$ is good, then any subset $T\subseteq S$ is good too. We can apply Lemma~\ref{lemma:good-containing}, to obtain
    \[ \widetilde{\gamma}_P(x) = \sum_{\substack{T\subseteq [r-1]\\ T \text{ good}}} (-1)^{|T|} \alpha_P(T)(-x)^{|T|} \left[\prod_{i=1}^m W_{r_i - r_{i-1} - 2}(x) \right]W_{r-r_s-1}(x),\]
    and now the formula in Lemma~\ref{lemma:gamma-recursion} says that the right-hand side is $\gamma_P(x)$.
\end{proof}

As a consequence of the above statement and the non-negativity of the flag $h$-vector of a Cohen--Macaulay poset, we conclude the validity of Theorem~\ref{thm:cohen-macaulay-chow-gamma-positive}. Observe that this also proves Stump's result, because posets with an $R$-labelling have non-negative flag $h$-vectors. However, the classes of Cohen--Macaulay posets and posets with $R$-labellings do not compare to each other.

In an independent work, and simultaneously to the writing of the present paper, Liao \cite{liao2} proved a conjecture of Angarone, Nathanson, and Reiner \cite{angarone-nathanson-reiner} on the equivariant $\gamma$-positivity of Chow rings of matroids, using techniques that are similar to our proof of Theorem~\ref{thm:cohen-macaulay-chow-gamma-positive}. 

It is sensible to ask if the stronger property of real-rootedness holds. When $P$ is a geometric lattice, this is an outstanding conjecture by Ferroni and Schr\"oter \cite[Conjecture~8.18]{ferroni-schroter} and by Huh--Stevens \cite[Conjecture~4.3.3]{stevens-bachelor}.

\begin{remark}\label{rem:not-RR-CM}
    Based on numerous experiments, in an earlier version of this manuscript, we conjectured that real-rootedness might hold for Cohen--Macaulay posets. We also pointed out that that conjecture beared a resemblance to the Neggers--Stanley conjecture, for which there are known counterexamples by Stembridge \cite{stembridge} (see also the previous work by Br\"and\'en \cite{branden-neggers}). At that time we were unable to find adequate modifications of those examples that would yield a non-real-rooted Chow polynomial (an obstruction to test a high number of examples is that computing the Chow polynomial of similar posets of rank $17$ on approximately $50$ elements takes a few hours on a standard computer). In May 2026, we found, after modifying Stembridge's example in the right way, a Cohen--Macaulay poset whose Chow polynomial is not real-rooted. The poset is depicted in Figure~\ref{fig:CM-not-RR}: it is a distributive lattice, in particular it is modular, and thus semimodular, and EL-shellable. Some colleagues suggested to restrict the conjecture from Cohen--Macaulay posets to posets with an EL-labelling, or to extend it to posets with a nonnnegative flag $h$-vector; this example shows that neither of those versions is true. We note, however, that our example is far from being a geometric lattice, as atomicity fails in a dramatic way (this poset has only one atom).

    \begin{figure}[ht]
    \begin{tikzpicture}[scale = 0.7,
    x=1.15cm,
    y=0.70cm,
    auto=center,
    every node/.style={circle, fill=black, inner sep=2.5pt},
    edges/.style={thick}
]

\foreach \i/\x/\y in {%
1/0/0,%
2/0/1,%
3/-1.2/2, 4/1.2/2,%
5/-1.2/3, 7/1.2/3,%
6/-2.4/4, 8/0/4, 11/2.4/4,%
9/-2.4/5, 12/0/5, 16/2.4/5,%
10/-3.6/6, 13/-1.2/6, 17/1.2/6, 22/3.6/6,%
14/-2.4/7, 18/0/7, 23/2.4/7,%
15/-3.6/8, 19/-1.2/8, 24/1.2/8, 29/3.6/8,%
20/-2.4/9, 25/0/9, 30/2.4/9,%
21/-3.6/10, 26/-1.2/10, 31/1.2/10, 37/3.6/10,%
27/-2.4/11, 32/0/11, 38/2.4/11,%
28/-3.6/12, 33/-1.2/12, 39/1.2/12, 45/3.6/12,%
34/-2.4/13, 40/0/13, 46/2.4/13,%
35/-2.4/14, 41/0/14, 47/2.4/14,%
36/-2.4/15, 42/0/15, 48/2.4/15,%
43/-1.2/16, 49/1.2/16,%
44/-1.2/17, 50/1.2/17,%
51/0/18%
}{
    \node (n\i) at (\x,\y) {};
}

\path[edges]
(n1) edge (n2)

(n2) edge (n3) edge (n4)

(n3) edge (n5)
(n4) edge (n5) edge (n7)

(n5) edge (n6) edge (n8)
(n6) edge (n9)

(n7) edge (n8) edge (n11)
(n8) edge (n9) edge (n12)

(n9) edge (n10) edge (n13)
(n10) edge (n14)

(n11) edge (n12) edge (n16)
(n12) edge (n13) edge (n17)

(n13) edge (n14) edge (n18)
(n14) edge (n15) edge (n19)

(n15) edge (n20)

(n16) edge (n17) edge (n22)
(n17) edge (n18) edge (n23)

(n18) edge (n19) edge (n24)
(n19) edge (n20) edge (n25)

(n20) edge (n21) edge (n26)
(n21) edge (n27)

(n22) edge (n23)
(n23) edge (n24) edge (n29)

(n24) edge (n25) edge (n30)
(n25) edge (n26) edge (n31)

(n26) edge (n27) edge (n32)
(n27) edge (n28) edge (n33)

(n28) edge (n34)

(n29) edge (n30)
(n30) edge (n31) edge (n37)

(n31) edge (n32) edge (n38)
(n32) edge (n33) edge (n39)

(n33) edge (n34) edge (n40)
(n34) edge (n35) edge (n41)

(n35) edge (n36) edge (n42)
(n36) edge (n43)

(n37) edge (n38)
(n38) edge (n39) edge (n45)

(n39) edge (n40) edge (n46)
(n40) edge (n41) edge (n47)

(n41) edge (n42) edge (n48)
(n42) edge (n43) edge (n49)

(n43) edge (n44) edge (n50)
(n44) edge (n51)

(n45) edge (n46)
(n46) edge (n47)
(n47) edge (n48)
(n48) edge (n49)
(n49) edge (n50)
(n50) edge (n51);

\end{tikzpicture}\caption{A distributive lattice whose Chow polynomial is not real-rooted.}\label{fig:CM-not-RR}
\end{figure}
\end{remark}

Before ending this section, we address the case of the right augmented Chow polynomial.

\begin{proposition}
    Let $\mathcal{C}$ be a hereditary class of graded bounded posets that is closed under truncations. Consider the $Z$-function and the right augmented Chow function arising from $\chi$. 
    \begin{enumerate}[\normalfont(i)]
        \item If $Z$ is unimodal on all posets in $\mathcal{C}$, then so is $F$.
        \item If $\H$ and $Z$ are $\gamma$-positive on all posets of $\mathcal{C}$, then so is $F$.
    \end{enumerate}
\end{proposition}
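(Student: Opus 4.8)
The plan is to read everything off the non-recursive formula of Proposition~\ref{prop:right-larson},
\[ F_P(x) = Z_P(x) + x \sum_{\substack{t\in P\\ \rho(t)>1}} \H_{\trunc([\widehat{0},t])}(x)\, Z_{t\widehat{1}}(x),\]
which is convenient here because no term $F$ appears on the right-hand side, so no induction on rank is needed. First I would reduce, exactly as in the discussion preceding Theorem~\ref{thm:chi-chow-unimodal} (heredity of $\mathcal{C}$), to proving the statement for the Chow \emph{polynomial} $F_P(x)$ of an arbitrary $P\in\mathcal{C}$ of rank $r$; applying the polynomial statement to each closed interval then gives the assertion for the function $F$. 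Note that the cases $r\le 1$ are automatic since then the sum is empty and $F_P=Z_P$.

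The one step that needs care is the bookkeeping of centers of symmetry. By Proposition~\ref{prop:augmented-degree-symmetry} the polynomial $F_P(x)$ is symmetric with center $\tfrac{r}{2}$, and since $Z^{\rev}=Z$ so is $Z_P(x)$. For each $t$ with $\rho(t)>1$ the poset $\trunc([\widehat{0},t])$ is graded and bounded of rank $\rho(t)-1$, so by Proposition~\ref{prop:degree-and-symmetry}\ref{it:chow-symmetry} the polynomial $\H_{\trunc([\widehat{0},t])}(x)$ is symmetric with center $\tfrac{\rho(t)-2}{2}$, while $Z_{t\widehat{1}}(x)$ is symmetric with center $\tfrac{r-\rho(t)}{2}$. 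Hence each summand $x\,\H_{\trunc([\widehat{0},t])}(x)\,Z_{t\widehat{1}}(x)$ is symmetric with center $1+\tfrac{\rho(t)-2}{2}+\tfrac{r-\rho(t)}{2}=\tfrac{r}{2}$, matching that of $Z_P$; so $F_P(x)$ is a sum of symmetric polynomials all sharing the common center $\tfrac{r}{2}$.

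From here the two parts assemble from standard closure properties. For (i), since $[t,\widehat{1}]\in\mathcal{C}$ by heredity, $Z_P(x)$ and $Z_{t\widehat{1}}(x)$ are non-negative, symmetric, and unimodal by hypothesis, while $\H_{\trunc([\widehat{0},t])}(x)$ is non-negative, symmetric, and unimodal by Theorem~\ref{thm:chi-chow-unimodal} (no hypothesis needed). Lemma~\ref{lemma:prod-unimodals} makes each product non-negative, symmetric, unimodal; multiplication by $x$ preserves this; and by Lemma~\ref{lemma:char-symmetric-unimodality} a sum of non-negative symmetric unimodal polynomials with a common center of symmetry is again such, which gives the claim for $F_P$. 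For (ii), I would instead use that $[t,\widehat{1}]\in\mathcal{C}$ and, crucially invoking closure under truncations, $\trunc([\widehat{0},t])\in\mathcal{C}$, so that $Z_P(x)$, $Z_{t\widehat{1}}(x)$ and $\H_{\trunc([\widehat{0},t])}(x)$ are all $\gamma$-positive by hypothesis; then apply the elementary facts that a product of $\gamma$-positive polynomials is $\gamma$-positive (multiply the $\gamma$-expansions term by term), that multiplying a $\gamma$-positive polynomial of center $\tfrac{d}{2}$ by $x$ yields a $\gamma$-positive polynomial of center $\tfrac{d+2}{2}$, and that a sum of $\gamma$-positive polynomials with a common center is $\gamma$-positive. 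I do not expect a genuine obstacle: Proposition~\ref{prop:right-larson} does the real work, and the only thing to get right is that in the truncation terms the rank drops by one, which is precisely what makes the center of $x\,\H_{\trunc([\widehat{0},t])}(x)\,Z_{t\widehat{1}}(x)$ come out to $\tfrac{r}{2}$ rather than $\tfrac{r-1}{2}$.
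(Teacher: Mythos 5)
Your proof is correct and takes essentially the same approach as the paper: both rely on Proposition~\ref{prop:right-larson} and observe that every summand is symmetric with the common center $\tfrac{r}{2}$, so that unimodality (resp.\ $\gamma$-positivity) descends from $Z$ and $\H$ via products and sums of symmetric polynomials. Your write-up is in fact a bit more careful than the paper's, since you explicitly note that in part (i) the unimodality of $\H_{\trunc([\widehat{0},t])}$ is unconditional (Theorem~\ref{thm:chi-chow-unimodal}), whereas in part (ii) it is precisely the $\gamma$-positivity hypothesis on $\H$ together with closure of $\mathcal{C}$ under truncations that is invoked.
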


\begin{proof}
    We rely on the right augmented version of Larson's decomposition, proved in Proposition~\ref{prop:right-larson}. The formula proved in that statement shows that
    \[ F_{st}(x) = Z_{st}(x) + x\sum_{w\neq s} \H_{\trunc([s,w])}(x)\, Z_{wt}(x).\]
    Each summand $\H_{\trunc([s,w])}(x)\, Z_{wt}(x)$ is a product of two symmetric polynomials. Since both polynomials are always unimodal (resp. $\gamma$-positive), then so is their product. Furthermore, all of the summands are symmetric with center of symmetry $\frac{1}{2}(\rho(t) - \rho(s) -2)$. The factor $x$ preserves unimodality (resp. $\gamma$-positivity), and shifts the center of symmetry to $\frac{1}{2}(\rho(t)-\rho(s))$. By adding the unimodal (resp. $\gamma$-positive) term $Z_{st}(x)$, which has the correct center of symmetry, the proof is complete.
\end{proof}

One of the main results of \cite[Theorem~4.7]{ferroni-matherne-stevens-vecchi} establishes that when $P$ is a geometric lattice, the $Z$-polynomial arising from the characteristic function is $\gamma$-positive (unimodality was proved first via the Hard Lefschetz theorem, by Braden, Huh, Matherne, Proudfoot, and Wang \cite[Theorem~1.2(2)]{braden-huh-matherne-proudfoot-wang}), proving a conjecture in \cite{ferroni-nasr-vecchi}. In particular, we obtain the following corollary to the above proposition.

\begin{corollary}\label{coro:right-aug-gamma-pos}
    If $P$ is a geometric lattice, then $F_P(x)$ is $\gamma$-positive.
\end{corollary}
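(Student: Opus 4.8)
The plan is to deduce this as a direct instance of the preceding proposition, applied to the class $\mathcal{C}$ of \emph{geometric lattices}, i.e.\ lattices of flats $\mathcal{L}(\M)$ of loopless matroids $\M$. For this to work I must check three things: that $\mathcal{C}$ is hereditary on closed intervals, that $\mathcal{C}$ is closed under the truncation operation $\trunc(\cdot)$, and that both the Chow function $\H$ and the $Z$-function (for $\kappa=\chi$) are $\gamma$-positive on every poset in $\mathcal{C}$. Once these are in place, part (ii) of the proposition immediately gives that $F$ is $\gamma$-positive on all geometric lattices, and in particular $F_P(x)=F_{\widehat{0}\,\widehat{1}}(x)$ is $\gamma$-positive, as claimed.

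First I would recall the two structural facts about geometric lattices. Heredity: for flats $F \leq F'$ in $\mathcal{L}(\M)$, the closed interval $[F,F']$ is isomorphic to the lattice of flats of the minor $(\M|_{F'})/F$, so it is again a geometric lattice; this is a standard fact from matroid theory. Closure under truncation: the coatoms of $\mathcal{L}(\M)$ are exactly the hyperplanes of $\M$, so deleting them yields the poset of flats of $\M$ of rank $\neq \rk(\M)-1$, which is precisely $\mathcal{L}(\trunc(\M))$, where $\trunc(\M)$ is the matroid truncation (its independent sets are those of $\M$ of size $< \rk(\M)$). For $\rk(\M)\geq 2$ this truncation is again a loopless matroid, and the remaining low-rank cases are trivial (the truncation of a rank-$\leq 1$ geometric lattice is a one-element poset, which is itself a geometric lattice). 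Hence $\mathcal{C}$ is a hereditary class closed under truncations, as required by the hypotheses of the proposition.

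It then remains to import the $\gamma$-positivity inputs. By \cite[Theorem~3.25]{ferroni-matherne-stevens-vecchi} (equivalently \cite[Theorem~1.8]{ferroni-matherne-stevens-vecchi}), whose proof rests on the semismall decomposition of \cite{semismall}, the $\chi$-Chow polynomial $\H_P(x)$ is $\gamma$-positive for every geometric lattice $P$; by heredity the whole Chow function $\H$ is $\gamma$-positive on $\mathcal{C}$. Likewise, by \cite[Theorem~4.7]{ferroni-matherne-stevens-vecchi} the $Z$-polynomial arising from $\chi$ is $\gamma$-positive for geometric lattices, so $Z$ is $\gamma$-positive on all of $\mathcal{C}$. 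Applying part (ii) of the proposition now concludes the proof. The argument has essentially no hard step: the only real content is the bookkeeping that geometric lattices are closed under intervals and truncation, and the delicate analytic work (the semismall and Hodge-theoretic inputs behind the $\gamma$-positivity of $\H$ and $Z$) has already been done in \cite{ferroni-matherne-stevens-vecchi,semismall}; the combinatorial glue is Proposition~\ref{prop:right-larson} together with the product-of-symmetric-unimodal lemma used in the preceding proposition.
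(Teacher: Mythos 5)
Your proposal is correct and follows exactly the paper's route: the paper derives Corollary~\ref{coro:right-aug-gamma-pos} from part (ii) of the preceding proposition applied to the hereditary, truncation-closed class of geometric lattices, importing the $\gamma$-positivity of $\H$ from \cite[Theorem~3.25]{ferroni-matherne-stevens-vecchi} and of $Z$ from \cite[Theorem~4.7]{ferroni-matherne-stevens-vecchi}. You merely spell out the (standard) verifications that geometric lattices are closed under taking intervals and under truncation, which the paper leaves implicit.
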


\begin{remark}\label{rem:F-horrible}
     It is reasonable to ask whether only assuming the Cohen--Macaulayness of $P$ would be enough to conclude the $\gamma$-positivity of the right augmented Chow polynomial and the $Z$-polynomial. The answer is negative in a strong sense: there exist Cohen--Macaulay posets for which the right augmented Chow polynomial and the $Z$-polynomial fail to even be positive. For example, consider the poset $P$ depicted on the right in Figure~\ref{fig:posets2}. The number of elements of each rank in $P$ is $1$, $4$, $8$, $1$, and $1$, where all the comparability relations between consecutive levels are added. It is straigthforward to check that this is a Cohen--Macaulay poset. We have
     \begin{align*} 
        F_P(x) &= x^4 + 11x^3 - x^2 + 11x + 1,\\
        Z_P(x) &= x^4 + x^3 - 20x^2 + x + 1.
     \end{align*}
     
     Note that the proof of $\gamma$-positivity we have for geometric lattices relies on the heavy machinery of the singular Hodge theory of matroids, because the proof of the $\gamma$-positivity of the $Z$-polynomial uses the non-negativity of the Kazhdan--Lusztig polynomials of matroids \cite[Remark~4.8]{ferroni-matherne-stevens-vecchi}. In this case, the poset $P$ has right KLS polynomial equal to $f_P(x) = -3x+1$. This example shows that there is a huge leap between the behavior of $F$ and $Z$ for Cohen--Macaulay posets versus geometric lattices. For the sake of clarity, we have summarized these and further facts about characteristic Chow functions in Table~\ref{table:chipolyproperties}.
\end{remark}

\begin{table}[htbp]
    \centering

\begin{tabular}{| l | l | l | l | l |} \hline
 & \textbf{positivity} & \textbf{unimodality} & \textbf{$\gamma$-positivity} & \textbf{real-rootedness} \\ \hline
\multirow{3}{.8em}{$\H$} & \multirow{3}{8.5em}{true for all posets} & \multirow{3}{8.5em}{true for all posets} & \multirow{3}{8.5em}{true for all CM posets, false in general} & \multirow{3}{7em}{conjectured for all geometric lattices} \\ {} & {} & {} & {} & {} \\ {} & {} & {} & {} & {} \\ \hline
\multirow{3}{.8em}{$F$} & \multirow{3}{8.5em}{true for geometric lattices, false for CM and general posets} & \multirow{3}{8.5em}{true for geometric lattices, false for CM and general posets} & \multirow{3}{8.5em}{true for geometric lattices, false for CM and general posets} & \multirow{3}{7em}{unknown for geometric lattices} \\ {} & {} & {} & {} & {} \\ {} & {} & {} & {} & {} \\ \hline
\multirow{3}{.8em}{$G$} & \multirow{3}{8.5em}{true for all posets} & \multirow{3}{8.5em}{true for all posets} & \multirow{3}{8.5em}{true for CM posets, false in general} & \multirow{3}{7em}{conjectured for all geometric lattices} \\ {} & {} & {} & {} & {} \\ {} & {} & {} & {} & {} \\ \hline
\end{tabular}
\caption{A list of properties for polynomials associated to the $\chi$-Chow polynomial of various posets.}    \label{table:chipolyproperties}
\end{table}

\subsection{A Chow ring for arbitrary graded posets?}\label{subsec:chowish-ring}

It is hard to resist the temptation of asking if one can associate to each graded bounded poset $P$ a graded Artinian ring $A(P)$ in such a way that the Hilbert series of $A(P)$ matches the Chow polynomial $\H_P(x)$.

Furthermore, if such a ring exists, it is sensible to expect that it satisfies the following properties:
    \begin{enumerate}[(i)]
        \item Poincar\'e duality, because $\H_P(x)$ is palindromic by Proposition~\ref{prop:degree-and-symmetry}.
        \item A version of the Hard Lefschetz theorem, because $\H_P(x)$ is unimodal by Theorem~\ref{thm:chi-chow-unimodal}.
        \item Some analog of Larson's decompositions \cite{larson}, because $\H_P(x)$ satisfies the corresponding recursions by Proposition~\ref{prop:larson-recursion}.
        \item When $P$ is a geometric lattice, $A(P)$ is isomorphic to the Chow ring defined by Feichtner and Yuzvinsky in \cite{feichtner-yuzvinsky}.
    \end{enumerate}

We have attempted to construct such a ring, but we have not been able to do so. To give an example of a reasonable guess, consider the polynomial ring $S = \mathbb{Q}[x_s: s\in P\smallsetminus\{\widehat{0}\}]$, and consider the ideal $I\subseteq S$ generated by the polynomials:
    \begin{align} 
        &x_{s}x_{t} & &\text{for all $s$ and $t$ that do not compare},\\
        &x_s\left(\sum_{w\geq t} x_w\right)^{\rho(t)-\rho(s)}&   &\text{for all $s<t$},\\
        &\left(\sum_{t\geq s} x_t\right)^{\rho(s)} &  &\text{for all $s$}.
    \end{align}
By \cite[Theorem~1]{feichtner-yuzvinsky}, when $P$ is a geometric lattice, the quotient ring $S/I$ is the Chow ring of any matroid having $P$ as its lattice of flats. However, if $P$ is not a geometric lattice, the ring obtained by taking the quotient $S/I$ does not yield the desired polynomial $\H_P(x)$, as its Hilbert series often fails to be palindromic.

Note that some other previously obtained results, such as the formula $\H_{P*Q}(x) = \H_P(x)\cdot \H_{\aug(Q)}$ for the join of two posets, impose additional restrictions on any hypothetical ring that categorifies the $\chi$-Chow polynomial successfully.

\section{Eulerian Chow functions of Eulerian posets}\label{sec:five}

In this section, we will once again deal with finite graded bounded posets. In particular, the discussion at the beginning of Section~\ref{sec:four} applies.

\subsection{Basics of Eulerian Chow polynomials}

Let $n\in \mathbb{Z}_{\geq 0}$. From a $d$-dimensional convex polytope $\mathscr{P}\subseteq \mathbb{R}^n$ one can construct the poset $P$ of all of the faces of $\mathscr{P}$, where the order is given by inclusion. Note that $P$ is bounded and graded, and its rank equals $d$. The poset $P$ fulfills a number of important properties (see, e.g., \cite[Theorem~2.7]{ziegler}). One of the most fundamental is that $P$ is Eulerian.

\begin{definition}
    Let $P$ be a graded bounded poset. We say that $P$ is \emph{Eulerian} if the M\"obius function satisfies $\mu_{st} = (-1)^{\rho(t) - \rho(s)}$ for every $s\leq t$ in $P$.
\end{definition}

We will assume that the reader is acquainted with the basic properties of Eulerian posets, and we refer to \cite[Section~3.16]{stanley-ec1} for a more detailed treatment of this topic. We will often use that $P$ is Eulerian if and only if every interval $[s,t]$ contains the same number of elements of odd rank and even rank. As an example, the poset $Q$ appearing in Figure~\ref{fig:poset1} is Eulerian. It is easy to see that this poset $Q$ cannot be the face poset of a polytope, because face posets of polytopes are atomic lattices while $Q$ is clearly not. 

The following provides a characterization of Eulerian posets in terms of kernels.

\begin{proposition}[{\cite[Proposition~7.1]{stanley-local}}]
    Let $P$ a finite graded bounded poset. Then $P$ is Eulerian if and only if the element $\varepsilon\in \mathcal{I}_{\rho}(P)$ given by $\varepsilon_{st}(x) = (x-1)^{\rho(t) - \rho(s)}$ is a $(P,\rho)$-kernel.
\end{proposition}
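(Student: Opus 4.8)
The plan is to unwind the kernel condition $\varepsilon^{-1}=\varepsilon^{\rev}$ into a purely numerical identity on the ranks of elements inside intervals of $P$, and then to recognize that identity as the parity-balance characterization of Eulerianness. First I would record that $\varepsilon\in\mathcal{I}_\rho(P)$ (since $\deg\varepsilon_{st}=\rho_{st}$) and that $\varepsilon_{ss}(x)=(x-1)^{0}=1$ for all $s$; hence by Proposition~\ref{prop:inverses-incidence-algebra} the element $\varepsilon$ is invertible, and since the involution $\rev$ respects products and fixes $\delta$, the one-sided equation $\varepsilon\cdot\varepsilon^{\rev}=\delta$ is equivalent to the two-sided statement $\varepsilon^{-1}=\varepsilon^{\rev}$. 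Thus the whole assertion reduces to showing that $\varepsilon\cdot\varepsilon^{\rev}=\delta$ holds if and only if $P$ is Eulerian.

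Next I would compute $\varepsilon^{\rev}$ explicitly: for $s\le t$,
\[
\varepsilon^{\rev}_{st}(x)=x^{\rho_{st}}\varepsilon_{st}(x^{-1})=x^{\rho_{st}}(x^{-1}-1)^{\rho_{st}}=(1-x)^{\rho_{st}}=(-1)^{\rho_{st}}\varepsilon_{st}(x).
\]
Plugging this into the convolution and using the additivity $\rho_{sw}+\rho_{wt}=\rho_{st}$ of the weak rank function, for every $s<t$ one gets
\[
(\varepsilon\cdot\varepsilon^{\rev})_{st}(x)=\sum_{s\le w\le t}(x-1)^{\rho_{sw}}(-1)^{\rho_{wt}}(x-1)^{\rho_{wt}}=(x-1)^{\rho_{st}}\sum_{s\le w\le t}(-1)^{\rho_{wt}},
\]
while $(\varepsilon\cdot\varepsilon^{\rev})_{ss}(x)=1=\delta_{ss}$ automatically. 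Since $(x-1)^{\rho_{st}}$ is a nonzero polynomial in the domain $\mathbb{Z}[x]$, the identity $\varepsilon\cdot\varepsilon^{\rev}=\delta$ is equivalent to $\sum_{s\le w\le t}(-1)^{\rho_{wt}}=0$ for every $s<t$; and because $\rho_{wt}=\rho_{st}-\rho_{sw}$, this says exactly that every nontrivial closed interval of $P$ has equally many elements of even rank as of odd rank.

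To finish I would invoke the classical fact recalled in the text above (cf.\ \cite[Section~3.16]{stanley-ec1}): a finite graded bounded poset is Eulerian precisely when every nontrivial interval $[s,t]$ satisfies $\sum_{s\le w\le t}(-1)^{\rho_{sw}}=0$. If one prefers to keep the argument self-contained, this equivalence can be reproven in two lines: substituting $\mu_{sw}=(-1)^{\rho_{sw}}$ into the defining recursion $\sum_{s\le w\le t}\mu_{sw}=0$ gives one direction, and that same recursion together with an induction on $\rho_{st}$ forces $\mu_{st}=(-1)^{\rho_{st}}$ for the other. I do not expect a genuine obstacle here: the argument is a short sign computation, and the only points that need care are the passage from the polynomial identity to the numerical one (where one uses both that $(x-1)^{\rho_{st}}\neq 0$ and that $\rho$ is additive) and the appeal to the M\"obius-function characterization of Eulerian posets.
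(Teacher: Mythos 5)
Your proof is correct. Since the paper cites this result directly from Stanley's \cite{stanley-local} rather than supplying its own argument, there is no in-paper proof to compare against, but yours is the natural and standard one: expand $\varepsilon\cdot\varepsilon^{\rev}$ using the additivity of $\rho$, factor out the nonzero polynomial $(x-1)^{\rho_{st}}$, and reduce to the parity-balance identity $\sum_{s\le w\le t}(-1)^{\rho_{sw}}=0$ on nontrivial intervals, which you correctly identify with the M\"obius-function characterization of Eulerianness (via $a\upzeta=\delta$ forcing $a=\mu$). All the individual steps check out, including the passage from the one-sided identity $\varepsilon\varepsilon^{\rev}=\delta$ to $\varepsilon^{-1}=\varepsilon^{\rev}$, which is justified because a one-sided inverse of an already-invertible element must equal its two-sided inverse.
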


We will often refer to $\varepsilon$ as the \emph{Eulerian $P$-kernel}. Correspondingly, if $P$ is an Eulerian poset, the $\varepsilon$-Chow function of $P$ will be called the \emph{Eulerian Chow function} of $P$. 

The left KLS polynomial $g_P(x)$ arising from the $P$-kernel $\varepsilon$ in an Eulerian poset $P$ is often called the \emph{toric $g$-polynomial of $P$}.\footnote{We point out an ambiguity in the literature. In some sources, when $\mathscr{P}$ is a polytope, the \emph{toric $g$-polynomial of $P$} is defined to be the toric $g$-polynomial of the poset of faces of $\mathscr{P}$ ordered under \emph{containment} (as opposed to under inclusion). This will not be too important for us, as we will be mainly focusing on the posets rather than the polytopes.} We refer to the work of Bayer and Ehrenborg \cite{bayer-ehrenborg} for a thorough study of toric $g$-polynomials of Eulerian posets.  As Stanley points out in his book \cite[p.~315]{stanley-ec1}, the toric $g$-polynomial is an exceedingly subtle invariant of the poset $P$. It is not difficult to see that the right KLS polynomial $f_P(x)$ arising from $\varepsilon$ equals the toric $g$-polynomial of the dual poset $P^*$.

\begin{example}\label{example:eulerian}
    Consider the poset $Q$ depicted on the right of Figure~\ref{fig:poset1}.
    Below we include the resulting KLS, Chow, and augmented Chow polynomials of this poset using the Eulerian kernel:
    \begin{align*}
        f_Q(x) &= -6x^2 -x + 1,\\
        g_Q(x) &= -6x^2 -x + 1,\\
        \H_Q(x) &= x^4 + 12x^3 + 6x^2 + 12x + 1,\\
        F_Q(x) &= x^5 + 16x^4 + 18x^3 + 18x^2 + 16x + 1,\\
        G_Q(x) &= x^5 + 16x^4 + 18x^3 + 18x^2 + 16x + 1.
    \end{align*}
    Unlike the case of $\chi$-Chow polynomials, for which we always had the non-negativity of the left KLS function $g$ and therefore the unimodality of $\H_Q(x)$ via Theorem~\ref{thm:kls-positive-chow-unimodal}, for $\varepsilon$-Chow polynomials these phenomena do not persist.
\end{example}

The following is the key result that allows us to describe Eulerian Chow polynomials in a transparent way.

\begin{theorem}\label{thm:chow-eulerian-h-vector-order-complex}
    Let $P$ be an Eulerian poset. The Eulerian Chow polynomial of $P$ equals the $h$-polynomial of the order complex $\Delta(P)$.
\end{theorem}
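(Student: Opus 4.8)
The plan is to identify $\H_{st}(x)$, for every interval $[s,t]$ of $P$, with the $h$-polynomial of the order complex $\Delta(s,t)$ of the open interval; applying this to $[\widehat{0},\widehat{1}]$ and recalling that $\Delta(P)=\Delta(\widehat{0},\widehat{1})$ then yields the statement. Write $h_{st}(x):=h(\Delta(s,t),x)$ for this $h$-polynomial, so $h_{ss}(x)=1$ and, since $P$ (hence every interval of $P$) is graded, $\Delta(s,t)$ is pure of dimension $\rho_{st}-2$ and $h_{st}(x)$ has degree $\rho_{st}-1$ when $s<t$. The reduced Eulerian kernel is $\overline{\varepsilon}_{st}(x)=(x-1)^{\rho_{st}-1}$ for $s<t$ and $\overline{\varepsilon}_{ss}(x)=-1$ (Definition~\ref{def:reduced-kernel}), so by Definition~\ref{def:chow-function} and~\eqref{eq:recurrence-chow2} the Chow function is determined by $\H_{ss}(x)=1$ together with
\[
\H_{st}(x)=\sum_{s\leq w<t}\H_{sw}(x)\,(x-1)^{\rho_{wt}-1}\qquad(s<t).
\]
Since this recursion is upper-triangular in $\rho_{st}$, it suffices to prove that the family $(h_{st})$ obeys the very same recursion and normalization; then $h_{st}=\H_{st}$ for all $s\leq t$ by induction on $\rho_{st}$, and in particular $h_{\widehat{0}\widehat{1}}=\H_P$.

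To check the recursion for $(h_{st})$ I would decompose the chains of an open interval according to their top element: if $f_j(\Delta(s,t))$ denotes the number of $j$-element chains in $(s,t)$, then $f_0=1$ and $f_j(\Delta(s,t))=\sum_{s<w<t}f_{j-1}(\Delta(s,w))$ for $j\geq 1$. Encoding this in $\widehat f_{st}(z):=\sum_{j\geq 0}f_j(\Delta(s,t))\,z^{j}$ gives $\widehat f_{st}(z)=1+z\sum_{s<w<t}\widehat f_{sw}(z)$ for $s<t$. Purity of $\Delta(s,t)$ yields the conversion $h_{st}(x)=(x-1)^{\rho_{st}-1}\,\widehat f_{st}\!\bigl(\tfrac{1}{x-1}\bigr)$; substituting the recursion for $\widehat f_{st}$ and then replacing each $\widehat f_{sw}\!\bigl(\tfrac{1}{x-1}\bigr)$ by $(x-1)^{-(\rho_{sw}-1)}h_{sw}(x)$ for $s<w<t$ transforms the right-hand side into $(x-1)^{\rho_{st}-1}+\sum_{s<w<t}h_{sw}(x)\,(x-1)^{\rho_{wt}-1}$, using $\rho_{st}-\rho_{sw}=\rho_{wt}$. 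Absorbing the first term as the $w=s$ summand (via $h_{ss}=1$) gives exactly the displayed identity; the degenerate cases $\rho_{st}\in\{0,1\}$, where the open interval is empty, are checked by hand and give $1$ on both sides.

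I do not anticipate a serious obstacle; the computation is essentially bookkeeping. The one delicate point is the degree/purity accounting: it is the gradedness of $P$ that makes $\Delta(s,t)$ pure of the expected dimension $\rho_{st}-2$, and this is what justifies the substitution $y\mapsto\tfrac{1}{x-1}$ relating the $f$- and $h$-polynomials, so one must keep the intervals with $\rho_{st}\leq 1$ out of the generating-function manipulation to avoid negative powers of $x-1$. As a sanity check, the recursion forces $\H_{st}=1$ on covering relations and $\H_{st}(x)=x+(k-1)$ on a rank-two interval with $k$ intermediate elements, matching the $h$-polynomial of $k$ disjoint points; note also that the Eulerian hypothesis enters only through the fact that $\varepsilon$ is then a $(P,\rho)$-kernel (so that $\H$ is defined), whereas the identity $h_{st}=\H_{st}$ is a formal consequence of a recursion valid for every graded bounded poset.
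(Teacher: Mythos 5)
Your argument is correct, and it follows a genuinely different route than the paper's proof. The paper starts from the identity $h(\Delta(P),x)=(1-x)^{\rho(P)+1}\sum_{n\geq 0}\upzeta^n_P\,x^{n+1}$ (the zeta-polynomial generating function for multichains), feeds the induction hypothesis into the recursion~\eqref{eq:recurrence-chow} (which peels off the \emph{first} step of a chain), and then needs the telescoping identity $\upzeta^m_P=\sum_{n=0}^m\sum_{t\neq\widehat{0}}(-1)^{\rho(t)-1}\upzeta^n_{t,\widehat{1}}$, which it establishes precisely by invoking $\mu_{\widehat{0},t}=(-1)^{\rho(t)}$, i.e., the Eulerian hypothesis. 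You instead decompose chains of the open interval by their \emph{top} element to get $f_j(\Delta(s,t))=\sum_{s<w<t}f_{j-1}(\Delta(s,w))$, convert to $h$-polynomials using gradedness (to fix the dimension and hence the power of $x-1$ in the conversion), and verify directly that the resulting family satisfies the dual recursion~\eqref{eq:recurrence-chow2}, which characterizes $-(\overline{\varepsilon})^{-1}$. Your approach completely sidesteps the Eulerian hypothesis in the combinatorics, and you are right that this hypothesis is really only needed so that $\varepsilon$ is a $(P,\rho)$-kernel and hence $-(\overline{\varepsilon})^{-1}$ deserves to be called a Chow function: the polynomial identity $h(\Delta(P),x)=\bigl(-(\overline{\varepsilon})^{-1}\bigr)_{\widehat{0}\widehat{1}}(x)$ holds for every finite graded bounded poset. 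This is a cleaner and somewhat more general argument, bought at the modest cost of not passing through the zeta-polynomial formula, which is the form in which the statement appears in Stanley's book.
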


The preceding result says that the $\varepsilon$-Chow polynomial encodes precisely the number of chains of each size in $P$. As Stanley asserts in \cite[p.~310]{stanley-ec1} the class of Eulerian posets enjoys remarkable properties involving the enumeration of chains, and the above result is one further manifestation of this phenomenon.

For each integer $n\geq 1$, consider the element $\upzeta^n\in \mathcal{I}(P)$ given by
    \[ \upzeta^n := \underbrace{\upzeta \cdots \upzeta}_{\text{$n$ times}}.\]
By \cite[Theorem~3.12.1(c)]{stanley-ec1}, it is known that $\upzeta^n_{st}$ equals the number of \emph{multichains} in the closed interval $[s,t]\subseteq P$ having length $n-1$. That is, $\upzeta^n_{st}$ is the number of ways of choosing elements $t_1,\ldots, t_{n-1}$ such that $s\leq t_1\leq t_2 \leq \cdots \leq t_{n-1}\leq t$, where repetitions are allowed. Stanley calls the map $n\mapsto \upzeta^n_P$ the ``$Z$-polynomial of $P$'', but we will not use this name in hopes of avoiding confusion with the $Z$-function defined in Definition~\ref{def:zeta-functions}. When $P$ is Eulerian, we have $(-1)^{\rho(s)-\rho(t)}\upzeta_{st}^n = \mu_{st}^n$ for all $n\geq 0$ (see \cite[Proposition~3.16.1]{stanley-ec1}).

\begin{proof}[Proof of Theorem~\ref{thm:chow-eulerian-h-vector-order-complex}]
    If $\rho(P) = 0$, then the formula is clearly true, as both polynomials in the statement equal $1$. 
    
    For the induction step, we rely on a technical result about the enumeration of chains in graded bounded posets. From the formula in \cite[Exercise~3.157(ii)]{stanley-ec1}, it is known that
        \[ h(\Delta(P),x) = (1-x)^{\rho(P)+1} \sum_{n=0}^{\infty} \upzeta^n_P \, x^{n+1}.\]
    When $\rho(P) >0$, the recursion for Chow polynomials in equation~\eqref{eq:recurrence-chow} gives
    \[
    \H_P(x) = \sum_{t \neq \widehat{0}}(x-1)^{\rho(t) - 1}\H_{t,\widehat{1}}(x).
    \]
    Every Chow polynomial appearing on the right-hand side of the last equation is on a poset of smaller rank, hence by induction
    \begin{align*}
    \H_P(x) &= \sum_{t \neq \widehat{0}}(x-1)^{\rho(t) - 1} h(\Delta([t,\widehat{1}]))\\
    &= \sum_{t \neq \widehat{0}}(x-1)^{\rho(t) - 1}(1-x)^{\rho(P) - \rho(t) +1} \sum_{n\geq 0}\upzeta^n_{t,\widehat{1}}\,x^n \\
    &= (1-x)^{\rho(P) +1} \sum_{t \neq \widehat{0}}(-1)^{\rho(t)}\frac{1}{1-x} \sum_{n\geq 0}\upzeta^n_{t,\widehat{1}}\,x^n. 
    \end{align*}
    Now, expanding $\frac{1}{1-x} = \sum_{\ell\geq 0} x^\ell$ and reordering the terms, we have
    \begin{align*}
        \H_P(x) &= (1-x)^{\rho(P) +1} \sum_{\ell\geq 0}\sum_{t \neq \widehat{0}}(-1)^{\rho(t)} \sum_{n\geq 0}\upzeta^n_{t,\widehat{1}}\, x^{\ell+n}.
    \end{align*}
    By using the change of variable $m := \ell + n$, we obtain
    \[
    \H_P(x) = (1-x)^{\rho(P) +1}\sum_{m \geq 0}\left( \sum_{n=0}^m \sum_{t \neq \widehat{0}}(-1)^{\rho(t) -1}\upzeta^n_{t,\widehat{1}} \right)x^m. 
    \]
    To conclude the proof of the theorem, it remains to verify that 
    \[
    \upzeta^m_P = \sum_{n=0}^m \sum_{t \neq \widehat{0}}(-1)^{\rho(t) -1}\upzeta^n_{t,\widehat{1}} .
    \]
    Observe that the condition on $P$ being Eulerian implies that $(-1)^{\rho(t)}=\mu_{\widehat{0},t}$. So, by using that $\mu\cdot \upzeta^n = \upzeta^{n-1}$ for $n\geq 1$, we have
    \[ \sum_{n=0}^m \sum_{t \neq \widehat{0}}(-1)^{\rho(t) -1}\upzeta^n_{t,\widehat{1}} = - \sum_{n=0}^m \sum_{t \neq \widehat{0}}\mu_{\widehat{0},t}\upzeta^n_{t,\widehat{1}} = -
    \left( \mu_{P} + \sum_{n=1}^m \sum_{t \neq \widehat{0}}\mu_{\widehat{0},t}\upzeta^n_{t,\widehat{1}}\right)
    =
    - \mu_P - \sum_{n=1}^m (\upzeta^{n-1}_P-\upzeta^n_P),  \]
    and the last sum telescopes and cancels the remaining $\mu_P$. This gives $\upzeta_P^m$, as desired.
\end{proof}

An immediate conclusion from the last result is that the $\varepsilon$-Chow polynomial of an Eulerian poset $P$ is a non-negative combination of entries of the flag $h$-vector of $P$. In particular, if $P$ is Cohen--Macaulay then $\H_P(x)$ has non-negative coefficients. Even though there exist many Eulerian posets that are not Cohen--Macaulay (see, e.g., the poset on the right in Figure~\ref{fig:poset1}), it is considerably harder to construct an Eulerian poset whose flag $h$-vector attains a negative entry. A subtle result by Bayer and Hetyei \cite{bayer-hetyei} shows that in fact all Eulerian posets of rank at most $6$ have a non-negative flag $h$-vector. However, they construct a very complicated example \cite[Figure~2]{bayer-hetyei} in rank $7$ for which the flag $h$-vector attains a negative entry. Furthermore, modulo the Dehn--Sommerville relations, this flag $h$-vector attains a \emph{single} negative entry, equal to $-1$. We refer to \cite[Solution to Exercise 193(b)]{stanley-ec1} for further discussion about that example. 

The coefficients of the $\varepsilon$-Chow polynomial are \emph{sums} of entries of the flag $h$-vector, so it is possible that the presence of several negative entries may be compensated for by a few positive entries, thereby yielding a non-negative Chow polynomial. We pose the following question.

\begin{question}
    Is the $\varepsilon$-Chow polynomial of an Eulerian poset non-negative?
\end{question}

By the preceding discussion, if there is an example answering the above question in the negative, it has to be rank $7$ or above, and we expect it to have a very complicated shape.

\subsection{Unimodality and \texorpdfstring{$\gamma$}{gamma}-positivity}

Without imposing additional restrictions, the $\varepsilon$-Chow polynomial of an Eulerian poset need not be unimodal. However, as we will explain here, when the Eulerian poset comes from a nice geometric object, unimodality and $\gamma$-positivity follow from deep results in combinatorial algebraic geometry.

One can generalize face posets of polytopes in different ways. One such generalization appears as follows. A \emph{regular CW complex} is a (finite) collection $\Gamma$ of non-empty pairwise disjoint open subsets $\{\sigma_i\}_{i\in I}\subseteq \mathbb{R}^n$ (for some $n$) such that
\begin{enumerate}[(i)]
    \item Each closure $\overline{\sigma}_i$ is homeomorphic to a closed ball $\mathbb{B}^{n_i}$ (of some dimension $n_i$). Moreover, this homeomorphism restricted to the boundary $\partial \sigma_i$ yields a homeomorphism with the sphere $\mathbb{S}^{n_i-1}$.
    \item The boundary $\partial \sigma_i$ of each $\sigma_i$ is the union of some $\sigma_j$'s.
\end{enumerate}

The \emph{underlying space} of $\Gamma$, denoted $|\Gamma|$, is by definition the subspace of $\mathbb{R}^n$ obtained by the union of all the $\sigma_i$'s in $\Gamma$. The empty face and the full space $|\Gamma|$ are called improper cells. The \emph{face poset} of $\Gamma$ is the poset of all the cells $\sigma_i$ ordered by $\sigma_i\leq \sigma_j$ whenever $\overline{\sigma}_i \subseteq \overline{\sigma}_j$. We will denote this poset by $P(\Gamma)$. Notice that $P(\Gamma)$ is graded, and the rank function on all proper faces is given by $\rho(\sigma_i)= n_i+1$. 

If $|\Gamma|$ is homeomorphic to a sphere, we call $\Gamma$ a \emph{regular CW sphere}. These cell complexes are relevant in the present context because the face poset of a regular CW sphere is known to be Eulerian (see, e.g., \cite[Proposition~3.8.9]{stanley-ec1}). On the other hand, face posets of regular CW spheres are Cohen--Macaulay. A poset that is simultaneously Eulerian and Cohen--Macaulay is often called a \emph{Gorenstein* poset} (the asterisk being part of the notation). In other words, face posets of regular CW spheres are Gorenstein*.

The order complex $\Delta(P)$ of the face poset $P=P(\Gamma)$ of a regular CW sphere $\Gamma$ is often called the \emph{barycentric subdivision} of $\Gamma$. In particular, Theorem~\ref{thm:chow-eulerian-h-vector-order-complex} says that if $P$ is the face poset of a regular CW sphere $\Gamma$, the Chow polynomial is the $h$-vector of the barycentric subdivision of $\Gamma$. 

When $\Gamma$ is a polyhedral complex, the barycentric subdivision of $\Gamma$ can be seen geometrically in a straightforward way and, moreover, it is the boundary complex of a simplicial polytope. The $h$-vectors of simplicial polytopes are known to be unimodal thanks to the $g$-theorem for simplicial polytopes, proved by Stanley \cite{stanley-g-theorem} and Billera--Lee \cite{billera-lee}. In particular, the $\varepsilon$-Chow polynomial of the face poset of a polytope is unimodal.

When $P$ is a graded bounded poset of rank $r$, one can encode the flag $h$-vector of $P$ via the \emph{$\mathbf{ab}$-index}. Formally, it is defined as the polynomial $\Psi_{P}(\mathbf{a},\mathbf{b})$, in the non-commuting variables $\mathbf{a},\mathbf{b}$, given by
    \[ \Psi_P(\mathbf{a},\mathbf{b}) = \sum_{S\subseteq [r-1]} \beta_P(S)\, u_S,\]
where $u_S := e_1\cdots e_{r-1}$, and $e_i = \mathbf{a}$ if $i\not\in S$ and $e_i=\mathbf{b}$ if $i\in S$. Notice that the transformation that takes the flag $h$-vector into the flag $f$-vector of $P$ can be rewritten via the following identity:
    \[ \Psi_P(\mathbf{a}+\mathbf{b},\mathbf{b}) = \sum_{S\subseteq [r-1]} \alpha_P(S)\, u_S.\]
A fundamental property of Eulerian posets  is that their $\mathbf{ab}$-indices can be written in the form
\[ \Psi_P(\mathbf{a},\mathbf{b}) = \Phi_P(\mathbf{a}+\mathbf{b},\mathbf{ab}+\mathbf{ba}),\]
for some polynomial $\Phi_P(\mathbf{c},\mathbf{d})$ in the non-commuting variables $\mathbf{c}$ and $\mathbf{d}$. The polynomial $\Phi_P(\textbf{c},\mathbf{d})$ is called the \emph{$\mathbf{cd}$-index}. We refer to \cite[Section~3.17]{stanley-ec1} for more details, and to \cite{bayer-cd} for a thorough exposition on the $\mathbf{cd}$-index. The following is a very deep result proved by Karu in \cite{karu}.

\begin{theorem}[Karu]\label{thm:karu}
    Let $P$ be a Gorenstein* poset. The $\mathbf{cd}$-index of $P$ has non-negative coefficients.
\end{theorem}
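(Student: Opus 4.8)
The plan is to follow the sheaf-theoretic strategy of Karu \cite{karu}, which transplants the Hodge-theoretic proof of the Hard Lefschetz theorem for the intersection cohomology of projective toric varieties into the purely combinatorial setting of Eulerian posets; write $n+1 = \rho(\widehat{1})$ for the rank of $P$. The first step is to attach to $P$ a \emph{minimal extension sheaf} $\mathcal{L}$ in the sense of Barthel--Brasselet--Fieseler--Kaup \cite{barthel-brasselet-fieseler-kaup} and Bressler--Lunts \cite{bressler-lunts}: a collection of finitely generated graded modules $\mathcal{L}_\sigma$ over polynomial rings $A_\sigma = \operatorname{Sym}(V_\sigma^*)$, one for each $\sigma \in P\smallsetminus\{\widehat{0}\}$ with $\dim V_\sigma = \rho(\sigma) - 1$, equipped with restriction maps and characterized by normalization ($\mathcal{L}_\sigma = \mathbb{R}$ in degree $0$ at the atoms), freeness over $A_\sigma$ on the open star of $\sigma$, and minimality of the stalk $\overline{\mathcal{L}}_\sigma := \mathcal{L}_\sigma \otimes_{A_\sigma}\mathbb{R}$ as an extension of the restriction of $\mathcal{L}$ to the boundary of $\sigma$. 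The numerology to verify is that $\operatorname{Poin}(\overline{\mathcal{L}}_\sigma, t) = g_{\widehat{0}\,\sigma}(t)$, the toric $g$-polynomial of the interval $[\widehat{0},\sigma]$, and that the global sections $\Gamma(\mathcal{L})$ form a free module over $A = \operatorname{Sym}(V^*)$ with $\dim V = n+1$; hence $\overline{\Gamma(\mathcal{L})} := \Gamma(\mathcal{L})\otimes_A\mathbb{R}$ is a graded vector space concentrated in degrees $0,\dots,n+1$ satisfying Poincaré duality.

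The second step is to establish the \emph{Lefschetz package} for $\overline{\Gamma(\mathcal{L})}$: there is a degree-one element $\ell \in A_1$ such that
\[
\ell^{\,n+1-2i}\colon \overline{\Gamma(\mathcal{L})}^{\,i}\longrightarrow \overline{\Gamma(\mathcal{L})}^{\,n+1-i}\quad\text{is an isomorphism whenever } 2i \le n+1.
\]
As usual, Hard Lefschetz cannot be proved in isolation: one runs a simultaneous induction on rank establishing Hard Lefschetz together with the Hodge--Riemann bilinear relations for $\overline{\mathcal{L}}_\sigma$ and for $\overline{\Gamma}$ of every interval of $P$, feeding the Hodge--Riemann relations in lower ranks into Hard Lefschetz in the next rank through the standard limiting and sign-perturbation arguments. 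This is the \textbf{main obstacle}: since $P$ need not be polytopal there is no ambient toric variety from which to borrow Hodge theory, so the Hodge--Riemann relations must be propagated up $P$ by purely combinatorial means; this is precisely the technical heart of Karu's extension of the non-rational Hard Lefschetz theorem of \cite{karu04}, building on the polytopal case of \cite{barthel-brasselet-fieseler-kaup, bressler-lunts}.

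The third step is to read $\Phi_P(\mathbf{c},\mathbf{d})$ off the Lefschetz structure. Using the standard description of the $\mathbf{cd}$-index (see \cite{bayer-cd} and \cite[Section~3.17]{stanley-ec1}), each $\mathbf{cd}$-monomial encodes a traversal of $P$ built from ``short'' steps that raise the rank by $1$ (recorded by a $\mathbf{c}$) and ``long'' steps that raise it by $2$ through an intervening rank-selected layer (recorded by a $\mathbf{d}$); matching a $\mathbf{c}$ with multiplication by the Lefschetz element $\ell$ and a $\mathbf{d}$ with passage to a \emph{primitive} part --- the kernel of the relevant power of $\ell$ modulo the image of a boundary map --- one identifies the coefficient of each $\mathbf{cd}$-monomial with the dimension of an honest subquotient of $\overline{\Gamma(\mathcal{L})}$ assembled iteratively from the Lefschetz decompositions relative to a chain of faces. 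Being dimensions of genuine real vector spaces, these coefficients are non-negative, which is the assertion. The content of this last step is combinatorial bookkeeping rather than Hodge theory: one must check that the recursive local-$g$ contributions reassemble so that the $\mathbf{cd}$-coefficients appear as dimensions and not merely as alternating sums of flag numbers, a point where the Dehn--Sommerville relations for $P$ are essential.
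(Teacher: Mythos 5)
This theorem is not proved in the paper at all --- the authors state it as a black-box citation of Karu's work \cite{karu}, immediately following a brief discussion of the $\mathbf{cd}$-index and Gorenstein* posets, and use it only as an input to their own results (e.g.\ to deduce $\gamma$-positivity of Eulerian Chow polynomials via Gal's observation $\gamma(h(\Delta(P)),t) = \Phi_P(1,2t)$). So there is nothing in the paper to compare your proposal against.

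As a reconstruction of Karu's published argument, your Steps 1 and 2 are a fair description of the ambient technology: the minimal extension sheaf $\mathcal{L}$ of Barthel--Brasselet--Fieseler--Kaup and Bressler--Lunts, its stalk Poincar\'e polynomials realizing the toric $g$-polynomials (though note the usual degree-doubling convention: $\Poin(\overline{\mathcal{L}}_\sigma, t) = g_{\widehat{0}\sigma}(t^2)$), and Karu's non-rational Hard Lefschetz/Hodge--Riemann induction \cite{karu04}, which is indeed the engine. But Step 3 is where the proposal stops being a proof. The claim that ``matching $\mathbf{c}$ with multiplication by $\ell$ and $\mathbf{d}$ with passage to a primitive part identifies the coefficient of each $\mathbf{cd}$-monomial with the dimension of an honest subquotient'' is stated as if it were a straightforward bookkeeping consequence of the Lefschetz package, but this is precisely the hard combinatorial content of \cite{karu}. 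Karu does not extract the $\mathbf{cd}$-coefficients by a one-pass traversal reading $\mathbf{c}$ and $\mathbf{d}$ off a Lefschetz decomposition; the argument goes through a more delicate decomposition, and the identification of $\mathbf{cd}$-coefficients as dimensions requires substantial further machinery. Your own closing sentence --- ``one must check that the recursive local-$g$ contributions reassemble so that the $\mathbf{cd}$-coefficients appear as dimensions'' --- correctly names the gap: that check is the theorem, not a verification. As written, the proposal is a roadmap to Karu's paper rather than a proof, and in the context of the present paper (which does not attempt a proof either) the honest thing to do is exactly what the authors did: cite \cite{karu}.
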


Using this result, one can strengthen the unimodality of $\varepsilon$-Chow polynomials of face posets of polytopes in two ways. First, this phenomenon extends to all Gorenstein* posets, and second, the stronger property of $\gamma$-positivity holds.

\begin{theorem}
    Let $P$ be a Gorenstein* poset. The $\varepsilon$-Chow polynomial of $P$ is $\gamma$-positive.
\end{theorem}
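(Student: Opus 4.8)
The plan is to combine Theorem~\ref{thm:chow-eulerian-h-vector-order-complex} with Karu's Theorem~\ref{thm:karu} through the classical dictionary between flag $h$-vectors, the $\mathbf{ab}$-index, and the $h$-polynomial of the order complex. By Theorem~\ref{thm:chow-eulerian-h-vector-order-complex} we have $\H_P(x) = h(\Delta(P),x)$, so it suffices to prove that the $h$-polynomial of the order complex of a Gorenstein* poset is $\gamma$-positive.

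First I would record the identity $h(\Delta(P),x) = \Psi_P(x,1)$, where $\Psi_P$ denotes the $\mathbf{ab}$-index. Writing $r$ for the rank of $P$, the complex $\Delta(P)$ has dimension $r-2$, so (in the notation of Section~\ref{subsec:cohen-macaulay}) $d = r-1$ and $h(\Delta(P),x) = \sum_{S\subseteq[r-1]}\alpha_P(S)\,(x-1)^{r-1-|S|}$. Since $\Psi_P(\mathbf{a}+\mathbf{b},\mathbf{b}) = \sum_{S\subseteq[r-1]}\alpha_P(S)\,u_S$ with $u_S$ a word of length $r-1$ carrying $\mathbf{b}$ precisely in the positions indexed by $S$, substituting the commuting scalars $\mathbf{a}=x-1$ and $\mathbf{b}=1$ turns $u_S$ into $(x-1)^{r-1-|S|}$, which gives $h(\Delta(P),x)=\Psi_P(x,1)$. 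Then I would use that $P$ is Eulerian to rewrite $\Psi_P(\mathbf{a},\mathbf{b}) = \Phi_P(\mathbf{a}+\mathbf{b},\mathbf{ab}+\mathbf{ba})$ in terms of the $\mathbf{cd}$-index, and specialize to obtain $\H_P(x) = \Psi_P(x,1) = \Phi_P(x+1,\,2x)$.

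The core observation is then purely formal. Write $\Phi_P(\mathbf{c},\mathbf{d}) = \sum_w [\Phi_P]_w\,w$, the sum over $\mathbf{cd}$-monomials. Each $w$ is homogeneous of degree $r-1$ when $\deg\mathbf{c}=1$ and $\deg\mathbf{d}=2$, so a monomial $w$ with $b$ factors $\mathbf{d}$ has exactly $r-1-2b$ factors $\mathbf{c}$; since $x+1$ and $2x$ commute, the substitution $\mathbf{c}\mapsto x+1$, $\mathbf{d}\mapsto 2x$ sends $w$ to $(x+1)^{r-1-2b}(2x)^b = 2^b\,x^b\,(1+x)^{r-1-2b}$, which is precisely a basis element of the $\gamma$-expansion. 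Grouping by the number of $\mathbf{d}$'s gives
\[
\H_P(x) = \sum_{b\geq 0}\Bigl(2^{b}\!\!\sum_{w\text{ with }b\text{ factors }\mathbf{d}}[\Phi_P]_w\Bigr)\,x^{b}\,(1+x)^{\,r-1-2b},
\]
a representation of the form $\sum_b \gamma_b\,x^b(1+x)^{d-2b}$ with $d=r-1$ (this also re-proves that $\H_P$ is symmetric with center $\tfrac{r-1}{2}$). Since $P$ is Gorenstein*, Karu's Theorem~\ref{thm:karu} yields $[\Phi_P]_w\geq 0$ for every $w$, hence each $\gamma_b\geq 0$, and $\H_P(x)$ is $\gamma$-positive.

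I do not anticipate a serious obstacle once the two reductions are in place; the argument is short and the genuinely deep input is Karu's theorem, which is cited. The only point that needs care is convention-bookkeeping — in particular that the correct specialization of the $\mathbf{d}$-variable is $2x$ (coming from $\mathbf{d}=\mathbf{ab}+\mathbf{ba}$) rather than $x$, and the degree count that places the center of symmetry at $\tfrac{r-1}{2}$. These are easy to verify on small cases such as $P=B_2$ (where $\Phi_P=\mathbf{c}$ and $\H_P=x+1$) and $P=B_3$ (where $\Phi_P=\mathbf{c}^2+\mathbf{d}$ and $\H_P=x^2+4x+1=(1+x)^2+2x$).
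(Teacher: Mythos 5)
Your proof is correct and follows essentially the same route as the paper: identify $\H_P$ with $h(\Delta(P),x)$ via Theorem~\ref{thm:chow-eulerian-h-vector-order-complex}, pass to the $\mathbf{cd}$-index, and invoke Karu's Theorem~\ref{thm:karu}. The only difference is that you re-derive the specialization $\mathbf{c}\mapsto x+1$, $\mathbf{d}\mapsto 2x$ yielding the $\gamma$-expansion, where the paper simply cites Gal's observation that $\gamma(h(\Delta(P)),t) = \Phi_P(1,2t)$.
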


\begin{proof}
    This follows from combining Theorem~\ref{thm:chow-eulerian-h-vector-order-complex}, Theorem~\ref{thm:karu}, and an observation made by Gal in \cite[p.~237]{gal}, that the $\gamma$-polynomial of the $h$-vector of $\Delta(P)$ equals $\Phi_{P}(1,2t)$.
\end{proof}

\subsection{Open questions about Eulerian Chow polynomials}

It is natural to ask for inequalities or properties beyond $\gamma$-positivity, for example real-rootedness. The following is an equivalent reformulation of a long-standing and influential open question by Brenti and Welker \cite{brenti-welker}.

\begin{question}[Brenti and Welker]\label{question:eulerian-chow-real-rooted}
    Let $P$ be the face poset of a convex polytope. Is the associated $\varepsilon$-Chow polynomial real-rooted?
\end{question}

They proved that the answer to the above question is affirmative if $P$ is the face poset of a \emph{simplicial} convex polytope. Furthermore, if $P$ is the face poset of a simplicial homology sphere (or, more generally, a Boolean cell complex), then one can apply results by Nevo, Petersen, and Tenner \cite{nevo-petersen-tenner} to characterize further conditions that the $\varepsilon$-Chow polynomial must satisfy.

Even more broadly, Athanasiadis and Kalampogia-Evangelinou have asked whether the $\varepsilon$-Chow polynomial of a Gorenstein* poset is always real-rooted (see \cite[Question~5.2]{athanasiadis-kalampogia}). They proved that many operations that preserve the Gorenstein* property also preserve the real-rootedness of the $\varepsilon$-Chow polynomial. We also refer to the work of Athanasiadis and Tzanaki \cite{athanasiadis-tzanaki} for related results.

Numerous questions about Eulerian Chow polynomials are in order. Although we provided a concrete description of the $\varepsilon$-Chow polynomial of any Eulerian poset, it is unclear what the \emph{augmented} Chow polynomials are. It is not difficult to see that the left augmented $\varepsilon$-Chow polynomial of $P$ equals the right augmented $\varepsilon$-Chow polynomial of the dual poset $P^*$. That is, unlike the case of graded posets with $\kappa=\chi$, there is no significant distinction between the left and right augmented Chow functions. 

\begin{question}
    Let $P$ be an Eulerian poset. What do the coefficients of the right (or left) augmented $\varepsilon$-Chow polynomial enumerate?
\end{question}

We observe that the same question for the $Z$-polynomial has been raised by Proudfoot in \cite{proudfoot-kls}. 

In light of the fact that the $\varepsilon$-Chow polynomial of the face poset of a polytope is the $h$-vector of a simplicial polytope, it is natural to ask whether the left (or right) augmented Chow polynomials can be realized as $h$-vectors of simplicial polytopes as well.

\begin{question}
    Let $P$ be the face poset of a polytope. Is the left (or right) augmented $\varepsilon$-Chow polynomial of $P$ the $h$-vector of some simplicial polytope?
\end{question}

Based on substantial computational evidence, we believe that the answer to the above question is likely affirmative. A related open question by Brenti \cite[Problem~2.9]{brenti-opac} asks whether any monic palindromic polynomial that has only negative real zeros is necessarily the $h$-vector of a simplicial polytope. Our experiments even suggest that the \emph{augmented} $\varepsilon$-Chow polynomials of face posets of polytopes are real-rooted.

\section{Coxeter Chow functions of Bruhat intervals}\label{sec:six}

In this section, we will study various combinatorial aspects of the Chow function arising from the $R$-polynomials in intervals of the Bruhat order of a Coxeter group $(W,S)$. 

\subsection{A short recapitulation} We briefly recall the necessary definitions in this setting. For a more detailed background, we refer to Bj\"orner and Brenti's book \cite{bjorner-brenti}.

Let $(W,S)$ be a Coxeter system and $T = \{wsw^{-1} : w\in W, s\in S\}$ be the set of its reflections (in contrast with the set $S$ of \emph{simple} reflections). The Bruhat order is the poset defined on $W$ with relations $u \leq v$ whenever there exist $w_0,\ldots, w_n$ such that $w_0 = u$, $w_n = v$, and $w_i^{-1}w_{i+1} = t \in T$ for all $i= 1,\ldots, n-1$. This poset is graded, with rank function given by $\rho(w) = \ell(w)$, where $\ell(w)$ is the length of any reduced word equal to $w$. Furthermore, Bruhat intervals are Eulerian and shellable, and hence Gorenstein* (see \cite[Proposition~2.7.5]{bjorner-brenti}).

We also define the \emph{Bruhat graph} $B(W)$ to be the directed graph whose vertices are the elements of $W$ and whose edges are of the form $u \rightarrow v$ if $u^{-1}v = t \in T$. Similarly, we define the Bruhat graph $B(u,v)$ for $u,v \in W$ by restricting to the interval $[u,v] \subset W$. Notice that the Bruhat graph has more edges than the Hasse diagram of the corresponding Bruhat order.

Given an element $w \in W$, the elements of the set $D_R(w) := \{s \in S : \rho(ws) < \rho(w)\}$ are called the \emph{right descents of $w$}. 

The \emph{$R$-polynomial} of an interval in the Bruhat poset is defined recursively as follows. For $s \in D_R(v)$,

\[
R_{uv}(x) = \begin{cases}
1 &\text{if $u=v$},\\
R_{us,vs}(x) &\text{if $s \in D_R(u)$},\\
xR_{us,vs}(x) + (x-1)R_{u,vs}(x) &\text{if $s \notin D_R(u)$}.
\end{cases}
\]
\begin{figure}\[
\begin{tikzcd}[row sep = huge,column sep = small]
                                          &                                                                                      &                                          & s_1s_2s_3s_2s_1                                                                                         &                                             &                                                                                      &                                         \\
                                          & s_2s_1s_2s_3 \arrow[rru]                                                             & s_3s_2s_1s_2 \arrow[ru]                  &                                                                                                         & s_3s_2s_1s_3 \arrow[lu]                     & s_1s_3s_2s_3 \arrow[llu]                                                             &                                         \\
s_2s_1s_2 \arrow[ru] \arrow[rru]          & s_2s_1s_3 \arrow[u] \arrow[rrru]                                                     & s_1s_2s_3 \arrow[lu] \arrow[rrru]        &                                                                                                         & s_3s_2s_1 \arrow[llu] \arrow[u]             & s_1s_3s_2 \arrow[lllu] \arrow[u]                                                     & s_3s_2s_3 \arrow[llu] \arrow[lu]        \\
s_2s_1 \arrow[u] \arrow[ru] \arrow[rrrru] &                                                                                      & s_1s_2 \arrow[llu] \arrow[u] \arrow[rru] & s_1s_3 \arrow[llu] \arrow[lu] \arrow[ru] \arrow[rru]                                                    & s_2s_3 \arrow[llu] \arrow[lllu] \arrow[rru] &                                                                                      & s_3s_2 \arrow[llu] \arrow[lu] \arrow[u] \\
                                          & s_1 \arrow[lu] \arrow[ru] \arrow[rru] \arrow[rrrruuu, dotted] \arrow[rrruuu, dotted] &                                          & s_2 \arrow[lu] \arrow[ru] \arrow[rrru] \arrow[lllu]                                                     &                                             & s_3 \arrow[ru] \arrow[lu] \arrow[llu] \arrow[lllluuu, dotted] \arrow[llluuu, dotted] &                                         \\
                                          &                                                                                      &                                          & 1 \arrow[u] \arrow[llu] \arrow[rru] \arrow[llluuu, dotted] \arrow[rrruuu, dotted] \arrow[uuuuu, dotted] &                                             &                                                                                      &                                                       
\end{tikzcd}
\]
\caption{The lower ideal generated by  $w = s_1s_2s_3s_2s_1$ in the Bruhat order of $\mathfrak{S}_4$. The dotted arrows are the directed edges that we add when we consider the corresponding Bruhat graph $B(1,w)$.}\label{bruhat interval S4}
\end{figure}
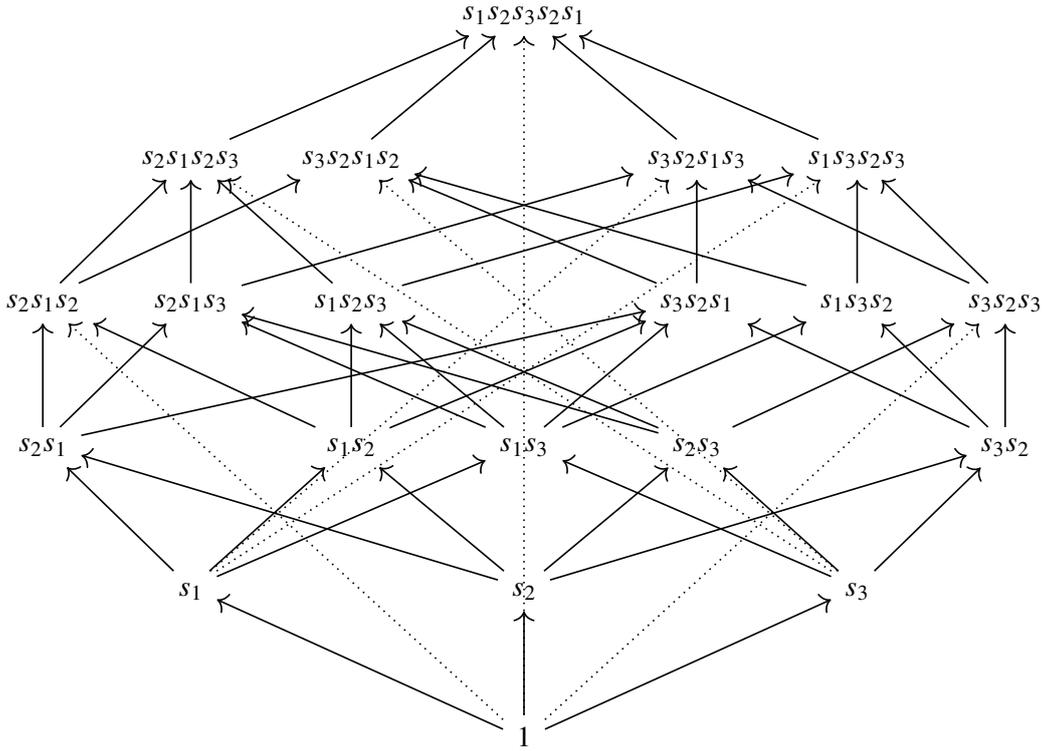
\begin{example}\label{ex: recurring bruhat example}
Consider the symmetric group $\mathfrak{S}_4$, generated by the simple reflections $S = \{s_1,s_2,s_3\}$, where $s_i$ is the transposition $(i\; i+1)$. The set of reflections is then \[T = \{s_1,s_2,s_3,s_1s_2s_3,s_2s_3s_2,s_1s_2s_3s_2s_1\}.\] For the word $w = s_1s_2s_3s_2s_1$, we draw both the Hasse diagram of the interval $[1,w]$ in the Bruhat order and the corresponding Bruhat graph in Figure \ref{bruhat interval S4}. One can recursively compute that $R_{1,w}(x) = x^5 - 3x^4 + 5x^3 - 5x^2 + 3x - 1$.
\end{example}
One can check that the $R$-function is a $P$-kernel in the incidence algebra of Bruhat orders (see for example \cite[Exercise~11]{bjorner-brenti}) and thus we may define the \emph{Coxeter Chow function}, or \emph{$R$-Chow function} for brevity, of a Coxeter group.

Historically, the Coxeter case is the original motivating example \cite{kazhdan-lusztig} that led to the development of KLS theory. A striking difference that sets this case apart from the ones studied in Sections~\ref{sec:four} and~\ref{sec:five} is that it is still not known whether the KLS functions are combinatorially invariant. We will discuss this briefly in Section~\ref{sec: combinatorial invariance} below. Moreover it is known that intervals of the Bruhat order are not necessarily Bruhat orders of smaller Coxeter groups, i.e., this property is not hereditary, whilst both being a geometric lattice or an Eulerian poset are hereditary properties.

\subsection{A formula for the Coxeter Chow function}
We now proceed to compute an explicit formula for the $R$-Chow function. We refer the interested reader to \cite[Section~5]{bjorner-brenti}. Further combinatorial formulas can be found in \cite{brenti-combinatorial1,brenti-combinatorial2}.

In the remainder of this section, $\Phi^+$ will denote the positive roots of $W$. It will be useful to work with the following classical reparameterization of the $R$-polynomials.

\begin{proposition}[{\cite[Proposition~5.3.1]{bjorner-brenti}}]\label{prop: def-R-tilde}
    Let $u,v \in W$. Then, there exists a unique polynomial $\widetilde{R}_{uv}(x) \in \mathbb{N}[x]$ such that
    \[
    R_{uv}(x) = x^{\rho_{uv}/2}\widetilde{R}_{uv}(x^{1/2}-x^{-1/2}).
    \]
\end{proposition}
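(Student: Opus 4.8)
The plan is to prove both the existence and the uniqueness by a short induction that follows the defining recursion of the $R$-polynomials, working throughout inside the ring of Laurent polynomials $\mathbb{Z}[x^{1/2},x^{-1/2}]$ and abbreviating $\lambda := x^{1/2}-x^{-1/2}$ (so $\lambda^2 = x-2+x^{-1}$). For $u\not\le v$ one sets $R_{uv}=0$ and then $\widetilde{R}_{uv}=0$ works, so we may assume $u\le v$, and we take $\rho:=\rho_{uv}=\ell(v)-\ell(u)$.

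For uniqueness I would first note that the substitution $p(y)\mapsto p(\lambda)$ is injective on $\mathbb{Q}[y]$: if $p(\lambda)=q(\lambda)$ in $\mathbb{Z}[x^{1/2},x^{-1/2}]$, then setting $x^{1/2}=t$ and letting $t$ run over the positive reals, the expression $t-t^{-1}$ attains every real value, forcing $p=q$ as polynomials. Multiplying by the unit $x^{\rho/2}$ changes nothing, so at most one polynomial $\widetilde{R}_{uv}$ can satisfy $R_{uv}(x)=x^{\rho/2}\widetilde{R}_{uv}(\lambda)$.

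For existence I would induct on $\ell(v)$, phrasing the inductive hypothesis so that it applies to every pair whose larger element has length strictly less than $\ell(v)$ (with the convention above for non-comparable pairs). The base case $u=v$ is immediate with $\widetilde{R}_{uv}=1$. For $\ell(v)\ge 1$ choose $s\in D_R(v)$. If $s\in D_R(u)$, the lifting property gives $us\le vs$, one has $\rho_{us,vs}=\rho$ and $\ell(vs)<\ell(v)$, so the recursion $R_{uv}=R_{us,vs}$ together with the inductive hypothesis lets me set $\widetilde{R}_{uv}:=\widetilde{R}_{us,vs}\in\mathbb{N}[y]$. If $s\notin D_R(u)$, then (taking $R_{us,vs}=0$ if $us\not\le vs$) the inductive hypothesis yields $\widetilde{R}_{us,vs},\widetilde{R}_{u,vs}\in\mathbb{N}[y]$ with $R_{us,vs}(x)=x^{(\rho-2)/2}\widetilde{R}_{us,vs}(\lambda)$ and $R_{u,vs}(x)=x^{(\rho-1)/2}\widetilde{R}_{u,vs}(\lambda)$; substituting these into $R_{uv}=x\,R_{us,vs}+(x-1)R_{u,vs}$ and using the identity $(x-1)x^{(\rho-1)/2}=x^{(\rho+1)/2}-x^{(\rho-1)/2}=x^{\rho/2}\lambda$ gives
\[
R_{uv}(x)=x^{\rho/2}\widetilde{R}_{us,vs}(\lambda)+x^{\rho/2}\lambda\,\widetilde{R}_{u,vs}(\lambda)=x^{\rho/2}\bigl(\widetilde{R}_{us,vs}(\lambda)+\lambda\,\widetilde{R}_{u,vs}(\lambda)\bigr),
\]
so $\widetilde{R}_{uv}(y):=\widetilde{R}_{us,vs}(y)+y\,\widetilde{R}_{u,vs}(y)$ lies in $\mathbb{N}[y]$ and has the required property. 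Since $R_{uv}$ is itself independent of the choice of $s$, the uniqueness just proved shows $\widetilde{R}_{uv}$ is well defined.

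I do not expect a genuine obstacle here. The two points that need care are: (a) the induction must be on $\ell(v)$, not on the rank $\rho_{uv}$, since the branch $s\in D_R(u)$ leaves $\rho_{uv}$ unchanged; and (b) one must track the half-integer powers of $x$, which is cleanest when everything lives in $\mathbb{Z}[x^{1/2},x^{-1/2}]$ and the computation is organized around the single identity $(x-1)x^{(\rho-1)/2}=x^{\rho/2}\lambda$. One should also remember that in the branch $s\notin D_R(u)$ the elements $us$ and $vs$ need not be comparable, so $R_{us,vs}$ may vanish; with the stated conventions this causes no difficulty.
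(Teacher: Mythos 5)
The paper does not give its own proof of this proposition: it is stated with an explicit citation to Bj\"orner and Brenti's book and used as a black box. That said, your argument is a correct and essentially standard proof of the cited result, and it is the same induction on $\ell(v)$ that appears in Bj\"orner--Brenti. The uniqueness argument via injectivity of the substitution $y\mapsto x^{1/2}-x^{-1/2}$ is sound; the existence step correctly tracks the rank shifts $\rho_{us,vs}=\rho-2$ and $\rho_{u,vs}=\rho-1$ in the branch $s\notin D_R(u)$ and reduces the recursion to the single identity $(x-1)x^{(\rho-1)/2}=x^{\rho/2}\bigl(x^{1/2}-x^{-1/2}\bigr)$; and your two cautionary remarks (induct on $\ell(v)$ rather than on $\rho_{uv}$, and allow $R_{us,vs}=0$ when $us\not\le vs$) are exactly the points where a careless argument would go wrong. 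No gap.
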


Recall that a total ordering $<$ on $\Phi^+$ is a \emph{reflection ordering} if for all $\alpha,\beta \in \Phi^+$ and $\lambda,\mu >0$ such that $\lambda \alpha + \mu \beta \in \Phi^+$, then 
\[ 
\text{either $\qquad
\alpha < \lambda \alpha + \mu \beta < \beta,
\qquad$
or
$\qquad
\beta < \lambda \alpha + \mu \beta < \alpha
$}.\]

Since there exists a bijection between $\Phi^+$ and $T$, one can also describe a reflection ordering on $T$. Given a reflection ordering on $\Phi^+$ and a path $\Delta = (a_0,\ldots, a_r)$ in $B(u,v)$ of length $r$, we define the \emph{edge set} of $\Delta$ to be
\[
E(\Delta) = \{a_{i-1}^{-1}a_i \mid i=1,\ldots, r \}\subseteq T
\]
and say that $i \in \{1, \ldots, r-1\}$ is a \emph{descent} of $\Delta$ if $a_{i-1}^{-1}a_i > a_i^{-1}a_{i+1}$. We denote the set of descents of a path with $D(\Delta)$ and set $\des(\Delta) = |D(\Delta)|$. Similarly, one can define the \emph{ascent set} of $\Delta$ and the quantity $\asc(\Delta) = \ell(\Delta) - \des(\Delta) -1$.

\begin{theorem}[{\cite{dyer}, \cite[Theorem~5.3.4]{bjorner-brenti}}]\label{thm: comb-R-tilde}
    For every $u,v \in W$,
    \[
    \widetilde{R}_{uv}(x) = \sum_{\substack{\Delta \in B(u,v) \\ \des (\Delta) = 0}} x^{\ell(\Delta)}. 
    \]
\end{theorem}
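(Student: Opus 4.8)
The plan is to convert the assertion into a statement about the recursion defining the $R$-polynomials and prove it by induction on $\rho_{uv}=\ell(v)-\ell(u)$, the essential content being a length-preserving correspondence between families of paths in Bruhat graphs.

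I would start with a reparametrization. By Proposition~\ref{prop: def-R-tilde} it is equivalent to prove the formula for $\widetilde R_{uv}$. Substituting $R_{uv}(x)=x^{\rho_{uv}/2}\widetilde R_{uv}(q)$ with $q=x^{1/2}-x^{-1/2}$ into the defining recursion of the $R$-polynomials recalled above and cancelling the common power of $x$ turns it into
\[
\widetilde R_{uu}(q)=1,\qquad \widetilde R_{uv}(q)=\begin{cases}\widetilde R_{us,vs}(q)&\text{if }s\in D_R(u),\\ \widetilde R_{us,vs}(q)+q\,\widetilde R_{u,vs}(q)&\text{if }s\notin D_R(u),\end{cases}
\]
valid for every $s\in D_R(v)$; since $vs<v$, these relations together with the convention $\widetilde R_{uv}=0$ for $u\not\leq v$ determine $\widetilde R$ uniquely. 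So it suffices to prove that the path generating polynomial
\[
\widehat R_{uv}(q):=\sum_{\substack{\Delta\in B(u,v)\\ \des(\Delta)=0}}q^{\ell(\Delta)}
\]
satisfies the same initial condition and recursion; note that $\des(\Delta)=0$ says precisely that the edge labels of $\Delta$, regarded as elements of $T$, form a strictly increasing sequence in the fixed reflection ordering.

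For the inductive step (the base case $u=v$ being the empty path), fix $u<v$, choose $s\in D_R(v)$, and — this is where the choice matters — work with a reflection ordering adapted to $s$, i.e.\ one in which the reflection $s$ is the minimal (or, dually, the maximal) element of $T$. Such orderings exist: the extreme reflection of a reflection ordering is always simple, and conversely any simple reflection can be made extreme (for finite $W$ by a reduced word of the longest element beginning or ending with $s$, in general by Dyer's theory of initial sections). With $s$ extremal, the label $s$ can occur in a no-descent path $u\to v$ only in one distinguished spot (first, resp.\ last, edge). This splits the no-descent paths $u\to v$ into those using $s$ there — which, via deletion of that edge, contribute a factor $q$ times a path count on a shorter interval ending or starting at $vs$, the lifting property of the Bruhat order controlling exactly when such paths exist — and those avoiding $s$ altogether.

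The crux, and the step I expect to be the main obstacle, is the bijective lemma needed to handle the $s$-avoiding family: one must match, length-preservingly, the no-descent paths from $u$ to $v$ that never use the reflection $s$ with the no-descent paths of the interval $[us,vs]$, so that this family contributes $\widehat R_{us,vs}(q)=\widetilde R_{us,vs}(q)$ by induction, after which assembling the pieces reproduces both branches of the recursion. The delicacy is that the obvious guess — translating each vertex of a path on the right by $s$ — does not work, because such a translation can reverse the orientation of a Bruhat edge and because conjugation by $s$ need not preserve a reflection ordering; the correspondence therefore has to be built by hand, exploiting the exchange and lifting properties of the Bruhat order to repair the translated walk and to keep track of descents. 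An alternative, and this is Dyer's original route, is to avoid paths at first: expand the inverse of the relevant standard basis element of the Hecke algebra along a reduced word realizing the chosen reflection ordering, and identify the resulting sum over distinguished subwords with the sum over increasing paths in the Bruhat graph; this is close in spirit to the matrix identity $\prod_{t\uparrow}(I+qA_t)=\big(\widetilde R_{uv}\big)_{u,v\in W}$, where $A_t$ is the adjacency matrix of the edges of the Bruhat graph labeled $t$ and the product runs over reflections in increasing order (each factor being invertible since $A_t^2=0$). In any formulation, once the identity is established for an ordering adapted to $s$, the fact that $\widetilde R_{uv}$ carries no reference to any reflection ordering shows a posteriori that $\widehat R_{uv}$ is independent of the chosen reflection ordering, which is the full strength of the statement.
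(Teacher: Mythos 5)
The paper does not prove this theorem; it is cited from Dyer \cite{dyer} and \cite[Theorem~5.3.4]{bjorner-brenti} and used as an external ingredient, so there is no in-text proof to compare yours against. Judged as a freestanding argument, your sketch is incomplete. The setup is right: the reparametrized recursion for $\widetilde{R}$ obtained by substituting into the $R$-recursion, the choice of a reflection ordering in which $s\in D_R(v)$ is extremal (so that the edge label $s$ can occur in at most one distinguished position in a no-descent path), and the resulting dichotomy between paths that use $s$ and paths that do not.

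However, you flag the pivotal lemma --- a length-preserving correspondence between $s$-avoiding no-descent paths in $B(u,v)$ and no-descent paths in $B(us,vs)$, which is exactly what produces the $\widetilde{R}_{us,vs}$ summand in both branches of the recursion --- as ``the main obstacle'' and then never construct it. You correctly observe that the naive map $a\mapsto as$ breaks Bruhat edge orientations and need not respect the reflection ordering; this is precisely why the lemma is delicate, and announcing that the correspondence ``has to be built by hand'' is not a substitute for building it. The alternative Hecke-algebra route you mention (the product $\prod_{t\uparrow}(I+qA_t)$ and the matching of subwords with increasing paths) is indeed how Dyer establishes the identity, but you do not carry out that computation either. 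The gap is therefore genuine: the central step of the induction is identified but not supplied, so what you have is an accurate map of where a proof would live rather than a proof.
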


Combining Proposition \ref{prop: def-R-tilde} and Theorem \ref{thm: comb-R-tilde}, we have the following immediate consequence.
\begin{theorem}
Let $W$ be a Coxeter group with a reflection order $<$ and two elements $u, v \in W$. Then, 
\[
R_{uv}(x) = \sum_{\substack{\Delta \in B(u,v) \\ \des (\Delta) = 0}} x^{\frac{\rho_{uv}-\ell(\Delta)}{2}}(x-1)^{\ell(\Delta)}.
\]
\end{theorem}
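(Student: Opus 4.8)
The statement is a direct bridge between the substitution formula of Proposition~\ref{prop: def-R-tilde} and the combinatorial sum of Theorem~\ref{thm: comb-R-tilde}, so the plan is simply to substitute one into the other and simplify. First I would start from the identity
\[
R_{uv}(x) = x^{\rho_{uv}/2}\,\widetilde{R}_{uv}\!\left(x^{1/2}-x^{-1/2}\right)
\]
of Proposition~\ref{prop: def-R-tilde}, and then insert the explicit expansion
\[
\widetilde{R}_{uv}(y) = \sum_{\substack{\Delta\in B(u,v)\\ \des(\Delta)=0}} y^{\ell(\Delta)}
\]
from Theorem~\ref{thm: comb-R-tilde} with $y = x^{1/2}-x^{-1/2}$. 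This yields
\[
R_{uv}(x) = x^{\rho_{uv}/2}\sum_{\substack{\Delta\in B(u,v)\\ \des(\Delta)=0}} \left(x^{1/2}-x^{-1/2}\right)^{\ell(\Delta)}.
\]

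Next I would absorb the global factor $x^{\rho_{uv}/2}$ into each summand. Writing $\ell=\ell(\Delta)$ and factoring $x^{-1/2}$ out of $x^{1/2}-x^{-1/2} = x^{-1/2}(x-1)$, we get $\left(x^{1/2}-x^{-1/2}\right)^{\ell} = x^{-\ell/2}(x-1)^{\ell}$, hence
\[
x^{\rho_{uv}/2}\left(x^{1/2}-x^{-1/2}\right)^{\ell} = x^{(\rho_{uv}-\ell)/2}(x-1)^{\ell}.
\]
Summing over all descent-free paths $\Delta$ gives exactly the claimed formula
\[
R_{uv}(x) = \sum_{\substack{\Delta\in B(u,v)\\ \des(\Delta)=0}} x^{\frac{\rho_{uv}-\ell(\Delta)}{2}}(x-1)^{\ell(\Delta)}.
\]

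The only subtlety worth remarking on — and the one place where a reader might pause — is the appearance of half-integer powers of $x$ in the intermediate steps. These are harmless: a path $\Delta\in B(u,v)$ of length $\ell(\Delta)$ contributes a term whose exponent $\tfrac{1}{2}(\rho_{uv}-\ell(\Delta))$ is an integer precisely because $\ell(\Delta)\equiv \rho_{uv}\pmod 2$ for every path from $u$ to $v$ in the Bruhat graph (each edge changes the length parity, and more to the point $\widetilde{R}_{uv}\in\mathbb{N}[x]$ by Proposition~\ref{prop: def-R-tilde}, so only the correct parity of $\ell(\Delta)$ occurs). Thus every power of $x$ appearing in the final expression is a genuine nonnegative integer power, and the computation is legitimate over $\mathbb{Z}[x]$. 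I do not expect any genuine obstacle here; the result is a one-line consequence of the two cited theorems, included for the convenience of the subsequent computation of $\H_{uv}(x)$ in Theorem~\ref{thm:comb-coxeter-H-intro}.
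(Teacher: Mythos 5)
Your proposal is correct and matches the paper, which states this result as an ``immediate consequence'' of combining Proposition~\ref{prop: def-R-tilde} with Theorem~\ref{thm: comb-R-tilde}; the substitution and the factoring $x^{1/2}-x^{-1/2} = x^{-1/2}(x-1)$ are exactly what is intended. Your parity remark is a sensible addendum but not a gap in either argument.
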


Now we have all the ingredients to state and prove the combinatorial description of the Coxeter Chow function.

\begin{theorem}\label{thm: comb-coxeter-H}
Let $W$ be a Coxeter group with a reflection order $<$ and two elements $u, v \in W$. Then, 
\[
\H_{uv}(x) = \sum_{\Delta \in B(u,v)}x^{\frac{\rho_{uv}-\ell(\Delta)}{2} + \asc(\Delta)} = \sum_{\Delta \in B(u,v)}x^{\frac{\rho_{uv}-\ell(\Delta)}{2} + \des(\Delta)}.
\]
\end{theorem}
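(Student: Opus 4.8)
The strategy is to use the recursive description of $\H$ in terms of the reduced kernel, together with Dyer's combinatorial formula for $\widetilde R$, and set up a bijective/generating-function induction on the size of the interval $[u,v]$. The base case $u=v$ is trivial since $\H_{uu}=1$ and the only path is the empty path, contributing $x^0$. For the inductive step I would start from the recurrence \eqref{eq:recurrence-chow}, i.e.
\[
\H_{uv}(x) = \sum_{u < w \leq v} \overline{\kappa}_{uw}(x)\,\H_{wv}(x),
\]
where $\kappa = R$ is the $R$-function. First I would record the explicit form of $\overline{R}_{uw}$: using the theorem just stated, $R_{uw}(x) = \sum_{\Delta \in B(u,w),\, \des(\Delta)=0} x^{(\rho_{uw}-\ell(\Delta))/2}(x-1)^{\ell(\Delta)}$, so dividing by $x-1$ gives
\[
\overline{R}_{uw}(x) = \sum_{\substack{\Delta \in B(u,w)\\ \des(\Delta)=0}} x^{\frac{\rho_{uw}-\ell(\Delta)}{2}}(x-1)^{\ell(\Delta)-1},
\]
valid for $u<w$. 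The plan is then to expand $(x-1)^{\ell(\Delta)-1}$ binomially and interpret each term combinatorially: a descent-free path $\Delta$ of length $\ell$ from $u$ to $w$, together with a choice of a subset of its $\ell-1$ "internal" edges to be promoted to descents, should match the enumeration of general paths. This is exactly the mechanism by which Dyer's descent-free formula upgrades to a statistic-weighted count over all paths.

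The key combinatorial lemma I would isolate and prove is: every path $\Delta \in B(u,v)$ decomposes uniquely as a concatenation $\Delta = \Delta' \cdot \Delta''$ where $\Delta'$ is a maximal descent-free initial segment ending at some $w$ with $u < w \le v$, and $\Delta''$ is a path from $w$ to $v$; moreover the "junction" contributes a descent exactly when $\Delta''$ is nonempty (because the last edge of $\Delta'$ is smaller than the first edge of $\Delta''$ would require checking — actually the maximality of $\Delta'$ forces the edge at the junction to be a descent when $\Delta''$ starts, and conversely). Tracking the exponent $\tfrac{\rho_{uv}-\ell(\Delta)}{2} + \asc(\Delta)$ under this decomposition — using $\rho_{uv} = \rho_{uw} + \rho_{wv}$, $\ell(\Delta) = \ell(\Delta') + \ell(\Delta'')$, and the additivity of $\asc$ up to a correction of $\pm 1$ at the junction — should reproduce exactly one term of the recurrence, with the binomial expansion of $(x-1)^{\ell(\Delta')-1}$ accounting for the freedom of which of the internal vertices of $\Delta'$ are "forced non-descents" versus which descents in $\Delta$ lie strictly inside the initial segment. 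One then checks the two forms (with $\asc$ and with $\des$) agree by the symmetry of $\H$ established in Proposition~\ref{prop:degree-and-symmetry}\ref{it:chow-symmetry}, since reversing a path (or rather, applying the reversal involution to the reflection order) swaps ascents and descents while fixing $\ell$ and $\rho$, and $\asc(\Delta) + \des(\Delta) = \ell(\Delta) - 1$ makes $\tfrac{\rho-\ell}{2}+\asc$ and $\tfrac{\rho-\ell}{2}+\des$ interchange under $x \mapsto x^{-1}$, $x^{\rho-1}$-reflection.

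The main obstacle I anticipate is the bookkeeping at the junction: getting the descent/ascent count to behave additively across the concatenation requires care, because whether the junction edge pair forms a descent depends on comparing the last reflection of $\Delta'$ with the first reflection of $\Delta''$ in the reflection order, and the maximality of the descent-free initial segment is precisely what pins this down. One must also handle the boundary subcase $w = v$ (empty $\Delta''$) separately, where there is no junction descent and the term matches $\overline{R}_{uv}$ directly — this is the term that survives when one "unravels" the convolution as in the proof of Proposition~\ref{prop:degree-and-symmetry}. A secondary but routine point is justifying that the sum over subsets in the binomial expansion of $(x-1)^{\ell(\Delta')-1}$ is genuinely a sum over the $2^{\ell(\Delta')-1}$ ways of designating internal vertices of $\Delta'$ as descents-in-$\Delta$, which requires that a descent-free path with a marked subset of internal vertices is the same data as an arbitrary path together with its maximal descent-free prefix — a clean combinatorial identity but one worth stating as a lemma before grinding through the exponent arithmetic. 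Once the bijection and the exponent matching are in hand, the induction closes immediately.
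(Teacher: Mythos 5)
Your overall framework (invoke Dyer's formula for $\widetilde R$, translate to $\overline R$, and run an induction off the recurrence~\eqref{eq:recurrence-chow}) is sound, but the combinatorial lemma you placed at the center of the argument would not survive scrutiny. The claimed correspondence between ``a descent-free path $\Delta'$ with a marked subset of its internal positions'' and general paths does not exist: the ascent/descent pattern of a path in the Bruhat graph is completely determined by its edge sequence and the fixed reflection order, so you cannot ``promote'' an interior vertex of a descent-free path to a descent. Separately, the unique maximal descent-free prefix decomposition does not realize the one-step recurrence as a bijection. When you expand $\H_{uv} = \sum_{u<w\le v}\overline R_{uw}\H_{wv}$ and invoke the inductive hypothesis on each $\H_{wv}$, a fixed general path $\Delta$ receives a contribution from \emph{every} nonempty descent-free prefix $\Delta'$ of $\Delta$, not only the maximal one. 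What has to be proved is a cancellation identity: if $m$ is the position of the first descent of $\Delta$ (or $m=\ell(\Delta)$ when there is none), then
\[
\sum_{k=1}^{m-1} (x-1)^{k-1}\,x^{\asc(\Delta)-k} \;+\; (x-1)^{m-1}\,x^{\asc(\Delta)-m+1} \;=\; x^{\asc(\Delta)},
\]
which follows from the telescoping identity $x^{a} - \sum_{j=0}^{i}(x-1)^j x^{a-1-j} = (x-1)^{i+1}x^{a-1-i}$. That is a calculational fact, not a bijection, and it is exactly what your lemma would need to be replaced by.

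The paper sidesteps the telescoping by unravelling the recurrence \emph{completely} rather than one step at a time: $\H_{uv}$ is written as a sum over all chains $u=u_0<u_1<\cdots<u_{r+1}=v$, each gap is populated with a descent-free path via Dyer's formula, and the concatenated object is a general path $\Delta$ together with a chain $\mathcal{U}$ of its internal vertices containing $D(\Delta)$. Interchanging the order of summation makes the inner sum over $\mathcal{U}\supseteq D(\Delta)$ an honest sum of non-negative binomial coefficients, and the binomial theorem gives $(x-1+1)^{\,\ell(\Delta)-\des(\Delta)-1} = x^{\asc(\Delta)}$ directly. Your descent-statement-via-symmetry reduction is fine and matches the paper. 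If you prefer to keep the one-step induction, insert the telescoping lemma and drop the bijection claim; otherwise, switching to the full unravelling makes the binomial expansion you had in mind actually do the work.
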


\begin{proof}
The formulas are clearly true if $\rho_{uv} \leq 1$. For $\rho_{uv} \geq 2$, we start by unravelling the recursion and writing $\H$ only in terms of $\overline{R}$ (notice the $-1$ in the exponent of $(x-1)$ because we reduced it).
\begin{align*}
\H_{uv}(x) &= \sum_{\substack{\mathcal{U} = \{u_1,\ldots , u_r\} \\u = u_0 < \cdots < u_{r+1} = v}} \prod_{i=1}^{r+1}\sum_{\substack{\Delta_i \in B(u_{i-1},u_i) \\ \des(\Delta_i) = 0}}x^{\frac{\rho_{u_{i-1}u_i} - \ell(\Delta_i)}{2}}(x-1)^{\ell(\Delta_i) - 1}\\
&= \sum_{\mathcal{U}}\sum_{\substack{\Delta \in B(u,v) \\ \mathcal{U} \subseteq \Delta \\ \des(\Delta) \subseteq \mathcal{U}}}x^{\frac{\rho_{uv} - \ell(\Delta)}{2}}(x-1)^{\ell(\Delta) - (r+1)} \\
&= \sum_{\Delta \in B(u,v)} x^{\frac{\rho_{uv} - \ell(\Delta)}{2}}\sum_{\substack{\mathcal{U} \subseteq \Delta\setminus \{u,v\} \\ \mathcal{U} \supseteq \des(\Delta)}} (x-1)^{\ell(\Delta) - (r+1)}.
\end{align*}
The inner sum is equal to $x^{\asc(\Delta)}$ as 
\begin{align*}
x^{\asc(\Delta)} = x^{\ell(\Delta) - \des(\Delta) - 1} &= (x-1+1)^{\ell(\Delta) - \des(\Delta) - 1}\\
&= \sum_{j = 0}^{\ell(\Delta) - \des(\Delta) - 1}\binom{\ell(\Delta) - \des(\Delta) - 1}{j} (x-1)^{\ell(\Delta) - \des(\Delta) - 1 - j} \\
&= \sum_{r = \des(\Delta)}^{\ell(\Delta) - 1}\binom{\ell(\Delta) - \des(\Delta) - 1}{r - \des(\Delta)} (x-1)^{\ell(\Delta) - 1 - r}.
\end{align*}
This is clearly counting the subsets $\mathcal{U}$ as desired. Then, 
\[
\H_{uv}(x) = \sum_{\Delta \in B(u,v)}x^{\frac{\rho_{uv} - \ell(\Delta)}{2} + \asc(\Delta)}.
\]
The statement with descents comes from the fact that $\des(\Delta) = \ell(\Delta) - \asc(\Delta) - 1$ and that the polynomial $\H_{uv}(x)$ is symmetric with center of symmetry $\frac{\rho_{uv}-1}{2}$.
\end{proof}

\begin{example}
Consider the interval from Example \ref{ex: recurring bruhat example}. We have $73$ paths in the directed graph $B(1,w)$. Let us also fix the reflection order 
\[
s_1<s_1s_2s_1<s_1s_2s_3s_2s_1<s_2<s_2s_3s_2<s_3.
\]
A straightforward (and not enlightening) computation shows that there are
\begin{itemize}
    \item 1 path of length 1 with 0 descents,
    \item 2 paths of length 3 with 0 descents,
    \item 4 paths of length 3 with 1 descent,
    \item 2 paths of length 3 with 2 descents,
    \item 1 path of length 5 with 0 descents,
    \item 14 paths of length 5 with 1 descent,
    \item 34 paths of length 5 with 2 descents,
    \item 14 paths of length 5 with 3 descents,
    \item 1 path of length 5 with 4 descents.
\end{itemize}
By Theorem \ref{thm: comb-coxeter-H}, we can conclude that
\begin{align*}
\H_{1,w}(x) &= x^2 \\
&\,\,\,\,\,\,\,+ 2x^1 + 4x^2 + 2x^3\\ 
&\,\,\,\,\,\,\,+ x^0 + 14x^1 + 34x^2 + 14x^3 + x^4\\
&=1 + 16x + 39x^2 + 16x^2 + x^4.
\end{align*}
\end{example}

\begin{remark}
    By considering the whole Bruhat poset on $\mathfrak{S}_n$, and denoting the corresponding $R$-Chow polynomial by $\H_{\mathfrak{S}_n}(x)$, we obtain the following first few values:
    {\footnotesize\begin{align*}
         \H_{\mathfrak{S}_n}(x) = \begin{cases}
            1 & n = 1,\\
            1 & n = 2,\\
            x^2 + 3\,x + 1 & n=3,\\
            x^{5} + 20 \, x^{4} + 84 \, x^{3} + 84 \, x^{2} + 20 \, x + 1 & n = 4,\\
            x^{9} + 115 \, x^{8} + 2856 \, x^{7} +  21429 \, x^{6} + 56840 \, x^{5} + 56840 \, x^{4} + 21429 \, x^{3} + 2856 \, x^{2} + 115 \, x + 1 & n = 5.
        \end{cases}
    \end{align*}}

The sequences of coefficients of these polynomials do not appear in the OEIS \cite{oeis}. In the authors' opinion, providing a closed formula for these polynomials or, at least, an efficient way of computing them would be very interesting. 
\end{remark}

\begin{remark}
    We have not been able to find a nice analogue of Theorem~\ref{thm: comb-coxeter-H} for the right and left augmented Chow functions arising in this setting. 
\end{remark}

\subsection{The complete $\mathbf{cd}$-index and gamma-positivity}

The groundbreaking work of Elias and Williamson~\cite{elias-williamson}, who proved the non-negativity conjecture for Kazhdan--Lusztig polynomials of Bruhat intervals of Coxeter groups, implies via Theorem~\ref{thm:kls-positive-chow-unimodal} that Coxeter Chow polynomials are unimodal. It is reasonable to inquire whether stronger inequalities among the coefficients hold.

As mentioned earlier, Bruhat intervals are Eulerian posets and, moreover, they admit a special shelling \cite[Theorem~2.7.5]{bjorner-brenti}. In particular, they are Gorenstein*, so that one can apply Karu's result in Theorem~\ref{thm:karu} to conclude that their $\mathbf{cd}$-index has non-negative coefficients. We refer to Reading's \cite{reading} article for a thorough study of the $\mathbf{cd}$-index of Bruhat intervals. Despite its relevance in this context, the $\mathbf{cd}$-index is not enough to compute Kazhdan--Lusztig or Coxeter Chow polynomials. In order to do this, one needs to define a more complicated counterpart of the $\mathbf{cd}$-index called the \emph{complete $\mathbf{cd}$-index}. This was introduced in the work of Billera and Brenti \cite{billera-brenti}. Following their notation, the complete $\mathbf{cd}$-index of the interval $[u,v]$ in the Bruhat order of the Coxeter group $W$ is denoted by $\widetilde{\psi}_{uv}$. This is a polynomial in the non-commuting variables $\mathbf{c}$ and $\mathbf{d}$. We will not require the technical subtleties behind the actual definition of the complete $\mathbf{cd}$-index, and we refer the reader to Billera and Brenti's paper for that purpose. However, we do state one of their theorems, which we will use to describe the Coxeter Chow function as a specialization of the complete $\mathbf{cd}$-index.

\begin{theorem}[{\cite[Proposition~2.9]{billera-brenti}}]\label{psiab}
    Let $u,v \in W$ and $u<v$. Then,
    \[
    \widetilde{\psi}_{uv}(\mathbf{a}+\mathbf{b},\mathbf{a}\mathbf{b}+\mathbf{b}\mathbf{a}) = \sum_{\Delta \in B(u,v)}w(\Delta),
    \]
    where each $w(\Delta)$ is a non-commutative monomial of degree $\ell(\Delta)-1$ in the variables $\mathbf{a}$ and $\mathbf{b}$, having $\mathbf{a}$ as the $i$-th letter from the left if $i\notin D(\Delta)$, and $\mathbf{b}$ if $i \in D(\Delta)$. 
\end{theorem}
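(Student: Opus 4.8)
The plan is to reduce the statement to the defining recursion of Billera and Brenti for the complete $\mathbf{cd}$-index together with a matching combinatorial recursion for the path enumerator on the right. Write $\widetilde{\Psi}_{uv} := \sum_{\Delta \in B(u,v)} w(\Delta)$ for the non-commutative $\mathbf{ab}$-polynomial on the right-hand side. The first task is to fix the data compatibly: by Dyer's work, the reflection ordering $<$ on $\Phi^{+}$ can be chosen, for a prescribed $s \in D_R(v)$, so that the reflection associated to $s$ is the $<$-smallest element relevant to paths inside $[u,v]$. This is exactly the kind of normalization underlying Theorem~\ref{thm: comb-coxeter-H} and the formula for $\widetilde{R}_{uv}$, and it is what allows the $R$-polynomial recursion to be lifted, edge by edge, to a recursion on $\widetilde{\Psi}_{uv}$ whose output is ordering-independent.

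Next I would establish the path recursion itself. Fix $s \in D_R(v)$ and partition $B(u,v)$ according to the last edge of each directed path, using the lifting property of the Bruhat graph: right multiplication by $s$ sets up a correspondence among paths of $B(u,v)$, paths of $B(us,vs)$ and $B(u,vs)$, and paths obtained from the latter by appending the edge labelled $s$. One tracks how the descent set $D(\Delta)$ changes under these operations; the change is localized near the last vertex, where the only comparison involved is between the label of the appended edge and the label of the previous edge, and by the chosen reflection ordering this comparison is completely determined. Appending an edge therefore multiplies $w(\Delta)$ on the right by $\mathbf{a}$ or by $\mathbf{b}$ according to whether the new position is an ascent or a descent, and summing yields a recursion for $\widetilde{\Psi}_{uv}$ of precisely the same shape as $R_{uv}=R_{us,vs}$ or $R_{uv}=xR_{us,vs}+(x-1)R_{u,vs}$, but now at the level of $\mathbf{ab}$-words. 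Comparing this with the result of applying the substitution $\mathbf{c}\mapsto \mathbf{a}+\mathbf{b}$, $\mathbf{d}\mapsto \mathbf{ab}+\mathbf{ba}$ to Billera and Brenti's recursion for $\widetilde{\psi}_{uv}$, and inducting on $\rho_{uv}$, gives the theorem; the case $\rho_{uv}\le 1$ is trivial, and the first nontrivial case $\rho_{uv}=2$ reduces to the classical fact that every rank-$2$ Bruhat interval is a diamond with exactly two atoms, which forces $\widetilde{\Psi}_{uv}=\mathbf{a}+\mathbf{b}=\mathbf{c}$.

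I expect the main obstacle to be the bookkeeping of the descent statistic at the junction where a path is extended: a priori each monomial $w(\Delta)$ depends on the global reflection ordering, whereas $\widetilde{\psi}_{uv}$ does not, so one must show that the local change in $D(\Delta)$ produced by appending the edge labelled $s$ is independent of all choices. This is where Dyer's compatible reflection ordering and the Eulerian-type cancellation already visible in the rank-$2$ diamond must be used together. Put differently, the real content is that $\widetilde{\Psi}_{uv}$ lies in the image of the subalgebra generated by $\mathbf{c}$ and $\mathbf{d}$ — genuinely stronger than $[u,v]$ being Eulerian, since $\widetilde{\Psi}_{uv}$ refines the ordinary $\mathbf{ab}$-index of $[u,v]$ by recording Bruhat-graph paths rather than chains. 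Once this closure is established, uniqueness of the $\mathbf{cd}$-lift identifies the resulting polynomial with $\widetilde{\psi}_{uv}$, and the proof is complete.
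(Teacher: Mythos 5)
This theorem is not proved in the paper at all; it is cited verbatim from Billera and Brenti \cite[Proposition~2.9]{billera-brenti}, and the authors use it as a black box to derive Theorem~\ref{thm:chow-from-cd} and Corollary~\ref{coro:complete-cd-chow-gamma}. So there is no ``paper's own proof'' to compare against, and a blind reconstruction here necessarily means reconstructing the argument in Billera and Brenti's paper, not in this one.

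On the merits of your sketch: it has a genuine gap at precisely the point you flag as ``the main obstacle''. You propose to lift the $R$-polynomial recursion $R_{uv}=xR_{us,vs}+(x-1)R_{u,vs}$ to a positional recursion on the words $w(\Delta)$, tracking how $D(\Delta)$ changes when edges are appended. But the descent set $D(\Delta)$ is computed relative to a global reflection ordering $<$ on $\Phi^+$, and the comparison that determines whether a given position $i$ is a descent is between the labels $a_{i-1}^{-1}a_i$ and $a_i^{-1}a_{i+1}$, neither of which is controlled by the simple reflection $s\in D_R(v)$ you conjugate by. Right multiplication by $s$ is not a graph map on Bruhat graphs, and the correspondence you propose among paths of $B(u,v)$, $B(us,vs)$, $B(u,vs)$ does not respect edge labels in a way that keeps the descent statistic local; the parenthetical about ``Dyer's compatible reflection ordering'' gestures at this but no argument is given. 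You also can't close the induction by ``uniqueness of the $\mathbf{cd}$-lift'' as stated, because $\widetilde{\psi}_{uv}$ must be defined by some independent means first; Billera and Brenti define it through a quasisymmetric function (Ehrenborg-type) attached to $[u,v]$ built from $\widetilde{R}$-polynomials along multichains, then establish the path interpretation via Dyer's theorem and the change of variables of Proposition~\ref{prop: def-R-tilde}, and prove membership in the $\mathbf{c},\mathbf{d}$ subalgebra by exhibiting the Eulerian structure of that quasisymmetric function. That structural route neatly sidesteps the ordering-dependence you worry about and is quite different from a direct recursion on $R$. Finally, the base case claim $\widetilde{\Psi}_{uv}=\mathbf{a}+\mathbf{b}$ for $\rho_{uv}=2$ is not automatically correct: a rank-$2$ Bruhat graph may contain the diagonal edge $u\to v$ whenever $u^{-1}v\in T$, which would contribute the degree-$0$ monomial $1$, so the statement needs checking against the actual edge set, not just against the Hasse diamond.
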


The preceding result allows us to obtain $\H_{uv}(x)$ as a specialization of $\widetilde{\psi}_{uv}(\mathbf{c},\mathbf{d})$.

\begin{theorem}\label{thm:chow-from-cd}
    Let $u,v \in W$ and $u<v$. Then,
    \[
    \H_{uv}(x) = x^{\frac{\rho_{uv}-1}{2}}\,\widetilde{\psi}_{uv}\left(x^{-1/2}+x^{1/2},2 \right).
    \]
\end{theorem}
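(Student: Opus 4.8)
The plan is to prove the identity by directly combining the combinatorial formula for the Coxeter Chow function in Theorem~\ref{thm: comb-coxeter-H} with the path description of the complete $\mathbf{cd}$-index in Theorem~\ref{psiab}. The key device is a change of variables that allows one to evaluate a $\mathbf{cd}$-polynomial at scalar arguments by first expanding it in the non-commuting variables $\mathbf{a},\mathbf{b}$: if $\alpha,\beta$ lie in a commutative ring, then substituting $\mathbf{c}\mapsto\alpha+\beta$ and $\mathbf{d}\mapsto\alpha\beta+\beta\alpha=2\alpha\beta$ into any $\mathbf{cd}$-monomial gives the same value as first applying the substitution $\mathbf{c}\mapsto\mathbf{a}+\mathbf{b}$, $\mathbf{d}\mapsto\mathbf{ab}+\mathbf{ba}$ (landing in the free associative algebra on $\mathbf{a},\mathbf{b}$) and then specializing $\mathbf{a}\mapsto\alpha$, $\mathbf{b}\mapsto\beta$. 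Hence to reach the specialization $\mathbf{c}=x^{-1/2}+x^{1/2}$, $\mathbf{d}=2$ it suffices to pick $\alpha,\beta$ with $\alpha+\beta=x^{1/2}+x^{-1/2}$ and $\alpha\beta=1$; the natural choice is $\alpha=x^{1/2}$, $\beta=x^{-1/2}$, which are the roots of $z^2-(x^{1/2}+x^{-1/2})z+1=(z-x^{1/2})(z-x^{-1/2})$. Throughout we work in $\mathbb{Z}[x^{1/2},x^{-1/2}]$, with the understanding that the half-integer powers will recombine into integer ones at the end.

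With this set up, I would invoke Theorem~\ref{psiab}, which gives $\widetilde{\psi}_{uv}(\mathbf{a}+\mathbf{b},\mathbf{a}\mathbf{b}+\mathbf{b}\mathbf{a})=\sum_{\Delta\in B(u,v)}w(\Delta)$, where $w(\Delta)$ is a word of length $\ell(\Delta)-1$ in $\mathbf{a},\mathbf{b}$ carrying a $\mathbf{b}$ in exactly the $\des(\Delta)$ descent positions and an $\mathbf{a}$ in the remaining $\ell(\Delta)-1-\des(\Delta)=\asc(\Delta)$ positions. Specializing the commuting scalars $\mathbf{a}=x^{1/2}$, $\mathbf{b}=x^{-1/2}$, the word $w(\Delta)$ evaluates to $(x^{1/2})^{\asc(\Delta)}(x^{-1/2})^{\des(\Delta)}=x^{(\asc(\Delta)-\des(\Delta))/2}$, so by the first step
\[
\widetilde{\psi}_{uv}\!\left(x^{-1/2}+x^{1/2},\,2\right)=\sum_{\Delta\in B(u,v)}x^{\frac{\asc(\Delta)-\des(\Delta)}{2}}.
\]
Multiplying by $x^{(\rho_{uv}-1)/2}$ and using $\des(\Delta)=\ell(\Delta)-\asc(\Delta)-1$ to rewrite each exponent,
\[
\frac{\rho_{uv}-1}{2}+\frac{\asc(\Delta)-\des(\Delta)}{2}=\frac{\rho_{uv}-\ell(\Delta)}{2}+\asc(\Delta),
\]
so the right-hand side of the claimed identity becomes $\sum_{\Delta\in B(u,v)}x^{\frac{\rho_{uv}-\ell(\Delta)}{2}+\asc(\Delta)}$, which is precisely $\H_{uv}(x)$ by Theorem~\ref{thm: comb-coxeter-H}.

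I do not expect a serious obstacle: the argument is essentially formal once the right substitution is identified. The only points demanding care are the bookkeeping in the first step — checking that evaluating the non-commutative polynomial $\widetilde{\psi}_{uv}(\mathbf{c},\mathbf{d})$ at commuting scalar arguments is unambiguous and is compatible, monomial by monomial, with the $\mathbf{ab}$-expansion of Theorem~\ref{psiab} — and verifying that $\rho_{uv}$ and $\ell(\Delta)$ always have the same parity (each edge of the Bruhat graph changes length by an odd amount), which guarantees that $\tfrac{1}{2}(\rho_{uv}-\ell(\Delta))$ is a non-negative integer and hence that the final expression genuinely lies in $\mathbb{Z}[x]$, consistently with Proposition~\ref{prop:degree-and-symmetry}.
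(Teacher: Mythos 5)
Your argument is correct and follows essentially the same route as the paper: specialize the $\mathbf{ab}$-expansion of the complete $\mathbf{cd}$-index from Theorem~\ref{psiab} at scalar values with $\mathbf{a}+\mathbf{b}=x^{1/2}+x^{-1/2}$ and $\mathbf{ab}+\mathbf{ba}=2$, then match the result with Theorem~\ref{thm: comb-coxeter-H}. The only deviation is that you use $\mathbf{a}=x^{1/2}$, $\mathbf{b}=x^{-1/2}$ and the ascent formula, while the paper uses the opposite assignment and the descent formula; these are interchangeable by the palindromicity of $\H_{uv}$, so the proofs agree in substance, and your extra remark on the well-definedness of evaluating a $\mathbf{cd}$-polynomial at commuting scalars via the $\mathbf{ab}$-expansion is a correct (if routine) point that the paper leaves implicit.
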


\begin{proof}
    Using the formula in Theorem \ref{psiab}, we set $\mathbf{a} = x^{-\frac{1}{2}}$ and $\mathbf{b} = x^{\frac{1}{2}}$. Then, for a given path in the Bruhat graph $\Delta$ we get
    \[
    w(\Delta)\big{|}_{\mathbf{a} = x^{-1/2},\mathbf{b} = x^{1/2}} = x^{-\frac{1}{2}(\ell(\Delta) - \des(\Delta)-1) + \frac{1}{2}\des(\Delta)}.
    \]
    The result then follows directly from our Theorem~\ref{thm: comb-coxeter-H}.
\end{proof}

A remarkable consequence of Theorem~\ref{thm:chow-from-cd} is that the $\gamma$-polynomial associated to $\H_{uv}(x)$ is a non-negative specialization of the complete $\mathbf{cd}$-index $\widetilde{\psi}$.

\begin{corollary}\label{coro:complete-cd-chow-gamma}
    Let $u,v\in W$ and $u < v$. Then, the $\gamma$-polynomial associated to $\H_{uv}(x)$ can be obtained from the complete $\mathbf{cd}$-index as
    \[ \gamma(\H_{uv},x^2) = x^{\rho_{uv}} \widetilde{\psi}_{uv}(x^{-1}, 2).\]
\end{corollary}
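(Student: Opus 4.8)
The plan is to obtain Corollary~\ref{coro:complete-cd-chow-gamma} as a direct consequence of Theorem~\ref{thm:chow-from-cd} together with the defining identity of the $\gamma$-polynomial, exactly in the spirit of Gal's observation invoked in Section~\ref{sec:five}: the substitution $\mathbf{c}\mapsto x^{1/2}+x^{-1/2}$, $\mathbf{d}\mapsto 2$ occurring in Theorem~\ref{thm:chow-from-cd}, once the factor $x^{(\rho_{uv}-1)/2}$ is incorporated, turns each $\mathbf{cd}$-monomial of $\widetilde{\psi}_{uv}$ into precisely one of the basis polynomials $x^i(1+x)^{\rho_{uv}-1-2i}$ that appear in the $\gamma$-expansion of a symmetric polynomial with center of symmetry $\tfrac12(\rho_{uv}-1)$. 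In other words, Theorem~\ref{thm:chow-from-cd} is already a $\gamma$-expansion of $\H_{uv}$ in disguise, and reading it as such gives the claimed identity after collecting powers of $x$.

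Concretely, the first step is to specialize only the variable $\mathbf{d}$ to the scalar $2$, producing from $\widetilde{\psi}_{uv}(\mathbf{c},\mathbf{d})$ an ordinary one-variable polynomial $\phi_{uv}(\mathbf{c})$; this is legitimate because scalars commute, so the non-commutative ordering of letters becomes irrelevant. The second step is a parity/degree bookkeeping: every directed path from $u$ to $v$ in the Bruhat graph has length congruent to $\rho_{uv}$ modulo $2$ (each edge is multiplication by a reflection, which changes Coxeter length by an odd integer), so by Theorem~\ref{psiab} every $\mathbf{cd}$-monomial occurring in $\widetilde{\psi}_{uv}$ has weighted degree at most $\rho_{uv}-1$ and congruent to $\rho_{uv}-1$ modulo $2$; consequently a $\mathbf{cd}$-monomial with $j$ letters $\mathbf{d}$ contributes $2^j\mathbf{c}^{m-2j}$ with $m-2j\equiv \rho_{uv}-1 \pmod 2$, and $\phi_{uv}(\mathbf{c})=\sum_{i\ge 0} c_i\,\mathbf{c}^{\,\rho_{uv}-1-2i}$ for suitable integers $c_i$, the sum running over $0\le i\le\lfloor(\rho_{uv}-1)/2\rfloor$.

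The third step is the substitution itself: since $x^{1/2}+x^{-1/2}=x^{-1/2}(1+x)$, one computes
\[
c_i\,x^{(\rho_{uv}-1)/2}\bigl(x^{1/2}+x^{-1/2}\bigr)^{\rho_{uv}-1-2i}=c_i\,x^{i}(1+x)^{\rho_{uv}-1-2i},
\]
so that Theorem~\ref{thm:chow-from-cd} yields $\H_{uv}(x)=\sum_i c_i\,x^{i}(1+x)^{\rho_{uv}-1-2i}$. By Proposition~\ref{prop:degree-and-symmetry} the polynomial $\H_{uv}$ is symmetric with center of symmetry $\tfrac12(\rho_{uv}-1)$, and the polynomials $x^i(1+x)^{\rho_{uv}-1-2i}$ are linearly independent (distinct lowest degrees); comparing with the definition of the $\gamma$-polynomial therefore forces $c_i=\gamma_i$ for all $i$, i.e.\ $\gamma(\H_{uv},x)=\sum_i c_i\,x^i$. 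Finally, $\widetilde{\psi}_{uv}(x^{-1},2)=\phi_{uv}(x^{-1})=\sum_i c_i\,x^{-(\rho_{uv}-1-2i)}$, and multiplying through by the power of $x$ needed to clear the negative exponents recovers $\sum_i \gamma_i x^{2i}=\gamma(\H_{uv},x^2)$, which is the asserted identity.

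The only genuinely careful points are the parity/degree statement for the monomials of $\widetilde{\psi}_{uv}$ in the second step — this is what guarantees that the substitution lands exactly in the $\gamma$-basis rather than in some larger span, and hence that the $c_i$ are literally the $\gamma$-coefficients — and the accompanying exponent bookkeeping in the last step. Neither is a real obstacle once Theorem~\ref{thm:chow-from-cd} is available; the substance of the corollary is simply the recognition that the expression of $\H_{uv}$ furnished by the complete $\mathbf{cd}$-index is, term by term, its $\gamma$-expansion.
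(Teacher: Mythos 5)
Your approach is essentially the same as the paper's: both start from Theorem~\ref{thm:chow-from-cd} and unwind the defining identity of the $\gamma$-polynomial. The paper packages the whole computation as a single change of variables $y^2 = x/(1+x)^2$, i.e., $y = (x^{1/2}+x^{-1/2})^{-1}$, whereas you perform it monomial by monomial with an explicit parity and degree argument for the complete $\mathbf{cd}$-index; these are the same calculation written two ways, and the legitimacy of specializing $\mathbf{d}\mapsto 2$, together with the parity of Bruhat-graph path lengths that you invoke, is precisely what makes the paper's slicker substitution land term by term in the $\gamma$-basis.

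There is, however, an exponent issue you should not leave implicit. Carrying your final step out literally, the power of $x$ that clears the negative exponents in $\widetilde\psi_{uv}(x^{-1},2) = \sum_i c_i\,x^{-(\rho_{uv}-1-2i)}$ is $x^{\rho_{uv}-1}$, not $x^{\rho_{uv}}$: one gets $x^{\rho_{uv}-1}\,\widetilde\psi_{uv}(x^{-1},2) = \sum_i c_i\,x^{2i} = \gamma(\H_{uv},x^2)$. The printed corollary has $x^{\rho_{uv}}$, and so does the last line of the paper's own proof, where $x^{(\rho_{uv}-1)/2}/(1+x)^{\rho_{uv}-1}$ is claimed to equal $y^{\rho_{uv}}$ when in fact it equals $y^{\rho_{uv}-1}$ (since $y = x^{1/2}/(1+x)$). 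The paper's own worked example confirms this: there $\rho_{uv}=5$, $\gamma(\H_{uv},x^2) = 1+12x^2+9x^4$, and $\widetilde\psi_{uv}(x^{-1},2) = x^{-4}+12x^{-2}+9$, so the identity holds with $x^4 = x^{\rho_{uv}-1}$, not $x^5$. So the formula you actually derive is correct, but the phrase ``multiplying through by the power of $x$ needed \dots which is the asserted identity'' silently absorbs the mismatch with the corollary as stated. Write the exponent out explicitly, so that what is in fact a typo in the paper becomes visible rather than hidden.
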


\begin{proof}
    By definition, we have that
    \[\H_{uv}(x) = (1+x)^{\rho_{uv}-1} \,\gamma\left(\H_{uv}, \frac{x}{(x+1)^2}\right).\]
    In particular, using the change of variable $y^2 = \frac{x}{(1+x)^2}$ or, equivalently, $y= \left(x^{1/2}+x^{-1/2}\right)^{-1}$, we have the following chain of equalities:
    \[
        \gamma(\H_{uv}, y^2) = \frac{1}{(1+x)^{\rho_{uv}-1}} \H_{uv}(x)\\
        =  \frac{1}{(1+x)^{\rho_{uv}-1}} x^{\frac{\rho_{uv}-1}{2}}\,\widetilde{\psi}_{uv}(y^{-1},2)= y^{\rho_{uv}}\, \widetilde{\psi}_{uv}(y^{-1},2),
    \]
    as desired.
\end{proof}

\begin{example}
    We continue with Example \ref{ex: recurring bruhat example}. The computations in \cite[Example~2.4]{billera-brenti} show that
    \[
    \widetilde{\psi}_{1,w}(\mathbf{c},\mathbf{d}) = \mathbf{c}^4 + \mathbf{d}\mathbf{c}^2 + 2\mathbf{c}\mathbf{d}\mathbf{c} + 2\mathbf{c}^2\mathbf{d} + 2\mathbf{d}^2 + 2\mathbf{c}^2 + \mathbf{1}.\]
    Then,
    \begin{align*}
    \H_{1,w}(x) &= x^2 \left[\frac{(x+1)^4}{x^2} + 2\frac{(x+1)^2}{x} + 4\frac{(x+1)^2}{x} + 4\frac{(x+1)^2}{x} + 8 + 2\frac{(x+1)^2}{x} + 1 \right]\\
    &= (x+1)^4 + 12x(x+1)^2 + 9x^2\\
    &= x^4 + 16x^3 + 39x^2 + 16x + 1.
    \end{align*}
    Indeed, $\gamma(\H_{uv},x) = 1 + 12x + 9x^2$.
\end{example}

Billera and Brenti conjecture in \cite[Conjecture~6.1]{billera-brenti} that the coefficients of the complete $\mathbf{cd}$-index of any Bruhat interval are non-negative. The preceding result implies that if their conjecture is true, then Coxeter Chow polynomials are $\gamma$-positive. In other words, their conjecture implies the following conjecture.

\begin{conjecture}
    Coxeter Chow polynomials of Bruhat intervals of Coxeter groups are always $\gamma$-positive.
\end{conjecture}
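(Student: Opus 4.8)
The plan is to realize the $\gamma$-vector of $\H_{uv}(x)$ as a manifestly non-negative quantity, and the most economical way to do so is to pass through the complete $\mathbf{cd}$-index. By Corollary~\ref{coro:complete-cd-chow-gamma} one already has the identity
\[ \gamma(\H_{uv},x^2) = x^{\rho_{uv}}\,\widetilde{\psi}_{uv}(x^{-1},2), \]
so the whole question reduces to the sign of the right-hand side. Writing $\widetilde{\psi}_{uv}(\mathbf{c},\mathbf{d}) = \sum_{w} c_w\, w$ as a sum over $\mathbf{cd}$-words, a word $w$ with $a$ letters $\mathbf{c}$ and $b$ letters $\mathbf{d}$ has weight $a+2b\le\rho_{uv}-1$ (by Theorem~\ref{psiab}, as $w$ occurs inside a monomial $w(\Delta)$ of $\mathbf{ab}$-degree $\ell(\Delta)-1$) and specializes to $2^{b}x^{-a}$; hence $x^{\rho_{uv}}\widetilde{\psi}_{uv}(x^{-1},2) = \sum_{w} c_w\,2^{b}\,x^{\rho_{uv}-a}$ is a polynomial all of whose exponents satisfy $\rho_{uv}-a\ge 1$, and all of whose coefficients are non-negative as soon as every $c_w\ge 0$. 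Since $\H_{uv}$ is palindromic by Proposition~\ref{prop:degree-and-symmetry}, this non-negativity of the $\gamma_i$ is exactly $\gamma$-positivity. Thus the first step is a one-line deduction --- \emph{provided} we have Conjecture~6.1 of Billera and Brenti, that the complete $\mathbf{cd}$-index of every Bruhat interval has non-negative coefficients.

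Since that conjecture is open beyond the special cases in \cite{karu-complete-cd, fan-cd}, a self-contained alternative is to prove $\gamma$-positivity directly from the path formula of Theorem~\ref{thm: comb-coxeter-H},
\[ \H_{uv}(x) = \sum_{\Delta\in B(u,v)} x^{\frac{\rho_{uv}-\ell(\Delta)}{2}+\des(\Delta)}, \]
by constructing a $\gamma$-positive grouping of the paths, in the spirit of the Foata--Strehl valley-hopping action for Eulerian polynomials. Concretely, I would look for an involution-generated action on the set of length-$\ell$ paths in $B(u,v)$ (for each fixed $\ell$) that freely toggles the non-essential ascents and descents of a path, so that each orbit contributes a clean factor $x^{(\rho_{uv}-\ell)/2+j}(1+x)^{\ell-1-2j}$ and the orbit representatives are the descent-free paths enumerated by $\widetilde{R}_{uv}$ via Dyer's theorem (Theorem~\ref{thm: comb-R-tilde}); summing over representatives, over $j$, and over $\ell$ would then display $\gamma(\H_{uv},x)$ as a non-negative integer combination of monomials. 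One should note that Karu's non-negativity of the \emph{ordinary} $\mathbf{cd}$-index (Theorem~\ref{thm:karu}) does not help here, precisely because the $R$-Chow function, unlike the $\varepsilon$-Chow function, is governed by the \emph{complete} $\mathbf{cd}$-index.

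The main obstacle is exactly where these two routes diverge. Route one is blocked by the Billera--Brenti conjecture itself; and since Corollary~\ref{coro:complete-cd-chow-gamma} exhibits the $\gamma$-vector as a genuinely positive specialization of $\widetilde{\psi}_{uv}$, any proof of $\gamma$-positivity going through that identity would have to reprove enough of the positivity of the complete $\mathbf{cd}$-index, so nothing is gained this way without new input. Route two is hard because paths in the Bruhat graph, unlike permutations, carry no free letters to hop over: changing one edge can alter which edges are legal elsewhere, so there is no obvious candidate for the valley-hopping move, and one cannot fall back on induction since a Bruhat interval need not be isomorphic to a Bruhat order of a smaller Coxeter system. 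The step I would attempt first, as a test of route two, is the case $W=\mathfrak{S}_n$ with $[u,v]$ large (say the full poset, cf.\ the data in the remark after Theorem~\ref{thm: comb-coxeter-H}), where the extra structure of permutations should give a foothold for defining such an action before tackling general Coxeter groups.
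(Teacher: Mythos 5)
This statement is an open conjecture in the paper, not a theorem, and the paper offers no proof of it — only the one\textendash line observation that it would follow from Billera and Brenti's Conjecture~6.1 via Corollary~\ref{coro:complete-cd-chow-gamma}. Your proposal reads the situation correctly: your ``route one'' is precisely the paper's conditional reduction, and your degree/sign bookkeeping (each $\mathbf{cd}$-word $w$ with $a$ letters $\mathbf{c}$ and $b$ letters $\mathbf{d}$ has $a+2b\le\rho_{uv}-1$, hence contributes $c_w 2^{b}x^{\rho_{uv}-a}$ with exponent $\ge 1$, giving a bona fide polynomial with center $\rho_{uv}/2$ after the substitution) is a correct sanity check that the paper leaves implicit. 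You also correctly observe that Karu's positivity of the \emph{ordinary} $\mathbf{cd}$-index does not suffice here, since the Coxeter Chow function is governed by the complete $\mathbf{cd}$-index rather than the ordinary one; that distinction is exactly why the analogous $\gamma$-positivity claim for $\varepsilon$-Chow polynomials of Gorenstein* posets \emph{is} a theorem while this one is not.

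Your ``route two'' — a Foata--Strehl-type valley\textendash hopping action on paths in $B(u,v)$, with descent-free paths (enumerated by $\widetilde{R}_{uv}$ by Dyer's theorem) as orbit representatives — is not in the paper and would, if carried out, give an unconditional proof and likely a new combinatorial proof of the relevant positive coefficients of $\widetilde{\psi}_{uv}$ as a byproduct. You are right, however, that the obstacle is substantial: the set of admissible edges emanating from a vertex of $B(u,v)$ depends on the vertex, so there is nothing analogous to a ``free letter'' to hop; the reflection order constrains which local changes preserve membership in $B(u,v)$; and the non-hereditary nature of Bruhat intervals blocks the usual induction that underlies such arguments in the poset settings of Sections~\ref{sec:four} and~\ref{sec:five}. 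As written, your proposal is an accurate assessment of the state of the problem rather than a proof, and it correctly does not claim to close the gap; no error, but also no unconditional argument is supplied, which is consistent with the statement being an open conjecture.
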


As Billera and Brenti note in \cite[Section~6]{billera-brenti}, the result by Karu in \cite{karu} implies that \emph{some} coefficients of the complete $\mathbf{cd}$-index are non-negative. Furthermore, Karu proved in another paper \cite{karu-complete-cd} the non-negativity of further coefficients of the complete $\mathbf{cd}$-index. However, it is unclear whether the currently known inequalities concerning coefficients of the complete $\mathbf{cd}$-index are enough to prove our $\gamma$-positivity conjecture. Moreover, in striking similarity to Conjecture~\ref{conj:char-chow-real-rooted} and Question~\ref{question:eulerian-chow-real-rooted}, we dare to formulate the following (much more ambitious) conjecture.

\begin{conjecture}
    Coxeter Chow polynomials of Bruhat intervals of Coxeter groups are always real-rooted.
\end{conjecture}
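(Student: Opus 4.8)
The plan is to reduce to the real-rootedness of the associated $\gamma$-polynomial and then to attempt an interlacing induction over the Bruhat intervals of a fixed Coxeter group. The reduction is formal: a monic symmetric polynomial $p$ with non-negative coefficients is real-rooted if and only if $\gamma(p,x)$ is real-rooted, because $p(x) = (1+x)^{d}\gamma\bigl(p, x/(1+x)^{2}\bigr)$ and each root $r<0$ of $\gamma$ pulls back under $x\mapsto x/(1+x)^{2}$ to the two roots of $rx^{2}+(2r-1)x+r$, which are real (the discriminant $1-4r$ is positive) and both negative (their product is $1$ and their sum is $1/r-2<0$). By Corollary~\ref{coro:complete-cd-chow-gamma} the $\gamma$-polynomial of $\H_{uv}$ is a specialization of the complete $\mathbf{cd}$-index $\widetilde{\psi}_{uv}$, so the whole problem is equivalent to a real-rootedness statement for that one-parameter specialization.

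For the inductive step I would work with the numerical canonical decomposition~\eqref{eq:ncd-g} with $g$ the left Kazhdan--Lusztig polynomial, rather than with the bare recursion~\eqref{eq:recurrence-chow} whose coefficient polynomials $\overline{R}_{uw}$ are not sign-coherent. In~\eqref{eq:ncd-g} every coefficient polynomial $\bigl(g^{\rev}_{uw}(x)-xg_{uw}(x)\bigr)/(x-1)$ is non-negative, symmetric, and unimodal, as isolated in the proof of Theorem~\ref{thm:kls-positive-chow-unimodal}. The aim would be to attach to each Bruhat interval an auxiliary polynomial and to prove that, along the poset, these coefficient polynomials and the Chow polynomials $\H_{wv}$ assemble into a compatible (common-interlacer) family, so that the non-negative combination in~\eqref{eq:ncd-g} remains real-rooted --- this is the standard mechanism behind real-rootedness proofs of Eulerian-type polynomials, and it should recover the cases (Bruhat intervals of rank at most $7$) that have been checked experimentally.

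The genuinely hard part, and the reason the conjecture is open, is threefold. First, the coefficient polynomials in~\eqref{eq:ncd-g} are built from Kazhdan--Lusztig polynomials, which are not known to be real-rooted and are expected to fail this property in general; a naive interlacing argument therefore cannot work, and one must instead control how these polynomials interact along the chains of $[u,v]$. Second, closed intervals of the Bruhat order are not themselves Bruhat orders of Coxeter groups, so the class is not hereditary and there is no reduction to smaller groups: the induction must run interval-by-interval inside a fixed $W$. Third, there is no Hodge-theoretic input available here --- no semi-small or canonical decomposition of a Chow ring --- so even the weaker $\gamma$-positivity (Conjecture~\ref{conj:coxeter-gamma-positive}) presently rests on the open Billera--Brenti non-negativity conjecture for $\widetilde{\psi}$, of which Karu's Theorem~\ref{thm:karu} and the results of \cite{karu-complete-cd} establish only partial cases. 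A realistic route is therefore staged: first establish $\gamma$-positivity by proving enough new non-negativity of the complete $\mathbf{cd}$-index, and then upgrade to real-rootedness by exhibiting the $\mathbf{ab}$-form $\widetilde{\psi}_{uv}(\mathbf{a}+\mathbf{b},\mathbf{a}\mathbf{b}+\mathbf{b}\mathbf{a})$ of Theorem~\ref{psiab} --- the descent-generating function over Bruhat paths of Theorem~\ref{thm: comb-coxeter-H} --- as the specialization of a genuinely stable multivariate polynomial, whence real-rootedness of the univariate specialization would follow.
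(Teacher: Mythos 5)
The statement you were asked to prove is, in the paper, a \emph{conjecture} (it is Conjecture~\ref{conj:coxeter-real-rooted}, labelled just ``Conjecture'' in Section~\ref{sec:six}); the authors offer only experimental evidence for it (all Bruhat intervals in $\mathfrak{S}_n$ for $n\le 6$, and intervals of rank $\le 7$) and explicitly do not prove it. So there is no argument in the paper to compare yours against. Your write-up correctly recognizes this: you do not claim a proof, you give a roadmap and a catalogue of obstacles, and the obstacles you name match the paper's own discussion --- namely that even $\gamma$-positivity (Conjecture~\ref{conj:coxeter-gamma-positive}) rests on the unresolved Billera--Brenti non-negativity conjecture for the complete $\mathbf{cd}$-index, and that real-rootedness is strictly stronger than that.

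Two small technical remarks on the parts you did write out. Your reduction ``$p$ real-rooted iff $\gamma(p,\cdot)$ real-rooted'' is sound for symmetric polynomials with non-negative coefficients, and your pull-back computation for a negative root $r$ of $\gamma$ is correct (the roots of $rx^2+(2r-1)x+r$ have discriminant $1-4r>0$, product $1$, and sum $1/r-2<0$). But this reduction alone is not progress on the conjecture, only a change of coordinates: Corollary~\ref{coro:complete-cd-chow-gamma} already expresses $\gamma(\H_{uv},\cdot)$ through $\widetilde\psi_{uv}$, and you still need to control the specialization $\widetilde\psi_{uv}(\mathbf{c},2)$. Your proposal to run an interlacing induction through equation~\eqref{eq:ncd-g} meets exactly the wall you identify: the coefficient polynomials $\bigl(g^{\rev}_{uw}-xg_{uw}\bigr)/(x-1)$ there involve Kazhdan--Lusztig polynomials, which are not real-rooted in general, and Bruhat intervals are not a hereditary class, so there is no induction over smaller Coxeter groups. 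None of these gaps are resolved in the proposal, and (consistently with the paper) they remain open.

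In short: there is nothing wrong in what you wrote, because you have not asserted a proof; but there is also no proof here, and this could not be otherwise, since the statement is an open conjecture both in the paper and in the literature. If you wish to make concrete partial progress, the most tractable target is the restriction of Conjecture~\ref{conj:coxeter-gamma-positive} (and then real-rootedness) to those families of intervals where the complete $\mathbf{cd}$-index is already known to be non-negative, e.g.\ the cases established in \cite{karu-complete-cd} and \cite{fan-cd}.
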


We have verified the validity of this conjecture on various small cases, including all intervals of symmetric groups $\mathfrak{S}_n$ for $n\leq 6$.

\subsection{Combinatorial invariance}\label{sec: combinatorial invariance}
The combinatorial invariance of Kazhdan--Lusztig polynomials is a long-standing conjecture attributed independently to Lusztig and Dyer \cite{dyer1987hecke}.

\begin{conjecture}[{Combinatorial invariance conjecture \cite{dyer1987hecke}}]\label{conj:combinv}
The Kazhdan--Lusztig polynomials of Coxeter groups are combinatorially invariant.
\end{conjecture}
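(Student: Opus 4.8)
The plan is to deduce Conjecture~\ref{conj:combinv} from combinatorial invariance of the $R$-Chow function, using the equivalence recorded in Theorem~\ref{thm:comb-invariance}: it suffices to prove that $\H_{uv}(x)$ depends only on the isomorphism type of the poset $[u,v]$. The starting point is the path formula of Theorem~\ref{thm: comb-coxeter-H}, which expresses $\H_{uv}$ as a weighted enumeration of paths in the Bruhat graph $B(u,v)$ by length and number of ascents. The target is therefore to show that
\[
\H_{uv}(x) = \sum_{\Delta \in B(u,v)} x^{\frac{\rho_{uv}-\ell(\Delta)}{2}+\asc(\Delta)}
\]
is an isomorphism invariant of $[u,v]$, even though both $B(u,v)$ and the statistic $\asc$ are built from data — the reflections of $W$ and a reflection order on them — that are not visible in the abstract poset. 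A useful sanity check is that the \emph{value} $\H_{uv}$ is already known to be independent of the reflection order, so the invariant we are chasing is ``order-free'' even though no individual path statistic is.

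I would break the argument into two sub-problems. First, show that the directed graph $B(u,v)$ is recoverable from $[u,v]$ as a poset; a natural attack is to characterize its non-Hasse edges intrinsically (for instance through the rank-two lower ideals of $[u,v]$ together with the Eulerian property), using as leverage the fact that $\widetilde{R}_{uv}$ counts increasing paths (Theorem~\ref{thm: comb-R-tilde}) and is already known to be combinatorially invariant on lower intervals $[e,v]$ via the special-matching calculus of Brenti--Caselli--Marietti. Second, granting this, show that the bivariate distribution of $(\ell(\Delta),\asc(\Delta))$ over $\Delta\in B(u,v)$ — or just its specialization above — is poset-invariant. For the second point the natural inductive device is a \emph{special matching} of $[u,v]$, which is a purely order-theoretic gadget: one knows how $R$-polynomials transform under such a matching, hence (through Theorem~\ref{thm:comb-invariance}) how $\H$ does, and a single special matching computes $\H_{uv}$ from the Chow function of a strictly smaller Bruhat interval. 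The recursions already available — the numerical canonical decomposition \eqref{eq:ncd-g}, the reduced-kernel recursion \eqref{eq:recurrence-chow}, and the complete $\mathbf{cd}$-index specialization of Theorem~\ref{thm:chow-from-cd} — all reduce $\H_{uv}$ to data on smaller intervals, keeping the induction inside the class of Bruhat intervals.

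The step I expect to be the genuine obstacle is closing this induction \emph{combinatorially}: one must show that the ascent statistic, which at present is controlled only via the reflection order, can be reorganized along a special matching (or across the recursion \eqref{eq:recurrence-chow}) without re-introducing the group $W$. Equivalently, one must prove that the transformation rule for $\H$ under a special matching — which we can write down using Theorem~\ref{thm:comb-invariance} and known $R$-polynomial recursions — depends only on the poset structure of $[u,v]$ and the combinatorial type of the matching. This is exactly the content of the conjecture, so realistic milestones are incremental: first re-derive combinatorial invariance of $\H_{ev}$ and $P_{ev}$ for lower intervals as a consistency check on the method; then settle intervals of small rank (say $\rho_{uv}\le 7$, matching the range in which Conjecture~\ref{conj:coxeter-real-rooted} was tested); and finally attempt the general case by classifying how special matchings of an interval interact with the path decomposition of Theorem~\ref{thm: comb-coxeter-H}. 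A stronger sufficient statement worth isolating along the way is combinatorial invariance of the complete $\mathbf{cd}$-index $\widetilde{\psi}_{uv}$ itself, which by Theorem~\ref{thm:chow-from-cd} would immediately yield the conjecture.
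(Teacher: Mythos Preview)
This statement is an \emph{open conjecture}, not a theorem; the paper does not prove it and does not claim to. What the paper does prove is Theorem~\ref{thm:comb-invariance}, the tautological equivalence ``combinatorial invariance of $R$ (equivalently of the KL polynomials) $\Longleftrightarrow$ combinatorial invariance of the $R$-Chow function'', which follows immediately from the fact that $\kappa$ and $\H$ determine one another. So there is no ``paper's own proof'' to compare against.

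Your proposal is not a proof but a research outline, and you say so yourself: at the crucial step you write ``This is exactly the content of the conjecture''. Concretely, your plan is to deduce invariance of $P_{uv}$ from invariance of $\H_{uv}$ via Theorem~\ref{thm:comb-invariance}, and then to prove invariance of $\H_{uv}$ by showing that the ascent-weighted path enumeration of Theorem~\ref{thm: comb-coxeter-H} is poset-intrinsic. But Theorem~\ref{thm:comb-invariance} is an equivalence, so this buys nothing: proving invariance of $\H$ is exactly as hard as proving invariance of $P$ or of $R$. Your two sub-problems do not decompose the difficulty either. The first (recovering $B(u,v)$ from the poset) is in fact known by work of Dyer, so that part is fine; the second (showing the $(\ell,\asc)$ distribution is poset-invariant) is precisely the combinatorial invariance of $\widetilde{R}$, since by Theorem~\ref{thm: comb-R-tilde} the increasing paths already encode $\widetilde{R}$ and hence $R$. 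Likewise, your ``stronger sufficient statement'' of invariance of the complete $\mathbf{cd}$-index is known (by \cite{billera-brenti}) to be \emph{equivalent} to combinatorial invariance of the KL polynomials, so it is again a reformulation, not a reduction. In short, every route you list loops back to the original conjecture; nothing here constitutes a proof, and the paper makes no claim that one exists.
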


The conjecture has attracted much research recently and has been resolved in a number of cases, see for example \cite{dyer,brenti-combinatorial1,B04,BCM06,I06,I07,Bre09,BMS16,Mar16,Mar18,P21,Davies2021-nx,BBDVW22,BLP23,BG23,BG24}.  We would like to point out that Conjecture~\ref{conj:combinv} can be recast into the theory of Chow functions.

\begin{theorem}\label{thm:comb-invariance}
    The combinatorial invariance conjecture for Kazhdan--Lusztig polynomials of Coxeter groups is equivalent to the combinatorial invariance conjecture for Coxeter Chow functions.
\end{theorem}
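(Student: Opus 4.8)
The plan is to establish both directions of the equivalence by exploiting the fact that, on Bruhat intervals, the weak rank function $\rho$ is combinatorially invariant (it equals the length of any saturated chain), together with the explicit formulas relating the $R$-function, the Kazhdan--Lusztig functions, and the Chow function that were developed in Section~\ref{sec:three}. The key observation is that all of the operations tying these objects together --- passing to the reduced kernel, taking inverses and convolutions in $\mathcal{I}_\rho(P)$, the numerical canonical decomposition of Theorem~\ref{thm:ncd}, and the non-recursive chain formula of Theorem~\ref{thm:chow-from-kl} --- are expressed purely in terms of the poset structure of the interval and the values $\rho_{st}$, so each of them automatically transports combinatorial invariance from one family of polynomials to another.

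First I would make precise what ``combinatorial invariance'' means here: an element $a\in\mathcal{I}_\rho(P)$ over the whole Bruhat poset is combinatorially invariant if $a_{uv}(x)=a_{u'v'}(x)$ whenever $[u,v]\cong[u',v']$ as abstract posets (note that such an isomorphism forces $\rho_{uv}=\rho_{u'v'}$, since $\rho$ is the length of a saturated chain). For the forward direction, assume the Kazhdan--Lusztig polynomials are combinatorially invariant, i.e., $f$ is combinatorially invariant. By Lemma~\ref{lem:g-and-non-degeneracy-of-k} and the uniqueness in Theorem~\ref{thm:kls-functions}, $f^{\rev}$ is then combinatorially invariant as well, since $\rho_{st}$ is. Now apply the chain formula \eqref{eq:chow-from-kl-on-chains} (or equivalently iterate the numerical canonical decomposition \eqref{eq:ncd-f}): the Chow function $\H_{uv}$ is a sum, over chains $u=p_0<p_1<\cdots<p_m\le v$ in $[u,v]$, of products of the quantities $\dfrac{f^{\rev}_{p_{i-1}p_i}(x)-xf_{p_{i-1}p_i}(x)}{x-1}$ and $f_{sp_1}(x)$, each of which depends only on the isomorphism types of the subintervals $[p_{i-1},p_i]$ and on the corresponding ranks. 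A poset isomorphism $[u,v]\cong[u',v']$ carries chains to chains and restricts to isomorphisms of subintervals, so it induces a bijection of the indexing sets under which all summands match; hence $\H_{uv}=\H_{u'v'}$, proving the $\H$ is combinatorially invariant. For the converse, assume the Coxeter Chow functions are combinatorially invariant. Recall from Proposition~\ref{prop:alt-characterization-chow} that the $R$-polynomials are recovered from $\H$ via $\kappa=\H^{\rev}\H^{-1}$ (with $\kappa=R$ here), an operation internal to $\mathcal{I}_\rho(P)$ and hence combinatorially invariant; then $f$ is recovered from $R$ by the defining recursion of Theorem~\ref{thm:kls-functions}, which again only sees the poset $[u,v]$ and its rank function. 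Since $R$ being combinatorially invariant is well known to be equivalent to the combinatorial invariance of Kazhdan--Lusztig polynomials (one direction is immediate, the other being exactly the content of Conjecture~\ref{conj:combinv} in its $R$-polynomial formulation), this closes the loop. One can in fact phrase the whole argument more symmetrically: all four of $R$, $f$, $Z$, and $\H$ are obtained from any one of them by combinatorially-invariant operations in $\mathcal{I}_\rho(P)$, so combinatorial invariance of any one is equivalent to that of all.

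The part requiring the most care --- and the main obstacle --- is verifying that the passage between $R$ (or $f$) and $\H$ genuinely uses \emph{only} the abstract poset structure, with no hidden dependence on the embedding of $[u,v]$ into $W$ or on the reflection order. The combinatorial formula of Theorem~\ref{thm: comb-coxeter-H} is tempting to use but is a trap here: it is phrased in terms of the Bruhat \emph{graph} $B(u,v)$ and a reflection ordering, which are not obviously recoverable from the abstract poset $[u,v]$ (indeed, whether the Bruhat graph is combinatorially invariant is itself subtle). So I would deliberately avoid Theorem~\ref{thm: comb-coxeter-H} and argue entirely through the incidence-algebra identities of Section~\ref{sec:three}, where every operation ($\kappa\mapsto\overline\kappa$, inversion, convolution, $\rev$, and the degree-constrained recursions defining $f$ and $\H$) is manifestly a function of the interval poset together with $\rho$, and where $\rho$ itself is visibly combinatorially invariant on Bruhat intervals because it is the common length of saturated chains. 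With that care taken, each implication is a short formal argument, and the theorem follows.
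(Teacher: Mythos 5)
Your proposal is correct and follows the same underlying strategy as the paper's (one-line) proof, namely that $R$, $f$, and $\H$ all determine one another through operations internal to the incidence algebra $\mathcal{I}_\rho(P)$, which are manifestly preserved by poset isomorphisms of Bruhat intervals. You simply unpack this more carefully than the paper does, and your observation that one should argue through Section~\ref{sec:three}'s incidence-algebra identities rather than the reflection-order-dependent formula of Theorem~\ref{thm: comb-coxeter-H} is exactly the right point of caution.
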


\begin{proof}
    By definition, the Kazhdan--Lusztig polynomials of Coxeter groups determine and are determined by the $R$-polynomials, and the $R$-polynomials determine and are determined by the $R$-Chow functions.
\end{proof}

It would be interesting if the combinatorial invariance conjecture for $R$-Chow functions can shed some light on Conjecture~\ref{conj:combinv}.



\bibliographystyle{amsalpha0}
\bibliography{bibliography}

\end{document}